\def\comment#1{{\sf{[#1]}}}
\def\Z{{\mathbb Z}}
\def\Q{{\mathbb Q}}
\def\C{{\mathbb C}}
\def\P{{\mathbb P}}
\def\H{{\mathbb H}}
\def\L{{\mathbb L}}
\def\V{{\mathbb V}}
\def\Aff{{\mathbb A}}
\def\A{{\mathcal A}}
\def\cC{{\mathcal C}}
\def\cG{{\mathcal G}}
\def\cH{{\mathcal H}}
\def\K{{\mathcal K}}
\def\cL{{\mathcal L}}
\def\M{{\mathcal M}}
\def\cP{{\mathcal P}}
\def\U{{\mathcal U}}
\def\X{{\mathcal X}}
\def\e{\epsilon}
\def\w{\omega}
\def\G{\Gamma}
\def\a{{\mathfrak a}}
\def\d{{\mathfrak d}}
\def\g{{\mathfrak g}}
\def\k{{\mathfrak k}}
\def\n{{\mathfrak n}}
\def\p{{\mathfrak p}}
\def\r{{\mathfrak r}}
\def\u{{\mathfrak u}}
\def\z{{\mathfrak z}}
\def\cc{\mathbf{c}}
\def\dd{\mathbf{d}}
\def\ee{\mathbf{e}}
\def\x{\mathbf{x}}
\def\deltatilde{{\tilde{\delta}}}
\def\rhotilde{{\tilde{\rho}}}
\def\rhohat{{\hat{\rho}}}
\def\sigmatilde{{\tilde{\sigma}}}
\def\phihat{\hat{\phi}}
\def\etabar{{\overline{\eta}}}
\def\wtilde{{\tilde{\w}}}
\def\what{{\hat{\w}}}
\def\Ghat{\widehat{\G}}
\def\uhat{\hat{\u}}
\def\ghat{\hat{\g}}
\def\stilde{\tilde{s}}
\def\Utilde{\widetilde{U}}
\def\Ttilde{\widetilde{T}}
\def\Uhat{\widehat{\U}}
\def\cGhat{\widehat{\cG}}
\def\Gtilde{\widetilde{G}}
\def\Mo{\overset{\ \circ}{\vphantom{n}\smash{\M}}}
\def\kbar{{\overline{k}}}
\def\Kbar{{\overline{K}}}
\def\ubar{\overline{u}}
\def\xbar{{\overline{x}}}
\def\thetadual{\check{\theta}}
\def\Ql{{\Q_\ell}}
\def\Zl{{\Z_\ell}}
\def\Zlx{{\Z_\ell^\times}}
\def\Qp{{\Q_p}}
\def\Gm{{\mathbb{G}_m}}
\def\Sp{{\mathrm{Sp}}}
\def\GSp{{\mathrm{GSp}}}
\def\un{\mathrm{un}}
\def\geom{\mathrm{geom}}
\def\cts{\mathrm{cts}}
\def\top{\mathrm{top}}
\def\orb{\mathrm{orb}}
\def\an{\mathrm{an}}
\def\et{\mathrm{\acute{e}t}}
\def\nab{\mathrm{nab}}
\def\Het{H_\et}
\def\Hnab{H_\nab}
\def\dot{\bullet}
\def\bmu{\pmb{\mu}}
\def\tambo{\boxplus}
\def\blank{\phantom{x}}
\def\red#1{{\langle #1 \rangle}}
\def\bracket{\text{\sf bracket}}
\def\cupp{\text{\sf cup}}
\def\sect{\text{\sf sect}}
\def\bs{\backslash}
\newcommand\im{\operatorname{im}}
\newcommand\id{\operatorname{id}}
\newcommand\ad{\operatorname{ad}}
\newcommand\Hom{\operatorname{Hom}}
\newcommand\Ext{\operatorname{Ext}}
\newcommand\Homcts{\operatorname{Hom_{\mathrm{cts}}}}
\newcommand\Spec{\operatorname{Spec}}
\newcommand\Diff{\operatorname{Diff}}
\newcommand\Aut{\operatorname{Aut}}
\newcommand\Out{\operatorname{Out}}
\newcommand\Der{\operatorname{Der}}
\newcommand\Gr{\operatorname{Gr}}
\newcommand\Jac{\operatorname{Jac}}
\newcommand\Ind{\operatorname{Ind}}
\newcommand\Char{\operatorname{char}}
\newcommand\Gal{\operatorname{Gal}}
\newcommand\Pic{\operatorname{Pic}}
\newcommand\codim{\operatorname{codim}}
\newcommand\Sect{\operatorname{Sect}}
\newtheorem{theorem}{Theorem}[section]
\newtheorem{lemma}[theorem]{Lemma}
\newtheorem{proposition}[theorem]{Proposition}
\newtheorem{corollary}[theorem]{Corollary}
\newtheorem{bigtheorem}{Theorem}
\theoremstyle{definition}
\newtheorem{definition}[theorem]{Definition}
\newtheorem{example}[theorem]{Example}
\theoremstyle{remark}
\newtheorem{remark}[theorem]{Remark}
\begin{document}

\title{Rational Points of Universal Curves}

\author{Richard Hain}
\address{Department of Mathematics\\ Duke University\\
Durham, NC 27708-0320}
\email{hain@math.duke.edu}

\thanks{Supported in part by grant DMS-0706955 from the National Science
Foundation and by MSRI}

\date{\today}

%\begin{abstract}
%
%\end{abstract}

\maketitle

\tableofcontents

\section{Introduction}

Associated to a smooth, projective, geometrically connected curve $C$ over a
field $K$ is the short exact sequence
$$
1 \to \pi_1(C\otimes_K \Kbar,\etabar) \to \pi_1(C,\etabar) \to G_K \to 1
$$
of \'etale fundamental groups, where $\Kbar$ is a separable closure of $K$, $G_K
= \Gal(\Kbar/K)$ and where $\etabar$ is a geometric point of $C$. Each
$K$-rational point $x$ of $C$ induces a section $s_x : G_K \to \pi_1(C,\etabar)$
of the right-hand map, which is well defined up to conjugation by an element of
the geometric fundamental group $\pi_1(C\otimes_K \Kbar,\etabar)$.
Grothendieck's Section Conjecture \cite{grothendieck} asserts that, if $K$ is an
infinite field that is finitely generated over its prime field, and if the genus
of $C$ is 2 or more, then the function
$$
C(K) \to \{\text{sections of }\pi_1(C,\etabar) \to G_K\}/
\text{conjugation by } \pi_1(C\otimes_K \Kbar,\etabar)
$$
that takes $x$ to the conjugacy class of $s_x$ is a bijection. In this paper we
prove the section conjecture for the restriction of the universal curve $\cC \to
\M_{g}$ to its generic point $\Spec(k(\M_g))$ for all $g\ge 5$ when $\Char k
=0$.\footnote{Here and throughout the introduction, $\M_g$, $\M_{g,n}$ and
$\M_{g,n}[m]$ are considered to be stacks over the field $k$.} When $g\ge 5$ and
$n\ge 1$ we prove a modified version of the Section Conjecture for the pullback
of the universal curve to the generic point of $\M_{g,n}$ in which
$\pi_1(C\otimes_K \Kbar,\etabar)$ is replaced by its $\ell$-adic prounipotent
completion, where the prime number $\ell$ is chosen so that the image of the
$\ell$-adic cyclotomic character $\chi_\ell : G_k \to \Zlx$ is infinite. In
particular, this version of the ``unipotent section conjecture'' holds whenever
$k$ is a number field or a finite extension of $\Qp$, where $p\neq \ell$.

The proof proceeds in two stages. We first show that, when $g\ge 3$, the
universal curve over $\M_{g,n}$ has no more $k(\M_{g,n})$-rational points other
than the obvious ones --- namely, the rational points that are the restrictions
of the $n$ tautological sections of $\cC \to \M_{g,n}$. This follows directly
from results in Teichm\"uller theory proved by Hubbard \cite{hubbard} and Earle
and Kra \cite{earle-kra} in the 1970s. We give an algebraic proof of this
consequence of their results which also applies to ample subvarieties of
$\M_{g,n}$ of dimension $\ge 2$ and which should also hold in positive
characteristic.

The second step is to study conjugacy classes of sections of the group extension
$$
1 \to \pi_1(C\otimes_K \Kbar,\etabar)^\un_{/\Ql} \to
G_K\ltimes\pi_1(C\otimes_K \Kbar,\etabar)^\un_{/\Ql} \to G_K \to 1
$$
where $K=k(\M_{g,n})$, $n\ge 1$, and where $C$ is the pullback of the universal
curve to $\Spec K$. This is done using the non-abelian cohomology developed by
Kim in \cite{kim:coho} for extensions of a profinite group by a prounipotent
group, which proves to be very effective in this case.

The principal tool used to prove these results is the theory of weighted
completions of profinite groups
\cite{hain-matsumoto:weighted,hain-matsumoto:mcgs}, which was developed with
Makoto Matsumoto. Other notable ingredients used include the computation of the
relative completion of mapping class groups in genus $\ge 3$ in
\cite{hain:torelli},  a ``density theorem'' \cite{hain:density}, and Kim's
non-abelian cohomology mentioned above.

This work is inspired by work of Ellenberg \cite{ellenberg} and Kim
\cite{kim:program} who independently tried to bound the number of rational
points of hyperbolic curves over a number field by bounding the number of
conjugacy classes of sections of the surjective homomorphism
$\pi_1(C,\etabar)/L^r \to G_K$, where $L^r$ denotes the $r$th term of the lower
central series of $\pi_1(C\otimes_K\Kbar,\etabar)$.

We now describe the main results in more detail. A natural generalization of the
universal curve $\cC \to \M_{g,n}$, introduced in Section~\ref{sec:top_amp}, is
that of a {\em topologically ample family of curves of type $(g,n)$}. Roughly
speaking, a family $C \to T$ of smooth projective curves over a geometrically
connected $k$-variety $T$ is topologically ample of type $(g,n)$, where $g\ge
3$, if it has $n$ disjoint sections and if the corresponding $k$-morphism $T \to
\M_{g,n}$ induces an isomorphism of the fundamental group of $T$ with a finite
index subgroup of $\pi_1(\M_{g,n},\etabar)$ that contains the profinite Torelli
group. (See Definition~\ref{def:top_ample}.) Examples of topologically ample
curves of type $(g,n)$ include the universal curve over the moduli space
$\M_{g,n}[m]$ of smooth projective curves of type $(g,n)$ with a level $m$
structure, where $g+n\ge 4$ or $g\ge 3$ and $m\ge 3$, and its restriction to a
generic linear section of $\M_{g,n}[m]$ of dimension $\ge 2$.

\begin{bigtheorem}
\label{thm:rational}
Suppose that $C\to T$ is a topologically ample family of curves of type $(g,n)$
over a field $k$ of characteristic zero. If $g\ge 3$, then the only
$k(T)$-rational points of $C$ are its $n$ tautological points: $C(k(T)) =
\{x_1,\dots,x_n\}$.
\end{bigtheorem}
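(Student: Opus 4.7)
The plan is to argue by contradiction, converting any hypothetical non-tautological $k(T)$-rational point of $C$ into a section of an extension of $\hat\pi_1(T)$ which a group-theoretic rigidity result will then forbid.

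Suppose $\sigma \in C(k(T))$ is distinct from $x_1, \ldots, x_n$. Since $C \to T$ is proper and (after normalization) $T$ is regular in codimension one, $\sigma$ extends to a section over a Zariski open $U \subseteq T$ whose complement has codimension $\ge 2$; after shrinking further we may arrange that $\sigma(U)$ is disjoint from the $n$ tautological sections. The datum $(C|_U; x_1, \ldots, x_n, \sigma)$ is then a family of $(g, n+1)$-pointed smooth curves, giving a morphism $\phi : U \to \M_{g,n+1}$ that lifts $U \hookrightarrow T \to \M_{g,n}$ along the forgetful map $p : \M_{g,n+1} \to \M_{g,n}$. By purity of the \'etale fundamental group, $\hat\pi_1(U) \xrightarrow{\sim} \hat\pi_1(T)$, so $\phi$ induces a lift of $\hat\pi_1(T) \to \hat\pi_1(\M_{g,n})$ to $\hat\pi_1(\M_{g,n+1})$.

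Next, $p$ produces the short exact sequence of geometric fundamental groups
\[
1 \to \hat\pi_1(S_{g,n}) \to \hat\pi_1(\M_{g,n+1}) \to \hat\pi_1(\M_{g,n}) \to 1,
\]
where $S_{g,n}$ denotes the $n$-punctured genus $g$ fiber of $p$. Pulling back along the image of $\hat\pi_1(T) \to \hat\pi_1(\M_{g,n})$ gives an extension of that image by $\hat\pi_1(S_{g,n})$ of which $\phi$ provides a continuous section. By topological ampleness, this image is a finite-index subgroup $\G$ of $\hat\pi_1(\M_{g,n})$ containing the profinite Torelli group. The tautological points $x_1, \ldots, x_n$ correspond to $n$ cuspidal sections of this extension over $\G$, each represented by the stabilizer of a peripheral loop around one of the punctures; geometrically these match the $n$ tautological sections $\M_{g,n} \to \Mbar_{g,n+1}$ whose images lie in the boundary $\Mbar_{g,n+1} \setminus \M_{g,n+1}$. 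Since $\phi$ factors through the open substack $\M_{g,n+1}$, it cannot coincide with any of these boundary-valued sections, yielding the desired contradiction --- \emph{provided} one can show there are no other section classes.

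The main obstacle is exactly that proviso: classifying the conjugacy classes of continuous sections of the extension over $\G$ and showing that there are none beyond the $n$ cuspidal ones. I expect this to follow from the structure of the weighted (and relative) completion of the mapping class group developed in \cite{hain-matsumoto:weighted, hain-matsumoto:mcgs, hain:torelli}. After passing to the weighted completion, the problem reduces to a representation-theoretic computation for the residual $\Sp_{2g}$-action on the graded pieces of the Lie algebra of the completion of $\hat\pi_1(S_{g,n})$, where the space of $\Sp_{2g}$-invariants is small and controlled. The density theorem \cite{hain:density} then transfers this rigidity back to the profinite setting. Once these inputs are in place, the rest of the argument is essentially formal.
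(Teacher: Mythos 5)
Your high-level plan---rational points give sections of the extension of $\pi_1(T)$ by the fundamental group of the fiber, and those sections should be classified via the weighted completion machinery---is the right idea, but the proposal stops exactly where the work begins, and along the way it introduces two genuine problems.

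First, the reduction to a map $U\to\M_{g,n+1}$. You want to shrink $U$ so that $\sigma(U)$ is disjoint from the $n$ tautological sections, \emph{and} keep $T-U$ of codimension $\ge 2$ (so that $\pi_1(U)\cong\pi_1(T)$). These two wishes conflict: the locus where $\sigma$ coincides with some $x_j$ can be a divisor, and removing it changes $\pi_1$. The paper sidesteps this by not factoring through $\M_{g,n+1}$ at all. It works with the \emph{complete} universal curve $\cC_{g,n}$ (the open part of $\cC_g^{n+1}$ where the first $n$ points are distinct), for which a rational point always gives a section, and then proves (Proposition~\ref{prop:d_invariance}) that the truncated invariant $\d(\u_{\cC_{g,n}})$ agrees with $\d(\u_{\M_{g,n+1}})=\d_{g,n+1}$. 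That invariance lemma is what lets one get away with never imposing disjointness.

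Second, and more seriously, the step you flag as ``the main obstacle'' is not a proviso that can be outsourced; it is the theorem. The proposal says the classification of sections ``should follow'' from weighted completions and ``the rest is essentially formal,'' but neither half is right as stated. (a) The paper's actual mechanism is the 2-step nilpotent graded Lie algebra $\d_{g,n}:=\Gr^W_\dot(\u_{g,n}/W_{-3})/(\Lambda^2_0 H)^\perp$; the sections of $\d_{g,n+1}\to\d_{g,n}$ are classified by Proposition~\ref{prop:d_sections}/\ref{prop:d_sections3} using the explicit cocycle identity of Lemma~\ref{lem:cocycle}, which in turn depends on the computed bracket (Proposition~\ref{prop:bracket}) and the representation-theoretic facts in Proposition~\ref{prop:reps}. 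The genus~3 case additionally needs the integral lattice $\Lambda^3_n H_{\Z_2}$ to kill the spurious section $\zeta_n$ that exists over $\Ql$. None of this is ``formal''; it is the heart of the argument. (b) Even after the Lie-algebra step one only knows that the cycle class $\kappa_x$ of the hypothetical point equals some $\kappa_j$. Passing from that to $x=x_j$ is the content of Proposition~\ref{prop:rigidity}, which uses finite generation of $\Jac_{C/T}(T)$ (this is exactly why Definition~\ref{def:top_ample} demands it), Corollaries~\ref{cor:kim} and~\ref{cor:torsion}, the no-section Lemma~\ref{lem:no_sect}, and then Stix's extension criterion (Remark~\ref{rem:stix}) to lift a section over $U$ to all of $T$. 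This second half cannot be waved away.

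Finally, a smaller inaccuracy: the density theorem of \cite{hain:density} plays no role in Theorem~\ref{thm:rational}. It enters only in the proofs of Theorems~\ref{thm:zero_points} and~\ref{thm:sec_pos}, where one needs the Zariski open $U\subseteq\M_{g,n}$ in the inverse limit defining $G_K$ to have the same cohomology in low weights; for Theorem~\ref{thm:rational} no such shrinking occurs.

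So: right toolbox, but the proposal contains no proof of the central classification, the reduction to $\M_{g,n+1}$ is flawed, the passage from Lie-algebra sections to actual rational points is missing, and the density theorem is cited in the wrong place.
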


The theorem is false when $g=2$. Since every genus 2 curve is hyperelliptic, 
$$
|C(k(\M_{2,n}[2m]))| \ge 6 + 2n
$$
whenever $m+n\ge 2$ as, apart from the $n$ tautological sections,
$C(k(\M_{2,n}[2m]))$ contains the $2g+2=6$ Weierstrass points of the universal
curve and the $n$ hyperelliptic conjugates of the tautological
points.\footnote{It is necessary to take $m+n\ge 2$ as the stack $\M_2[2]$ does
not have a function field. The Weierstrass points are rational as the level is
even, which implies that the $2$-torsion of the jacobian is trivialized.} In
fact, the result of Earle-Kra \cite{earle-kra} implies that there are no
additional rational points, so that one has equality above.

The following result establishes the Section Conjecture for the pullback of the
universal curve to the generic point of $\M_{g}[m]$ when $g\ge 5$, provided
that there exists a prime number $\ell$ such that the image of the $\ell$-adic
cyclotomic character $\chi_\ell : G_k \to \Zlx$ is infinite. 

\begin{bigtheorem}
\label{thm:zero_points}
Suppose that $k$ is a field of characteristic zero, that $\ell$ is a prime
number, and that $m\ge 1$. Set $K = k(\M_{g}[m])$ and let $C/K$ be the pullback
of the universal genus $g$ curve to $\Spec K$. Fix a geometric point $\etabar$
of $C$. If $g\ge 5$, and if the image of the $\ell$-adic cyclotomic character
$\chi_\ell : G_k \to \Zlx$ is infinite, then the extension
$$
1 \to \pi_1(C\otimes_K\Kbar,\etabar) \to \pi_1(C,\etabar) \to G_K \to 1
$$
does not split.
\end{bigtheorem}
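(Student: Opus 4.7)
The plan is to reduce to the $\ell$-adic unipotent extension and obstruct its splitting using weighted-completion theory together with Kim's non-abelian cohomology.

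\emph{Step 1: no sections from rational points.} The universal curve $\cC \to \M_g[m]$ is a topologically ample family of type $(g,0)$ as soon as $g \geq 3$, so Theorem \ref{thm:rational} gives $C(K) = \emptyset$. In particular, no hypothetical section of $\pi_1(C,\etabar) \to G_K$ can arise from a $K$-rational point of $C$.

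\emph{Step 2: reduction to the unipotent extension.} A section $s : G_K \to \pi_1(C,\etabar)$, composed with the $\ell$-adic unipotent completion of $\pi_1(C\otimes_K\Kbar,\etabar)$, yields a section
$$
s^\un : G_K \to G_K \ltimes \pi_1(C\otimes_K\Kbar,\etabar)^\un_{/\Ql}
$$
of the associated semi-direct product. Conjugacy classes of such sections are parametrized by a pointed set $H^1_\cts(G_K,\pi_1^\un_{/\Ql})$ via Kim's non-abelian continuous cohomology \cite{kim:coho}. It therefore suffices to show this pointed set is trivial: only the basepoint class occurs, and that basepoint class corresponds to rational-point data that Step~1 has already excluded.

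\emph{Step 3: weight obstructions.} Filter $\pi_1^\un_{/\Ql}$ by its lower central series, so the set $H^1_\cts(G_K,\pi_1^\un_{/\Ql})$ is controlled inductively by a tower whose stages involve the graded pieces $\Gr^k \pi_1^\un_{/\Ql}$. The density theorem \cite{hain:density}, combined with the computation of the weighted completion of the arithmetic mapping class group \cite{hain-matsumoto:mcgs} and of the relative completion of the mapping class group \cite{hain:torelli}, identifies the $G_K$-action on $\Gr^k \pi_1^\un_{/\Ql}$ as a polynomial representation of $\Sp_{2g}$ tensored with $\Ql(k)$. Under the infinite-image hypothesis on $\chi_\ell$, the Tate twist $\Ql(k)$ is distinguishable in continuous $G_K$-cohomology, which collapses both the obstructions in $H^2_\cts$ at each stage and any potentially nontrivial classes in $H^1_\cts$ down to the basepoint, leaving $H^1_\cts(G_K,\pi_1^\un_{/\Ql})$ trivial.

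\emph{Main obstacle.} The technical heart is Step~3: verifying the vanishing at every level of the lower-central-series tower. This depends on the hypothesis $g \geq 5$ (needed to apply \cite{hain:torelli} at full strength, so that the Galois action on the graded pieces is cleanly controlled by the weighted completion) and on $\chi_\ell$ having infinite image (needed to distinguish Tate twists in continuous Galois cohomology and so force the weight obstruction to bite). Together with Step~1, which removes the last class that might lift to a genuine section, this will yield the non-splitting of the profinite extension.
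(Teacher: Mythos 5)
Your high-level plan (unipotent completion, weight filtration, non-abelian cohomology) follows the paper's strategy, but Step~3 has the obstruction logic exactly backwards, and the resulting argument would prove the wrong conclusion.

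The paper's proof shows $\Hnab^1(\cG_K,\pi^\un/W_{-3})$ is \emph{empty} when $n=0$, and this is because the degree-two obstruction does \emph{not} vanish. Concretely: by Proposition~\ref{prop:lie_coho}, $H^1(\g_K,\Gr^W_{-1}\p)\cong\Ql^n = 0$ for $n=0$, so there is a unique candidate graded section at the first stage; by Lemma~\ref{lem:delta} its obstruction in $H^2(\g_K,\Gr^W_{-2}\p)$ equals $\bigl(1-\sum a_i\bigr)\cc + \cdots = \cc$, which is a nonzero class (Propositions~\ref{prop:projections}, \ref{prop:bracket}, \ref{prop:lie_coho}). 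Hence $\delta^{-1}(0)=\emptyset$, so $\Hnab^1(\cG_K,\pi^\un/W_{-3})=\emptyset$, and by Proposition~\ref{prop:generic_pt} no section of $\pi_1(C,\etabar)\to G_K$ can exist. Your Step~3 instead asserts that the weight structure ``collapses both the obstructions in $H^2_\cts$'' and ``leaves $H^1_\cts(G_K,\pi^\un_{/\Ql})$ trivial,'' i.e.\ a singleton. Vanishing $H^2$ obstructions plus singleton $H^1$ would prove that a section \emph{exists} and is unique up to conjugacy --- the opposite of non-splitting. What the infinite-image hypothesis on $\chi_\ell$ actually buys is Zariski density of the monodromy so that weighted completion applies; it does not make the relevant $H^2$ vanish, and indeed it must not, since the nonvanishing of $\cc$ is the whole point.

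There is a secondary error in Steps~1--2: writing $G_K\ltimes\pi_1^\un$ and speaking of ``the basepoint class'' presupposes a chosen splitting, which is exactly what does not exist when $n=0$. The extension $1\to\pi^\un\to\Gtilde^\Ql_C\to G_K\to 1$ is not a semidirect product a priori, and the paper's $\Hnab^1$ (Section~\ref{sec:nab}) is deliberately unpointed precisely so that it can be empty. Finally, Step~1 (using Theorem~\ref{thm:rational} to get $C(K)=\emptyset$) does no work here: knowing no section arises from a rational point cannot by itself rule out sections, unless one already assumes the section conjecture --- which is circular. The paper's proof of Theorem~\ref{thm:zero_points} does not invoke Theorem~\ref{thm:rational} at all.
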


When $n\ge 1$, we can only prove a weaker version of this result. As in the
previous result, $k$ is a field of characteristic zero. Fix $m\ge 1$ and set $K
= k(\M_{g,n}[m])$. Let $C/K$ be the pullback of the universal curve to $\Spec
K$.

Each $K$-rational point $x_j \in \{x_1,\dots,x_n\}$ of $C$ induces a continuous
homomorphism
$$
s_j : G_K \to \Aut \pi_1(C\otimes_K \Kbar,\xbar_j)
$$
where $\xbar_j : \Spec \Kbar \to C$ is a geometric point that lies over $x_j$.
This induces a homomorphism
$$
s_j^\un : G_K \to \Aut \pi_1(C\otimes_K \Kbar,\xbar_j)^\un_{/\Ql}
$$
into the automorphism group of the $\Ql$-unipotent completion of
$\pi_1(C\otimes_K \Kbar,\xbar_j)$.

\begin{bigtheorem}
\label{thm:sec_pos}
Suppose that $g\ge 5$ and that $n\ge 1$. Suppose that $k$ is a field of
characteristic zero and that $\ell$ is a prime number for which the $\ell$-adic
cyclotomic character $\chi_\ell : G_k \to \Zlx$ has infinite image. With the
notation introduced above, if $H^1(G_k,\Ql(r))$ is finite dimensional for all $r
> 1$, then then there is a natural bijection between the
$\pi_1(C\otimes_K\Kbar,\xbar_1)^\un(\Ql)$ conjugacy classes of splittings of
$$
1 \to \pi_1(C\otimes_K\Kbar,\xbar_1)^\un(\Ql) \to
G_K\ltimes_{s_1^\un} \pi_1(C\otimes_K\Kbar,\xbar_1)^\un(\Ql) \to G_K \to 1
$$ 
and elements of $C(K)$.
\end{bigtheorem}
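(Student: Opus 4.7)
The plan is to combine Theorem~\ref{thm:rational} with Kim's non-abelian cohomology formalism~\cite{kim:coho}. By Theorem~\ref{thm:rational}, $C(K) = \{x_1,\dots,x_n\}$, and each tautological rational point $x_j$ produces (after a choice of path from a geometric point above $x_j$ to $\xbar_1$) a continuous splitting of the unipotent semidirect-product extension in the statement. By Kim, the set of $U$-conjugacy classes of continuous splittings is identified with the non-abelian continuous cohomology set $H^1_\cts(G_K, U)$, where $U = \pi_1(C\otimes_K\Kbar,\xbar_1)^\un(\Ql)$ is endowed with the $G_K$-action induced by $s_1^\un$ (for which the splitting coming from $x_1$ itself is the trivial class). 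The problem thus reduces to showing that the natural map $C(K) \to H^1_\cts(G_K, U)$ is a bijection.

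I would compute $H^1_\cts(G_K, U)$ inductively along the lower central series (equivalently the weight filtration) $U = U^1 \supseteq U^2 \supseteq \cdots$, using the non-abelian exact sequences
\begin{equation*}
H^1_\cts(G_K,\Gr^W_{-r}U) \to H^1_\cts(G_K, U/U^{r+1}) \to H^1_\cts(G_K, U/U^r) \to H^2_\cts(G_K,\Gr^W_{-r}U)
\end{equation*}
coming from the central extensions $1 \to \Gr^W_{-r} U \to U/U^{r+1} \to U/U^r \to 1$. This reduces the problem to continuous Galois cohomology with coefficients in the finite-dimensional $\Ql$-representations $\Gr^W_{-r} U$. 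The weighted completion of the arithmetic mapping class group $\pi_1(\M_{g,n}[m],\etabar)$ developed in~\cite{hain-matsumoto:weighted,hain-matsumoto:mcgs}, together with the computation of the relative completion in genus $\ge 3$ from~\cite{hain:torelli}, identifies each $\Gr^W_{-r} U$ as a $G_K$-representation built out of the cyclotomic character and the symplectic representation of $\GSp_{2g}$ on $H_1^\et(C\otimes_K\Kbar,\Ql)$. The density theorem~\cite{hain:density} ensures that the image of $G_K$ in the pro-reductive quotient of this weighted completion is Zariski dense, which together with the cyclotomic hypothesis forces invariants and coinvariants of non-trivial Tate twists to vanish; the finiteness hypothesis on $H^1(G_k,\Ql(r))$ for $r > 1$ then provides finite-dimensional control at every stage.

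The main obstacle is proving that at every weight $r$, the contribution to $H^1_\cts(G_K, U/U^{r+1})$ beyond the tautological classes is trivial. At the lowest weight $r=1$, one identifies $H^1_\cts(G_K, \Gr^W_{-1}U)$ with $H^1_\cts(G_K, H_1^\et(C\otimes_K\Kbar,\Ql))$ and verifies that the $n$ tautological sections give $n$ distinct classes with no extraneous elements from Galois cohomology; this should follow because the differences of tangent vectors at the $n$ punctures are linearly independent in $\Gr^W_{-1} U$. For $r \ge 2$, the task is to show both that the tautological classes survive the connecting map into $H^2_\cts(G_K,\Gr^W_{-r}U)$ and that the kernel $H^1_\cts(G_K, \Gr^W_{-r}U)$ contributes nothing new; via the Zariski density from~\cite{hain:density}, this reduces to an Ext computation in the category of representations of the weighted completion in which the cyclotomic hypothesis kills the relevant invariants. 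Assembling the vanishing across all weights, with Mittag--Leffler for the inverse limit guaranteed by the finite-dimensionality hypothesis on $H^1(G_k,\Ql(r))$, yields the desired bijection $C(K) \xrightarrow{\sim} H^1_\cts(G_K, U)$.
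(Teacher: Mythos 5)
Your general strategy — inducting along the weight filtration of $\pi^\un$ using the truncated non-abelian exact sequences, and feeding the computation with the weighted-completion description of the Galois action — is essentially the one the paper follows. But there is a genuine gap at the first nontrivial step, and it is exactly where the paper's main lemma lives.

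You write that at weight $-1$ one ``verifies that the $n$ tautological sections give $n$ distinct classes with no extraneous elements from Galois cohomology,'' and that for $r\ge 2$ the problem reduces to showing the tautological classes survive the connecting map. This misdescribes the structure of $\Hnab^1$ at the first step. By Proposition~\ref{prop:coho_comp} one has $H^1(G_K,\Gr^W_{-1}\pi^\un)=H^1(G_K,H)\cong\Ql^n$, with the $\kappa_j$ forming a basis; so the set of conjugacy classes of sections of the weight $\ge -1$ truncation is an $n$-dimensional affine space over $\Ql$ (indeed $\Hnab^1(\cG_K,\pi^\un/W_{-2})\cong\Aff^n_\Ql$), not an $n$-element set. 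The $n$ tautological points are $n$ special $\Ql$-points of this affine space; the problem is not to rule out a contribution from $H^1$ of the base field, it is to cut down a positive-dimensional family of candidate sections to exactly $n$ points. This is done by computing the weight $-2$ obstruction: Lemma~\ref{lem:delta} identifies the connecting morphism $\delta:\Hnab^1(\cG_K,\pi^\un/W_{-2})\to H^2(\g_K,\Gr^W_{-2}\p)$ as an explicit quadratic polynomial map whose coefficients come from the presentation of $\Gr^W_\dot\u_{g,n}/W_{-3}$ (the classes $\cc,\ee_i,\ee_{ij}$ of Propositions~\ref{prop:projections} and~\ref{prop:bracket}, hence ultimately from \cite{hain:torelli}), and Corollary~\ref{cor:isom_r_is_3} shows $\delta^{-1}(0)=\{s_1,\dots,s_n\}$. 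Nothing in your sketch supplies this quadric computation, and without it there is no mechanism to eliminate the rest of the affine space. Note also that the hypothesis $g\ge 5$ is needed precisely to make $\cc\neq 0$ and to apply Theorem~\ref{thm:gysin}; your reference to ``differences of tangent vectors at the punctures'' points at affine-curve combinatorics that are not relevant here since $C$ is complete.

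Two smaller issues. First, your subsequent step, that ``the kernel $H^1_\cts(G_K,\Gr^W_{-r}U)$ contributes nothing new'' for $r\ge 2$, is not automatic: at even $r=2t$ the $\Sp(H)$-invariants of $\Gr^W_{-r}\pi^\un$ contribute $H^1(G_k,\Ql(t))$, which need not vanish (e.g.\ over number fields). The paper handles this with Proposition~\ref{prop:injective}, which says that cup product by $H^1(G_K,H)$ is injective on these classes, so the extra sections at an even weight are killed by the connecting map at the next odd weight; the cyclotomic hypothesis alone does not make them vanish. Second, you work with $H^1_\cts(G_K,U)$ directly in Kim's sense, while the paper works with $\Hnab^1(\cG_K,\pi^\un)$ via the weighted completion and then identifies the two (Proposition~\ref{prop:nab_iso}); this is a cosmetic change, but the scheme-theoretic formulation is what makes the explicit polynomial computation of $\delta$ feasible, so it is worth adopting rather than eliding.
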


The theorem applies, for example, when $k$ is a number field and when $k$ is a
finite extension of $\Qp$ provided that $\ell\neq p$. It should also hold when
$k$ is a finite field of characteristic $p$ and $\ell\neq p$. Truncated versions
of this result are proved in Section~\ref{sec:thm3}. The theorem also holds when
$\M_{g,n/k}[m]$ is replaced by a generic section of it by a complete
intersection $T$ of dimension $\ge 3$ and sufficiently high multi-degree. The
details of the proof of this extension, Theorem~\ref{thm:variant}, are similar
to those Theorem~\ref{thm:sec_pos} and are left to the interested reader.

When $C$ is an affine curve, the story is more subtle as the second weight
graded quotient of $\pi_1^\un(C\otimes_K\Kbar,\etabar)$ contains at least one
copy of $\Ql(1)$. Since $H^1(G_K,\Ql(1))$ is infinite dimensional, this implies
that $H^1(G_K,\Gr^W_{-2}\pi_1^\un(C\otimes_K\Kbar))$ is infinite dimensional
when $C$ is affine. To avoid unnecessary technicalities, the affine case has
been postponed.

\bigskip

As remarked above, the principal tool used to prove the main results is the
theory of weighted completion of profinite groups developed with Makoto
Matsumoto in \cite{hain-matsumoto:weighted,hain-matsumoto:mcgs}. Given a curve
$C/T$ satisfying certain natural conditions, one can take the weighted
completion of the groups $\pi_1(C,\etabar_C)$ and $\pi_1(T,\etabar)$ with
respect to their natural monodromy representations in $\GSp(H) :=
\GSp(\Het^1(C_{\etabar},\Ql(1)))$ to obtain $\Ql$-affine proalgebraic groups
$\cG_C$ and $\cG_T$, both of which are extensions of $\GSp(H)$ by a prounipotent
group. The projection $C \to T$ induces a surjective homomorphism
\begin{equation}
\label{eqn:homom}
\cG_C \to \cG_T.
\end{equation}
Sections of $\pi_1(C,\etabar_C) \to \pi_1(T,\etabar)$ induce sections of
(\ref{eqn:homom}). Theorem~\ref{thm:rational} is proved by exploiting the
presentation of the Lie algebra $\u_{g,n}$ of the pronilpotent radical of the
relative completion of the mapping class group $\pi_1(\M_{g,n},x)^\an$ that was
computed in \cite{hain:torelli}.

The Lie algebras $\u_T$, $\u_C$ and $\u_{g,n}$ of the prounipotent radicals of
the fundamental groups of $T$, $C$ and $\M_{g,n}$ all have a natural weight
filtration
$$
\u = W_{-1}\u \supseteq W_{-2}\u \supseteq W_{-3}\u \supseteq \cdots
$$
whose graded quotients are $\GSp(H)$-modules. To prove
Theorem~\ref{thm:rational}, one need only use their quotients $\u_C/W_{-3}$ and
$\u_T/W_{-3}$ by $W_{-3}$. The principal defining property of a topologically
ample family $C/T$ of curves of type $(g,n)$ is that the morphism $T \to
\M_{g,n}$ induce an isomorphism $\u_T/W_{-3}\to \u_{g,n}/W_{-3}$. This clearly
holds when $T \to \M_{g,n}$ induces an isomorphism on fundamental groups, such
as when $T$ is a generic linear section of $\M_{g,n}$ of dimension $\ge 2$.

To prove Theorems~\ref{thm:zero_points} and \ref{thm:sec_pos}, one needs to
understand how $\u_T/W_{-3}$ changes when one replaces $T=\M_{g,n}$ by a
Zariski open subset $U$. It turns out that when $g\ge 5$, the induced Lie
algebra homomorphism
$$
\u_U/W_{-3} \to \u_{g,n}/W_{-3}
$$
is close to being an isomorphism. More precisely, its weight $-1$ quotient does
not change when one replaces $\M_{g,n}$ by $U$, and its weight $-2$ quotient is
an isomorphism modulo the trivial representation of $\Sp(H)$. This is proved
using a corollary of the ``density theorem'' from \cite{hain:density}, which
asserts that if $X$ is a complex algebraic manifold, and $f:X \to \M_{g,n}^\an$
a morphism whose image is a divisor, then the image of the natural monodromy
representation $\pi_1(X,x_o) \to \Sp_g(\Z)$ has finite index in $\Sp_g(\Z)$. The
proof of this density theorem makes essential use of Goresky and MacPherson's
stratified Morse theory \cite{smt} and a recent result \cite{putman:pic} of
Putman on the Picard groups of level coverings of $\M_{g,n}$ for $g\ge 5$.

This invariance of $\u/W_{3}$ is encoded in the cohomological computations of
Sections~\ref{sec:coho_comps} and \ref{sec:lie_coho_comps}.
Theorems~\ref{thm:zero_points} and \ref{thm:sec_pos} are then proved using Kim's
non-abelian cohomology \cite{kim:coho} and provide an example where his methods
effectively find all of the rational points of a curve. These non-abelian
computations are also related to, and might illuminate, work of Jordan Ellenberg
\cite{ellenberg} and Kirsten Wickelgren \cite{wickelgren} on rational points of
curves over number fields.

The computations and arguments in Sections~\ref{sec:coho_comps} through 
\ref{sec:thm3} introduce certain cohomology classes of degree $\le 2$ in the
cohomology of $\M_{g,n}$ with coefficients in local systems corresponding to 
certain $\GSp(H)$ modules $V$. Since a smooth projective curve of genus $g$ over
an extension field $K$ of $k$ with $n$-known rational points corresponds to a
map $\Spec K \to \M_{g,n}$, these classes specialize to classes in
$H^\dot(G_K,V)$, and can thus be regarded as characteristic classes of curves
$C$ over $K$ with $n$ known rational points. They should be useful in
investigating whether or not $C$ has any additional $K$-rational points.

\bigskip

\noindent{\em Convention:} Suppose that $k$ is a field. By a $k$-variety we will
mean an integral, separated scheme of finite type over $k$. All $k$-schemes will
be integral. By a family of smooth varieties over $X$ we mean a flat morphism $X
\to T$, each of whose closed fibers is smooth. When we want to emphasize that
$\M_{g,n}$ is regarded as a stack over $\Spec k$, we will denote it by the
unfortunately cumbersome notation $\M_{g,n/k}$.

\bigskip

\noindent{\em Acknowledgments:} Much of this work was done in 2009 during visits
to MSRI, the Universit\'e de Nice and the Newton Institute. I am very grateful
to these institutions for their hospitality and support and to Duke University
for the sabbatical leave during which this work was completed. I am also
grateful to the many mathematicians who took an interest in this work and with
whom I had many helpful discussions, particularly Jordan Ellenberg, Minhyong Kim
and Makoto Matsumoto. I am also grateful to Arnaud Beauville and Naria Kawazumi,
both of whom pointed out the existence and relevance of the works of Hubbard
\cite{hubbard} and Earle and Kra \cite{earle-kra}. Finally, I owe a huge debt of
gratitude to the referee for his/her very thorough reading of the manuscript and
his/her numerous detailed comments and corrections.

\section{Fundamental Groups and Galois Groups}

We will have occasion to use various fundamental groups. In this section we
enumerate some of them and summarize their basic properties. Unless mentioned
otherwise, throughout this section $X$ is a smooth, geometrically connected
variety defined over a field $k$ of characteristic zero.

\subsection{The topological fundamental group} When $k$ is a subfield of $\C$,
we shall denote the corresponding analytic variety by $X^\an$. For $x\in X(\C)$,
we have the topological fundamental group $\pi_1^\top(X^\an,x)$, which we shall
denote simply by $\pi_1^\top(X,x)$.

If $X$ is a DM stack over $k$, the associated analytic space $X^\an$ is an
orbifold (i.e., a stack in the category of topological spaces). In this case, we
will denote the orbifold fundamental group of $X^\an$ by $\pi_1^\orb(X,x)$,
where $x$ is any suitable basepoint. Such fundamental groups are defined in
\cite{noohi}, for example. Typically in this paper, $X$ will be the quotient of
a smooth variety be a finite group. In this case the orbifold fundamental group
is easy to describe directly. See, \cite[\S3]{hain:elliptic}, for example.

\subsection{The \'etale fundamental group}

Denote the \'etale fundamental group of $X$ by $\pi_1(X,\etabar)$, where
$\etabar$ is a geometric point of $X$. The \'etale fundamental group of a DM
stack $X$ can also be defined --- see \cite{noohi}. We will also denote it by
$\pi_1(X,\etabar)$. Fix an algebraic closure $\kbar$ of $k$. The fundamental
group of $\Spec k$ with respect to the geometric point $\Spec \kbar \to \Spec k$
is simply $\Gal(\kbar/k)$. We denote it by $G_k$. The structure morphism $X \to
\Spec k$ induces a homomorphism $\pi_1(X,\etabar) \to G_k$. One has the
canonical short exact sequence
\begin{equation}
\label{eqn:ses}
1 \to \pi_1(X\otimes_k \kbar,\etabar) \to \pi_1(X,\etabar) \to G_k \to 1.
\end{equation}
Each $k$-rational point $x\in X(k)$ induces a section of $\pi_1(X,\etabar) \to
G_k$ that is well-defined up to conjugation by an element of the geometric
fundamental group $\pi_1(X\otimes_k \kbar,\etabar)$.

\subsection{Topological versus \'etale fundamental groups}

When $k= \C$ and $x\in X(\C)$, there is a natural isomorphism
$$
\pi_1(X,x) \cong \pi_1^\top(X,x)^\wedge
$$
between the \'etale fundamental group of $X$ and the profinite completion of
its topological fundamental group. When $k$ is a subfield of $\C$, the exact
sequence (\ref{eqn:ses}) becomes
$$
1 \to \pi_1^\top(X,x)^\wedge \to \pi_1(X,x) \to G_k \to 1.
$$

\subsection{Relation to Galois groups}

Set $K = k(X)$ and let $\etabar$ be any geometric point that lies over the generic
point of $X$. This also serves as a basepoint for any Zariski open subset of
$X$. We therefore have an inverse system of profinite groups:
$$
\big\{\pi_1(X-D,\etabar)\big\}_D
$$
where $D$ ranges over the divisors in $X$ that are defined over $k$.

Denote the Galois group $\Gal(\Kbar/K)$ of $K$ by $G_K$. There are natural
surjections
$$
G_K \to \pi_1(X-D,\etabar).
$$
for each divisor $D$ in $X$.

\begin{proposition}
The natural homomorphism
$$
G_K \to \varprojlim_D \pi_1(X-D,\etabar)
$$
is an isomorphism.
\end{proposition}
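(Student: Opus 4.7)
The plan is to express $\Spec K$ as a cofiltered limit of Zariski open subschemes of $X$ and invoke the continuity of the \'etale fundamental group. The opens $X-D$, as $D$ runs over $k$-divisors of $X$, form a cofiltered system under reverse inclusion with quasi-compact open-immersion transition maps (quasi-compactness being automatic since $X$ is Noetherian), and the local ring of the generic point of $X$ is $K$, so
$$
\Spec K\;=\;\varprojlim_D (X-D)
$$
as $k$-schemes. I would then invoke the continuity theorem for \'etale fundamental groups under cofiltered limits of quasi-compact, quasi-separated schemes with affine transition maps (SGA~4, Exp.~VII, Thm.~5.9) to conclude
$$
\pi_1(\Spec K,\etabar)\;=\;\varprojlim_D \pi_1(X-D,\etabar).
$$
Since $\Spec K$ is the spectrum of a field, its \'etale fundamental group at the geometric point $\etabar$ is exactly $G_K$, and the isomorphism thus obtained coincides with the map of the proposition because both are induced by the canonical morphisms $\Spec K\to X-D$.

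The only substantive input is the continuity theorem, and no serious obstacle arises. As an alternative that avoids citing this theorem, I would prove directly that the restriction functor $\varinjlim_D \mathrm{FEt}(X-D)\to\mathrm{FEt}(\Spec K)$ is an equivalence of Galois categories compatible with the fiber functors induced by $\etabar$: essential surjectivity sends a finite separable extension $L/K$ to the normalization of $X$ in $L$, which is \'etale outside its branch divisor $D_L$ --- a $k$-divisor by Zariski--Nagata purity together with the Galois equivariance of the discriminant --- while full faithfulness follows by taking the Zariski closure of the graph of a generic morphism between two such covers and removing the locus where it fails to be finite \'etale. Either route yields the desired isomorphism; the mildest point that needs attention is the descent argument showing that the branch divisor of $Y\to X$ is defined over $k$.
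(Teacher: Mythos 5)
The paper states this proposition without proof, so there is no argument in the text to compare against; the following reviews your proposal on its own merits.

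Your main route is the standard one and essentially works, but there is a gap at the step where you invoke the limit theorem. The continuity theorem you cite requires \emph{affine} transition morphisms, whereas you verify only that the open immersions $X-D'\hookrightarrow X-D$ are quasi-compact; a quasi-compact open immersion need not be affine (e.g.\ $\Aff^2\setminus\{0\}\hookrightarrow\Aff^2$). The gap is easy to fill here: the standing hypothesis of this section is that $X$ is smooth, so every divisor is Cartier, and the complement of an effective Cartier divisor is affine over the ambient scheme; hence the transitions $X-D'\hookrightarrow X-D$ are indeed affine. Alternatively, pass to the cofinal subsystem of affine opens $V\ni\eta$, for which the transitions are trivially affine; since both that system and $\{X-D\}_D$ are cofinal in the poset of all $k$-opens containing $\eta$ (the latter because every non-generic point of $X$ lies on some $k$-divisor, so every proper closed $k$-subset is contained in one), the resulting limits of fundamental groups agree. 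That same observation about non-generic points lying on divisors is also what makes the identification $\Spec K=\varprojlim_D(X-D)$ literally true and deserves a sentence. Your alternative argument via Galois categories is also sound, and is closer to what the paper has in mind given that it records the surjections $G_K\to\pi_1(X-D,\etabar)$ just before the proposition; the point you flag, that the branch divisor of the normalization of $X$ in a finite extension $L/K$ is defined over $k$, is the right one to check, and it holds because the normalization and its non-\'etale locus are constructed functorially from $k$-data.
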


\section{Points and Sections}
\label{sec:points}

Suppose that $T$ is a smooth variety over $k$, a field of characteristic zero,
and that $f : C \to T$ is a family of smooth projective curves. That is, $f$ is
a proper flat family of geometrically connected smooth projective curves over
$T$. Denote the function field $k(T)$ of $T$ by $K$. The following assertion
follows directly from the valuative criterion for properness \cite[7.3.8]{ega}. It
can also be proved by an elementary direct argument.

\begin{proposition}
There is a natural 1-1 correspondence between $K$-rational points $x\in C(K)$
and equivalence classes of sections $s : U \to C$ of $f$ defined on a Zariski
open subset $U$ of $T$ defined over $k$ where each component of $T-U$ has
codimension $\ge 2$ in $T$. Two such sections are defined to be equivalent if
they agree on the intersection of their domains.
\end{proposition}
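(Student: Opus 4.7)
The plan is to interpret a $K$-rational point $x \in C(K)$ as a morphism $x : \Spec K \to C$ over $T$ (since the composition $\Spec K \to C \to T$ must factor through the generic point of $T$), and then use properness to extend it to a section on a large open subset of $T$. Specifically, $x$ defines a rational section $T \dashrightarrow C$ of $f$; call its domain of definition $U$. The domain of definition of a rational map into a separated scheme is always open, so $U$ is open in $T$.

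Next I would argue that each codimension $1$ point of $T$ lies in $U$, so that $T \setminus U$ has codimension $\geq 2$. Because $T$ is smooth (hence regular), the local ring $\O_{T,t}$ at any codimension $1$ point $t$ is a discrete valuation ring with fraction field $K$. Applied to $f : C \to T$, which is proper, the valuative criterion \cite[7.3.8]{ega} produces a unique morphism $\Spec \O_{T,t} \to C$ over $T$ extending $x : \Spec K \to C$. Spreading this out gives a $T$-morphism on an open neighborhood of $t$, showing $t \in U$. Taking the union of all such neighborhoods yields the desired section $s_x : U \to C$.

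Conversely, any section $s : U \to C$ restricts at the generic point of $T$ (which lies in every nonempty open of $T$) to a morphism $\Spec K \to C$, i.e.\ a $K$-rational point. To check that the two constructions are mutually inverse modulo the stated equivalence relation, suppose $s_1 : U_1 \to C$ and $s_2 : U_2 \to C$ are two sections restricting to the same $K$-point. Since $C \to T$ is separated, the diagonal of $C \times_T C$ is closed, so the locus $Z \subseteq U_1 \cap U_2$ where $s_1$ and $s_2$ agree is closed; as it contains the generic point of $T$, it equals $U_1 \cap U_2$. Hence the two sections are equivalent in the sense of the proposition, and one checks easily that the section obtained from $x$ by the extension procedure really does restrict back to $x$ at the generic point.

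No step is genuinely hard; the one point that requires a moment of care is the codimension claim. The potential obstacle is conflating ``extends at every codimension $1$ point'' with ``extends on an open whose complement has codimension $\geq 2$''. This is resolved by remarking that the domain of definition of a rational map is automatically open, so once extension exists at every codimension $1$ point, the complement of the domain is a closed subset containing no codimension $1$ points, hence is of codimension $\geq 2$.
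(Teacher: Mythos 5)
The paper does not give a detailed proof; it simply asserts that the statement "follows directly from the valuative criterion for properness \cite[7.3.8]{ega}." Your proposal is a correct, fully worked-out version of exactly that argument — extending the rational section across codimension-one points via the valuative criterion and the regularity of $T$, then invoking separatedness of $C \to T$ for uniqueness — so it matches the paper's intended approach.
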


Fix a prime number $\ell$ and let $\# \in \{\et, \top\}$ if $k=\C$ and $\#=\et$
in general. Fix geometric points $t_o$ of $T$ and $x_o$ of $C$ such that $f(x_o)
= t_o$. Denote the fiber of $C$ over $t_o$ by $C_o$.

\begin{corollary}
\label{cor:splitting}
Each $x\in C(K)$ induces a splitting 
$x_\ast : \pi_1^\#(T,t_o) \to \pi_1^\#(C,x_o)$
of the natural homomorphism
$f_\ast : \pi_1^\#(C,x_o) \to \pi_1^\#(T,t_o)$
that is unique up to conjugation by an element of $\pi_1^\#(C_o,x_o)$.
\end{corollary}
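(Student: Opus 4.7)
The plan is to reduce the corollary to the preceding proposition by invoking purity of the fundamental group in codimension two. Given $x\in C(K)$, the proposition produces an equivalence class of sections $s : U \to C$ of $f$, where $U$ is a Zariski open of $T$ whose complement has codimension $\ge 2$. The key input is that for a smooth $k$-variety (or smooth DM stack) $T$ and a Zariski open $U \subseteq T$ with $\codim_T(T\setminus U)\ge 2$, the inclusion induces an isomorphism $\pi_1^\#(U,\bullet)\xrightarrow{\cong}\pi_1^\#(T,\bullet)$. For $\#=\et$ this is Zariski--Nagata purity (which extends to smooth DM stacks), while for $\#=\top$ with $k=\C$ it follows from the fact that removing a complex-analytic subset of complex codimension $\ge 2$ amounts to removing real codimension $\ge 4$, which does not alter $\pi_1^\top$ of a manifold or orbifold.

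With this in hand, I would construct $x_\ast$ as follows. After a harmless change of basepoint (using functoriality of $\pi_1^\#$ under basepoint change, which is well defined up to inner automorphism), assume that $t_o$ lies over a point of $U$ and set $x_o' := s(t_o)$. Since $f\circ s = \id_U$, the induced map $s_\ast : \pi_1^\#(U,t_o) \to \pi_1^\#(C,x_o')$ is a section of $f_\ast$, and composing with the inverse of the purity isomorphism and a path translation from $x_o'$ to $x_o$ in $C$ produces the desired splitting $x_\ast : \pi_1^\#(T,t_o) \to \pi_1^\#(C,x_o)$.

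For uniqueness, two sections $s,s'$ representing the same class agree on the intersection of their domains, so they induce the same map on $\pi_1^\#$ once a basepoint in the common domain is fixed. The only remaining ambiguity in $x_\ast$ lies in the translation path in $C$ used to move the basepoint back to $x_o$. Two choices of such a lift of the fixed basepoint-translation path in $T$ differ by an element of $\ker f_\ast$, and for the smooth proper family $f$ of curves the long exact homotopy sequence (respectively its étale analog, using that $f$ is a topological fibration of orbifolds after analytification) identifies this kernel with the image of $\pi_1^\#(C_o, x_o)$ in $\pi_1^\#(C, x_o)$. Consequently $x_\ast$ is determined up to conjugation by $\pi_1^\#(C_o,x_o)$, as claimed. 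The main obstacle is the careful verification of purity for smooth DM stacks in the étale setting and a tidy bookkeeping of basepoint changes to isolate the ambiguity in the kernel of $f_\ast$; both are standard but require attention.
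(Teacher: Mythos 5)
Your proposal is correct and takes essentially the same route the paper has in mind: the paper leaves the corollary's proof implicit, but the very next remark (Remark~\ref{rem:stix}) explicitly invokes the same purity fact that removing a codimension-$\ge 2$ subset from $T$ does not change $\pi_1^\#$, which is precisely the key input you use to turn the partial section from the preceding proposition into a splitting of $f_\ast$.
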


\begin{remark}
\label{rem:stix}
A result of Stix \cite{stix} implies that rational points $x\in C(K)$
correspond to global sections of $C \to T$. When $k=\C$, this follows from
Teichm\"uller theory. Indeed, if $s : U \to C$ is a holomorphic section $C\to T$
defined in the complement of an analytic subset $Z$ of $T$, each of whose
components has codimension $\ge 2$, then the inclusion $U\hookrightarrow T$
induces an isomorphism on fundamental groups, and thus an inclusion of
universal coverings $\Utilde \hookrightarrow \Ttilde$. The section $s$ lifts to
a holomorphic mapping $\tilde{s} : \Utilde \to \X_{g,1}$ into the Teichm\"uller
space $\X_{g,1}$ of marked Riemann surfaces of type $(g,1)$. Since $\X_{g,1}$ is
a domain in $\C^{3g-2}$, Hartog's Theorem implies that $\stilde$ extends to a
holomorphic mapping $\Ttilde \to \X_{g,1}$, which implies that $s$ extends to a
holomorphic section of $C\to T$.
\end{remark}

\section{Monodromy Representations}

\subsection{The groups $\Sp(H)$ and $\GSp(H)$}
\label{sec:gsp}

Suppose that $g \ge 1$ and that $A$ is a commutative ring. Suppose that $H_A$ is
a free $A$-module of rank $2g$ endowed with a unimodular, skew symmetric
bilinear form
$$
\beta : H_A \otimes H_A \to A.
$$
For an $A$-algebra $R$, set $H_R = H_A\otimes_A R$. The general symplectic group
is defined by
$$
\GSp(H_R) := \{\phi : H_R \to H_R :
\phi^\ast \beta = \tau(\phi)\beta \text{ for some } \tau(\phi) \in R^\times\}.
$$
Taking $\phi$ to $\tau(\phi)$ defines a homomorphism $\tau : \GSp(H_R) \to
R^\times$ whose kernel is the {\em symplectic group} $\Sp(H_R)$.

We will view $\GSp(H_A)$ and $\Sp(H_A)$ as group schemes over $A$ whose groups
of $R$-rational points are $\GSp(H_R)$ and $\Sp(H_R)$, respectively. We will
omit the ground ring $A$ from the notation when it should be clear from the
context. There is an exact sequence of algebraic groups
$$
1 \to \Sp(H) \to \GSp(H) \overset{\tau}{\to} \Gm \to 1.
$$

The pairing $\beta$ can be replaced by a $\GSp(H)$-pairing $\theta$ as follows.
For $r\in \Z$, denote by $A(r)$ the representation of $\GSp(H)$ whose underlying
$A$-module is free of rank $1$ and where $\GSp(H_A)$ acts via $\tau^r$. Each
choice of a generator $a_o$ of $A(1)$ gives rise to the $\GSp(H)$-invariant
bilinear pairing
$$
\theta : H \otimes H \to A(1)\qquad \theta: x\otimes y \mapsto \beta(x,y)a_o.
$$
With this choice
$$
\GSp(H_R) := \{\phi : H_R \to H_R : \phi^\ast \theta  = \theta\}.
$$

For a $\GSp(H_A)$-module $V$, denote by $V(r)$ the representation $V\otimes_A
A(r)$. Since $\beta$ is unimodular, $\theta$ induces an isomorphism
$$
\xymatrix{
H_A \ar[r]^(.22)\simeq & H_A^\ast(1) := \Hom_A(H_A,A)(1)
}
$$
The representation $\Lambda^2 V$ will be regarded as a {\em quotient} of
$V^{\otimes 2}$; the image of $v_1\otimes v_2$ will be denoted $v_1\wedge v_2$.
The pairing
$$
(\Lambda^2 H_A^\ast)\otimes (\Lambda^2 H_A)\to A \qquad 
(\phi_1\wedge\phi_2)\otimes (x_1\wedge x_2) := \det(\phi_i(x_j))
$$
is perfect and thus induces isomorphisms
$$
(\Lambda^2 H_A)^\ast(2) \cong \Lambda^2 \big(H_A^\ast(1)\big)
\cong \Lambda^2 H_A
$$
The dual pairing $\thetadual : H_A^\ast \otimes H_A^\ast \to A(-1)$ will be
regarded as a $\GSp(H)$-equivariant map $\thetadual : A(1) \to \Lambda^2 H_A$
or, equivalently, as an element of $\Lambda^2 H_A(-1)$.

\subsection{Monodromy representations}

Suppose that $T$ is a smooth variety over a field $k$ of characteristic zero and
that $f:C\to T$ is a family of smooth projective curves over $T$. Fix a
geometric point $\etabar : \Spec F \to T$ of $T$ and denote the fiber of $C$
over it by $C_\etabar$. For a prime number $\ell$ we set
$$
H_\Zl = H_\et^1(C_\etabar,\Zl(1)).
$$
This is endowed with the cup product pairing $\theta : \Lambda^2 H_\Zl \to
\Zl(1)$, which is unimodular. Let $\xbar$ be a geometric point of $C$ that lies
over $\etabar$.

\begin{lemma}
\label{lem:injective}
If $g\ge 2$ or if $g=1$ and $f:C\to T$ has a section, then the homomorphism
$\pi_1(C_\etabar,\xbar) \to \pi_1(C,\xbar)$ induced by $C_\etabar \to C$ is
injective.
\end{lemma}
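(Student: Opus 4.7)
The plan is to reduce the lemma to the topological setting over $\C$ and apply classical Teichm\"uller theory.

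First, I would reduce to $k = \C$ using the standard invariance of \'etale fundamental groups of geometrically connected varieties under extension of algebraically closed fields of characteristic zero. Over $\C$, by Ehresmann's theorem, $f^\an : C^\an \to T^\an$ is a $C^\infty$ fiber bundle with fiber $\Sigma_g$ (together with its marked sections in the $g = 1$ case). Its long exact sequence of homotopy groups reads
$$
\pi_2(T^\an) \xrightarrow{\partial} \pi_1^\top(C_\etabar,\xbar) \to \pi_1^\top(C,\xbar) \to \pi_1^\top(T,\etabar) \to 1,
$$
so topological injectivity of the middle map reduces to vanishing of the connecting homomorphism $\partial$.

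The bundle is classified by a map $T^\an \to B\mathrm{Diff}^+(\Sigma_g, S)$, where $S$ is the (possibly empty) set of marked points from the sections, and $\partial$ factors through $\pi_2(B\mathrm{Diff}^+(\Sigma_g, S)) \cong \pi_1(\mathrm{Diff}^+(\Sigma_g, S))$. By the theorems of Earle--Eells ($g \geq 2$, $|S| = 0$) and Earle--Schatz ($g = 1$, $|S| \geq 1$), the identity component of $\mathrm{Diff}^+(\Sigma_g, S)$ is contractible in the hyperbolic range $2g - 2 + |S| > 0$, which both hypotheses of the lemma satisfy. Hence $\partial = 0$ and the topological sequence is short exact.

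To finish, I would pass to profinite completions to obtain the \'etale statement. Injectivity survives because the normal surface subgroup $\pi_1^\top(C_\etabar)$ is finitely generated: for any open normal subgroup $N$ in it, the intersection of its finitely many $\pi_1^\top(C)$-conjugates is a $\pi_1^\top(C)$-normal subgroup of finite index contained in $N$, and the resulting finite extension of $\pi_1^\top(T)$ can be separated by a finite quotient using residual finiteness of $\pi_1^\top(T)$ (which holds in the applications of the paper, since $T$ is a moduli space or subvariety thereof whose fundamental group is a quotient of a mapping class group). The main obstacle, and essentially the only non-formal step, is the Earle--Eells/Schatz vanishing, which encodes precisely the hyperbolicity dichotomy in the lemma's hypothesis: outside that range $\pi_1(\mathrm{Diff}^+(\Sigma_g,S))$ is non-trivial, and $\partial$ need not vanish.
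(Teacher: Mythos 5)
The paper takes a genuinely different and noticeably cleaner route, so a comparison is in order. You prove short exactness of the topological homotopy sequence via the Earle--Eells/Earle--Schatz contractibility of $\mathrm{Diff}^+(\Sigma_g,S)_0$, and then try to push injectivity through profinite completion. The paper never uses exactness of the long exact sequence at all: for any Serre fibration, $\pi_1$ of the total space acts on $\pi_1$ of the fiber, and the composite
$$
\pi_1^\top(C_\etabar,\xbar) \to \pi_1^\top(C,\xbar) \to \Aut\bigl(\pi_1^\top(C_\etabar,\xbar)\bigr)
$$
is the conjugation action. Since $\pi_1^\top(C_\etabar,\xbar)$ is finitely generated, this action extends to the profinite completions, and the profinite completion of a surface group of genus $\ge 2$ has trivial center, so the first arrow must be injective. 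This sidesteps both the Teichm\"uller-theoretic input (which is correct in your proposal, but much heavier than needed) and any question of whether profinite completion preserves left-exactness of the sequence.

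That last question is exactly where your proposal has a real gap. You need the discrete short exact sequence $1\to N\to G\to Q\to 1$ (with $N=\pi_1^\top(C_\etabar)$, $G=\pi_1^\top(C)$, $Q=\pi_1^\top(T)$) to remain exact at $\widehat N$ after completion. Your argument produces, for each finite-index $M\le N$, a $G$-normal finite-index $M'\le M$, and then asserts that the extension of $Q$ by the finite group $N/M'$ "can be separated by a finite quotient using residual finiteness of $\pi_1^\top(T)$." There are two problems. First, the lemma is stated for an arbitrary smooth geometrically connected $k$-variety $T$; residual finiteness of $\pi_1^\top(T)$ is not a hypothesis, and your parenthetical justification---that in the applications $\pi_1^\top(T)$ is a quotient of a mapping class group---does not supply it, since residual finiteness does not pass to quotients (every group is a quotient of a free group). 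Second, even granting that $Q$ is finitely generated and residually finite, it is not purely formal that every extension of $Q$ by a finite group is residually finite; that step requires a genuine argument. The paper's center trick is designed precisely to avoid having to establish any of this, and you should either adopt it or supply the missing separation lemma with correct hypotheses.
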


\begin{proof}
The result is clear when $f$ has a section. So suppose that $g\ge 2$. Since
$\pi_1(C\otimes\kbar,\xbar) \to \pi_1(C,\xbar)$ is injective, it suffices to
consider the case where $k$ is algebraically closed. We may assume that $k=\C$
and that $\etabar \in T(\C)$ and $\xbar\in C(\C)$. Standard algebraic topology
implies that $\pi_1^\top(C,\xbar)$ acts on $\pi_1^\top(C_\etabar,\xbar)$ and
that the composite
$$
\pi_1^\top(C_\etabar,\xbar) \to \pi_1^\top(C,\xbar) \to
\Aut\big(\pi_1^\top(C_\etabar,\xbar)\big)
$$
is the conjugation action. Taking profinite completions, it follows that there
is a continuous action of $\pi_1(C,\xbar)$ on $\pi_1(C_\etabar,\xbar)$ and that
$$
\pi_1(C_\etabar,\xbar) \to \pi_1(C,\xbar) \to
\Aut\big(\pi_1(C_\etabar,\xbar)\big)
$$
is the conjugation action. Since $\pi_1(C_\etabar,\xbar)$ has trivial center
\cite{anderson}, it follows that $\pi_1(C_\etabar,\xbar) \to \pi_1(C,\xbar)$ is
injective.
\end{proof}

Suppose that $g\ge 2$ or that $g=1$ and that $f$ has a section. Since $H_\Zl
\cong \Hom(\pi_1(C_\etabar,\xbar),\Zl(1))$, the conjugation action of
$\pi_1(C,\xbar)$ on $\pi_1(C_\etabar,\xbar)$ induces a natural monodromy action
$$
\rho_\etabar : \pi_1(T,\etabar) \to \GSp(H_\Zl)
$$
such that the diagram
$$
\xymatrix{
\pi_1(T,\etabar) \ar[r]^{\rho_\etabar}\ar[d] & \GSp(H_\Zl) \ar[d]^\tau \cr
G_k \ar[r]^{\chi_\ell} & \Gm(\Zl)
}
$$
commutes, where the left-hand map is the canonical projection, the right-hand
vertical map $\tau$ is the natural surjection, and where $\chi_\ell$ is the
$\ell$-adic cyclotomic character. Often we will extend scalars from $\Zl$ to
$\Ql$.

Denote the projection by $f : C \to T$. For $A=\Zl$ or $\Ql$, set
$$
\H_A := R^1f_\ast A(1)
$$

\begin{proposition}
If $g\ge 2$ or if $g=1$ and $f:C\to T$ has a section, then the monodromy
representation of $\pi_1(T,\etabar)$ on the fiber $H_A$ of $\H_A$ over $\etabar$
is $\rho_\etabar : \pi_1(T,\etabar) \to \Aut H_A$.
\end{proposition}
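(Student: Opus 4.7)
The strategy is to identify both the sheaf-theoretic monodromy of $\H_A$ at $\etabar$ and the representation $\rho_\etabar$ as two descriptions of the same datum: the action of $\pi_1(T,\etabar)$ on (the dual of) the abelianization of $\pi_1(C_\etabar,\xbar)$ induced from the homotopy exact sequence. The argument is almost a tautology once the standard tools are lined up correctly.

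First, since $f:C\to T$ is smooth and proper, smooth and proper base change for \'etale cohomology identifies the stalk of $\H_A = R^1 f_\ast A(1)$ at $\etabar$ canonically with $\Het^1(C_\etabar,A(1))$. Because $C_\etabar$ is a smooth projective curve of genus $\ge 1$, it is a $K(\pi,1)$ for \'etale cohomology with $\Zl$- or $\Ql$-coefficients; consequently
$$
\Het^1(C_\etabar,A(1)) \cong \Homcts\bigl(\pi_1(C_\etabar,\xbar),A(1)\bigr) = H_A.
$$

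Second, I invoke the classical description (SGA~1, Exp.~XIII) of the monodromy action on a higher direct image. Under our hypothesis ($g\ge 2$, or $g=1$ with a section), Lemma~\ref{lem:injective} guarantees that the sequence
$$
1 \to \pi_1(C_\etabar,\xbar) \to \pi_1(C,\xbar) \to \pi_1(T,\etabar) \to 1
$$
is exact, so $\pi_1(C,\xbar)$ acts on $\pi_1(C_\etabar,\xbar)$ by conjugation. This induces an action on the pro-$\ell$ abelianization, which factors through $\pi_1(T,\etabar)$ because inner automorphisms act trivially on any abelian quotient. Dualizing and twisting by $A(1)$, one obtains the monodromy action on $H_A$ via the identification of the previous paragraph; this is the standard identification of the monodromy of $R^1 f_\ast A(1)$ at $\etabar$.

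Third, I observe that $\rho_\etabar$ was defined in the preceding discussion by exactly the same recipe: Lemma~\ref{lem:injective} furnishes the normal inclusion $\pi_1(C_\etabar,\xbar)\hookrightarrow \pi_1(C,\xbar)$, the conjugation action passes to an outer action of $\pi_1(T,\etabar)$ on $\pi_1(C_\etabar,\xbar)$, and one acts on $H_\Zl = \Homcts(\pi_1(C_\etabar,\xbar),\Zl(1))$ through this outer action. Thus the two representations agree on the nose, not merely up to conjugation. The only substantive step is the compatibility used in the second paragraph---the translation between the sheaf $R^1 f_\ast A(1)$ and the fundamental group of the geometric fiber---which is where I would expect to spend the most care; everything else is formal manipulation of definitions.
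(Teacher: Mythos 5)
Your proof is correct, and it differs in structure from the paper's. You argue directly over the base $T$: proper base change identifies the stalk of $R^1f_\ast A(1)$ at $\etabar$ with $\Het^1(C_\etabar,A(1))$, the $K(\pi,1)$ property of the curve turns that into $\Homcts(\pi_1(C_\etabar,\xbar),A(1))$, and you then appeal to the general compatibility of the monodromy on a higher direct image with the conjugation action on $\pi_1$ of the fiber. The paper instead reduces to the generic point: it sets $K=k(T)$, replaces $\etabar$ by $\Spec\Kbar$, and compares the two short exact sequences of fundamental groups over $\Spec K$ and over $T$. Over $\Spec K$ the monodromy is literally the Galois action on $\Het^1$ of a curve over a field, where the conjugation description is classical; the surjectivity of $G_K\to\pi_1(T,\etabar)$ and naturality of base change then transport this to $T$. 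The paper's route avoids citing the general higher-direct-image compatibility in full generality by anchoring itself in the more familiar Galois setting, while yours is more uniform and self-contained. Two minor notes: the paper also treats the topological analogue (by reducing to a mapping torus), which your write-up does not address, and SGA~1, Exp.~XIII is not the most apt reference for the step you flag as the substantive one — the identification of the monodromy of $R^1f_\ast$ with the conjugation action belongs more to the equivalence between lisse sheaves and continuous $\pi_1$-representations, as developed around SGA~4/SGA~4$\tfrac{1}{2}$, rather than to Raynaud's expos\'e on non-abelian proper base change.
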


\begin{proof}
The proof of the topological analogue of the assertion is an exercise in
topology and is left to the reader.\footnote{Reduce to the case of a mapping
torus.} We prove the arithmetic version. Let $K=k(T)$. Fix an algebraic closure
$\Kbar$ of $K$. We may assume that $\etabar = \Spec \Kbar$. Then one has the
diagram
$$
\xymatrix{
1 \ar[r] & \pi_1(C_\etabar,\xbar) \ar[r]\ar@{=}[d] &
\pi_1(C\times_T \Spec K,\xbar) \ar[r]\ar[d] & G_K \ar[d]\ar[r] & 1 \cr
1 \ar[r] & \pi_1(C_\etabar,\xbar) \ar[r] & \pi_1(C,\xbar) \ar[r] &
\pi_1(T,\etabar) \ar[r] & 1
}
$$
in which the vertical maps are surjective. The Galois action $G_K \to \Aut H_A$
is induced by the conjugation action of $\pi_1(C\times_T \Spec K,\xbar)$ on
$\pi_1(C_\etabar,\xbar)$. The result now follows from naturality of the
monodromy action under base change and the commutativity of the diagram.
\end{proof}

\section{Mapping Class Groups and Moduli Stacks of Curves}
\label{sec:mcgs}

Suppose that $g$ and $n$ are non-negative integers satisfying $2g-2+n > 0$. Fix
a closed, oriented surface $S$ of genus $g$ and a finite subset $\x =
\{x_1,\dots,x_n\}$ of $n$ distinct points in $S$. The corresponding mapping
class group will be denoted
$$
\G_{S,\x} = \pi_0\Diff^+ (S,\x),
$$
where $\Diff^+(S,\x)$ denotes the group of orientation preserving
diffeomorphisms of $S$ that fix $\x$ pointwise. By the classification of
surfaces, the diffeomorphism class of $(S,\x)$ depends only on $(g,n)$.
Consequently, the group $\G_{S,\x}$ depends only on the pair $(g,n)$. It will be
denoted by $\G_{g,n}$ when we have no particular marked surface $(S,\x)$ in
mind.

Denote the moduli stack over $\Spec k$ of smooth projective curves of genus $g$
with $n$ distinct marked points by $\M_{g,n/k}$. The corresponding complex
analytic orbifold will be denoted by $\M_{g,n}^\an$. Recall that
$\pi_1^\orb(\M_{g,n},\eta)$ denotes the orbifold fundamental group of
$\M_{g,n}^\an$.

If $(C,\x)$ is an $n$-pointed smooth complex projective curve, there is
a natural isomorphism
$$
\pi_1^\orb(\M_{g,n},[C,\x]) \cong \G_{C,\x}.
$$
For each geometric point $\etabar$ of $\M_{g,n/k}$ there is therefore an
isomorphism
$$
\pi_1(\M_{g,n/\kbar},\etabar) \cong \G_{g,n}^\wedge,
$$
which is well-defined up to inner automorphisms, and an exact sequence
$$
1 \to \G_{g,n}^\wedge \to \pi_1(\M_{g,n/k},\etabar) \to G_k \to 1.
$$

The Lefschetz trace formula implies that when $n>2g+2$, the automorphism group
of each $n$-marked, genus $g$ curve is trivial. In this case, $\M_{g,n/k}$ is a
quasi-projective $k$-scheme, \cite{knudsen}. When $n\le 2g+2$, we can replace
$\M_{g,n}$ by $\Mo_{g,n}$, the locus of $n$-pointed genus $g$ curves in
$\M_{g,n}$ with trivial automorphism group. Since the locus of $n$-pointed
curves in $\M_{g,n}$ with a non-trivial automorphism is a closed subscheme of
$\M_{g,n}$, Knudsen's result \cite{knudsen} implies that
$\Mo_{g,n}$ is a smooth $k$-scheme.

\begin{proposition}
\label{prop:autos}
If $g\ge 2$, then each component of the complement of $\Mo_{g,n}$ in $\M_{g,n}$
has codimension $\ge g+n-2$. Consequently, if $g+n \ge 4$, the inclusion
$\Mo_{g,n} \to \M_{g,n}$ induces an isomorphism $\pi_1^\top(\Mo_{g,n},x) \cong
\G_{g,n}$ of fundamental groups.
\end{proposition}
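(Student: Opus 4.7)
The plan is to bound the codimension of $Z := \M_{g,n}\setminus \Mo_{g,n}$ via Riemann--Hurwitz, and then to upgrade the codimension bound to a $\pi_1$-statement by lifting to Teichm\"uller space and invoking the standard fact that removing a complex analytic subset of codimension at least two from a simply connected complex manifold preserves simple connectedness.

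I would first stratify $Z$ by the prime order $p$ of an automorphism fixing all marked points: since every nontrivial finite group contains an element of prime order, $Z$ is covered by the loci $Z_{p,h,r}\subseteq\M_{g,n}$ of pointed curves $(C;\vec x)$ admitting an automorphism $\sigma\in\Aut(C)$ of prime order $p$ with $\sigma(x_i)=x_i$ for all $i$, whose quotient $C/\langle\sigma\rangle$ has genus $h$ and branch divisor of size $r$, where $2g-2 = p(2h-2)+r(p-1)$ and $r\ge n$. The pair $(C,\sigma)$ is determined up to finite combinatorial data by the quotient marked curve $(C/\langle\sigma\rangle,\text{branch locus})\in\M_{h,r}$, and the marked $n$-tuple $\vec x$ is a further finite choice among the $r$ fixed points of $\sigma$. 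Hence $\dim Z_{p,h,r}\le 3h-3+r$, so
$$
\codim_{\M_{g,n}} Z_{p,h,r}\ge 3g+n-3h-r.
$$
Substituting $r=(2g-2-p(2h-2))/(p-1)$ and rearranging yields
$$
(p-1)\bigl[\codim Z_{p,h,r}-(g+n-2)\bigr]\ge 2g(p-2)+h(3-p).
$$
For $p=2$ this is just $h\ge 0$, with equality exactly at the hyperelliptic stratum $(h=0,r=2g+2)$ with Weierstrass marked points. For $p\ge 3$, the bound $h\le 1+(g-1)/p$ from $r\ge 0$ keeps the right-hand side non-negative for all admissible data with $g\ge 2$. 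This proves $\codim_{\M_{g,n}} Z \ge g+n-2$.

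For the fundamental-group claim, assume $g+n\ge 4$, so $\codim Z\ge 2$. I would present $\M_{g,n}^\an$ as the orbifold $[\X_{g,n}/\G_{g,n}]$, where $\X_{g,n}$ is Teichm\"uller space, a contractible complex manifold. The preimage $\X_{g,n}^o\subset \X_{g,n}$ of $\Mo_{g,n}^\an$ is precisely the free locus of the $\G_{g,n}$-action, and its complement is a $\G_{g,n}$-invariant closed complex analytic subset of complex codimension $\ge 2$. Removing such a subset from the simply connected $\X_{g,n}$ keeps the result simply connected, so $\X_{g,n}^o$ is simply connected. Since $\G_{g,n}$ acts freely and properly discontinuously on $\X_{g,n}^o$ with quotient $\Mo_{g,n}^\an$, the long exact sequence of a covering map yields $\pi_1^\top(\Mo_{g,n},x)\cong \G_{g,n}$.

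I expect the main obstacle to be the uniform Riemann--Hurwitz codimension bound, particularly the verification across all primes $p$ and all admissible branching data $(h,r)$ including the small-genus cases $g\in\{2,3\}$; the $\pi_1$ step is then essentially a formal consequence of the codimension estimate.
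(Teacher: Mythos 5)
Your proposal takes essentially the same route as the paper: a Riemann--Hurwitz codimension count over the strata indexed by the prime order $p$ of an automorphism and by the quotient data $(h,r)$, followed by the observation that a closed analytic subset of codimension $\ge 2$ can be removed from Teichm\"uller space without affecting $\pi_1$. Your algebra checks out, including the identity $(p-1)\bigl[\codim Z_{p,h,r}-(g+n-2)\bigr] = 2g(p-2)+h(3-p)$, and the degenerate quotient types $(h,r)\in\{(0,0),(0,1),(0,2),(1,0)\}$ are automatically excluded once $g\ge 2$.

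Where you do better than the paper's sketch is in the treatment of $n>0$. The paper dismisses this case as ``an immediate consequence as marking points cannot create new automorphisms,'' but that remark alone only gives $\codim\ge g-2$ in $\M_{g,n}$ (the preimage of the $n=0$ bad locus under the forgetful map has the same codimension, not $g+n-2$). What makes the codimension go up by $n$ is precisely the observation you make explicit: each marked point must be one of the $r$ fixed points of $\sigma$, so the fiber of the bad locus over a bad point of $\M_g$ is finite, and the stratum is dominated by a finite cover of $\M_{h,r}$ rather than by a fibration over it with $n$-dimensional fibers. You also supply the $\pi_1$ step (lifting to the fixed-point-free locus $\X_{g,n}^{\circ}\subset\X_{g,n}$, using that deleting a closed complex-analytic subset of complex codimension $\ge 2$ from a simply connected manifold leaves it simply connected, and then quotienting by the now-free $\G_{g,n}$-action), which the paper states as a consequence without argument. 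So your write-up is a correct and more complete version of the paper's own proof rather than a different method.
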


\begin{proof}[Sketch of Proof]
We will prove the case $n=0$.  The case $n>0$ is an immediate consequence as
marking points on a curve $C$ cannot create new automorphisms, only kill them.
First note that if a curve has a non-trivial automorphism, it has one of prime
order. Suppose that $S$ is a compact oriented surface of genus $g$ and that $G$
is a finite cyclic group of prime order $p$ that acts effectively on $S$ and
preserves the orientation. Since the stabilizer of each point is cyclic, $G\bs
S$ is a compact surface and the projection $\pi:S \to G\bs S$ is a branched
covering. Let $h$ be the genus of $G\bs S$.

The Riemann-Hurwicz formula implies that
\begin{equation}
\label{eqn:rh}
h-1 = \frac{g-1}{p} - \frac{1}{2}\sum_{y \in A} \Big(1-\frac{1}{p}\Big)
\end{equation}
where $A$ is the set of critical values of $\pi$. Let $a=|A|$. Teichm\"uller
theory implies that the locus of curves in $\M_g$ where $G\subseteq \Aut C$ and
the action of $G$ on $C$ is conjugate to the given action of $G$ on $S$ is the
image of a morphism $\M_{h,a}\to\M_g$. This locus has codimension
$$
\dim \M_g - \dim \M_{h,a} = (3g-3)-(3h-3+a).
$$
as $g\ge 2$ implies that $(h,a)$ cannot be $(1,0)$. Equation~(\ref{eqn:rh})
implies that the codimension of this locus is
$$
\bigg(3g-3 + \Big(\frac{3}{2}-\frac{1}{1-p^{-1}}\Big)a\bigg)(1-1/p).
$$
When $p=2$ the codimension is $(3g-3 - a/2)/2$, which is $\ge g-2$ as $a\le
2g+2$ with equality if and only if $a=2g+2$ and $h=0$, in which case $C$ is
hyperelliptic. When $p\ge 3$, then $3/2 \ge (1-p^{-1})^{-1}$. In this case,
$$
\codim \ge (3g-3)(1-p^{-1}) \ge 2g-2
$$
with equality if and only if $a=0$ and $p=3$.
\end{proof}

\subsection{Level structures}

\subsubsection{In topology}
Suppose that $S$ is a compact oriented reference surface of genus $g$. Set $H_\Z
= H_1(S,\Z)$ endowed with its intersection pairing. For a positive integer $m$,
the level $m$ subgroup $\Sp(H_\Z)[m]$ of $\Sp(H_\Z)$ is defined to be the kernel
of the quotient homomorphism
$$
\Sp(H_\Z) \to \Sp(H_{\Z/m\Z}).
$$
The group $\Sp(H_\Z)[m]$ is torsion free for all $m\ge 3$.

The action of $\Diff^+(S,\x)$ on $S$ induces a surjective homomorphism
$$
\rho : \G_{S,\x} \to \Sp(H_\Z).
$$ 
Define the level $m$ subgroup of $\G_{g,n}$ to be the kernel of the composite:
$$
\G_{g,n} \to \G_g \to \Sp(H_{\Z/m\Z}).
$$
Denote it by $\G_{g,n}[m]$. It is torsion free when $m\ge 3$.

\subsubsection{In arithmetic}
\label{sec:level}
Suppose that $m$ is a positive integer and that $k$ is a field of characteristic
zero that contains all $m$th roots of unity $\bmu_m(\kbar)$. Fix an isomorphism
$\psi : \Hom(\bmu_m(\kbar),\Q/\Z) \to \Z/m\Z$. Such isomorphisms correspond to
choices of a primitive $m$th root of unity. A level $m$ structure on a smooth
projective curve $C$ of genus $g$ over $k$ is an isomorphism
$$
\phi : \Het^1(C\otimes_k\kbar,\Z/m\Z) \to (\Z/m\Z)^{2g}
$$
such that the diagram
$$
\xymatrix{
\Het^1(C\otimes_k\kbar,\Z/m\Z)^{\otimes 2} \ar[r]^{\text{cup}}
\ar[d]_{\phi^{\otimes 2}} &
\Hom(\bmu_m(\kbar),\Q/\Z) \ar[d]_\psi \cr
(\Z/m\Z)^{2g} \otimes (\Z/m\Z)^{2g} \ar[r] & \Z/m\Z
}
$$
commutes, where the bottom row is the standard symplectic inner product on
$(\Z/m\Z)^{2g}$. 

When considering the moduli of curves with a level $m$ structure over $k$, we
will assume, as above, that $k$ contains the $m$th roots of unity
$\bmu_m(\kbar)$ and that the isomorphism $\psi$ is specified. This ensures that
the moduli stack $\M_{g,n/k}[m]$ is geometrically connected. It is a principal
$\Sp_g(\Z/m\Z)$-bundle over $\M_{g,n/k}$.  When $m\ge 3$, $\M_{g,n}[m]$ is a
smooth quasi-projective variety. This follows from \cite[Thm.~1.10]{git} using
the GIT setup from \cite[\S2]{geemen-oort}.

When $k$ is an algebraically closed subfield of $\C$, there is a natural
conjugacy class of isomorphisms $\pi_1^\orb(\M_{g,n}[m],\ast) \cong
\G_{g,n}[m]$.

\section{Relative and Weighted Completion of Profinite Groups}

This section is a terse review of the relative and weighted completions of
profinite groups. The reader is referred to the surveys
\cite{hain-matsumoto:survey,hain:morita}, and the articles
\cite{hain:comp,hain:torelli,hain-matsumoto:weighted} for a more detailed
discussion and details.

\subsection{Relative completion of a discrete group} Suppose that $\G$ is a
discrete group and that $R$ is a reductive $F$-group, where $F$ is a field of
characteristic zero (the {\em coefficient field}). The completion of $\G$ with
respect to a Zariski dense representation $\rho : \G \to R(F)$ is a proalgebraic
$F$-group $\cG$, which is an extension
$$
1 \to \U \to \cG \to R \to 1
$$
of $R$ by a prounipotent group $\U$, and a Zariski dense representation
$\rhotilde : \G \to \cG(F)$ whose composition with $\cG(F) \to R(F)$ is $\rho$.
It is characterized by the following universal mapping property: if $G$ is a
proalgebraic $F$-group that is an extension
$$
1 \to U \to G \to R \to 1
$$
of $R$ by a unipotent $F$-group $U$, and if $\phi : \G \to G(F)$ is a
homomorphism whose composition with $G(F) \to R(F)$ is $\rho$, then there is a
homomorphism $\cG \to G$ of $F$-groups that commutes with the projections to $R$
and such that $\phi$ is the composite $\G \to \cG(F) \to G(F)$.

When $R$ is the trivial group $\cG=\U$, which is a prounipotent group. In this
case, $\U$ and $\G \to \U(F)$ comprise the {\em unipotent completion} of $\G$
over $F$. It will be denoted by $\G^\un_{/F}$.

A more interesting example is where $\G$ is the mapping class group $\G_{g,n}$,
where $2g-2+n>0$ and $g\ge 1$. In this case, $\G_{g,n/F}^\un$ is trivial. A
better choice is to take $R=\Sp(H_\Q)$, where $H_A = H_1(S,A)$ is the first
homology of the compact reference surface $S$ and $\rho : \G_{g,n} \to
\Sp(H_\Q)$ to be the representation of the mapping class group on the first
homology of the surface. Since this has image $\Sp(H_\Z)$, it is Zariski dense.
Denote the completion of $\G_{g,n}$ with respect to $\rho$ by $\cG_{g,n}^\geom$
and its prounipotent radical by $\U_{g,n}^\geom$. These are proalgebraic
$\Q$-groups.

One can restrict the homomorphism $\G_{g,n} \to \cG_{g,n}^\geom(\Q)$ to the
level $m$ subgroup.

\begin{proposition}[{\cite[Prop.~3.3]{hain:torelli}}]
\label{prop:level_indep}
If $g\ge 3$, then for all $m\ge 1$, the group $\cG_{g,n}^\geom$ and the
homomorphism $\G_{g,n}[m]\hookrightarrow \G_{g,n}\to \cG_{g,n}^\geom(\Q)$ is the
completion of $\G_{g,n}[m]$ relative to the natural homomorphism $\G_{g,n}[m]\to
\Sp(H_\Q)$.
\end{proposition}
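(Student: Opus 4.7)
The plan is to verify that $\cG_{g,n}^\geom$, equipped with the composite $\G_{g,n}[m]\hookrightarrow\G_{g,n}\to\cG_{g,n}^\geom(\Q)$, satisfies the universal property defining the relative completion of $\G_{g,n}[m]$ with respect to $\G_{g,n}[m]\to\Sp(H_\Q)$. Writing $\cG'$ for the latter completion, its universal property applied to the above composite gives a canonical morphism $\Phi\colon\cG'\to\cG_{g,n}^\geom$ of proalgebraic $\Q$-groups compatible with the projections to $\Sp(H_\Q)$, and the goal is to show $\Phi$ is an isomorphism.

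\emph{Surjectivity} is straightforward. The image of $\G_{g,n}$ in $\cG_{g,n}^\geom(\Q)$ is Zariski dense by construction, and $\cG_{g,n}^\geom$ is connected, being an extension of the connected algebraic group $\Sp(H_\Q)$ by a prounipotent (hence connected) group. Since $\G_{g,n}[m]$ has finite index in $\G_{g,n}$, its image remains Zariski dense, so $\Phi$ is surjective.

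\emph{Injectivity} is the heart of the matter. I would verify the universal property directly: given a $\Q$-algebraic extension $1\to U\to G\to \Sp(H_\Q)\to 1$ with $U$ unipotent and a Zariski-dense homomorphism $\phi\colon\G_{g,n}[m]\to G(\Q)$ lifting $\G_{g,n}[m]\to\Sp(H_\Q)$, the universal property of $\cG_{g,n}^\geom$ itself reduces producing a factorization $\cG_{g,n}^\geom\to G$ to extending $\phi$ to a continuous $\tilde\phi\colon\G_{g,n}\to G(\Q)$, unique up to $U(\Q)$-conjugation. Inducting on the nilpotence class of $U$ along its lower central series $U=U_1\supseteq U_2\supseteq\cdots$, the existence question at each step becomes the extension of a $1$-cocycle $c\in Z^1(\G_{g,n}[m],V)$ to $\G_{g,n}$, where $V=U_i/U_{i+1}$ is a finite-dimensional $\Sp(H_\Q)$-module. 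Applying Lyndon--Hochschild--Serre to the extension $1\to\G_{g,n}[m]\to\G_{g,n}\to\Sp(H_{\Z/m\Z})\to 1$, and using that $\Sp(H_{\Z/m\Z})$ is a finite group of order invertible in $\Q$, all positive-degree cohomology of $\Sp(H_{\Z/m\Z})$ vanishes and the spectral sequence degenerates to give
$$H^i(\G_{g,n},V)\cong H^i(\G_{g,n}[m],V)^{\Sp(H_{\Z/m\Z})}$$
for every $i$. Injectivity of the restriction on $H^1$ is automatic and handles the uniqueness clause; existence of the extension reduces to showing $[c]$ is fixed by the outer $\Sp(H_{\Z/m\Z})$-action.

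The main obstacle, and where the hypothesis $g\ge 3$ is essential, is establishing this triviality of the outer $\Sp(H_{\Z/m\Z})$-action on $H^1(\G_{g,n}[m],V)$ for every $\Sp(H_\Q)$-module $V$. For this I would appeal to the explicit description of the Lie algebra $\u_{g,n}^\geom$ of the prounipotent radical of $\cG_{g,n}^\geom$ worked out in \cite{hain:torelli}: for $g\ge 3$ its low-weight pieces are built $\Sp(H_\Q)$-equivariantly from data independent of the level (Johnson homomorphism, cup-product on the surface, etc.), identifying $H^1(\G_{g,n}[m],V)$ with a canonical $\Sp(H_\Q)$-module on which the outer $\Sp(H_{\Z/m\Z})$-action, being inherited from the ambient $\Sp(H_\Q)$-action, is manifestly trivial. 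Combined with Hochschild--Serre, this yields the extension $\tilde\phi$ and the injectivity of $\Phi$.
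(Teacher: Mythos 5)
The paper does not prove this proposition; it cites it from \cite[Prop.~3.3]{hain:torelli}, so there is no internal proof against which to compare. Judged on its own, your argument has the correct skeleton --- a canonical morphism $\Phi\colon\cG'\to\cG_{g,n}^\geom$, surjectivity from density of a finite-index subgroup in a connected group, and the reduction via Lyndon--Hochschild--Serre (valid since $|\Sp(H_{\Z/m\Z})|$ is invertible in $\Q$) to the triviality of the outer $\Sp(H_{\Z/m\Z})$-action on $H^1(\G_{g,n}[m],V)$. You also correctly flag that this triviality is the crux and the place where $g\ge3$ enters.

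The gap is precisely at that last step. You assert that $H^1(\G_{g,n}[m],V)$ can be ``identified with a canonical $\Sp(H_\Q)$-module on which the outer $\Sp(H_{\Z/m\Z})$-action, being inherited from the ambient $\Sp(H_\Q)$-action, is manifestly trivial,'' citing the description of $\u_{g,n}^\geom$. But that identification is not free --- it is essentially the content of the proposition. To make it, one has to run the Hochschild--Serre spectral sequence for $1\to T_{g,n}\to\G_{g,n}[m]\to\Sp(H_\Z)[m]\to1$ and prove that it collapses to $H^1(\G_{g,n}[m],V)\cong\Hom_{\Sp(H_\Z)[m]}(H_1(T_{g,n})\otimes\Q,V)$; this requires both (i) Johnson's theorem that for $g\ge3$, $H_1(T_{g,n},\Q)\cong\Lambda^3_0 H\oplus H^{\oplus n}$ as an $\Sp(H_\Q)$-module, and (ii) a vanishing theorem of Borel--Raghunathan type for $H^1(\Sp(H_\Z)[m],V)$. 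Only once you have this $\Hom_{\Sp(H_\Q)}$-description --- using Zariski density of $\Sp(H_\Z)[m]$ to upgrade $\Hom_{\Sp(H_\Z)[m]}$ to $\Hom_{\Sp(H_\Q)}$ --- does the conjugation action of $\Sp(H_\Z)$, and hence of $\Sp(H_{\Z/m\Z})$, become visibly trivial. As written, the proposal gestures at ``Johnson homomorphism, cup product'' but does not actually invoke either ingredient (i) or (ii), so the key step remains unproved. (A minor secondary remark: your cocycle-extension induction for the universal property is workable but more delicate than you indicate --- you have to track the $H^2$ obstruction to lifting at each stage, not just surjectivity on cocycles; the cleaner route, used elsewhere in this paper for the weighted analogue Proposition~\ref{prop:level-indep}, is to compare $H_1(\u)$ and $H_2(\u)$ and invoke the minimal-presentation/Stallings criterion.)
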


We have been vague about the role played by the coefficient field $F$. This is
because relative completion behaves well under base change.

\begin{theorem}[\cite{hain-matsumoto:mcgs}]
\label{thm:base_change}
Assume the notation of this subsection. If $E$ is an extension field of $F$ and
if the image of $\rho : \G \to R(E)$ is Zariski dense in $R\otimes_F E$, then
$\cG\otimes_F E$ and $\rho_E : \G \to R(E)= (R\otimes_F E)(E)$ comprise the
completion of $\G$ relative to $\rho_E$.
\end{theorem}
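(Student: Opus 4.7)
The plan is to verify directly that $\cG \otimes_F E$, together with the composition $\rhotilde_E : \G \to \cG(F) \hookrightarrow (\cG\otimes_F E)(E)$, satisfies the universal property of the completion of $\G$ relative to $\rho_E$. Uniqueness of any lift in that universal property follows once one knows that $\rhotilde_E(\G)$ is Zariski dense in $\cG\otimes_F E$. This density is immediate from the corresponding statement over $F$, which is built into the construction of $\cG$, together with the fact that Zariski closures commute with the flat extension $F\hookrightarrow E$: the closure of $\rhotilde_E(\G)$ in $\cG\otimes_F E$ is the base change of the closure of $\rhotilde(\G)$ in $\cG$, and hence is all of $\cG\otimes_F E$.

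For existence, suppose given an $E$-extension $1 \to U' \to G' \to R\otimes_F E \to 1$ with $U'$ prounipotent and a homomorphism $\phi : \G \to G'(E)$ lifting $\rho_E$; I must produce a morphism $\Phi : \cG\otimes_F E \to G'$ of $E$-groups with $\Phi \circ \rhotilde_E = \phi$. Pass to the inverse system of finite-dimensional unipotent quotients of $U'$ to reduce to the case that $U'$ is finite-dimensional over $E$. Since $R$ is reductive, finite-dimensional representations of $R\otimes_F E$ descend to $F$-representations of $R$: the module $U'$ admits an $F$-form $U_F$ as an $R$-module, with $U' \cong U_F\otimes_F E$, and the extension $G'$ is conjugate to the semidirect product $(R\otimes_F E)\ltimes U'$. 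Under this splitting $\phi$ corresponds to a $1$-cocycle $u\in Z^1(\G, U_F\otimes_F E)$. The $F$-universal property of $\cG$ says that every cocycle in $Z^1(\G, U_F)$ comes from pulling back the universal cocycle on $R\ltimes U_F$ along a morphism of $F$-groups $\cG\to R\ltimes U_F$; by flat base change, $Z^1(\G,U_F)\otimes_F E$ is identified with the corresponding $E$-subspace of $Z^1(\G, U_F\otimes_F E)$, so $u$ is pulled back from the universal cocycle on $(R\otimes_F E)\ltimes U'$ along a unique morphism $\cG\otimes_F E \to (R\otimes_F E)\ltimes U'$, which is the desired $\Phi$.

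The main obstacle is justifying the final flat base change claim, i.e.\ that any cocycle $u\in Z^1(\G, U_F\otimes_F E)$ satisfying the algebraicity constraints inherent to the relative completion is an $E$-linear combination of cocycles in $Z^1(\G, U_F)$ realized by morphisms $\cG\to R\ltimes U_F$. This is most cleanly phrased Tannakianly: matrix coefficients of $\phi$ against finite-dimensional algebraic $E$-representations of $G'$ lie in $A\otimes_F E$, where $A$ is the Hopf algebra of $\cG$, because the generators of $A$ are matrix coefficients of those $F$-representations of $\G$ whose associated graded descends to an $R$-representation, and every such representation over $E$ is of this form by the descent of reductive representation theory. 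Once this identification is in hand, the morphism $\Phi$ constructed above commutes with the projections to $R\otimes_F E$ by construction, and the universal property is verified, completing the proof.
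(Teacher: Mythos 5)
The paper does not prove Theorem~\ref{thm:base_change}: it is cited from the unpublished manuscript \cite{hain-matsumoto:mcgs} (in preparation), with a pointer to \cite[Thm.~3.1]{hain:torelli} for the mapping-class-group case where $R=\Sp(H_\Q)$. So there is no in-text proof to compare against, and I can only assess your argument on its own terms.

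Your overall strategy --- verify the universal property directly, using density under base change for uniqueness and a cohomological/cocycle argument for existence --- is the right shape, and the density step (Zariski closure commutes with the flat extension $F\hookrightarrow E$) is correct. But the existence half has a genuine gap. You assert that because $R$ is reductive, \emph{every} finite-dimensional representation of $R\otimes_F E$ descends to an $F$-representation of $R$, giving an $F$-form $U_F$ with $U'\cong U_F\otimes_F E$. This is false for non-split $R$: if $R$ is an anisotropic torus over $F$ that splits over $E$, the nontrivial characters of $R_E$ have no $F$-form. The theorem as stated allows arbitrary reductive $R$, and even in the paper one eventually works with $\GSp(H)$, which happens to be split over $\Q$, so the statement is true in the cases used --- but your proof as written does not cover the general statement. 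The repair is standard and you should make it explicit: you do not need $U'$ to descend, only that $U'$ is a \emph{direct summand} of $V\otimes_F E$ for some finite-dimensional $R$-module $V$ over $F$ (always true, by semisimplicity of $\mathrm{Rep}(R_E)$ and the fact that every $R_E$-module is a subquotient of a base change). Then the isomorphism $H^1(\cG,V)\cong H^1(\G,V)$ from the universal property over $F$, flat base change $H^1(\G,V)\otimes_F E\cong H^1(\G,V\otimes_F E)$, the analogous base change for Hochschild cohomology of the affine group scheme $\cG$, and passage to summands give $H^1(\cG\otimes_F E, U')\cong H^1(\G,U')$ for every $R_E$-module $U'$, which is what you need.

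There is a second, smaller gap: after reducing to $U'$ finite-dimensional, you immediately say that $\phi$ corresponds to a $1$-cocycle in $Z^1(\G,U_F\otimes_F E)$. That interpretation requires $U'$ to be \emph{abelian}; a finite-dimensional unipotent quotient need not be. You need to run an induction on the lower central series (or weight filtration) of $U'$, at each stage producing a lift over an abelian kernel and using the $H^1$ comparison above; a five-lemma argument (or the standard $H^1$/$H^2$ Stallings-type criterion used elsewhere in the paper, e.g.\ in the proof of Proposition~\ref{prop:level-indep}) then finishes it. Your final paragraph gestures at a Tannakian reformulation, which is indeed a clean way to phrase the ``algebraicity constraint'' --- identifying $\mathrm{Rep}(\cG\otimes_F E)$ with the base change of $\mathrm{Rep}(\cG)$ inside $\mathrm{Rep}_E(\G)$ --- but as written it is too vague to carry the argument; you should either make the descent-of-summands and $H^1$ base-change statements precise as above, or fully develop the Tannakian comparison and check that it is compatible with the forgetful fiber functor to $\G$-modules.
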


This is proved directly for relative completions of mapping class groups in
\cite[Thm.~3.1]{hain:torelli}. It implies that the completion of $\G_{g,n}[m]$
relative to $\rho : \G_{g,n}[m] \to \Sp(H_\Ql)$ is $\cG_{g,n}^\geom\otimes_\Q
\Ql$.

\subsection{Continuous relative completion of a profinite group} This is the
profinite analogue of relative completion of a discrete group. In this case, we
take the coefficient field $F$ to be the field $\Ql$ for some prime number
$\ell$. Suppose that $\G$ is a profinite group, that $R$ is a reductive
$\Ql$-group, and that $\rho : \G \to R(\Ql)$ is a continuous, Zariski dense
representation. The continuous completion of $\G$ with respect to $\rho$ is a
proalgebraic $\Ql$-group $\cG$ that is an extension
$$
1 \to \U \to \cG \to R \to 1
$$
of $R$ by a prounipotent $\Ql$-group $\U$ and a continuous\footnote{A
homomorphism $\phi : \G \to G(\Ql)$ from a profinite group to the rational
points of a proalgebraic $\Ql$-group $G$ is defined to be continuous if it is
the inverse limit of continuous homomorphisms $\G \to G_\alpha(\Ql)$, where each
$G_\alpha$ is an algebraic $\Ql$-group and $G=\varprojlim G_\alpha$.} Zariski
dense representation $\rhotilde : \G \to \cG(\Ql)$ whose composition with
$\cG(\Ql) \to R(\Ql)$ is $\rho$. It is characterized by a universal mapping
property that is similar to the one that characterizes the relative completion
of a discrete group. The only difference is that all homomorphisms are required
to be continuous in the $\ell$-adic profinite case.

When $R$ is the trivial group, this reduces to the continuous unipotent
completion of a profinite group, \cite[A.2]{hain-matsumoto:weighted}.

The continuous relative completion of the profinite completion of a discrete
group can be computed from the relative completion of the discrete group. Denote
the profinite completion of the discrete group $\G$ by $\G^\wedge$. We can view
$\G$ as a topological group by defining the neighbourhoods of the identity to be
the finite index normal subgroups of $\G$.

\begin{theorem}[\cite{hain-matsumoto:weighted,hain-matsumoto:mcgs}]
\label{thm:comparison}
Suppose that $\G$ is a discrete group, $R$ is a reductive $\Ql$-group, and that
$\rho : \G \to R(\Ql)$ is a continuous, Zariski dense representation. Let
$\rho_\ell : \G^\wedge \to R(\Ql)$ be the continuous extension of $\rho$ to
$\G^\wedge$. If $\cG$ and $\rhotilde : \G \to \cG(\Ql)$ is the completion of
$\G$ relative to $\rho$, then:
\begin{enumerate}

\item $\rhotilde$ is continuous and thus induces a continuous homomorphism
$\rhohat_\ell : \G^\wedge \to \cG(\Ql)$;

\item $\cG$ and $\rhohat_\ell$ is the continuous relative completion of
$\G^\wedge$ with respect to $\rho_\ell$.

\end{enumerate}
\end{theorem}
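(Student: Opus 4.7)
The plan is to establish (i) first---that $\rhotilde \colon \G \to \cG(\Ql)$ is continuous for the profinite topology on $\G$---and then to derive (ii) formally from (i) via the universal properties.

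For the deduction of (ii) from (i): given a continuous, Zariski dense homomorphism $\phi \colon \G^\wedge \to G(\Ql)$ lifting $\rho_\ell$, where $G$ is a proalgebraic $\Ql$-group extension of $R$ by a unipotent $\Ql$-group $U$, restriction along $\G \to \G^\wedge$ yields a Zariski dense lift $\G \to G(\Ql)$ of $\rho$ (Zariski density of the restriction follows because the Zariski closure of $\phi(\G)$ in $G$ is $\ell$-adically closed and thus contains $\phi(\G^\wedge)$). The universal property of the discrete relative completion produces a unique algebraic homomorphism $\cG \to G$ compatible with $\rhotilde$; composing with $\rhohat_\ell$ gives a continuous homomorphism $\G^\wedge \to G(\Ql)$ that agrees with $\phi$ on the dense subgroup $\G$, hence equals $\phi$. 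Uniqueness on the algebraic level transfers from the discrete case by Zariski density of $\rhotilde(\G)$ in $\cG$.

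For (i), I would write $\cG = \varprojlim_\alpha G_\alpha$ as an inverse limit of its finite-dimensional algebraic quotients, each $G_\alpha$ an extension $1 \to U_\alpha \to G_\alpha \to R \to 1$ with $U_\alpha$ a unipotent $\Ql$-group, so that continuity of $\rhotilde$ reduces to continuity of each $\rhotilde_\alpha \colon \G \to G_\alpha(\Ql)$. Filtering $U_\alpha$ by its descending central series presents $G_\alpha$ as an iterated central extension of $R$ by finite-dimensional $\Ql$-representations $V_1, \dots, V_N$ of $R$. Inductively, assuming continuity at the partial quotient, the passage to the next stage is governed by a $1$-cocycle $c \colon \G \to V_i(\Ql)$ (with $\G$ acting through $\rho$) obtained from any continuous set-theoretic section; continuity of $\rhotilde_\alpha$ is equivalent to continuity of $c$, which in turn amounts to $c$ taking values in some $R$-stable $\Zl$-lattice $L \subseteq V_i(\Ql)$ on a finite-index subgroup of $\G$.

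The main obstacle is precisely this boundedness/integrality claim, and it is where the bulk of the proof must do real work. The two ingredients I would combine to settle it are: since $\G^\wedge$ is compact and $\rho_\ell$ is continuous, $\rho_\ell(\G^\wedge)$ lies in a compact open subgroup of $R(\Ql)$, which necessarily stabilizes some $\Zl$-lattice $L \subseteq V_i(\Ql)$; and the explicit construction of the relative completion carried out in \cite{hain-matsumoto:weighted} builds $\cG$ iteratively out of $\ell$-adic representations of $\G$ whose integral structures are forced by the continuity of $\rho_\ell$, so that the defining cocycles can be arranged to land in $L$ after restriction to a suitable finite-index subgroup. This closes the induction and proves $\rhotilde$ is continuous; density of $\G$ in $\G^\wedge$ together with completeness of $\cG(\Ql)$ then yields the unique continuous extension $\rhohat_\ell \colon \G^\wedge \to \cG(\Ql)$, establishing (i), and hence (ii) by the reduction above.
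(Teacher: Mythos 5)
The paper states this theorem with a citation to \cite{hain-matsumoto:weighted,hain-matsumoto:mcgs} and offers no proof, so there is no argument to compare against line by line; I evaluate your proposal on its own terms.

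Your reduction of (ii) to (i) is correct and clean: restricting a continuous $\phi\colon\G^\wedge\to G(\Ql)$ along the dense inclusion $\G\hookrightarrow\G^\wedge$, applying the discrete universal property to get $\psi\colon\cG\to G$, extending by continuity and density to see $\psi\circ\rhohat_\ell=\phi$, and transferring uniqueness via Zariski density of $\rhotilde(\G)$ is exactly the formal deduction one wants. Your observation that the restriction $\phi|_\G$ remains Zariski dense because a Zariski-closed subgroup of $G(\Ql)$ is $\ell$-adically closed handles the one point that needed checking.

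The gap is in (i), at precisely the step you flag as the "main obstacle." Having reduced to the continuity of a governing $1$-cocycle $c\colon\G\to V_i(\Ql)$, you assert that $c$ can be arranged to land in an $R$-stable $\Zl$-lattice $L$ by appealing to "the explicit construction of the relative completion carried out in \cite{hain-matsumoto:weighted}." But that is the very reference the theorem itself cites; the argument is circular at the load-bearing step. What has to be shown, without appeal to the conclusion, is first that every class in $H^1(\G,V_i)$ admits a representative cocycle valued in a $\Zl$-lattice, and second that such a bounded cocycle is automatically continuous for the profinite topology on $\G$ --- that is, $c^{-1}(\ell^n L)$ contains a finite-index subgroup for every $n$. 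Both claims fail for a general discrete group and require $\G$ to be finitely generated: finite generation lets one rescale $c$ on a generating set to land in a $\rho_\ell(\G^\wedge)$-stable lattice and then propagate via the cocycle identity, and it ensures that the reduction of $c$ modulo $\ell^n$, restricted to the finite-index normal subgroup on which $\rho$ acts trivially modulo $\ell^n$, factors through a finite quotient. You never invoke finite generation, and without some such hypothesis the theorem is false --- already for a free group of infinite rank the $H_1$ of the discrete unipotent $\Ql$-completion of $\G$ and of the continuous unipotent completion of $\G^\wedge$ disagree. Adding the finite-generation hypothesis and proving the two boundedness/continuity statements explicitly is what is needed to close the induction and hence the proof.
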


In particular, the continuous completion of $\G_{g,n}^\wedge[m]$ with respect to
the standard representation $\rho_\ell : \G_{g,n}^\wedge[m] \to \Sp(H_\Ql)$ is
$\cG_{g,n}^\geom\otimes_\Q\Ql$. It also implies that the continuous unipotent
completion of the profinite completion $\pi^\wedge$ of the topological
fundamental group $\pi$ of and $n$-punctured, genus $g$ surface is
$\pi^\un_{/\Ql}$.

\subsection{Negatively weighted representations}

Suppose that $F$ is a field of characteristic zero. Denote $\Gm_{/F}$ by $\Gm$.
First suppose that $R$ is a reductive $F$-group and that $\w : \Gm\to R$ is a
central cocharacter. If $V$ is an irreducible $F$-rational representation of
$R$, then the restriction of $V$ to $\Gm$ via $\w$ is isotypical. That is, there
is an integer $w(V)$ such that $\Gm$ acts on $V$ via the $w(V)$th power of its
defining representation. We will call $w(V)$ the {\em weight} of $V$ (with
respect to $\w$) and say that $V$ has negative weight when $w(V)<0$. More
generally, a finite dimensional representation of $R$ is negatively weighted
when each of its irreducible components is.

We will say that an extension
$$
1 \to U \to G \to R \to 1
$$
of affine algebraic $F$-groups, where $U$ is unipotent, is {\em negatively
weighted} (with respect to $\w$) when the abelianization $H_1(U)$ of $U$ is a
negatively weighted $R$-module.  An extension of $R$ by a prounipotent $F$-group
$U$ is negatively weighted if it is the inverse limit of negatively weighted
extensions of $R$ by unipotent groups.

\begin{proposition}[{\cite[Thms.~3.9 \& 3.12]{hain-matsumoto:weighted}}]
\label{prop:weight}
Suppose that $R$ is a reductive $F$-group and that $\w : \Gm \to R$ is a central
cocharacter. If $G$ is a proalgebraic group that is a negatively weighted
extension (with respect to $\w$) of $R$ by a prounipotent group, then every
finite dimensional $G$-module $V$ has a natural weight filtration $W_\dot$:
$$
0 = W_N V \subseteq \cdots \subseteq W_{r-1}V \subseteq W_r V
\subseteq \dots \subseteq W_M V = V.
$$
It is characterized by the property that the action of $G$ on
$$
\Gr^W_r V := W_rV/W_{r-1}V
$$
factors through $G \to R$ and is a representation of $R$ of weight $r$. The
weight filtration is preserved by $G$-module homomorphisms and
the functor $\Gr^W_\dot$ on the category of $G$-modules is exact.
\end{proposition}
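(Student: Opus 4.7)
The plan is to construct the filtration from a Levi splitting and then show that it is intrinsic. Since $V$ is finite-dimensional, the $G$-action factors through a quotient $G \twoheadrightarrow G'$ that is an algebraic negatively weighted extension of $R$ by a unipotent group $U'$ (the abelianization $H_1(U')$ is a quotient of $H_1(U)$, so still has only negative weights with respect to $\w$); we may therefore replace $G$ by $G'$ and assume $G$ is algebraic. Because $F$ has characteristic zero and $R$ is reductive, the Mostow--Levi theorem furnishes a section $\sigma : R \hookrightarrow G$ of $G \to R$, unique up to conjugation by $U(F)$. Composing with $\w$ yields a cocharacter $\sigma \circ \w : \Gm \to G$; the linear reductivity of $\Gm$ decomposes $V$ as a direct sum of weight spaces $V = \bigoplus_r V_r$, and I would define $W_r V := \bigoplus_{s \le r} V_s$.

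The core step is $G$-stability of $W_\bullet V$, which reduces to $U$-stability. View $\u := \operatorname{Lie} U$ as an $R$-module via $\Ad \circ \sigma$, and hence as a $\Gm$-module via $\w$. I claim $\u$ has only strictly negative weights: the lower central series $\u \supseteq \u^{(2)} \supseteq \u^{(3)} \supseteq \cdots$ is $R$-stable; its first quotient $\u/\u^{(2)} = H_1(U)$ is negatively weighted by hypothesis; and each higher graded piece $\u^{(r)}/\u^{(r+1)}$ is an $R$-equivariant quotient of $H_1(U)^{\otimes r}$, whose $\Gm$-weights are sums of $r$ negative integers and hence still negative. Since the action map $\u \otimes V \to V$ is $G$-equivariant, it is in particular $\Gm$-equivariant, so an $x \in \u$ of weight $w < 0$ carries $V_r$ into $V_{r+w} \subseteq W_{r-1} V$. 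Exponentiating via the scheme-theoretic isomorphism $\exp : \u \to U$, one deduces $u \cdot v - v \in W_{r-1} V$ for all $u \in U$ and $v \in V_r$; hence $U$, and therefore $G = U \cdot \sigma(R)$, preserves $W_\bullet V$. This negativity claim for all of $\u$ (not just for $H_1(U)$) is what I expect to be the main obstacle; everything afterwards is essentially formal.

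Finally one handles independence of $\sigma$, the universal characterization, functoriality, and exactness. Two Levi sections differ by conjugation by some $u \in U(F)$, which preserves $W_\bullet V$ by the previous step, so the filtration is intrinsic. For the characterization, suppose $W'_\bullet V$ is another $G$-stable filtration for which $G$ acts on each $\Gr^{W'}_r V$ through $R$ with $\w$-weight $r$; pick any Levi $\sigma$, so $W'_r V$ is $\Gm$-stable, and the weight constraint on graded pieces forces $V_s \cap W'_r V = 0$ for $s > r$ (by iterating the inclusion $V_s \cap W'_r V \subseteq V_s \cap W'_{r-1} V$) and $V_s \subseteq W'_r V$ for $s \le r$ (by letting $r$ grow until $V_s$ is captured), giving $W'_r V = W_r V$. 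Functoriality then follows because any $G$-equivariant $f : V \to V'$ is $\Gm$-equivariant through a common $\sigma$, so sends weight spaces to weight spaces and hence $W_\bullet V$ to $W_\bullet V'$; and the exactness of $\Gr^W_\bullet$ on short exact sequences of $G$-modules follows by applying the characterization to the induced subobject and quotient filtrations.
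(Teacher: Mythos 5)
Your proof is correct, and the paper offers no proof of its own here — it simply cites Hain–Matsumoto, Theorems~3.9 and~3.12 — but your argument (Levi splitting $\sigma:R\hookrightarrow G$, decomposing $V$ into $\sigma\circ\w$-weight spaces, proving $\u$ has strictly negative weights via the lower-central-series filtration and the surjection $H_1(\u)^{\otimes r}\twoheadrightarrow \u^{(r)}/\u^{(r+1)}$, then exponentiating to get $U$-stability, and deducing uniqueness, functoriality and exactness from the intrinsic characterization) is exactly the standard argument carried out in that reference. You also correctly identified the one genuinely nontrivial step — propagating negativity from $H_1(\u)$ to all of $\u$ — and handled it properly.
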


\begin{remark}
Each choice of a lift $\wtilde : \Gm \to G$ of the central cocharacter $\w : \Gm
\to R$ determines a vector space isomorphism $V \cong \Gr^W_\dot V$ of a
$G$-module $V$ with its associated weight graded space. This isomorphism is
natural in the sense that the diagram of vector spaces
$$
\xymatrix{
V_1 \ar[r]^\phi\ar[d]^\cong & V_2 \ar[d]^\cong \cr
\Gr^W_\dot V_1 \ar[r]^{\Gr^W_\dot\phi} & \Gr^W_\dot V_2
}
$$
commutes for all $G$-module homomorphism $\phi : V_1 \to V_2$.  Although the
isomorphism $V\cong \Gr^W_\dot V$ is  natural, it is not canonical as it depends
upon the choice of the lift $\wtilde$.
\end{remark}

Denote the Lie algebras of $G$, $R$ and $U$ by $\g$, $\r$ and $\u$,
respectively.

\begin{proposition}[{\cite[Prop.~4.5]{hain-matsumoto:weighted}}]
\label{prop:presentation}
If $G$ is a proalgebraic $F$-group that is a negatively weighted extension of
$R$ by a prounipotent group $U$, then the Lie algebra $\g$ of $G$ and the Lie
algebra $\u$ of $U$ have natural weight filtrations $W_\dot$ that satisfy
$$
\g = W_0\g,\quad \u = W_{-1}\g,\quad \r = \Gr^W_0 \g.
$$
In particular, each choice of a lift $\wtilde:\Gm\to G$ of the central
cocharacter $\w:\Gm\to R$ determines Lie algebra isomorphisms
$$
\g \cong \prod_{r<0} \Gr^W_r \g \text{ and } \u \cong \prod_{r<0} \Gr^W_r \u.
$$
\end{proposition}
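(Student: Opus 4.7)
The plan is to bootstrap Proposition~\ref{prop:weight}, applied to the adjoint representation, and to analyze the lower central series of $\u$.

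First I would construct the weight filtration on $\g$ by inverse limit. Write $U=\varprojlim U_\alpha$ as an inverse system of unipotent quotients, so that $\g=\varprojlim\g_\alpha$, where $\g_\alpha=\Lie(R\ltimes U_\alpha)$ is a finite-dimensional $G$-module under the adjoint action and the extension $G\to G_\alpha:=R\ltimes U_\alpha$ is still negatively weighted (since $H_1(U_\alpha)$ is a quotient of $H_1(U)$). Proposition~\ref{prop:weight} endows each $\g_\alpha$ with a weight filtration, and the naturality of that construction under $G$-module maps makes these filtrations strictly compatible with the transition maps. Taking the inverse limit produces a filtration $W_\bullet\g$, and restriction gives $W_\bullet\u$.

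Next I would pin down which weights actually occur. Because $\w$ is central in $R$, the adjoint action of $\Gm$ on $\r$ through $\w$ is trivial, so $\r$ is concentrated in weight $0$; thus in the short exact sequence $0\to\u\to\g\to\r\to 0$ of $G$-modules the claims $\u=W_{-1}\g$, $\g=W_0\g$ and $\Gr^W_0\g=\r$ all reduce to showing that every weight appearing in $\u$ is strictly negative. For this I would use the lower central series $\u=\u^{(1)}\supseteq\u^{(2)}\supseteq\cdots$: by hypothesis $\u^{(1)}/\u^{(2)}=H_1(U)$ is negatively weighted; the iterated bracket realizes $\u^{(k)}/\u^{(k+1)}$ as an $R$-equivariant quotient of $H_1(U)^{\otimes k}$; and since weights add under tensor product, the weights of $\u^{(k)}/\u^{(k+1)}$ are all $\le -k$. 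Assembling these in the limit gives $\u=W_{-1}\g$.

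For the splitting assertion, the lift $\wtilde:\Gm\to G$ furnishes a $\Gm$-action on $\g$ by $\Ad\circ\wtilde$. Since $\Gm$ is reductive and $\g$ is a pro-object in finite-dimensional $\Gm$-modules, $\g$ decomposes canonically as a product $\prod_r\g_r$ of weight spaces. The weight filtration from Proposition~\ref{prop:weight} was characterized by the central $\Gm$-action through $\w$, which on each $G$-module agrees with the $\Gm$-action coming from $\wtilde$ (composed with the projection to $R$); hence the weight space $\g_r$ is naturally identified with $\Gr^W_r\g$, yielding the product decomposition. It is an isomorphism of Lie algebras because the $\Gm$-action preserves the bracket. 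The same argument applied to $\u=W_{-1}\g$ produces $\u\cong\prod_{r<0}\Gr^W_r\u$.

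The main obstacle is the inverse-limit bookkeeping: one must check that the $G$-module structures on the finite-dimensional quotients $\g_\alpha$ are compatible enough that Proposition~\ref{prop:weight} applies uniformly and that the associated weight filtrations are strictly compatible, so the limit has the expected graded quotients. The essential use of the "negatively weighted" hypothesis is confined to the lower central series step, where the assumption on $H_1(U)$ is exactly what propagates to all graded pieces of $\u$.
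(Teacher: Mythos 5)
The paper does not prove this proposition; it is cited directly from \cite{hain-matsumoto:weighted}, so there is no in-paper argument to compare against. Judged on its own, your argument is correct and is the expected one: you build $W_\bullet\g$ as the inverse limit of the weight filtrations on the finite-dimensional quotients $\g_\alpha$ supplied by Proposition~\ref{prop:weight}, using the strict compatibility that comes from exactness of $\Gr^W_\bullet$; you observe that $\r$ sits in weight $0$ because $\w$ is central (so $\Ad\circ\w$ is trivial on $\r$); and you confine $\u$ to strictly negative weights by noting that each graded quotient $\u^{(k)}/\u^{(k+1)}$ of the lower central series is an $R$-equivariant quotient of $H_1(\u)^{\otimes k}$, whose weights are $\le -k$ because $H_1(\u)$ is negatively weighted and weights add under tensor product. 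Combined with $W_{-1}\r=0$, this gives $\u=W_{-1}\g=W_{-1}\u$ and $\g=W_0\g$. The splitting via $\Ad\circ\wtilde$ is also right: the weight filtration is $\wtilde(\Gm)$-stable, each $\Gr^W_r$ is pure of $\wtilde$-weight $r$ because the $G$-action on it factors through $R$, and $\Ad\circ\wtilde$ acts by Lie algebra automorphisms, so the $\wtilde$-weight decomposition splits the filtration compatibly with the bracket.

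One small observation, not a defect of your argument: as printed, the proposition's first isomorphism should read $\g\cong\prod_{r\le 0}\Gr^W_r\g$ rather than $\prod_{r<0}$, since $\Gr^W_0\g=\r\neq 0$. Your proof correctly produces the version with the weight-zero factor; the restriction to $r<0$ applies only to $\u$.
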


A pronilpotent Lie algebra $\n$ in the category of $G$-modules is the inverse
system $(\n_\alpha)_\alpha$ of nilpotent Lie algebras in the category of
$G$-modules. The continuous cohomology of a pronilpotent Lie algebra $\n$ is
defined by
$$
H^\dot(\n) = \varinjlim H^\dot(\n_\alpha)
$$
and its homology by
$$
H_\dot(\n) = \varprojlim H_\dot(\n_\alpha).
$$
These are ind- and pro-objects, respectively, of the category of $G$-modules,
and thus have natural weight filtrations. The later can be regarded as a
topological vector space under the inverse limit topology. There are natural
$G$-module isomorphisms
$$
H^\dot(\n) \cong \Homcts(H_\dot(\n),F) \text{ and }
H_\dot(\n) \cong \Hom(H^\dot(\n),F).
$$
Exactness of $\Gr^W_\dot$ implies that homology and cohomology commute with
$\Gr^W_\dot$. That is, there are canonical $R$-module isomorphisms
$$
H^\dot(\Gr^W_\dot\n) \cong \Gr^W_\dot H^\dot(\n)
\text{ and }
H_\dot(\Gr^W_\dot\n) \cong \Gr^W_\dot H_\dot(\n).
$$

The following useful result is an analogue of Sullivan's observation
\cite{sullivan}.

\begin{lemma}
\label{lem:ses}
Suppose that $G$ is a proalgebraic $F$-group that is a negatively weighted
extension of $R$ by a prounipotent group $U$ and that $\n$ is a pronilpotent Lie
algebra in the category of $G$-modules. If $H_1(\n)$ has negative weights (i.e.,
$H_1(\n) = W_{-1}H_1(\n)$), then there is an $R$-module isomorphism $\Gr^W_{-1}
\n = \Gr^W_{-1}H_1(\n)$ and an exact sequence
$$
0 \to \Gr^W_{2}H^1(\n) \to (\Gr^W_{-2}\n)^\ast
\overset{\bracket^\ast}{\longrightarrow}
\Lambda^2\Gr^W_{1}H^1(\n) \overset{\cupp}{\longrightarrow} \Gr^W_{2}H^2(\n)
\to 0
$$
in the category of ind-$R$-modules.
\end{lemma}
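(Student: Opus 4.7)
I would prove the lemma by analyzing the Chevalley-Eilenberg complex of $\n$ weight degree by weight degree, which is possible because a choice of lift of the central cocharacter splits the weight filtration on all relevant objects.

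First, I would invoke Proposition~\ref{prop:presentation} applied to $G$ (and the induced weight structure on $\n$). A choice of lift $\wtilde:\Gm\to G$ produces an $R$-module isomorphism $\n \cong \prod_{r<0}\Gr^W_r \n$, so $\n^\vee := \Homcts(\n,F)$ becomes a direct sum $\bigoplus_{r>0}(\Gr^W_{-r}\n)^\ast$ of $R$-modules concentrated in strictly positive weights. Everything that follows is then independent of the chosen lift because the maps in the sequence are $R$-equivariant.

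Next I would check the identification $\Gr^W_{-1}\n \cong \Gr^W_{-1}H_1(\n)$. Since the bracket sends $W_{-1}\n\otimes W_{-1}\n$ into $W_{-2}\n$, the commutator subalgebra $[\n,\n]$ lies in $W_{-2}\n$; hence the natural surjection $\n \to H_1(\n)$ is an isomorphism on $\Gr^W_{-1}$. Dualizing gives $\Gr^W_1 H^1(\n) \cong (\Gr^W_{-1}\n)^\ast$, which is crucial for identifying the middle term in the sequence.

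Now I would extract the exact sequence from the weight-$2$ part of the Koszul complex
$$
0 \to \n^\vee \xrightarrow{\;d^1\;} \Lambda^2 \n^\vee \xrightarrow{\;d^2\;} \Lambda^3 \n^\vee,
$$
whose differentials are dual to the bracket and its extension. Since $\n^\vee$ lives in strictly positive weights, weight considerations give
$(\n^\vee)_{\mathrm{wt}\,2} = (\Gr^W_{-2}\n)^\ast,\qquad
(\Lambda^2 \n^\vee)_{\mathrm{wt}\,2} = \Lambda^2(\n^\vee)_{\mathrm{wt}\,1} = \Lambda^2 \Gr^W_1 H^1(\n),\qquad
(\Lambda^3\n^\vee)_{\mathrm{wt}\,2} = 0,$
the last because any wedge of three elements of $\n^\vee$ has weight $\ge 3$. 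Thus $d^2$ is zero in weight $2$, so $\Gr^W_2 H^2(\n)=\operatorname{coker}(d^1)_{\mathrm{wt}\,2}$ and $\Gr^W_2 H^1(\n)=\ker(d^1)_{\mathrm{wt}\,2}$. The differential $d^1$ in weight $2$ is, by construction, the transpose of the bracket $\Lambda^2 \Gr^W_{-1}\n \to \Gr^W_{-2}\n$; that is, it equals the map $\bracket^\ast$. Putting the two four-term sequences together yields precisely
$$
0 \to \Gr^W_2 H^1(\n) \to (\Gr^W_{-2}\n)^\ast \xrightarrow{\bracket^\ast} \Lambda^2 \Gr^W_1 H^1(\n) \xrightarrow{\cupp} \Gr^W_2 H^2(\n) \to 0,
$$
with $\cupp$ being the restriction of the cup product on $H^\bullet(\n)$ to the weight-$2$ component, as claimed.

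The only mildly delicate point is making sure that the continuous duality and the identification of $d^1$ with the dual bracket are compatible with the pro/ind structures; this follows from Proposition~\ref{prop:weight} (exactness of $\Gr^W_\bullet$) and the standard fact that, for pronilpotent Lie algebras, $H^\bullet(\n)=\Homcts(H_\bullet(\n),F)$ commutes with $\Gr^W_\bullet$, both of which were recorded just above the statement. No genuinely hard step arises; the content is the weight bookkeeping that forces $\Lambda^3$ to vanish in weight $2$ and hence makes the four-term sequence exact on both ends.
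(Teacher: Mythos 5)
Your proof is correct and follows essentially the same route as the paper: both analyze the Chevalley--Eilenberg complex $C^\dot(\n)=\Homcts(\Lambda^\dot\n,F)$ weight-by-weight, using exactness of $\Gr^W_\dot$ to pass between the cohomology of the complex and the cohomology of its weight-graded pieces, and then read off the weight-$2$ part, where $(\Lambda^3\n^\vee)_{\mathrm{wt}\,2}=0$ forces the four-term exact sequence. The only cosmetic difference is that you obtain the identification $\Gr^W_{-1}\n\cong\Gr^W_{-1}H_1(\n)$ directly from $[\n,\n]\subseteq W_{-2}\n$, while the paper derives it by dualizing the weight-$1$ part of $C^\dot(\n)$; both hinge on the same observation $\n=W_{-1}\n$, which both treatments use without proof, as it follows from $H_1(\n)=W_{-1}H_1(\n)$ and the fact that a pronilpotent Lie algebra is topologically generated by any lift of $H_1$.
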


\begin{proof}
Denote the standard cochain complex of $\n$ by $C^\dot(\n)$, where
$$
C^s(\n) = \Hom_\cts(\Lambda^s \n,F).
$$
Since $\n = W_{-1}\n$, this is a differential graded
$F$-algebra in the category of ind-$R$-modules. The exactness of $\Gr^W_\dot$
implies that there are natural ind-$R$-module isomorphisms
$$
\Gr^W_\dot H^\dot(\n) \cong \Gr^W_\dot H^\dot(C^\dot(\n))
\cong H^\dot(\Gr^W_\dot C^\dot(\n)).
$$
Since $H_1(\n)$ is negatively weighted, $\Gr^W_{1}C^\dot(\n)$ is the complex
$\Hom(\Gr^W_{-1} \n,F)$, which implies that $\Gr^W_{1}H^1(\n) = (\Gr^W_{-1}
\n)^\ast$. Taking duals we see that
$$
W_{-1}H_1(\n)= \big(W_1H^1(\n)\big)^\ast = W_{-1}\n
$$
The complex $\Gr^W_{-2}C^\dot(\n)$ is the dual of
$$
\xymatrix{\Lambda^2 \Gr^W_{-1}\n \ar[r]^\bracket & \Gr^W_{-2}\n.}
$$
This and the isomorphism $\Gr^W_{1}H^1(\n) = (\Gr^W_{-1} \n)^\ast$ imply the
exactness of
$$
0 \to \Gr^W_{2}H^1(\n) \to (\Gr^W_{-2}\n)^\ast
\overset{\bracket^\ast}{\longrightarrow}
\Lambda^2\Gr^W_{1}H^1(\n) \overset{\cupp}{\longrightarrow}
\Gr^W_{2}H^2(\n) \to 0.
$$
\end{proof}

\subsection{Weighted completion of a profinite group} Weighted completion of a
profinite group $\G$ is a variant of continuous relative completion. It is a
useful tool as it can be used to define weight filtrations with strong exactness
properties on the representations of $\G$ that factor through its weighted
completion.

Suppose that $\G$ is a profinite group, $R$ is a reductive $\Ql$-group endowed
with a non-trivial, central cocharacter $\w : \Gm_{/\Ql} \to R$, and that $\rho
: \G \to R(\Ql)$ is a continuous, Zariski dense representation. 

The weighted completion of $\G$ with respect to $\rho$ is a proalgebraic
$\Ql$-group $\cG$ that is a negatively weighted extension
$$
1 \to \U \to \cG \to R \to 1
$$
of $R$ by a prounipotent $\Ql$-group $\U$ and a continuous, Zariski dense
representation $\rhotilde : \G \to \cG(\Ql)$ whose composition with $\cG(\Ql)
\to R(\Ql)$ is $\rho$. It is universal with respect to continuous homomorphisms
of $\G$ to negatively weighted extensions of $R$ that lift $\rho$.

Proposition~\ref{prop:weight} implies that every object of the category of
finite dimensional $\cG$-modules has a natural weight filtration and that the
functor $\Gr^W_\dot$ to the category of graded vector spaces is exact.

\subsection{Presentations}
\label{sec:presentations}

Suppose that $F$ is a field of characteristic zero and that $R$ is a reductive
group $F$-group. A generalization of Levi's Theorem ensures that every
extension 
$$
1 \to \U \to \cG \to R \to 1
$$
of $R$ by a prounipotent group splits and that any two such splittings are
conjugate by an element of $\U$. Such a splitting induces an isomorphism
$$
\cG \cong R \ltimes \U
$$
that commutes with the projections to $R$, where the action of $R$ on $\U$ is
determined by the splitting. The prounipotent radical $\U$ with its $R$-action
is determined by its Lie algebra $\u$ and the action of $R$ on it. Thus, to give
a presentation of $\cG$, it suffices to give a presentation of $\u$ in the
category of $R$-modules.

By standard arguments (cf. \cite[\S 3.1]{hain:morita}), $\u$ has a minimal
presentation of the form
$$
\u \cong \L(H_1(\u))^\wedge/(\im \psi)
$$
in the category of $R$-modules, where $\L(V)$ denotes the free Lie algebra
generated by the vector space $V$, and where $\psi : H_2(\u) \to
[\L(H_1(\u))^\wedge,\L(H_1(\u))^\wedge]$ is an injection such that the composite
$$
H_2(\u) \to [\L(H_1(\u))^\wedge,\L(H_1(\u))^\wedge] \to \Lambda^2 H_1(\u)
$$
is dual to the cup product $\Lambda^2 H^1(\u) \to H^2(\u)$.

Suppose now that $R$ is a reductive $\Ql$-group, and that $\w : \Gm \to R$ is a
central cocharacter. Suppose that $\G$ is a profinite group and that $\rho : \G
\to R(\Ql)$ is a continuous, Zariski dense representation. Denote the Lie
algebra of the weighted completion of $\G$ with respect to $\w$ and $\rho$ by
$\g$ and that of its prounipotent radical by $\u$.

\begin{proposition}[{\cite[\S 4.3]{hain-matsumoto:weighted}}]
\label{prop:homology}
If $V$ is a finite dimensional $R$-module of weight $r$, then there are natural
isomorphisms
$$
\Hom_R(H_1(\u),V) \cong \Hom_R(\Gr^W_r H_1(\u),V) \cong
\begin{cases}
H^1(\G,V) & r < 0, \cr
0 & r \ge 0
\end{cases}
$$
and a natural injection $\Hom_R(H_2(\u),V) \hookrightarrow H^2(\G,V)$ whenever
$r<0$. If $r > -2$, then $\Hom_R(H_2(\u),V) = 0$.
\end{proposition}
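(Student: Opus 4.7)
The plan is to deduce all parts from the universal mapping property of weighted completion together with the minimal presentation of $\u$ recalled in Section~\ref{sec:presentations}. For the first isomorphism and the vanishing when $r\ge 0$: by Proposition~\ref{prop:presentation} we have $\u=W_{-1}\u$, hence $H_1(\u)=W_{-1}H_1(\u)$. Any choice of lift $\wtilde:\Gm\to\cG$ of $\w$ splits the weight filtration $R$-equivariantly, so $H_1(\u)\cong\prod_{s<0}\Gr^W_s H_1(\u)$ as a pro-$R$-module. Since $V$ is isotypical of weight $r$ and $R$ is reductive, every $R$-equivariant map $H_1(\u)\to V$ annihilates the weights $s\neq r$ and factors through $\Gr^W_r H_1(\u)$, which is zero for $r\ge 0$.

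For the isomorphism with $H^1(\G,V)$ in weight $r<0$, I would argue via the universal mapping property of $\cG$. Given $\phi\in\Hom_R(H_1(\u),V)$, lift to an $R$-equivariant Lie algebra homomorphism $\u\to V$ (regarding $V$ as abelian) and extend to a proalgebraic homomorphism $\cG\to R\ltimes V$ over $R$. The target extension $1\to V\to R\ltimes V\to R\to 1$ is negatively weighted because $V$ has weight $r<0$. Composing with $\rhotilde:\G\to\cG(\Ql)$ and subtracting $\rho$ yields a continuous $1$-cocycle $\G\to V$, hence a class in $H^1(\G,V)$. Conversely, a continuous class in $H^1(\G,V)$ is represented by a crossed homomorphism $c:\G\to V$, which assembles with $\rho$ into a continuous homomorphism $\G\to R(\Ql)\ltimes V$ lifting $\rho$. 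Because $R\ltimes V$ is a negatively weighted extension of $R$, the universal property of weighted completion produces a unique proalgebraic homomorphism $\cG\to R\ltimes V$ extending it, whose restriction to Lie algebras descends to an $R$-equivariant map $H_1(\u)\to V$. Changing $c$ by a coboundary corresponds to conjugating the homomorphism $\G\to R(\Ql)\ltimes V$ by an element of $V$, which does not alter the induced map on $H_1(\u)$; uniqueness in the universal property then ensures that the two constructions are mutually inverse.

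For the injection $\Hom_R(H_2(\u),V)\hookrightarrow H^2(\G,V)$ in weight $r<0$, I would use the minimal presentation $\u\cong\L(H_1(\u))^\wedge/(\im\psi)$. Given $\lambda\in\Hom_R(H_2(\u),V)$, form the central extension $0\to V\to\u_\lambda\to\u\to 0$ of pronilpotent Lie algebras in the category of $R$-modules, where $\u_\lambda$ is obtained from $\L(H_1(\u))^\wedge\oplus V$ (with $V$ central) by imposing the modified relations $\psi(z)=\lambda(z)$ for $z\in H_2(\u)$. Because $V$ has negative weight, $\u_\lambda$ remains negatively weighted, and exponentiating together with the $R$-action gives a negatively weighted central extension $1\to V\to\cG_\lambda\to\cG\to 1$ of proalgebraic $\Ql$-groups. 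Pulling back along $\rhotilde$ produces a continuous central extension of $\G$ by $V$ and hence a class in $H^2(\G,V)$. Injectivity follows because a non-zero $\lambda$ yields a non-split Lie algebra extension of $\u$, and Zariski density of the image of $\rhotilde$ prevents the pulled-back group extension from splitting.

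Finally, for the vanishing when $r>-2$: since $H_1(\u)=W_{-1}H_1(\u)$, every non-zero bracket in $\L(H_1(\u))^\wedge$ has weight at most $-2$, so the image of the presentation map $\psi:H_2(\u)\to[\L(H_1(\u))^\wedge,\L(H_1(\u))^\wedge]$ lies in weights $\le -2$; thus $H_2(\u)=W_{-2}H_2(\u)$ and $\Hom_R(H_2(\u),V)=0$ for any $V$ of pure weight $r>-2$. The main obstacle will be verifying that the $H^1$ and $H^2$ constructions descend to well-defined, mutually inverse maps on cohomology classes and isomorphism classes of extensions, and that the continuity hypotheses needed to invoke the universal property of weighted completion are satisfied at each step.
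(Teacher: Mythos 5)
The paper does not prove this Proposition; it cites it from \cite[\S 4.3]{hain-matsumoto:weighted}, so there is no internal proof to compare against. Your reconstruction — deducing the $\Gr^W_r$ factorization and the vanishing for $r\ge 0$ from $H_1(\u)=W_{-1}H_1(\u)$ and a $\wtilde$-splitting of the weight filtration; identifying $\Hom_R(H_1(\u),V)$ with $H^1(\G,V)$ via the universal mapping property applied to lifts of $\rho$ into $R\ltimes V$; realizing $\Hom_R(H_2(\u),V)$ as negatively weighted central extensions and pulling back along $\rhotilde$; and reading off the vanishing for $r>-2$ from $H_2(\u)=W_{-2}H_2(\u)$ — is exactly the standard argument of the cited source and is essentially correct.

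One small sharpening is warranted in the injectivity step for $H^2$. You write that ``Zariski density of the image of $\rhotilde$ prevents the pulled-back group extension from splitting,'' but density alone is not quite the point. The clean argument is that a splitting $s:\G\to\G_\lambda:=\rhotilde^*\cG_\lambda$ composes with $\G_\lambda\to\cG_\lambda(\Ql)$ to give a continuous lift of $\rho$ into the negatively weighted extension $\cG_\lambda$ of $R$; the universal property of $\cG$ then produces a homomorphism $\cG\to\cG_\lambda$ over $R$, and uniqueness in that same universal property (applied to $\rhotilde$ itself) forces $\cG\to\cG_\lambda\to\cG$ to be the identity, so $\cG_\lambda\to\cG$ splits, contradicting $\lambda\neq 0$. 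You already invoke the universal property correctly in the $H^1$ computation, so this is a presentational adjustment rather than a gap. Likewise, in your explicit construction of $\u_\lambda$ you should record that the minimality of the presentation (i.e.\ $\im\psi\cap[\L,\cR]=0$, where $\cR$ is the relation ideal) is what guarantees $V\hookrightarrow\u_\lambda$ is injective. The remaining verifications you flag at the end (well-definedness, mutual inverseness, continuity) are indeed routine.
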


\begin{remark}
\label{rem:invariants}
An elementary spectral sequence argument implies that when $R$ is connected
$$
H^j(\g,V) \cong H^0(\r,H^j(\u)\otimes V) \cong \Hom_R(H_j(\u),V)
$$
for all finite dimensional $R$-modules $V$.
\end{remark}

The following example summarizes results from \cite{hain-matsumoto:weighted} and
will be used later in the paper.

\begin{example}
\label{ex:fields}
Suppose that $k$ is a field for which the image of the $\ell$-adic cyclotomic
character $\chi_\ell : G_k \to \Zlx$ is infinite or, equivalently, that
$\chi_\ell : G_k \to \Gm(\Ql)$ is Zariski dense. Define a central
cocharacter\footnote{This is the ``right choice'' because the standard
representation of $\Gm$ corresponds to $\Ql(1)$, which has weight $-2$.} $\w :
\Gm \to \Gm$ by $z\mapsto z^{-2}$. Denote the weighted completion of $G_k$ with
respect to $\chi_\ell$ and $\w$ by $\A_k$. It is a split extension
$$
1 \to \K_k \to \A_k \to \Gm \to 1
$$
of $\Ql$-groups, where $\K_k$ is prounipotent. Denote the Lie algebras of $\A_k$
and $\K_k$ by $\a_k$ and $\k_k$, respectively. Proposition~\ref{prop:homology}
implies that
$$
\Gr^W_r H_1(\k_k) =
\begin{cases}
H^1(G_k,\Ql(s))^\ast\otimes\Ql(s) & r= - 2s < 0,\cr
0 & \text{ otherwise.}
\end{cases}
$$
This implies that $\Gr^W_{-1} \a_k =0$ and that
$$
\Gr^W_{-2} \a_k \cong H^1(G_k,\Ql(1))^\ast\otimes \Ql(1).
$$
If $k$ is a number field, a finite extension of $\Qp$ or a finite field of
characteristic $p$, where $p\neq \ell$, the image of the $\ell$-adic cyclotomic
character $\chi_\ell : G_k \to \Zlx$ is infinite. When $k$ is finite, $\A_k =
\Gm$. When $k$ is a number field, $\Gr^W_\dot\k_k$ is a free Lie
algebra, \cite{hain-matsumoto:weighted}.

\end{example}

\subsection{An exactness criterion}

In general, relative and weighted completion are only right exact. In this
section we give a criterion for when weighted completion is exact. Suppose that
$$
\xymatrix{
1 \ar[r] & \pi \ar[r] & \Ghat \ar[r]^\phi & \G \ar[r] & 1
}
$$
is an exact sequence of profinite groups. Suppose that $R$ is a reductive
$\Ql$-group with a central cocharacter $\w : \Gm \to R$. and that $\rho : \G \to
R(\Ql)$ is a continuous, Zariski dense representation. Denote the weighted
completion of $\G$ and $\Ghat$ with respect to $\rho$ and $\rho\circ\phi$ by
$\cG$ and $\cGhat$, respectively. Denote the unipotent completion of $\pi$
by $\cP$.

\begin{proposition}
\label{prop:left_exactness}
Suppose that the $\G$-action on $H_1(\pi)$ induces a $\cG$-action on $H_1(\cP)$
and that the corresponding weight filtration on $H_1(\cP)$ has finite
dimensional graded quotients which vanish in weights $r\ge 0$. If $\cP$ has
trivial center, then the sequence of completions
$$
\xymatrix{
1 \ar[r] & \cP \ar[r] & \cGhat \ar[r] & \cG \ar[r] & 1
}
$$
is exact.
\end{proposition}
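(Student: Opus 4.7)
The plan is to exploit the universal property of weighted completion in both directions, together with a semidirect product construction built from the $\cG$-action on $\cP$.

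First I would handle the easy parts. Right exactness of weighted completion applied to $\Ghat \to \G \to \cG(\Ql)$ produces the surjection $\cGhat \twoheadrightarrow \cG$. Denote by $\widetilde{\cP}$ the Zariski closure of the image of $\pi$ in $\cGhat$; it is normal and prounipotent (it lies in the prounipotent radical $\Uhat$ of $\cGhat$), and the universal property of unipotent completion applied to $\pi \to \widetilde{\cP}(\Ql)$ yields a surjection $\cP \twoheadrightarrow \widetilde{\cP}$. To see that $\widetilde{\cP} = \ker(\cGhat \to \cG)$, I would observe that $\cGhat/\widetilde{\cP}$ is a negatively weighted extension of $R$ (being a quotient of one), and that $\Ghat \to \cGhat \to \cGhat/\widetilde{\cP}$ factors through $\G$. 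The universal property of $\cG$ then yields $\cG \to \cGhat/\widetilde{\cP}$ that on the Zariski-dense image of $\Ghat$ is two-sided inverse to the canonical $\cGhat/\widetilde{\cP} \to \cG$; hence $\cGhat/\widetilde{\cP} \cong \cG$ and $\widetilde{\cP}$ is precisely the kernel.

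The crux is thus the injectivity of $\cP \twoheadrightarrow \widetilde{\cP}$, which I would establish by constructing a retraction $r \colon \cGhat \to \cP$ of the composition $\cP \to \widetilde{\cP} \hookrightarrow \cGhat$. The hypothesis that the $\G$-action on $H_1(\pi)$ induces a $\cG$-action on $H_1(\cP)$, combined with the negative-weight and finite-dimensionality hypotheses, permits lifting this $\cG$-action step-by-step up the lower central series of $\operatorname{Lie}(\cP)$ to a $\cG$-action on the prounipotent group $\cP$ itself: the obstructions at each stage lie in graded pieces of non-negative weight, which vanish by assumption, and the defining relations in the free pronilpotent algebra on $H_1(\cP)$ are $\G$-invariant, hence $\cG$-invariant by Zariski density. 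With this action I form the semidirect product $\cG \ltimes \cP$; its prounipotent radical $\U \ltimes \cP$ has abelianization $H_1(\U) \oplus H_1(\cP)_\U$, which is negatively weighted, so $\cG \ltimes \cP$ is itself a negatively weighted extension of $R$.

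Next I would construct a continuous homomorphism $\phihat \colon \Ghat \to (\cG \ltimes \cP)(\Ql)$. Functoriality of unipotent completion promotes the conjugation action $\Ghat \to \Aut(\pi)$ to $\Ghat \to \Aut(\cP)$; since $\cP$ has trivial center, the sequence $1 \to \cP \to \Aut(\cP) \to \Out(\cP) \to 1$ is exact, and the original extension $\pi \to \Ghat \to \G$ is recovered as the pullback along $\G \to \Out(\cP)$. The $\cG$-action on $\cP$ constructed above provides a proalgebraic lift $\cG \to \Out(\cP)$; pulling back $1 \to \cP \to \Aut(\cP) \to \Out(\cP) \to 1$ along this lift yields an extension canonically identified with $\cG \ltimes \cP$, into whose rational points $\Ghat$ maps continuously. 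The universal property of weighted completion then extends $\phihat$ to a morphism $\Phi \colon \cGhat \to \cG \ltimes \cP$, and I would set $r$ to be the composition of $\Phi$ with the projection $\cG \ltimes \cP \to \cP$. Verifying that $r$ retracts $\cP \to \cGhat$ reduces, by Zariski density, to checking on $\pi$, where both compositions $\pi \to \cP \to \cGhat \to \cP$ and $\pi \to \cP$ coincide by construction — the triviality of $Z(\cP)$ being what ensures that $\Ghat \to \Aut(\cP)$ faithfully records $\pi \to \cP$ rather than collapsing it to $\cP/Z(\cP)$.

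The main obstacle will be the promotion of the $\cG$-module structure from $H_1(\cP)$ to a genuine $\cG$-action on the whole prounipotent group $\cP$. Once this lifting is in place, the remaining steps are essentially formal manipulations with universal properties and Zariski density; conversely, the finite-dimensionality and strict negativity of the weights on $H_1(\cP)$, together with $Z(\cP) = 1$, are precisely the ingredients needed to carry out the lifting.
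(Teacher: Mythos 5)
The crux of your argument --- manufacturing a genuine $\cG$-action on $\cP$ that lifts both the given $\cG$-module structure on $H_1(\cP)$ and the outer $\G$-action coming from the extension --- does not follow from the hypotheses, and in fact is \emph{false} in the situations where this proposition is applied. Because $Z(\cP)=1$, an extension of $\cG$ by $\cP$ is determined up to isomorphism by its outer action $\cG\to\Out(\cP)$, so the pullback $\widetilde{\cG}$ you construct is canonically isomorphic to $\cGhat$ once the proposition is known. A genuine $\cG$-action on $\cP$ compatible with the outer action therefore amounts to a section of $\cGhat\to\cG$: it identifies $\widetilde{\cG}\cong\cG\ltimes\cP$, split over $\cG$. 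But the proposition is invoked, via Proposition~\ref{prop:section}, precisely to prove exactness of $1\to\pi^\un\to\cG_C\to\cG_T\to1$ in situations where Theorem~\ref{thm:zero_points} then shows that \emph{no section of $\cG_C\to\cG_T$ exists} (e.g.\ the universal curve over $\M_{g}[m]$, $g\ge 5$). So the lifting step cannot go through in the required generality. Your sketch of it (``obstructions lie in graded pieces of non-negative weight, which vanish'') is not a substitute: the relevant obstructions to equipping $\p$ with a $\cG$-module structure compatible with the given one on $H_1(\p)$ and with the outer $\G$-action live in nontrivial $\Ext$-groups over $\cG$, and strict negativity of the weights of $H_1(\cP)$ does not make them vanish.

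The paper's argument is shorter and avoids this entirely by working with an action of $\cGhat$, not $\cG$, on $\p$. The conjugation action of $\Ghat$ on $\pi$ gives a continuous homomorphism $\Ghat\to\Aut_W(\p)$; finite-dimensionality of the weight graded quotients makes $\Aut_W(\p)=\varprojlim_r\Aut_W(\p/W_r\p)$ a proalgebraic group, and negativity of the weights makes the relevant extension of the image of $R$ negatively weighted, so the universal property of weighted completion yields $\cGhat\to\Aut_W(\p)$. Restricting this to the image of $\cP$ gives the adjoint action $\cP\to\Aut(\p)$, which is injective because $Z(\cP)=1$; hence $\cP\to\cGhat$ is injective. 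Note that once you have $\cGhat\to\Aut(\cP)$ (which is essentially your $\Phi$, but without needing a splitting of its target), you do not need a retraction $\cGhat\to\cP$ at all --- the composite $\cP\to\cGhat\to\Aut(\cP)$ is already injective, which is the whole point. Your detour through a semidirect product $\cG\ltimes\cP$ and a (non-homomorphic) projection onto $\cP$ is what creates the gap.
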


\begin{proof}
Denote the Lie algebra of $\cP$ by $\p$. Since the action of $\cG$ on $H_1(\cP)$
is induced by the conjugation action of $\Ghat$ on $\pi$, it follows that the
conjugation action of $\Ghat$ on $\pi$ induces an action of $\cGhat$ on
$H_1(\cP)=H_1(\p)$. Since this action is negatively weighted, it follows that
the action of $\Ghat$ on $\p$ induced by conjugation induces a negatively
weighted action of $\cGhat$ on $\p$.

Right exactness of weighted completion implies that the sequence
$$
\cP \to \cGhat \to \cG \to 1
$$
is exact. Since $\p$ is negatively weighted and since each weight graded
quotient of $H_1(\p)$ is finite dimensional, each $\p/W_r\p$ is finite
dimensional and the group $\Aut_W\p$ of $W_\dot$ preserving automorphisms of
$\p$ is proalgebraic as it is the inverse limit of the $\Aut_W(\p/W_r)$. The
conjugation action of $\Ghat$ on $\pi$ induces a homomorphism $\cGhat \to
\Aut_W\p$ such that the diagram
$$
\xymatrix{
\pi \ar[r]\ar[d] & \Ghat \ar[d]  \ar[dr] \cr
\cP(\Ql) \ar[r] & \cGhat(\Ql) \ar[r] & (\Aut\p)(\Ql)
}
$$
commutes. Since $\cP$ has trivial center, the adjoint action $\cP \to \Aut\p$ is
injective, which implies that $\cP \to \cGhat$ is injective.
\end{proof}

\section{Weighted Completion and Families of Curves}

Suppose that $k$ is a field of characteristic zero and that $C\to T$ is a family
of smooth projective curves of genus $g\ge 2$ over a smooth, geometrically
connected $k$-scheme $T$, or the generic point of a such a $k$-scheme. Let
$\etabar_T$ be a geometric point of $T$ and $\etabar_C$ a geometric point of $C$
that lies over $\etabar_T$. Denote the fiber of $C$ over $\etabar_T$ by
$C_\etabar$. Fix a prime number $\ell$. Set
$$
H = H_\Ql = \Het^1(C_\etabar,\Ql(1))
$$
and let $R$ be the Zariski closure of the image of the monodromy representation
$$
\pi_1(T,\etabar_T) \to \GSp(H_\Ql).
$$
Assume that $R$ contains the homotheties.\footnote{A theorem of Bogomolov
\cite{bogomolov} implies that this is always the case when $k$ is a number
field.} Then one has the central cocharacter
\begin{equation}
\label{eqn:standard}
\w : \Gm \to R\qquad z \mapsto z^{-1}\id_H,
\end{equation}
which shall call the {\em standard cocharacter}.\footnote{This is the
``correct'' definition as $H$ has weight $-1$.}

The surjectivity of $\pi_1(C,\etabar_C) \to \pi_1(T,\etabar_T)$ implies that the
monodromy representation $\pi_1(C,\etabar_C) \to R(\Ql)$ is also Zariski dense.
Denote the weighted completions of $\pi_1(C,\etabar_C)$ and $\pi_1(T,\etabar_T)$
with respect to $\w$ and their monodromy representations to $R$ by $\cG_C$ and
$\cG_T$, respectively; denote their prounipotent radicals by $\U_C$ and $\U_T$,
respectively.

The surjectivity of $\pi_1(C\otimes_k \kbar,\etabar_C) \to \pi_1(T\otimes_k
\kbar,\etabar_T)$ implies that their images in $R(\Ql)$ are equal. Their common
Zariski closure is a reductive subgroup $R^\geom$ of $R$. Denote the continuous
relative completions of $\pi_1(C\otimes_k \kbar,\etabar_C)$ and
$\pi_1(T\otimes_k \kbar,\etabar_T)$ with respect to their homomorphisms to
$R^\geom(\Ql)$ by $\cG_C^\geom$ and $\cG_T^\geom$, and their prounipotent
radicals by $\U_C^\geom$ and $\U_T^\geom$.

Finally, set $\pi = \pi_1(C_\etabar)$. Denote by $\pi^\un$ its continuous
unipotent completion $\pi_{/\Ql}^\un$ over $\Ql$.

\begin{proposition}
\label{prop:section}
With the notation above:
\begin{enumerate}

\item  There are exact sequences
$$
1 \to \pi^\un \to \cG_C \to \cG_T \to 1 \text{ and }
1 \to \pi^\un \to \cG_C^\geom \to \cG_T^\geom \to 1
$$
of proalgebraic $\Ql$-groups such that the diagram
$$
\xymatrix@R=10pt@C=0pt{
1 \ar[rr] && \pi_1(C_\etabar) \ar[rr]\ar'[d][dd]\ar[dr] &&
\pi_1(C\otimes_k \kbar,\etabar) \ar[rr]\ar'[d][dd]\ar[dr] &&
\pi_1(T\otimes_k \kbar,\etabar) \ar[rr]\ar'[d][dd]\ar[dr] && 1 \cr
& 1 \ar[rr] && \pi_1(C_\etabar) \ar[rr]\ar[dd] &&
\pi_1(C,\etabar) \ar[rr]\ar[dd] &&
\pi_1(T,\etabar) \ar[rr]\ar[dd] && 1 \cr
1 \ar[rr] && \pi^\un(\Ql) \ar'[r][rr]\ar[dr] &&
\cG_C^\geom(\Ql) \ar'[r][rr]\ar[dr] &&
\cG_T^\geom(\Ql) \ar'[r][rr]\ar[dr] && 1\cr
& 1 \ar[rr] && \pi^\un(\Ql) \ar[rr] && \cG_C(\Ql) \ar[rr] &&
\cG_T(\Ql) \ar[rr] && 1
}
$$
commutes.

\item Every section $s$ of $\pi_1(C,\etabar) \to \pi_1(T,\etabar)$ induces a
sections $\sigma$ of $\cG_C \to \cG_T$ and $\sigma^\geom$ of $\cG_C^\geom \to
\cG_T^\geom$ such that the diagram
$$
\xymatrix@R=10pt@C=0pt{
\pi_1(C\otimes_k \kbar,\etabar')\ar[dd]\ar[dr] &&
\pi_1(T\otimes_k \kbar,\etabar)\ar'[d][dd]\ar[ll]_s\ar[dr]\cr
&\pi_1(C,\etabar')\ar[dd] && \pi_1(T,\etabar)\ar[dd]\ar[ll]_(0.6)s\cr
\cG_C^\geom(\Ql)\ar[dr] &&
\cG_T^\geom(\Ql) \ar[dr]\ar'[l]_(0.7){\sigma^\geom}[ll] \cr
&\cG_C(\Ql) && \cG_T(\Ql) \ar[ll]_\sigma
}
$$
commutes.

\end{enumerate}
\end{proposition}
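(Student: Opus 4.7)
The plan is to deduce part (i) from the exactness criterion in Proposition~\ref{prop:left_exactness}, and then to obtain the sections in (ii) from the universal property of weighted completion.

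For (i), since $C \to T$ is proper and smooth with geometrically connected fibres, the homotopy exact sequence for \'etale fundamental groups combined with Lemma~\ref{lem:injective} (which uses $g \ge 2$) yields the short exact sequence
\begin{equation*}
1 \to \pi_1(C_\etabar,\etabar) \to \pi_1(C,\etabar) \to \pi_1(T,\etabar) \to 1
\end{equation*}
and its base change by $\kbar$. Setting $\pi = \pi_1(C_\etabar)$, the continuous unipotent completion $\cP := \pi^\un$ has $H_1(\cP) \cong H$ as an $R$-module; since $H$ is irreducible of weight $-1$ with respect to the standard cocharacter $\w$ of (\ref{eqn:standard}), $H_1(\cP)$ is finite dimensional with weight filtration vanishing in weights $\ge 0$. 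Moreover, because $g\ge 2$, the Malcev Lie algebra of a closed surface group admits the minimal presentation $\L(H)^\wedge/(\thetadual)$ in the category of $R$-modules, and a direct check shows its adjoint action is injective, so $\cP$ has trivial centre. These verify the hypotheses of Proposition~\ref{prop:left_exactness}, which yields exactness of $1 \to \pi^\un \to \cG_C \to \cG_T \to 1$. The same argument applied to the base-changed sequence, together with the analogous left-exactness statement for continuous relative completion from \cite{hain-matsumoto:mcgs}, gives exactness of $1 \to \pi^\un \to \cG_C^\geom \to \cG_T^\geom \to 1$. Commutativity of the three-dimensional diagram follows from the functoriality of (continuous, relative, weighted) completion applied to the horizontal and vertical arrows $\pi_1(X\otimes_k\kbar,\etabar) \to \pi_1(X,\etabar)$ for $X \in \{C,T\}$.

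For (ii), let $s : \pi_1(T,\etabar) \to \pi_1(C,\etabar)$ be a section. The composition $\alpha := \rhotilde_C \circ s : \pi_1(T,\etabar) \to \cG_C(\Ql)$ is continuous, and its composition with the projection $\cG_C \to R$ is the monodromy representation $\rho$. Since $\cG_C$ is by construction a negatively weighted extension of $R$, the universal property of the weighted completion $\cG_T$ produces a unique morphism $\sigma : \cG_T \to \cG_C$ of $\Ql$-group schemes over $R$ lifting $\alpha$. To see $\sigma$ is a section of the projection $\cG_C \to \cG_T$ established in (i), note that the composition $\cG_T \xrightarrow{\sigma} \cG_C \to \cG_T$ and $\id_{\cG_T}$ both factor $\rhotilde_T : \pi_1(T,\etabar) \to \cG_T(\Ql)$ through $\cG_T$, so they agree by the uniqueness clause in the universal property. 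The geometric section $\sigma^\geom$ is produced identically, using the restriction of $s$ to $\pi_1(T\otimes_k\kbar,\etabar)$ and the universal property of continuous relative completion. All compatibilities in the displayed diagram likewise follow from the uniqueness clauses in the universal properties.

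The one nontrivial input is the triviality of the centre of $\pi^\un$, since this is what unlocks Proposition~\ref{prop:left_exactness}; everything else is formal from the universal properties and the functoriality of completion. This centre computation reduces to the standard fact that the completed free Lie algebra $\L(H)^\wedge$ modulo the ideal generated by $\thetadual \in \Lambda^2 H$ has no non-zero central elements when $g \ge 2$, which can be read off from the fact that $\thetadual$ is non-degenerate.
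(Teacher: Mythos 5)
Your proof of the arithmetic row in (i) and of part (ii) follows essentially the paper's route: Proposition~\ref{prop:left_exactness} for exactness once one knows $\pi^\un$ has trivial centre, and the universal mapping property of weighted completion for producing $\sigma$. (Your sketch for the triviality of the centre is a bit thin --- ``adjoint action is injective'' is by definition equivalent to ``centre is trivial,'' so you are asserting the conclusion rather than checking it --- but the fact is standard and the paper simply cites \cite{hattori-stallings}, so this is cosmetic.)

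There is, however, a genuine gap in your treatment of the geometric row $1 \to \pi^\un \to \cG_C^\geom \to \cG_T^\geom \to 1$. You try to run the ``same argument'' for relative completion, invoking an ``analogous left-exactness statement for continuous relative completion'' attributed to \cite{hain-matsumoto:mcgs}. This is problematic: Proposition~\ref{prop:left_exactness} is formulated for \emph{weighted} completion and essentially depends on the weight machinery, but the geometric completions $\cG_C^\geom$, $\cG_T^\geom$ are relative completions over $\Sp(H_\Ql)$, which carries no nontrivial central cocharacter and therefore no intrinsic weight apparatus. The weight filtration on $\g^\geom$ in this paper is imported from the action of the ambient arithmetic completion, not native to relative completion. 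So an ``analogous'' lemma cannot simply be transplanted, and nothing of that form is established here. The paper avoids the issue by a short self-contained deduction: right exactness of relative completion gives $\pi^\un \to \cG_C^\geom \to \cG_T^\geom \to 1$, and the injectivity of $\pi^\un \to \cG_C^\geom$ follows because the composite $\pi^\un \to \cG_C^\geom \to \cG_C$ is injective (this is precisely the arithmetic-row exactness you have already established). You should replace your appeal to an external lemma by this factoring argument.
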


\begin{proof}[Sketch of Proof]
Exactness of the row containing $\cG_C$ follows from
Proposition~\ref{prop:left_exactness} and the fact \cite{hattori-stallings} that
$\pi^\un$ has trivial center as $H_1(\pi^\un)$ is a $\cG_C$ module of weight
$-1$. The exactness of the row containing $\cG_C^\geom$ follows from the right
exactness of relative completion and the fact that the composite $\pi^\un \to
\cG_C^\geom \to \cG_C$ is injective.

Since $\cG_C$ is a negatively weighted extension of $R$, the composition
$\pi_1(T,\xbar) \to \pi_1(C,\xbar') \to \cG_C(\Ql)$ of $\rhohat_C$ with $s$
induces a homomorphism $\sigma:\cG_T \to \cG_C$. The universal mapping property of
$\cG_T$ implies that it is a section.
\end{proof}

Denote the Lie algebras of
$$
R,\ \cG_C,\ \cG_T,\ \U_C,\  \U_T,\ \pi^\un\quad
\text{by}\quad
\r,\ \g_C,\ \g_T,\ \u_C,\ \u_T,\ \p,
$$
respectively. All are representations of $\cG_C$. So, by
Proposition~\ref{prop:presentation}, these all have natural weight
filtrations which are characterized by the property that their $m$th graded
quotient is an $R$-module of weight $m$. Moreover, $\p = W_{-1}\p$ and, for
$X=T$ and $C$
$$
\g_X = W_0 \g_X,\ W_{-1}\g_X = \u_X \text{ and }\Gr^W_0 \g_X \cong \r.
$$

\begin{corollary}
There is an exact sequence
$$
0 \to \Gr^W_\dot \p \to \Gr^W_\dot \g_C \to \Gr^W_\dot \g_T \to 0
$$
of graded Lie algebras in the category of $R$-modules. Each section of
$\pi_1(C,\etabar_C) \to \pi_1(T,\etabar_T)$ induces an $R$-linear section of
$\Gr^W_\dot \g_C \to \Gr^W_\dot \g_T$.
\end{corollary}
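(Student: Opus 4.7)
The plan is to obtain the exact sequence of graded Lie algebras by differentiating the exact sequence of proalgebraic groups in Proposition~\ref{prop:section}(i), then applying the exactness of $\Gr^W_\dot$ provided by Proposition~\ref{prop:weight}.

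First, differentiating the top row of the diagram in Proposition~\ref{prop:section}(i) yields an exact sequence
$$
0 \to \p \to \g_C \to \g_T \to 0
$$
of Lie algebras. All three are $\cG_C$-modules under the adjoint action ($\cG_T$ and $R$ acting via the surjection $\cG_C \to \cG_T \to R$), and both maps are $\cG_C$-equivariant Lie algebra homomorphisms. Since $\cG_C$ is a negatively weighted extension of $R$, Proposition~\ref{prop:weight} equips each of $\p$, $\g_C$, $\g_T$ with a canonical weight filtration (the one discussed in Proposition~\ref{prop:presentation}), preserved by $\cG_C$-module maps, and guarantees that the functor $\Gr^W_\dot$ is exact on the category of $\cG_C$-modules. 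Applying $\Gr^W_\dot$ to the displayed sequence then produces the required exact sequence
$$
0 \to \Gr^W_\dot \p \to \Gr^W_\dot \g_C \to \Gr^W_\dot \g_T \to 0
$$
in the category of graded $R$-modules; the bracket on each term is the associated graded of the ambient Lie bracket, which is $R$-equivariant because it was $\cG_C$-equivariant, so this is an exact sequence of graded Lie algebras in $R$-modules.

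For the second assertion, Proposition~\ref{prop:section}(ii) produces a homomorphic section $\sigma : \cG_T \to \cG_C$ of proalgebraic $\Ql$-groups. Differentiating gives a Lie algebra section $d\sigma : \g_T \to \g_C$ of $\g_C \to \g_T$. To see that $d\sigma$ respects weights, choose any lift $\wtilde_T : \Gm \to \cG_T$ of the standard central cocharacter $\w : \Gm \to R$, and set $\wtilde_C := \sigma \circ \wtilde_T$; this is a lift of $\w$ to $\cG_C$. By Proposition~\ref{prop:presentation}, the resulting $\Gm$-actions on $\g_T$ and $\g_C$ determine isomorphisms $\g_T \cong \prod_{r<0}\Gr^W_r\g_T$ and $\g_C \cong \prod_{r\le 0}\Gr^W_r\g_C$, and by construction $d\sigma$ intertwines these $\Gm$-actions. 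Hence $d\sigma$ preserves weights strictly, and applying $\Gr^W_\dot$ yields an $R$-linear section $\Gr^W_\dot d\sigma : \Gr^W_\dot\g_T \to \Gr^W_\dot \g_C$ of the surjection above.

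The only subtle point is the compatibility of weight filtrations with the section, and this is handled cleanly by the canonicity of the weight filtration together with the freedom to choose the lift of $\w$ coherently through $\sigma$; no calculation is required beyond this. Everything else is a direct application of Propositions~\ref{prop:weight}, \ref{prop:presentation} and~\ref{prop:section}.
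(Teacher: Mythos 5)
Your proof is correct and takes essentially the same approach as the paper: differentiate the exact sequence of proalgebraic groups from Proposition~\ref{prop:section} to get an exact sequence of $\cG_C$-modules, then apply the exactness of $\Gr^W_\dot$. Your extra care with the section — choosing the lift $\wtilde_C := \sigma\circ\wtilde_T$ so that $d\sigma$ intertwines the $\Gm$-actions — fills in the detail the paper leaves implicit when it says the section statement "follows similarly," though one could also argue more briefly that $d\sigma$ is a $\cG_T$-module morphism (with $\cG_T$ acting on $\g_C$ through $\sigma$) and that $\cG_T$-module morphisms automatically preserve the weight filtration by Proposition~\ref{prop:weight}.
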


\begin{proof}
Proposition~\ref{prop:section} implies that the sequence
$$
0 \to \p \to \g_C \to \g_T \to 0
$$
of Lie algebras is an exact sequence of $\cG_C$-modules. The result follows from
the fact that the functor $\Gr^W_\dot$ is exact on the category of
$\cG_C$-modules. The statement about sections follows similarly.
\end{proof}

\subsection{Weighted completion of Galois groups of function fields}

Suppose that $T$ is a smooth, geometrically connected variety over a field $k$
and that $C \to T$ is a family of smooth projective curves over $T$. Set
$K=k(T)$. Let $\etabar$ be a geometric point of $C$ that lies over the generic
point of $T$.

Denote the restriction of $C$ to the Zariski open subset $U$ of $T$ by $C_U$.
Denote the restriction of $C$ to the generic point $\Spec K$ of $T$ by $C_K$.
The image of
$$
\rho_U : \pi_1(C_U,\etabar) \to \pi_1(U,\etabar) \to \GSp(H_\Ql)
$$
does not depend on $U$. Denote its Zariski closure by $R$. Assume that this
contains the scalar matrices so that the image of the standard cocharacter
(\ref{eqn:standard}), $\w: z\mapsto z^{-1}\id_H$, is defined. Denote the
weighted completion of $\pi_1(C_U,\etabar)$ with respect to $\rho_U :
\pi_1(C_U,\etabar) \to R(\Ql)$ by $\cG_{C/U}$. Denote the weighted completion of
$\pi_1(U,\etabar)$ by $\cG_U$. Then one has the morphism of inverse systems
$$
\{\cG_{C/U}\}_U\to \{\cG_U\}_U.
$$
Denote the weighted completion of $\pi_1(C_K,\etabar)$ with respect to
$\rho_K : \pi_1(C_K,\etabar) \to R(\Ql)$ by $\cG_{C/K}$, and the weighted
completion of $G_K$ with respect to the monodromy representation $G_K \to
\GSp(H)$ by $\cG_K$.

\begin{proposition}
\label{prop:generic_pt}
There are natural isomorphisms
$$
\cG_{C/K} \stackrel{\simeq}{\longrightarrow} \varprojlim_U \cG_{C/U}
\text{ and }
\cG_{K} \stackrel{\simeq}{\longrightarrow} \varprojlim_U \cG_{U}
$$
such that the diagram
$$
\xymatrix{
\pi_1(C_K,\etabar) \ar[d]  \ar[r]^(.45)\simeq &
\varprojlim_U \pi_1(C_U,\etabar) \ar[d]\cr
\cG_{C/K}(\Ql) \ar[r]^(.45)\simeq & \varprojlim_U \cG_{C/U}(\Ql)
}
$$
commutes. Each section of $\pi_1(C_K,\etabar) \to G_K$ induces a section
of $\cG_{C/K}\to \cG_K$.
\end{proposition}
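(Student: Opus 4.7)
The plan is to show that the inverse limit $\varprojlim_U\cG_{C/U}$ satisfies the universal property characterizing $\cG_{C/K}$, and similarly for $\varprojlim_U\cG_U$ and $\cG_K$. First I would construct the natural map: for $U'\subseteq U$ the composite $\pi_1(C_{U'},\etabar)\twoheadrightarrow\pi_1(C_U,\etabar)\to\cG_{C/U}(\Ql)$ is continuous, Zariski dense in $R$, and lifts the monodromy, so it induces a surjection $\cG_{C/U'}\twoheadrightarrow\cG_{C/U}$ via the universal property of $\cG_{C/U'}$; hence $\varprojlim_U\cG_{C/U}$ is a pro-algebraic negatively weighted extension of $R$ by the prounipotent group $\varprojlim_U\U_{C/U}$. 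The compatible family $\pi_1(C_K,\etabar)\to\pi_1(C_U,\etabar)\to\cG_{C/U}(\Ql)$ assembles into a continuous, Zariski-dense homomorphism $\pi_1(C_K,\etabar)\to\varprojlim_U\cG_{C/U}(\Ql)$ lifting $\rho_K$, which by the universal property yields the morphism $\cG_{C/K}\to\varprojlim_U\cG_{C/U}$ together with the stated commutative square.

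To show this morphism is an isomorphism, let $G$ be a finite-dimensional algebraic $\Ql$-group that is a negatively weighted extension of $R$, and let $\phi:\pi_1(C_K,\etabar)\to G(\Ql)$ be a continuous homomorphism lifting $\rho_K$. I must show $\phi$ extends uniquely to a morphism $\varprojlim_U\cG_{C/U}\to G$; equivalently, since $G$ is finite-dimensional, that $\phi$ factors through some $\pi_1(C_{U_0},\etabar)$ and hence through $\cG_{C/U_0}$. Choosing a Levi decomposition $G=R\ltimes N$, the composition $\pi_1(C_K,\etabar)\to G\to R$ equals $\rho_K$, which already factors through every $\pi_1(C_U,\etabar)$; the lifts to $G$ are classified by a non-abelian continuous cohomology class in $H^1_\cts(\pi_1(C_K,\etabar),N)$. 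Filtering $N$ by its lower central series, whose graded pieces are finite-dimensional negatively weighted $R$-modules, a standard step-by-step argument reduces the factorization claim to showing that for each finite-dimensional $R$-module $V$ of negative weight,
$$
H^1_\cts\bigl(\pi_1(C_K,\etabar),V\bigr)=\varinjlim_U H^1_\cts\bigl(\pi_1(C_U,\etabar),V\bigr).
$$
This follows from the identification $\pi_1(C_K,\etabar)=\varprojlim_U\pi_1(C_U,\etabar)$ (proved exactly as for $G_K$ in Section~\ref{sec:points}) and the continuity of $\ell$-adic continuous cohomology under cofiltered inverse limits of profinite groups with finite-dimensional coefficients; alternatively, the same claim can be read off from Proposition~\ref{prop:homology} applied to both weighted completions.

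For the statement on sections, any section $s:G_K\to\pi_1(C_K,\etabar)$ composes with the canonical map to $\cG_{C/K}(\Ql)$ to yield a continuous homomorphism $G_K\to\cG_{C/K}(\Ql)$ whose projection to $R$ is the $G_K$-monodromy on $H$; the universal property of $\cG_K$ then produces the desired morphism $\sigma:\cG_K\to\cG_{C/K}$, which is a section of $\cG_{C/K}\to\cG_K$ by uniqueness in the universal property. The main technical point will be the $\ell$-adic cohomological continuity in the previous paragraph: for discrete torsion coefficients this is standard, and its transfer to continuous $\Ql$-coefficients requires interchanging a $\varprojlim$ over $\ell$-adic truncations with the $\varinjlim$ over $U$, but the finite-dimensionality of $V$ together with the fact that its $\pi_1(C_K,\etabar)$-action factors compatibly through $R$ via each $\pi_1(C_U,\etabar)$ makes the relevant Mittag-Leffler hypotheses hold.
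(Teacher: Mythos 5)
Your main route — verifying the universal property of $\varprojlim_U\cG_{C/U}$ by showing every continuous lift $\phi:\pi_1(C_K,\etabar)\to G(\Ql)$ to a finite-dimensional negatively weighted extension factors through some $\pi_1(C_{U_0},\etabar)$ — is genuinely different from the paper's. The paper instead appeals to Proposition~\ref{prop:homology} together with the fact that $\varinjlim_U H^\dot(\pi_1(C_U,\etabar),V)\to H^\dot(\pi_1(C_K,\etabar),V)$ is an isomorphism for all finite-dimensional $R$-modules $V$: via the identifications $\Hom_R(H_1(\u),V)\cong H^1$ and $\Hom_R(H_2(\u),V)\hookrightarrow H^2$, one concludes that $H_1(\u_{C/K})\to\varprojlim_U H_1(\u_{C/U})$ is an isomorphism and $H_2(\u_{C/K})\to\varprojlim_U H_2(\u_{C/U})$ is surjective, whence (as in Proposition~\ref{prop:level-indep}, via the minimal-presentation/Stallings criterion from Section~\ref{sec:presentations}) the map on prounipotent radicals is an isomorphism. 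Your factorization approach is a reasonable alternative and lands on the same underlying cohomological continuity input, but the paper's route avoids the bookkeeping of the step-by-step lifting argument.

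There is, however, a concrete gap in your reduction. You filter $N$ by its lower central series and claim the factorization claim reduces to the statement $H^1_\cts(\pi_1(C_K,\etabar),V)=\varinjlim_U H^1_\cts(\pi_1(C_U,\etabar),V)$. But at each inductive step one is lifting a homomorphism through a central extension $1\to V\to G_{i+1}\to G_i\to 1$: given that $\phi_i$ factors through some $\pi_1(C_{U_i},\etabar)$, the existence of a lift of that factored map to $G_{i+1}(\Ql)$ over some smaller $U$ is obstructed by a class in $H^2(\pi_1(C_{U},\etabar),V)$, and one needs to know that this obstruction dies after shrinking $U$, i.e., one needs at least injectivity of $\varinjlim_U H^2(\pi_1(C_U,\etabar),V)\to H^2(\pi_1(C_K,\etabar),V)$ (the given lift $\phi_{i+1}$ only tells you the obstruction dies over $\Spec K$). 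Only after such a lift exists does the torsor under $Z^1(\cdot,V)$ — hence your $H^1$ continuity, combined with the identity $B^1=V/V^\pi$ which is stable under $U$ — let you match it to $\phi_{i+1}$. So the reduction needs $H^2$ continuity as well as $H^1$; the paper's phrasing ("isomorphisms for all finite dimensional $R$-modules $V$'', implicitly in the degrees that matter) covers this, and in fact Proposition~\ref{prop:homology} packages exactly the $H^1$/$H^2$ data needed. Your final remark that ``the same claim can be read off from Proposition~\ref{prop:homology}'' is in fact the cleaner path, and it is essentially the paper's.
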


\begin{proof}
This follows from Proposition~\ref{prop:homology} and the fact that
the natural homomorphisms
$$
\varinjlim_U H^\dot(\pi_1(C_U,\etabar),V) \to H^\dot(\pi_1(C_K,\etabar),V),
\quad
\varinjlim_U H^\dot(\pi_1(U,\etabar),V) \to H^\dot(G_K,V)
$$
are isomorphisms for all finite dimensional $R$-modules $V$. The statement
about sections follows from the naturality of weighted completion. 
\end{proof}

\section{Weighted Completion of Arithmetic Mapping Class Groups}
\label{sec:comp_arith_mcgs}

In this section we summarize and extend results from \cite{hain-matsumoto:mcgs}.
Throughout this section, $g$, $n$ and $m$ are non-negative integers satisfying
$2g-2+n>0$, and $m\ge 1$.  We also assume that $k$ is a field of characteristic
zero that contains $\bmu_m(\kbar)$, the $m$th roots of unity, and we fix a
primitive $m$th root of unity so that one has a geometrically connected moduli
stack $\M_{g,n/k}[m]$.

Fix a geometric point $\etabar$ of $\M_{g,n/k}[m]$ and let $C_\etabar$ be the fiber
of the universal curve over $\etabar$. For a prime number $\ell$ and a
$\Zl$-module $A$, set
$$
H_A = \Het^1(C_\etabar,A(1)).
$$
Assume that the image of the $\ell$-adic cyclotomic character $\chi_\ell : G_k
\to \Zlx$ is infinite. This implies that the image of $\chi_\ell : G_k \to
\Gm(\Ql)$ is Zariski dense. Since the diagram
$$
\xymatrix{
1 \ar[r] & \pi_1(\M_{g,n/\kbar}[m],\etabar) \ar[r]\ar[d]^{\rho^\geom} &
\pi_1(\M_{g,n/k}[m],\etabar) \ar[r] \ar[d]^\rho & G_k \ar[d]^{\chi_\ell}\ar[r] & 1 \cr
1 \ar[r] & \Sp(H_\Ql) \ar[r] & \GSp(H_\Ql) \ar[r] & \Gm(\Ql) \ar[r] & 1
}
$$
commutes and since the image of $\rho^\geom$ is $\Sp(H_\Zl)$, which is Zariski
dense in $\Sp(H_\Ql)$, the monodromy representation
$$
\rho : \pi_1(\M_{g,n/k}[m],\etabar) \to \GSp(H_\Ql)
$$
is Zariski dense.

Now take the coefficient field $F$ to be $\Ql$. Denote the weighted completion
of $\pi_1(\M_{g,n/k}[m],\etabar)$ over $\Ql$ with respect to $\rho$ and the
standard cocharacter $\w: z \mapsto z^{-1}\id_H$ by
$$
\cG_{\M_{g,n/k}[m]} \text{ and }
\rhotilde : \pi_1(\M_{g,n/k}[m],\etabar) \to \cG_{\M_{g,n/k}[m]}(\Ql).
$$
Recall that $\cG_{g,n}^\geom$ is the relative completion of
$\pi_1(\M_{g,n/\kbar}[m],\etabar)$ with respect to its standard representation
to $\Sp(H_\Ql)$.

Recall from Example~\ref{ex:fields} that $\A_k$ denotes the weighted completion
of $G_k$ with respect to the $\ell$-adic cyclotomic character $\chi_\ell : G_k
\to \Gm(\Ql)$. The following is an easily proved consequence of the right
exactness of relative and weighted completions.

\begin{proposition}[\cite{hain-matsumoto:mcgs}]
If $2g-2+n > 0$, then applying weighted completion to the two right-hand
columns and relative completion to the left-hand column of diagram
$$
\xymatrix{
1\ar[r] & \pi_1(\M_{g,n/\kbar}[m],\etabar) \ar[d]\ar[r] &
\pi_1(\M_{g,n/k}[m],\etabar)\ar[d]\ar[r] & G_k \ar[r]\ar[d] &
1 \cr
1 \ar[r] & \Sp(H_\Ql) \ar[r] & \GSp(H_\Ql) \ar[r] & \Gm(\Ql) \ar[r] & 1 
}
$$
gives a commutative diagram
$$
\xymatrix{
& \cG_{g,n} \ar[d]\ar[r] & \cG_{\M_{g,n/k}[m]} \ar[d]\ar[r] &
\A_k \ar[r]\ar[d] &
1 \cr
1 \ar[r] & \Sp(H) \ar[r] & \GSp(H) \ar[r] & \Gm \ar[r] & 1 
}
$$
whose rows are exact. 
\end{proposition}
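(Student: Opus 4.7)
The proposition asks for two things: construction of the top row with commutativity, and exactness of that row. My plan is to extract both from the universal properties of relative and weighted completion, together with a general right-exactness principle.

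For the construction: the vertical maps are the defining completion maps. To produce $\cG_{g,n} \to \cG_{\M_{g,n/k}[m]}$, I would observe that the image of $\pi_1(\M_{g,n/\kbar}[m],\etabar)$ in $\cG_{\M_{g,n/k}[m]}(\Ql)$ lies in the preimage $G$ of $\Sp(H_\Ql)$ under $\cG_{\M_{g,n/k}[m]} \to \GSp(H_\Ql)$. This $G$ is an extension of $\Sp(H_\Ql)$ by the prounipotent radical of $\cG_{\M_{g,n/k}[m]}$, so by the universal property of relative completion (applied to $\pi_1(\M_{g,n/\kbar}[m]) \to \Sp(H_\Ql)$) we obtain the desired map. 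To produce $\cG_{\M_{g,n/k}[m]} \to \A_k$, consider the pullback $\A_k \times_{\Gm} \GSp(H) \to \GSp(H)$; this is a negatively weighted extension of $\GSp(H)$ with respect to the standard cocharacter $\w$ (since $\tau \circ \w$ is $z \mapsto z^{-2}$, which is exactly the cocharacter defining $\A_k$). The pair of homomorphisms $\pi_1(\M_{g,n/k}[m]) \to \GSp(H_\Ql)$ and $\pi_1(\M_{g,n/k}[m]) \to G_k \to \A_k(\Ql)$ agrees over $\Gm(\Ql)$, hence assembles into a map to the pullback; weighted completion's universal property then gives $\cG_{\M_{g,n/k}[m]} \to \A_k \times_\Gm \GSp \to \A_k$. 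Commutativity of the resulting diagram follows from these universal characterizations, as each map is determined by the original profinite-level data.

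For exactness, the bottom row is standard. For the top row I would argue exactness at $\A_k$ and at $\cG_{\M_{g,n/k}[m]}$ separately. At $\A_k$: since $\pi_1(\M_{g,n/k}[m]) \to G_k$ is surjective, the composite $\pi_1(\M_{g,n/k}[m]) \to \A_k(\Ql)$ has Zariski dense image, so the induced proalgebraic map $\cG_{\M_{g,n/k}[m]} \to \A_k$ has Zariski dense (and closed) image, hence is surjective. At $\cG_{\M_{g,n/k}[m]}$: the composition $\cG_{g,n} \to \cG_{\M_{g,n/k}[m]} \to \A_k$ is trivial because the profinite composite $\pi_1(\M_{g,n/\kbar}[m]) \to G_k$ is. Conversely, let $N$ be the closed normal subgroup of $\cG_{\M_{g,n/k}[m]}$ generated by the image of $\cG_{g,n}$. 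Since $\cG_{g,n}$ surjects onto $\Sp(H_\Ql)$, the quotient $\cG_{\M_{g,n/k}[m]}/N$ is a negatively weighted extension of $\Gm = \GSp(H)/\Sp(H)$ by a prounipotent group, and the induced map $\pi_1(\M_{g,n/k}[m]) \to (\cG_{\M_{g,n/k}[m]}/N)(\Ql)$ factors through $G_k$ with Zariski dense image lifting $\chi_\ell$. By the universal property of $\A_k$ this yields a map $\A_k \to \cG_{\M_{g,n/k}[m]}/N$, which must be inverse to the natural surjection $\cG_{\M_{g,n/k}[m]}/N \to \A_k$ (both maps being characterized by universal properties). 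Hence $N = \ker(\cG_{\M_{g,n/k}[m]} \to \A_k)$.

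The main obstacle is the last step: verifying that the quotient $\cG_{\M_{g,n/k}[m]}/N$ really is a negatively weighted extension of $\Gm$ and that it matches $\A_k$ via universal properties. This requires knowing that the image of $\cG_{g,n}$ in $\cG_{\M_{g,n/k}[m]}$ meets the reductive quotient in all of $\Sp(H_\Ql)$ (immediate from the construction) and that the induced weight filtration on the quotient's prounipotent radical remains negatively weighted (immediate from exactness of $\Gr_\dot^W$ in Proposition~\ref{prop:weight}). Once these ingredients are in place, the argument is essentially a formal consequence of the right-exactness of relative and weighted completions, in keeping with the author's description of the result as easily proved.
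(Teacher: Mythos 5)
Your proof is correct and follows the same route the paper has in mind: the maps come from the universal properties of relative and weighted completion (noting that $\A_k\times_\Gm\GSp(H)$ is negatively weighted because $\tau\circ\w$ is $z\mapsto z^{-2}$), and exactness of the top row is exactly the right-exactness of weighted completion, which you unpack directly via the closed normal subgroup $N$ and the universal-property comparison of $\cG_{\M_{g,n/k}[m]}/N$ with $\A_k$.
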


Like the relative completion of mapping class groups, $\cG_{\M_{g,n/k}[m]}$ does
not depend on the level $m$ when $g\ge 3$.

\begin{proposition}
\label{prop:level-indep}
If $k$ is a field of characteristic zero and if $g\ge 3$, then for all $m\ge 1$
the natural homomorphism $\cG_{\M_{g,n/k}[m]} \to \cG_{\M_{g,n/k}}$ is an
isomorphism.
\end{proposition}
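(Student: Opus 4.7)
The plan is to use the cohomological characterization of weighted completion (Proposition~\ref{prop:homology}) to reduce the assertion to the analogous level-independence for the geometric relative completion, which is Proposition~\ref{prop:level_indep}.

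The inclusion $\pi_1(\M_{g,n/k}[m],\etabar) \hookrightarrow \pi_1(\M_{g,n/k},\etabar)$ has finite index, is compatible with the standard representation to $\GSp(H_\Ql)$, and has Zariski dense image. By the universal property of weighted completion it induces a morphism
$$
\phi : \cG_{\M_{g,n/k}[m]} \to \cG_{\M_{g,n/k}},
$$
which is surjective since it maps onto $\GSp(H_\Ql)$ and has Zariski dense image. Denote the Lie algebras of the prounipotent radicals by $\u_m$ and $\u$. A standard minimal-presentation argument (cf.\ Section~\ref{sec:presentations}) shows that $\phi$ is an isomorphism provided $H_1(\u_m)\to H_1(\u)$ is an isomorphism and $H_2(\u_m)\to H_2(\u)$ is surjective. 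Dualizing and applying Proposition~\ref{prop:homology} to both sides, this in turn reduces to verifying that for every irreducible finite-dimensional $\GSp(H_\Ql)$-module $V$ of negative weight, the restriction map
$$
H^i(\pi_1(\M_{g,n/k},\etabar), V) \to H^i(\pi_1(\M_{g,n/k}[m],\etabar), V)
$$
is an isomorphism for $i = 1$ and an injection for $i = 2$.

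For this cohomological comparison I would set up the Lyndon--Hochschild--Serre spectral sequences
$$
E_2^{p,q}(m) = H^p(G_k, H^q(\pi_1(\M_{g,n/\kbar}[m],\etabar), V)) \Rightarrow H^{p+q}(\pi_1(\M_{g,n/k}[m],\etabar), V)
$$
for level $m$ and for level $1$, related by the morphism induced by the inclusion of geometric fundamental groups. By Proposition~\ref{prop:level_indep} combined with Theorem~\ref{thm:comparison}, the continuous relative completion of $\pi_1(\M_{g,n/\kbar}[m],\etabar)$ with respect to its standard representation to $\Sp(H_\Ql)$ is naturally isomorphic to $\cG_{g,n}^\geom\otimes_\Q\Ql$, independent of $m$. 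The cohomological characterization of relative completion then implies that the restriction map
$$
H^q(\pi_1(\M_{g,n/\kbar},\etabar), V) \to H^q(\pi_1(\M_{g,n/\kbar}[m],\etabar), V)
$$
is an isomorphism for $q = 1$ and an injection for $q = 2$; since $\pi_1(\M_{g,n/\kbar}[m],\etabar)$ is a characteristic subgroup of $\pi_1(\M_{g,n/\kbar},\etabar)$, this identification is $G_k$-equivariant. Together with the identity on $E_2^{p,0}=H^p(G_k,V)$, this shows that $E_2^{p,q}(1)\to E_2^{p,q}(m)$ is an isomorphism for $q=1$ and an injection for $q=2$. A routine five-lemma-style comparison of the two spectral sequences then transfers these properties to the abutments in total degrees $\le 2$, yielding the required isomorphism on $H^1$ and injection on $H^2$.

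The main obstacle is the $G_k$-equivariance of the identification of geometric cohomology in the spectral sequence step. It should follow from naturality of relative completion with respect to the outer $G_k$-action on the geometric fundamental group together with the characteristic property of the level-$m$ subgroup, but it requires careful bookkeeping to make sure the identification on each side is intertwined by the $G_k$-actions that drive the spectral sequences.
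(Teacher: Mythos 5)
Your proposal follows essentially the same route as the paper's own (sketched) proof: reduce via minimal presentations to comparing $H_1(\u_{\M[m]})\to H_1(\u_\M)$ (must be an isomorphism) and $H_2(\u_{\M[m]})\to H_2(\u_\M)$ (must be surjective); dualize via Proposition~\ref{prop:homology} to a comparison of group cohomology with negatively weighted coefficients; and use the Hochschild--Serre spectral sequence over $G_k$ to reduce to the geometric case, where the input is the level-independence of the geometric relative completion. The paper simply cites ``the fact that'' the geometric restriction map is an isomorphism in degree $\le 1$ and injective in degree $2$, whereas you attempt to derive it from Proposition~\ref{prop:level_indep} combined with Theorem~\ref{thm:comparison}.

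There is one small gap in that derivation. The degree $1$ isomorphism does follow from the cohomological characterization of relative completion applied to the common completion $\cG_{g,n}^\geom\otimes_\Q\Ql$. But the degree $2$ injectivity does not: the characterization only gives compatible injections of $\Hom_{\Sp(H)}(H_2(\u_{g,n}^\geom),V)$ into both $H^2(\pi_1(\M_{\kbar}),V)$ and $H^2(\pi_1(\M_{\kbar}[m]),V)$, which is insufficient to conclude that the restriction map between the two $H^2$'s is injective. The correct (and standard) justification is the transfer argument: since $\pi_1(\M_{\kbar}[m],\etabar)$ has finite index in $\pi_1(\M_{\kbar},\etabar)$ and the coefficients are $\Ql$-vector spaces, restriction is split injective in every degree. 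With that substituted, the remainder of the argument, including the $G_k$-equivariance bookkeeping you flag, goes through and matches the paper.
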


\begin{proof} This is a brief sketch. Complete details will appear in
\cite{hain-matsumoto:mcgs}. Set $\M=\M_{g,n}$. Denote the $\ell$-adic local
system over $\M$ that corresponds to the $\GSp(H_\Ql)$-module $V$ by $\V$. The
spectral sequence
$$
H^s(G_k,\Het^t(\M_{\kbar},\V)) \implies \Het^{s+t}(\M,\V)
$$
and the fact that the map $\Het^j(\M_{/\kbar},\V) \to \Het^j(\M_{/\kbar}[m],\V)$
is an isomorphism when $j\le 1$ and injective when $j=2$ implies that
$\Het^j(\M,\V) \to \Het^j(\M_[m],\V)$ is also an isomorphism when $j\le 1$ and
injective when $j=2$. Proposition~\ref{prop:homology} implies that for all
$\GSp(H)$-modules $V$ the columns of the commutative diagram
$$
\xymatrix{
\Hom_{\GSp(H)}(H_j(\u_{\M}),V) \ar[d]\ar[r] &
\Hom_{\GSp(H)}(H_j(\u_{\M[m]}),V) \ar[d] \cr
H^j(\M,\V) \ar[r] & H^j(\M[m],\V)
}
$$
are isomorphisms when $j=1$ and injective when $j=2$. This implies that
$H_j(\u_{\M[m]}) \to H_j(\u_{\M})$ is an isomorphism when $j=1$ and surjective
when $j=2$. The discussion of minimal presentations of pronilpotent Lie algebras
in Section~\ref{sec:presentations} implies that $\u_{\M[m]} \to \u_{\M}$ is an
isomorphism.\footnote{The last step is the analogue of Stalling's criterion
\cite{stallings} for pronilpotent Lie algebras.  It follows directly from the
discussion of minimal presentations of pronilpotent Lie algebras in
Section~\ref{sec:presentations}.}
\end{proof}

\begin{remark}
The map $\cG_{g,n}^\geom \to \cG_{\M_{g,n/k}}$ should be injective when $g\ge
3$. It is not injective when $g=1$. This issue is considered in
\cite{hain-matsumoto:mcgs} where it is related to the crystallinity of the
action of $G_k$ on $\g_{g,n}^\geom$ when $g\ge 2$.
\end{remark}

When the field $k$ is clear from context, we will denote $\cG_{\M_{g,n/k}}$ by
$\cG_{g,n}$ and its prounipotent radical by $\U_{g,n}$. Recall that
$\U_{g,n}^\geom$ denotes the prounipotent radical of $\cG_{g,n}^\geom$. Denote
the Lie algebras of $\U_{g,n}$ and $\U_{g,n}^\geom$ by $\u_{g,n}$ and
$\u_{g,n}^\geom$, respectively. Since $\u_{g,n}$ is a $\cG_{g,n}$-module (via
the adjoint action) it has a natural weight filtration. Although it is not
immediately obvious, $\u_{g,n}^\geom$ also has a natural weight
filtration.

\begin{proposition}[\cite{hain-matsumoto:mcgs}]
If $2g-2+n>0$, then the natural action of $\pi_1(\M_{g,n/k},\etabar)$ on
$\pi_1(\M_{g,n/\kbar},\etabar)$ induces an action of $\cG_{g,n}$ on
$\g_{g,n}^\geom$. That is, $\g_{g,n}^\geom$ and $\u_{g,n}^\geom$ are pro-objects
of the category of $\cG_{g,n}$-modules and thus have natural weight filtrations.
\end{proposition}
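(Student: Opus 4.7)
The plan is to promote the natural outer conjugation action of $\pi_1(\M_{g,n/k},\etabar)$ to an action of the entire weighted completion $\cG_{g,n}$ via the universal property of weighted completion, and then to invoke Proposition~\ref{prop:weight} for the weight filtrations.

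First I would lift the conjugation action of $\pi_1(\M_{g,n/k},\etabar)$ on the normal subgroup $\pi_1(\M_{g,n/\kbar},\etabar)$ to an action on the relative completion $\cG_{g,n}^\geom$. For $\gamma \in \pi_1(\M_{g,n/k},\etabar)$, conjugation by $\gamma$ restricts to an automorphism $c_\gamma$ of $\pi_1(\M_{g,n/\kbar},\etabar)$ that intertwines the standard representation $\rho^\geom$ with its conjugate by $\rho(\gamma) \in \GSp(H_\Ql)$, an element normalizing $\Sp(H_\Ql)$. A twisted form of the universal property of relative completion then yields a unique automorphism $\widetilde{c}_\gamma$ of $\cG_{g,n}^\geom$ lifting $c_\gamma$ and acting on the Levi $\Sp(H_\Ql)$ via $\Ad(\rho(\gamma))$. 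This assembles into a continuous homomorphism $c : \pi_1(\M_{g,n/k},\etabar) \to \Aut(\cG_{g,n}^\geom)$ whose restriction to $\pi_1(\M_{g,n/\kbar},\etabar)$ is inner conjugation through $\cG_{g,n}^\geom(\Ql)$.

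Next I would package $\rho$ and $c$ into a single homomorphism to a negatively weighted extension of $\GSp(H)$. After fixing a Levi splitting of $\cG_{g,n}^\geom$, I form
\[
\widetilde{\cG} := \GSp(H) \ltimes \U_{g,n}^\geom,
\]
where the central $\Gm \subset \GSp(H)$ acts on $\U_{g,n}^\geom$ through the weights determined by the action on $H$ (which has weight $-1$) together with the iterated Lie bracket on $\u_{g,n}^\geom$. Then $\widetilde{\cG}$ is a negatively weighted extension of $\GSp(H)$ by the prounipotent group $\U_{g,n}^\geom$, and the pair $(\rho,c)$ assembles into a continuous Zariski dense homomorphism $\widetilde{\rho} : \pi_1(\M_{g,n/k},\etabar) \to \widetilde{\cG}(\Ql)$ lifting $\rho$. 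By the universal property of weighted completion, $\widetilde{\rho}$ factors uniquely through a morphism of proalgebraic $\Ql$-groups $\cG_{g,n} \to \widetilde{\cG}$, and composing with the inner-automorphism action of $\widetilde{\cG}$ on $\U_{g,n}^\geom$ produces the desired action of $\cG_{g,n}$ on $\u_{g,n}^\geom$ and on $\g_{g,n}^\geom$. Proposition~\ref{prop:weight} then endows these Lie algebras with the advertised natural weight filtrations.

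The principal obstacle is the second step: producing the $\GSp(H)$-action on $\U_{g,n}^\geom$ extending the conjugation action of $\Sp(H)$, since a naive definition presupposes precisely the weight data we are trying to establish. I would circumvent this circularity by an inverse-system argument along the lower central series of $\u_{g,n}^\geom$: at each stage the $\Sp(H)$-module quotient is promoted to a $\GSp(H)$-module by transporting the Galois action arising from conjugation by $\pi_1(\M_{g,n/k},\etabar)$ (compatibly with the cyclotomic character on the central $\Gm$), and one verifies these actions cohere with the bracket so that the inverse limit defines a bona fide $\GSp(H)$-action on $\U_{g,n}^\geom$.
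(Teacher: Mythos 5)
The first step of your argument is sound: for each $\gamma\in\pi_1(\M_{g,n/k},\etabar)$, the conjugation $c_\gamma$ intertwines $\rho^\geom$ with $\Ad(\rho(\gamma))\circ\rho^\geom$, where $\rho(\gamma)\in\GSp(H_\Ql)$ normalizes $\Sp(H_\Ql)$, and a twisted form of the universal property of relative completion yields a unique lift $\widetilde c_\gamma\in\Aut\cG^\geom_{g,n}$ covering $\Ad(\rho(\gamma))$. (One should also check that $\gamma\mapsto\widetilde c_\gamma$ is continuous, but this is routine.)

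The genuine gap is in the second step, and you have in fact put your finger on exactly where it is. To invoke the universal mapping property of $\cG_{g,n}$ you must exhibit a proalgebraic target group that is a \emph{negatively weighted extension of $\GSp(H)$ by a prounipotent group}, and the candidate $\widetilde\cG=\GSp(H)\ltimes\U^\geom_{g,n}$ requires a $\GSp(H)$-action on $\U^\geom_{g,n}$ whose very existence (and negative-weightedness) is what the proposition asserts. Your proposed resolution in the final paragraph — ``transport the Galois action, compatibly with the cyclotomic character on the central $\Gm$'' — restates the desideratum rather than proving it. Concretely: after fixing a Levi splitting, $H_1(\u^\geom_{g,n})$ decomposes into $\Sp(H)$-isotypic pieces $\bigoplus_V M_V\otimes V$, and the conjugation action of $\pi_1(\M_{g,n/k},\etabar)$ makes each multiplicity space $M_V$ a continuous $G_k$-representation. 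Nothing in the sketch shows that the Zariski closure of $G_k$ in $\prod_V\GL(M_V)$ is a torus mapping onto $\Gm$ through $\chi_\ell$ (it could a priori contain a nonabelian reductive part or a nontrivial unipotent radical), nor that the unique negatively weighted $\GSp(H)$-structure on $H_1(\u^\geom_{g,n})$ compatible with this extends coherently through the lower central series to the bracket on all of $\u^\geom_{g,n}$. Equivalently, if one tries to run the universal property through the algebraic automorphism groups $A_r:=\Aut(\cG^\geom_{g,n}/L^{r+1}\U^\geom_{g,n})$, the kernel of $A_r\to\GSp(H)$ (via the cover $\GSp(H)\to\Sp(H)/Z$) is \emph{not} unipotent — it contains semisimple automorphisms of $\U^\geom_{g,n}/L^{r+1}$ commuting with $\Sp(H)$ — so $A_r$ is not itself a negatively weighted extension, and one must show that the Zariski closure of the image of $\pi_1(\M_{g,n/k},\etabar)$ inside $A_r$ is one. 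That is exactly the nontrivial content deferred to \cite{hain-matsumoto:mcgs}, and the circularity you flagged is not dispelled by the inverse-system sketch.
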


\begin{remark}
We will be appealing to computations in \cite{hain:torelli} where a weight
filtration on $\g_{g,n}^\geom$ is constructed using Hodge theory. The weight
filtration on $\g_{g,n}^\geom$ used in this paper is constructed using weighted
completion. In order to apply the computations of \cite{hain:torelli} in this
paper, we need to know that these two constructions of the weight filtration
agree. General results imply that this is the case. However, in the present
situation, a direct argument can be given when $g\ge 3$. The starting
observation is that in both constructions $H_1(\p)$ has weight $-1$. The
exactness of $\Gr^W_\dot$ then implies that $W_{-r}\p$ is the $r$th term of its
lower central series, which implies that the two constructions of the weight
filtration of $\p$ agree. In both constructions, $\u_{g,n}^\geom =
W_{-1}\g_{g,n}^\geom$ for all $n\ge 0$. Thus, to prove equality of the two
constructions, it suffices to show that they agree on $\u_{g,n}^\geom$. When
$g\ge 3$, Johnson's computation of the abelianization of the Torelli group
\cite{johnson:h1} implies that the map $H_1(\u_{g,1}) \to \Gr^W_{-1}\Der\p$ is
an isomorphism. This implies that $H_1(\u_{g,1})$ is pure of weight $-1$ in both
constructions. The injectivity of the natural map
$$
H_1(\u_{g,n}) \to H_1(\u_{g,1})^{\oplus n}
$$
induced by $\M_{g,n}\to(\M_{g,1})^n$ when $n>1$, and the surjectivity of
$H_1(\u_{g,1})\to H_1(\u_g)$ imply that $H_1(\u_{g,n})$ is also pure of weight
$-1$ for all $n\ge 0$. The exactness of $\Gr^W_\dot$ now implies that
$W_{-r}\u_{g,n}$ is the $r$th term of the lower central series of $\u_{g,n}$ for
all $n\ge 0$. The two construction therefore agree.
\end{remark}

\subsection{Variants}

The constructions and results above extend to other moduli spaces. Here we
consider one, namely the $n$th power of the universal curve $\cC_{g/k}^n[m]$
over $\M_{g/k}[m]$. Others will be constructed in subsequent sections, as
needed. As above, one has the homomorphism
$$
\rho : \pi_1(\cC_{g/k}^n[m],\etabar) \to \GSp(H_\Ql)
$$
which is Zariski dense. Denote the weighted completion of
$\pi_1(\cC_{g/k}^n[m],\etabar)$ with respect to $\rho$ and $\w$ by
$\cG_{\cC_{g/k}^n[m]}$. As in the case of $\M_{g,n/k}[m]$, it does not depend on
$m$ when $g\ge 3$. We will henceforth denote it by $\cGhat_{g,n}$ and its
prounipotent radical by $\Uhat_{g,n}$. Denote their Lie algebras by
$\ghat_{g,n}$ and $\uhat_{g,n}$, respectively.

We will denote the completion of $\pi_1(\cC_{g/\kbar}^n[m],\etabar)$ with
respect to the natural homomorphism to $\Sp(H_\Ql)$ by $\cGhat_{g,n}^\geom$. It
too is independent of $m$ when $g\ge 3$.

The inclusion $\M_{g,n/k}[m] \to \cC_{g/k}^n[m]$ induces a homomorphism
$\cG_{g,n} \to \cGhat_{g,n}$.
And one has the commutative diagram
$$
\xymatrix{
\cG_{g,n}^\geom \ar[r]\ar[d] & \cG_{g,n} \ar[r]\ar[d] & \A_k
\ar@{=}[d]\ar[r] & 1 \cr
\cGhat_{g,n}^\geom \ar[r] & \cGhat_{g,n} \ar[r] & \A_k \ar[r] & 1
}
$$
with exact rows.

\subsection{Basepoints}

Observe that $\M_{g,n/k}$ is a Zariski open subset of $\cC_{g/k}^n$. Denote
their common generic point by $\eta$ and let $\etabar$ to be a geometric point
that lies over it. Denote the image of $\etabar$ in $\M_{g/k}$ by $\etabar_o$
and the image of $\etabar$ in $\M_{g,1/k}$ under the $j$th projection
$\M_{g,n/k} \to \M_{g,1/k}$ by $\xbar_j$.

Denote the fiber of the universal curve over $\etabar_o$ by $C$. It has the
$n$-rational points $\xbar_1,\dots,\xbar_n$. The fiber of the projection
$\cC_{g/k}^n \to \M_{g/k}$ over $\etabar_o$ is $C^n$ and has basepoint
$\x_\etabar :=(\xbar_1,\dots,\xbar_n) \in C^n$. The natural map $\cC_{g/k}^n \to
(\M_{g,1/k})^n$ takes the basepoint $\etabar$ of $\cC_{g/k}^n$ to the basepoint
$\x_\etabar$ of $(\M_{g,1/k})^n$.

\subsection{Exactness}

The ``fat diagonal'' $\Delta$ of $C^n$ is the union of the divisors
$\Delta_{ij}$, where
$$
\Delta_{ij} := \{(u_1,\dots,u_n) : u_i = u_j,\ i\neq j\}.
$$
Observe that $\xbar_\eta\in C^n-\Delta$. The maps $\M_{g,n/k} \to \cC_{g/k}^n
\to (\M_{g,1/k})^n$ induce a commutative diagram
$$
\xymatrix{
1 \ar[r] & \pi_1(C^n-\Delta,\x_\etabar) \ar[r]\ar[d] & \pi_1(\M_{g,n/k},\etabar)
\ar[r]\ar[d] & \pi_1(\M_{g/k},\etabar_o) \ar@{=}[d]\ar[r] & 1 \cr
1 \ar[r] & \prod_{j=1}^n \pi_1(C,\xbar_j)\ar@{=}[d] \ar[r] &
\pi_1(\cC_{g/k}^n,\etabar)\ar[d] \ar[r] & \pi_1(\M_{g/k},\etabar_o)
\ar[d]^{\text{diag}} \ar[r] & 1 \cr
1 \ar[r] & \prod_{j=1}^n \pi_1(C,\xbar_j) \ar[r] &
\prod_{j=1}^n \pi_1(\M_{g,1/k},\xbar_j) \ar[r] & \pi_1(\M_{g/k},\etabar_o)^n
\ar[r] & 1
}
$$
with exact rows.\footnote{The exactness of both rows is follows from the fact
that the center of $\pi_1(C^n-\Delta,\x_\etabar)$ is trivial for all $n\ge 1$
when $g\ge 2$. The argument in the profinite case uses \cite{anderson} and is
essentially identical with the argument in the discrete case, which can be found
in \cite[Lem.~4.2.2]{birman}.} Denote $\pi_1(C^n-\Delta,\x_\etabar)$ by
$\pi_{g,n}$ and the Lie algebra of its unipotent completion over $\Ql$ by
$\p_{g,n}$. Denote the Lie algebra of $\pi_1(C,\xbar_j)^\un$ by $\p_j$ and the
Lie algebra of $\pi_1(\M_{g,1/k},\xbar_j)$ by $\g_{g,1}^{(j)}$.

Observe that the weight graded quotients of $\g_{g,n}$ and $\ghat_{g,n}$ are
$S_n\times\GSp(H)$-modules, where the $S_n$-action is induced by action of $S_n$
on $\cC_{g/k}^n$ that permutes the $n$ points. 

\begin{proposition}
\label{prop:exactness}
Suppose that $g\ge 2$. After applying weighted completion to this diagram, we
obtain the commutative diagram
$$
\xymatrix{
0 \ar[r] & \p_{g,n} \ar[r]\ar[d] & \g_{g,n}
\ar[r]\ar[d] & \g_g \ar@{=}[d]\ar[r] & 0 \cr
0 \ar[r] & \bigoplus_{j=1}^n \p_j \ar@{=}[d]\ar[r] &
\ghat_{g,n} \ar[r] \ar[d] & \g_g \ar[d]^{\text{diag}} \ar[r] & 0 \cr
0 \ar[r] & \bigoplus_{j=1}^n \p_j \ar[r] &
\bigoplus_{j=1}^n \g_{g,1}^{(j)} \ar[r] & (\g_g)^n \ar[r] & 0
}
$$
with exact rows. After applying $\Gr^W_\dot$, the rows are exact sequences of
$S_n\times\GSp(H)$-modules. Moreover, the same statements holds with $\g$
replaced by $\g^\geom$ in the diagram.
\end{proposition}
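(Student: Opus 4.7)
The plan is to apply Proposition~\ref{prop:left_exactness} row by row to obtain the three exact sequences of Lie algebras, then deduce the graded assertion using exactness of $\Gr^W_\dot$, and finally handle the geometric variant via the injectivity argument already used in the proof of Proposition~\ref{prop:section}. Right exactness in each row is automatic from the right exactness of weighted completion and the fact that the unipotent completion of the kernel surjects onto the kernel of the corresponding completion; the nontrivial point is injectivity of $\p_{g,n}\to\g_{g,n}$, $\bigoplus_j \p_j\to\ghat_{g,n}$, and $\bigoplus_j\p_j\to\bigoplus_j\g_{g,1}^{(j)}$.

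To apply Proposition~\ref{prop:left_exactness} to each row, I must check its two hypotheses on the relevant kernel $\pi$: that the unipotent completion $\p$ of $\pi$ has trivial center, and that $H_1(\p)$ carries a $\cG$-action whose weight filtration has finite-dimensional graded quotients concentrated in negative weights. For the middle and bottom rows the kernel is $\prod_j \pi_1(C,\xbar_j)$, whose unipotent completion is $\bigoplus_j \p_j$; each factor $\p_j$ has trivial center by \cite{hattori-stallings} (used earlier in the proof of Proposition~\ref{prop:section}), so the product does too, and $H_1\bigl(\bigoplus_j \p_j\bigr)\cong H^{\oplus n}$ is pure of weight $-1$. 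For the top row the kernel is $\pi_{g,n}=\pi_1(C^n-\Delta,\x_\etabar)$, which has trivial center by the reference to \cite{anderson} and \cite[Lem.~4.2.2]{birman} already recorded in the footnote; and $H_1(\p_{g,n})$ is the $\Ql$-\'etale $H_1$ of $C^n-\Delta$, which from the Gysin sequence associated with $\Delta\hookrightarrow C^n$ is an extension of a subquotient of $H^{\oplus n}$ (weight $-1$) by copies of $\Ql(1)$ (weight $-2$) indexed by the components $\Delta_{ij}$. In all three cases the resulting $\cG$-module structure and the required negativity and finite-dimensionality hold.

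With exactness of the three rows established, the commutativity of the diagram is an immediate consequence of the functoriality of weighted completion applied to the original diagram of profinite groups. The $S_n\times\GSp(H)$-equivariance of the $\Gr^W_\dot$-rows follows because the $S_n$-action on $\cC_{g/k}^n$ permuting the $n$ points induces compatible $S_n$-actions on the profinite groups in the original diagram, and hence on the weighted completions by naturality; the exactness of each $\Gr^W_\dot$-row is then immediate from Proposition~\ref{prop:weight}, which states that $\Gr^W_\dot$ is exact on the category of $\cG_{g,n}$-modules (and similarly for $\cGhat_{g,n}$).

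Finally, for the geometric variant, the key observation, exactly as in the proof of Proposition~\ref{prop:section}, is that the composites
$$
\p_{g,n}\to \g_{g,n}^\geom\to \g_{g,n},\quad
\bigoplus_j \p_j\to \ghat_{g,n}^\geom\to \ghat_{g,n},\quad
\bigoplus_j \p_j\to \bigoplus_j \g_{g,1}^{\geom,(j)}\to \bigoplus_j \g_{g,1}^{(j)}
$$
are injective by the arithmetic case already proved, so the corresponding maps into the geometric completions are injective; combined with the right exactness of relative completion this gives exactness of each geometric row. The main obstacle is verifying the hypotheses of Proposition~\ref{prop:left_exactness} for the top row, where one must identify the weights on $H_1(\p_{g,n})$ via the geometry of $C^n-\Delta$; once this is done the remainder of the argument is formal.
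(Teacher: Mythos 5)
Your proof is correct, but it handles the second and third rows differently from the paper, and it is worth noting the trade-offs. The paper invokes Proposition~\ref{prop:left_exactness} only for the first row (with the same identification of $H_1(\p_{g,n})$ as having weights $-1$ and $-2$ that you make via the Gysin sequence for $\Delta\hookrightarrow C^n$). For the second row it establishes left exactness indirectly: it uses the projections $r_j:\cC_g^n\to\M_{g,1}$ to compare with the already-established $n=1$ case of Proposition~\ref{prop:section} and deduce injectivity of $\bigoplus_j\p_j\to\ghat_{g,n}$ from injectivity of $\p_j\to\g_{g,1}$ for each $j$. For the third row it observes that it is simply the $n$-fold direct sum of the $n=1$ case, so exactness is formal. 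Your uniform application of Proposition~\ref{prop:left_exactness} to all three rows is also valid: for rows two and three the kernel's unipotent completion is $\bigoplus_j\p_j$, which has trivial center (center of a finite product of Lie algebras is the product of centers), and $H_1\cong H^{\oplus n}$ is pure of weight $-1$. The one place where the paper's route is cleaner is the bottom row: you are implicitly taking the weighted completion of the product $\pi_1(\M_{g/k})^n$ with respect to $\GSp(H)^n$ and asserting the result is $(\g_g)^n$, which requires a (true but unstated) compatibility of weighted completion with finite products of groups and of reductive targets; the paper sidesteps this entirely by building the bottom row as a direct sum of $n$ copies of an exact sequence. For the geometric variant your argument is the same as the paper's for the top row and a straightforward extension of it to the other two rows.
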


\begin{proof}[Sketch of Proof]
The conjugation action of $\pi_1(\M_{g,n/k},\etabar)$ on
$\pi_1(C^n-\Delta,\x_\etabar)$ induces induces an action of $\cG_{g,n}$ on
$H_1(\p_{g,n})$ which has weights $-1$ and $-2$, and is therefore negatively
weighted. The exactness of the first row now follows from
Proposition~\ref{prop:left_exactness} as $\p_{g,n}$ has trivial center
\cite[p.~201]{hattori-stallings}. The corresponding statement with $\g$ replaced
by $\g^\geom$ follows from the right exactness of relative completion and
because the composite $\p_{g,n} \to \ghat_{g,n}^\geom \to \ghat_{g,n}$ is
injective.

Right exactness of relative and weighted completion implies that the second row
is right exact. To prove that it is exact, we use the map $r_j : \cC_g^n\to
\M_{g,1}$ defined by $r_j : (x_1,\dots,x_n) \to x_j$. It induces a commutative
diagram
$$
\xymatrix{
 & \bigoplus_{j=1}^n \p_j \ar[r]\ar[d]_{pr_j} &
\ghat_{g,n} \ar[r]\ar[d] & \g_g \ar[r]\ar@{=}[d] & 0 \cr
0 \ar[r] & \p_j \ar[r] & \g_{g,1} \ar[r] & \g_g \ar[r] & 0
}
$$
where $pr_j$ denotes projection onto the $j$th factor.
Proposition~\ref{prop:section} implies that the bottom row of this diagram is
exact. It follows that the top row of this diagram (i.e., the second row of the
diagram in the statement) is left exact.

The exactness of the third row is a consequence of the exactness of the first
row of the diagram in the case $n=1$.
\end{proof}

Recall that for a fixed curve $C$, the $\Gr^W_\dot \p_j$ are naturally
isomorphic. Denote their common value by $\Gr^W_\dot \p$.

%\begin{corollary}
%\label{cor:extension}
%If $g\ge 3$, then the sequence
%$$
%\xymatrix{
%0 \ar[r] &  (\Gr^W_\dot\p)^n \ar[r] &
%\Gr^W_\dot\g_{g,n}^\geom \ar[r] & \Gr^W_\dot \ghat_g^\geom \ar[r] & 0
%}
%$$
%of $S_n\times\GSp(H)$-modules is exact and the inclusion
%$$
%\ker\{\Gr^W_\dot\p_{g,n}\to (\Gr^W_\dot\p)^n\} \to
%\ker\{\Gr^W_\dot\ghat_{g,n}^\geom\to \Gr^W_\dot\g_{g,n}^\geom\}
%$$
%is an $S_n\times\GSp(H)$-module isomorphism.
%\end{corollary}

The computations in \cite[\S 12]{hain:torelli} imply directly that
$\Gr^W_{-1}\p_{g,n}$ is the $S_n\times\GSp(H)$-module $H^n$. They also imply
that $\Gr^W_{-2}\p_{g,n}\to (\Gr^W_{-2}\p)^n$ is surjective with kernel
isomorphic to $\bigoplus_{i<j}\Ql(1)$, where the $ij$th factor is generated by
the logarithm\footnote{Every element of a prounipotent group over a field of
characteristic zero has a unique logarithm.} of a loop that encircles the $ij$th
diagonal $\Delta_{ij}$. Combined with the previous result, this gives:

\begin{lemma}
\label{lem:kernel}
If $g\ge 3$, then
$$
\Gr^W_r \u_{g,n} \to \Gr^W_r \uhat_{g,n} \text{ and }
\Gr^W_r \u_{g,n}^\geom \to \Gr^W_r \uhat_{g,n}^\geom
$$
are isomorphisms when $r=-1$ and is surjective when $r=-2$, both with kernel
isomorphic to the $S_n\times \GSp(H)$-module $\bigoplus_{i<j} \Ql(1)$, where
$S_n$ acts by permuting the factors.\footnote{In case there is any confusion,
the transposition $(i,j)$ acts trivially on the $ij$th factor.}
\end{lemma}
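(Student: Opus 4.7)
The plan is to deduce this by applying $\Gr^W_\dot$ to the short exact sequences of Proposition~\ref{prop:exactness} and feeding in the computations of $\Gr^W_{-1}\p_{g,n}$ and $\Gr^W_{-2}\p_{g,n}$ recalled immediately above the lemma from \cite[\S12]{hain:torelli}. Since $\p_{g,n}$, each $\p_j$, $\u_{g,n}$, $\uhat_{g,n}$ and $\u_g$ are all concentrated in weights $\le -1$, the two arithmetic rows of Proposition~\ref{prop:exactness}, restricted to $W_{-1}$, give the commutative diagram with exact rows
$$
\xymatrix{
0 \ar[r] & \p_{g,n} \ar[r]\ar[d] & \u_{g,n} \ar[r]\ar[d] & \u_g \ar@{=}[d]\ar[r] & 0 \\
0 \ar[r] & \bigoplus_{j=1}^n \p_j \ar[r] & \uhat_{g,n} \ar[r] & \u_g \ar[r] & 0
}
$$
and similarly for the geometric versions.

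Next I would apply $\Gr^W_r$, which is exact on the category of $\cG_{g,n}$-modules by Proposition~\ref{prop:weight}, for each $r<0$, obtaining
$$
\xymatrix{
0 \ar[r] & \Gr^W_r\p_{g,n} \ar[r]\ar[d]^{\alpha_r} & \Gr^W_r\u_{g,n} \ar[r]\ar[d]^{\beta_r} & \Gr^W_r\u_g \ar@{=}[d]\ar[r] & 0 \\
0 \ar[r] & \bigoplus_{j=1}^n \Gr^W_r\p_j \ar[r] & \Gr^W_r\uhat_{g,n} \ar[r] & \Gr^W_r\u_g \ar[r] & 0
}
$$
as $S_n\times\GSp(H)$-modules. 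Since the rightmost vertical is the identity, the snake lemma identifies $\ker\beta_r$ with $\ker\alpha_r$ and the cokernel of $\beta_r$ with the cokernel of $\alpha_r$. Thus the lemma reduces entirely to understanding $\alpha_r$ for $r=-1,-2$.

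This last step is now a translation of the cited facts. For $r=-1$, $\Gr^W_{-1}\p_{g,n}=H^{\oplus n}$ matches $\bigoplus_j\Gr^W_{-1}\p_j=H^{\oplus n}$ canonically, so $\alpha_{-1}$ is an isomorphism, whence $\beta_{-1}$ is too. For $r=-2$, $\alpha_{-2}$ is precisely the surjection $\Gr^W_{-2}\p_{g,n}\twoheadrightarrow(\Gr^W_{-2}\p)^n$ with kernel $\bigoplus_{i<j}\Ql(1)$, the $ij$th factor generated by the logarithm of a loop encircling the $S_n$-invariant divisor $\Delta_{ij}$ (which is why the transposition $(i,j)$ fixes this factor). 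The snake-lemma identification transports this description to $\beta_{-2}$. The geometric case is verbatim, using the geometric rows of Proposition~\ref{prop:exactness}.

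There is no real obstacle here: the exactness of $\Gr^W_\dot$ and the exact sequences of Proposition~\ref{prop:exactness} are in hand, and the hard computation (the identification of $\Gr^W_{-1}\p_{g,n}$ and $\Gr^W_{-2}\p_{g,n}$ with their explicit $S_n\times\GSp(H)$-module structure) is already cited. The only small care required is to check that the cited identifications from \cite{hain:torelli} respect the $S_n$-action, and in particular that the transposition $(i,j)$ acts trivially on the $ij$th $\Ql(1)$-summand—which is geometrically evident from its description as a loop around $\Delta_{ij}$.
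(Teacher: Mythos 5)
Your proof is correct and follows essentially the paper's own argument: the lemma is deduced by combining the exact sequences of Proposition~\ref{prop:exactness} (after applying the exact functor $\Gr^W_r$) with the cited identifications of $\Gr^W_{-1}\p_{g,n}$ and the kernel of $\Gr^W_{-2}\p_{g,n}\to(\Gr^W_{-2}\p)^n$ from \cite[\S12]{hain:torelli}, which is exactly what the paragraph preceding the lemma does. You merely make explicit the snake-lemma step that the paper leaves to the reader.
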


Suppose that $n\ge 1$. Set $\pi := \pi_1(C_\etabar,\xbar_1)$. Define $\p$ to be
the Lie algebra of $\pi^\un$. Then $\pi_1(\M_{g,n/k}[m],\etabar)$ acts on $\pi$
via the first projection $\pi_1(\M_{g,n/k}[m],\etabar) \to
\pi_1(\M_{g,1/k}[m],\xbar_1)$. This induces an action of $\cG_{g,n}$ on $\p$, so
that $\p$ also has a natural weight filtration. This action induces a Lie
algebra homomorphism $\g_{g,n} \to \Der\p$ which preserves the weight filtration
as it is $\cG_{g,n}$-equivariant.

\begin{proposition}
\label{prop:geom2arith}
If $g\ge 3$, the homomorphism $\u_{g,1}\to\Der\p$ induces natural homomorphisms
$$
\Gr^W_\dot \u_{g,1}^\geom \to \Gr^W_\dot \u_{g,1} \to \Gr^W_\dot \Der \p
\cong \Der \Gr^W_\dot \p.
$$
of Lie algebras in the category of $\GSp(H)$-modules. For all $n\ge 0$, the
natural map
$$
\Gr^W_{-1} \u_{g,n}^\geom \to \Gr^W_{-1}\u_{g,n}
$$
is an isomorphism. In weight $-2$ there is a short exact sequence
$$
0 \to \Gr^W_{-2} \u_{g,n}^\geom \to \Gr^W_{-2}\u_{g,n}
\to H^1(G_k,\Ql(1))^\ast\otimes\Ql(1) \to 0.
$$
\end{proposition}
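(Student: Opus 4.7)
The plan is to derive everything from the exact sequence
$$
1 \to \cG_{g,n}^\geom \to \cG_{g,n} \to \A_k \to 1
$$
(stated as right-exact in the preceding proposition; the required left-exactness is a consequence of the variant of Proposition~\ref{prop:left_exactness} for relative completions carried out in \cite{hain-matsumoto:mcgs}), combined with the computation of $\Gr^W_\dot \k_k$ from Example~\ref{ex:fields}: namely $\Gr^W_{-1}\k_k = 0$ and $\Gr^W_{-2}\k_k \cong H^1(G_k,\Ql(1))^\ast\otimes\Ql(1)$.

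Taking Lie algebras and comparing with the reductive quotient sequence $1\to\Sp(H)\to\GSp(H)\to\Gm\to 1$, the snake lemma extracts an exact sequence of prounipotent radicals
$$
0 \to \u_{g,n}^\geom \to \u_{g,n} \to \k_k \to 0.
$$
Since $\Gr^W_\dot$ is exact (Proposition~\ref{prop:weight}), applying it to this sequence and inserting the above values of $\Gr^W_{-1}\k_k$ and $\Gr^W_{-2}\k_k$ immediately yields the isomorphism $\Gr^W_{-1}\u_{g,n}^\geom \cong \Gr^W_{-1}\u_{g,n}$ and the asserted short exact sequence in weight $-2$.

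For the chain of homomorphisms associated to $n=1$, the conjugation action of $\pi_1(\M_{g,1/k},\etabar)$ on $\pi=\pi_1(C_\etabar,\xbar)$ (available via the tautological section of the universal curve) induces an action on $\pi^\un$, and the universal property of weighted completion produces a homomorphism $\cG_{g,1} \to \Aut\pi^\un$ whose derivative is a Lie algebra map $\g_{g,1}\to\Der\p$. This map is $\cG_{g,1}$-equivariant for the adjoint actions on both sides, so preserves the weight filtration, and hence restricts to a weight-preserving homomorphism $\u_{g,1}\to\Der\p$. Any choice of splitting of $W_\dot$ on $\p$ (which exists by Proposition~\ref{prop:presentation}) yields a Lie algebra isomorphism $\p\cong\Gr^W_\dot\p$, from which one deduces $\Gr^W_\dot\Der\p\cong\Der\Gr^W_\dot\p$ given the finite-dimensionality of the weight graded quotients of $\p$.

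The main obstacle is justifying the left-exactness of the extension at the first step, since Proposition~\ref{prop:left_exactness} only addresses the case where the kernel is the unipotent completion of a profinite group, whereas here the kernel is the relative completion $\cG_{g,n}^\geom$ with reductive quotient $\Sp(H)$. A suitable variant rests on the triviality of the center of $\cG_{g,n}^\geom$ for $g\ge 3$, which follows from the computation of $\u_{g,n}^\geom$ in \cite{hain:torelli}.
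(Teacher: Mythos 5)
Your reduction to the exactness of
$$
1 \to \cG_{g,n}^\geom \to \cG_{g,n} \to \A_k \to 1
$$
is where the proposal breaks down, and the difficulty is more serious than you acknowledge. Right-exactness is indeed cheap (it is right-exactness of relative and weighted completion), and if the sequence were also left-exact, then comparing with $1\to\Sp(H)\to\GSp(H)\to\Gm\to1$ and taking prounipotent radicals would give the exact sequence $0\to\u_{g,n}^\geom\to\u_{g,n}\to\k_k\to 0$; applying $\Gr^W_\dot$ and inserting $\Gr^W_{-1}\k_k=0$, $\Gr^W_{-2}\k_k\cong H^1(G_k,\Ql(1))^\ast\otimes\Ql(1)$ from Example~\ref{ex:fields} would finish the proof as you say. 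But left-exactness is precisely what is \emph{not} available. The paper's Remark immediately following Proposition~\ref{prop:level-indep} states explicitly that injectivity of $\cG_{g,n}^\geom\to\cG_{\M_{g,n/k}}$ ``should'' hold for $g\ge 3$, is known to \emph{fail} for $g=1$, and is tied in \cite{hain-matsumoto:mcgs} to the crystallinity of the $G_k$-action on $\g_{g,n}^\geom$ --- not to triviality of the center of $\cG_{g,n}^\geom$, which is your proposed fix. Proposition~\ref{prop:left_exactness} does not extend formally to a kernel that is a relative completion with reductive quotient $\Sp(H)$: its proof constructs the auxiliary homomorphism $\cGhat\to\Aut_W\p$ from the fact that $\p$ is prounipotent and negatively weighted, and there is no analogous proalgebraic automorphism group that works off the shelf when the kernel has a reductive part. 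So the centerlessness of $\cG_{g,n}^\geom$ is not by itself sufficient, and you are, in effect, assuming the hard part.

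The paper avoids the left-exactness question entirely. It only uses right-exactness to conclude that $\Gr^W_r\u_{g,n}^\geom\to\Gr^W_r\u_{g,n}$ is surjective for $r=-1$ and has cokernel $H^1(G_k,\Ql(1))^\ast\otimes\Ql(1)$ for $r=-2$. Injectivity in those two weights is then proved concretely: for $n=1$, the map $\Gr^W_r\u_{g,1}^\geom\to\Der\Gr^W_r\p$ is shown (via the Johnson homomorphism computations of \cite[Sects.~9,10]{hain:torelli}) to be an isomorphism for $r=-1$ and injective for $r=-2$, and since this map factors through $\Gr^W_r\u_{g,1}$, the map $\Gr^W_r\u_{g,1}^\geom\to\Gr^W_r\u_{g,1}$ is injective in weights $-1,-2$. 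The case $n>1$ then follows from Proposition~\ref{prop:exactness} and Lemma~\ref{lem:kernel}. This is a substantially weaker input --- injectivity on two specific weight-graded pieces rather than of the whole proalgebraic group --- and it is exactly what the proposition needs. Your construction of the chain $\Gr^W_\dot\u_{g,1}^\geom\to\Gr^W_\dot\u_{g,1}\to\Der\Gr^W_\dot\p$ in the last paragraph is fine; what is missing is the recognition that this chain is itself the vehicle for proving the injectivity, rather than an afterthought.
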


\begin{proof}
Right exactness of weighted completion and the exactness of $\Gr^W_m$ imply that
the sequence
$$
\Gr^W_r\u^\geom_{g,n} \to \Gr^W_r\g_{g,n} \to \Gr^W_r\a_k \to 0
$$
is exact for all $r$. Recall from Example~\ref{ex:fields} that
$$
\Gr^W_{m}\a_k =
\begin{cases}
0 & r=-1 \cr
H^1(G_k,\Ql(1))^\ast\otimes\Ql(1) & r = -2.
\end{cases}
$$
The map $\Gr^W_r\u_{g,n}^\geom \to \Gr^W_r\u_{g,n}$ is therefore surjective
when $r=-1$ and has cokernel $H^1(G_k,\Ql(1))^\ast\otimes\Ql(1)$ when $r=-2$.

The computations in \cite[Sects 9, 10]{hain:torelli} imply that the homomorphism
$$
\Gr^W_r \u_{g,1}^\geom \to \Der \Gr^W_r\p
$$
is an isomorphism when $r=-1$ and injective when $r=-2$. This completes the
proof of the result when $n\le 1$.

When $n>1$, Proposition~\ref{prop:exactness} and the $n=1$ case imply that
$\Gr^W_r\uhat_{g,n}^\geom \to \Gr^W_r\uhat_{g,n}$ is injective. The result now
follows from Lemma~\ref{lem:kernel}.
\end{proof}

\section{Generators and Relations}
\label{sec:relations}

The proofs of our main results make essential use of the of the structure of
$\Gr^W_\dot\u_{g,n}^\geom/W_{-3}$ as a graded Lie algebra in the category of
$S_n\times \GSp(H)$-modules. In this section we compute presentations of these
Lie algebras. These results are refinements and elaborations of computations in
\cite[Sects.~11 \& 13]{hain:torelli}.

\subsection{Representations of $\Sp(H)$ and $\GSp(H)$}

This is a quick review of the representation theory of $\GSp(H)$ over $\Q$,
which is needed in subsequent sections. A good reference is the book
\cite{fulton-harris} of Fulton and Harris. Throughout $H$ is a finite
dimensional $\Q$-vector space which is endowed with a unimodular, skew symmetric
bilinear, $\GSp(H)$-invariant bilinear form $\theta : H\otimes H \to \Q(1)$.
Since both groups are split over $\Q$, all of their irreducible representations
are absolutely irreducible. The representations of $\GSp(H)$ and $\Sp(H)$ over a
field $F$ of characteristic zero are obtained from their representations over
$\Q$ by extension of scalars.

The standard cocharacter $\w : z \mapsto z^{-1}\id_H$ assigns a weight $w(V)$ to
each irreducible $\GSp(H)$-module $V$. The defining representation $H$ has
weight $-1$, which implies that $H^{\otimes r}$ has weight $-r$. As we recall
below, the irreducible $\GSp(H)$ modules are constructed from $H^{\otimes r}$
using the symplectic form $\theta : H^{\otimes 2} \to \Q(1)$ and the action of
the symmetric group $S_r$, which acts on $H^{\otimes r}$ by permuting the
factors.

Recall from Section~\ref{sec:gsp} that $\Q(r)$ denotes the 1-dimensional 
representation of $\GSp(H)$ with character $\tau^r : \GSp(H) \to \Gm$. Since the
composite of the character $\tau : \GSp(H) \to \Gm$ with the standard
cocharacter
$$
\Gm \stackrel{\w}{\longrightarrow} \GSp(H) \stackrel{\tau}{\longrightarrow} \Gm
$$
is $x \mapsto x^{-2}$, $\Q(r)$ has weight $-2r$. For any $\GSp(H)$-module $V$,
set
$$
V(r) = V\otimes \Q(r).
$$
We will call this a ``Tate twist'' of $V$. If $V$ has weight $w$, then $V(r)$
has weight $w-2r$.

For $1 \le i < j \le r$, define $\theta_{ij}: H^{\otimes r} \to H^{\otimes
(r-2)}(1)$ to be the map
$$
\theta_{ij} : x_1\otimes \dots \otimes x_r \mapsto \theta(x_i,x_j)
x_1 \otimes \dots \otimes \widehat{x_i} \otimes \dots \otimes \widehat{x_j}
\otimes \dots \otimes x_n 
$$
that contracts the $i$th and $j$th factors using $\theta$. Set
$$
H^\red{r} = \bigcap_{i<j} \ker \theta_{ij}.
$$
This is a $\GSp(H)\times S_r$ submodule of $H^{\otimes r}$.

The isomorphism classes of irreducible $S_r$-modules are parametrized by
partitions
$$
\lambda = [\lambda_1, \lambda_2, \dots, \lambda_s]
$$
of $r$ into $s\le r$ parts, where $\lambda_1 \ge \lambda_2 \ge \dots \ge
\lambda_s > 0$. Define $|\lambda|$ by
$$
|\lambda| := r = \lambda_1 + \lambda_2 + \dots + \lambda_s. 
$$
To see how a partition $\lambda$ of $r$ gives a $\GSp(H)$-module, choose an
irreducible $S_r$-module $W_\lambda$ that corresponds to the partition $\lambda$
and a non-zero element $w_\lambda$ of it. Set $S^\lambda H =
\Hom_{S_r}(W_\lambda,H^{\otimes r})$. This is a $\GSp(H)$-module which we will
identify with the image of the map
$\Hom_{S_r}(W_\lambda,H^{\otimes r})\to H^{\otimes r}$ that takes $\phi$ to
$\phi(w_\lambda)$. Define
$$
H_\lambda = S^\lambda H \cap H^\red{r}.
$$
This is a $\GSp(H)$-module of weight $-|\lambda|$.  Note that $H_\lambda(r)$ has
weight $-|\lambda|-2r$.

Set $g = \dim H/2$. The basic general facts we need from representation theory
are summarized in the following result, which can be deduced from standards
results, such as those presented in \cite{fulton-harris}.

\begin{theorem}
The representations $H_\lambda(r)$, where $r\in \Z$ and $\lambda$ is a partition
of a non-negative integer into $\le g$ parts, is a set of representatives of the
isomorphism classes of the irreducible $\GSp(H)$-modules. Each remains
irreducible when restricted to $\Sp(H)$; every irreducible $\Sp(H)$-module
arises in this way. Two irreducible $\GSp(H)$-modules $H_\lambda(r)$ and
$H_{\lambda'}(r')$ are isomorphic as $\Sp(H)$-modules if and only if $\lambda =
\lambda'$.
\end{theorem}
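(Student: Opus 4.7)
The plan is to derive this from Weyl's classical construction of the irreducibles of $\Sp(H)$ via Schur--Weyl duality, and then to promote the resulting classification to $\GSp(H)$ using the fact that $\GSp(H)$ is an extension of $\Gm$ by $\Sp(H)$.

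First I would recall the Weyl construction for $\Sp(H)$: the contractions $\theta_{ij} : H^{\otimes r} \to H^{\otimes (r-2)}(1)$ commute with the $S_r$-action on $H^{\otimes r}$, so $H^\red{r}$ is preserved by both $\Sp(H)$ and $S_r$. Classical Schur--Weyl duality for the symplectic group (as in \cite{fulton-harris}) gives the $\Sp(H)\times S_r$-decomposition
$$
H^\red{r} \cong \bigoplus_{|\lambda|=r,\ \ell(\lambda)\le g} H_\lambda \otimes W_\lambda,
$$
in which each $H_\lambda$ is an irreducible $\Sp(H)$-module, the $H_\lambda$ are pairwise non-isomorphic as $\lambda$ ranges over partitions with at most $g$ parts, and every irreducible $\Sp(H)$-module arises this way. (The constraint $\ell(\lambda)\le g$ reflects the fact that $\Sp(H)$ has rank $g$.)

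Next I would extend this to $\GSp(H)$. The full group $\GSp(H)$ acts on $H^{\otimes r}$ commuting with $S_r$, and since $\theta_{ij}$ is $\GSp(H)$-equivariant (into $H^{\otimes(r-2)}(1)$), the subspace $H^\red{r}$ is a $\GSp(H)$-submodule. Hence $H_\lambda = S^\lambda H \cap H^\red{r}$ inherits a $\GSp(H)$-action, and the Tate twists $H_\lambda(r) = H_\lambda \otimes \Q(r)$ are $\GSp(H)$-modules. Each is irreducible over $\GSp(H)$ because it is already irreducible under $\Sp(H) \subset \GSp(H)$, and the weight computation $w(H_\lambda(r)) = -|\lambda| - 2r$ distinguishes the pairs $(\lambda, r)$ from one another as $\GSp(H)$-modules (together with the $\Sp(H)$-type).

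To see that every irreducible $\GSp(H)$-module $V$ is of this form, I would use the fact that $\Sp(H)$ is a normal closed subgroup of $\GSp(H)$ with quotient $\Gm$. An easy averaging/isotypic argument shows that the restriction $V|_{\Sp(H)}$ is $\Sp(H)$-isotypic, and since $\GSp(H)/\Sp(H) = \Gm$ is one-dimensional and $V$ is $\GSp(H)$-irreducible, $V|_{\Sp(H)}$ is in fact $\Sp(H)$-irreducible, hence isomorphic to a unique $H_\lambda$. Then $V\otimes H_\lambda^\ast$ is a one-dimensional $\GSp(H)$-module on which $\Sp(H)$ acts trivially, so it is a character of $\Gm$, i.e.\ some $\Q(r)$, giving $V\cong H_\lambda(r)$. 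The final statement is immediate: $\Sp(H)$ acts trivially on $\Q(r)$, so $H_\lambda(r)|_{\Sp(H)} \cong H_\lambda|_{\Sp(H)}$, and Weyl's classification then gives isomorphism precisely when $\lambda = \lambda'$.

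This is essentially a bookkeeping exercise on top of classical representation theory, so there is no real obstacle; the only point requiring a little care is the extension-of-$\Sp(H)$-to-$\GSp(H)$ step, which I would handle as above by exploiting that the quotient $\GSp(H)/\Sp(H)\cong \Gm$ is a one-dimensional torus.
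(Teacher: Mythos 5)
The paper gives no proof of its own for this statement, simply citing Fulton--Harris as a reference for standard symplectic representation theory, and your argument is exactly the proof such a citation is meant to suggest: Weyl's construction for $\Sp(H)$ via $H^{\red{r}}$ and Schur--Weyl duality, followed by promotion to $\GSp(H)$ by twisting by characters of the central $\Gm$. Your argument is correct; the one place where the stated justification is slightly off is the claim that $V|_{\Sp(H)}$ is irreducible ``since $\GSp(H)/\Sp(H) = \Gm$ is one-dimensional'' --- the one-dimensionality of the quotient is not what does the work, but rather the fact that the center $\Gm$ of $\GSp(H)$ surjects onto the quotient (so $\GSp(H)$ is generated by $\Sp(H)$ and $\Gm$) and acts by scalars on $V$ by Schur's lemma, whence every $\Sp(H)$-submodule of $V$ is automatically $\GSp(H)$-stable.
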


\subsection{A presentation of $\Gr^W_\dot\p$}

Here $\p$ denotes the Lie algebra of the unipotent completion of the fundamental
group of a smooth complex projective curve of genus $g\ge 1$. Its natural weight
filtration is defined over $\Q$ as $W_{-r}\p$ is the $r$th term of its lower
central series. We recall the presentation of $\Gr^W_\dot \p$. Recall that
$\thetadual$ is a map $\Q(1) \to \Lambda^2 H$. It will be regarded as a map
$\thetadual : \Q(1) \to \L_2(H)$ via the canonical identification $\L_2(H)\cong
\Lambda^2 H$. The next result follows directly from a fundamental result of
Labute \cite{labute}. It also follows from Hodge theory via the main result of
\cite{dgms} or by the more motivic argument in \cite[Thm.~5.8]{hain:torelli}.

\begin{proposition}
If $g\ge 1$, then the weight filtration of $\p$ equals its lower central series.
There is  a natural $\GSp(H)$-equivariant Lie algebra isomorphism
$$
\Gr^W_\dot \p \cong \L(H)/(\im\thetadual).
$$
Inner automorphisms act trivially on this isomorphism, so that it does not
depend on the choice of a basepoint.
\end{proposition}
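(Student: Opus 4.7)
The plan is to apply the minimal-presentation formalism of Section~\ref{sec:presentations} to $\p$, which reduces the problem to identifying $H_1(\p)$ and $H_2(\p)$ as $\GSp(H)$-modules and computing the cup product between them. Since $C$ is a $K(\pi,1)$, the cohomology of $\p$ agrees with that of $C$. Poincar\'e duality on $C$, combined with the Tate twist built into the definition $H = H^1(C,\Q(1))$, gives a canonical $\GSp(H)$-isomorphism $H_1(\p) \cong H$ of weight $-1$, while $H^2(\p) \cong H^2(C,\Q) \cong \Q(1)$ and the cup product $H \otimes H \to \Q(1)$ is identified with the symplectic form $\theta$. Dually, $H_2(\p) \cong \Q(1)$, and the natural relation map $H_2(\p) \to \Lambda^2 H_1(\p) \cong \L_2(H)$ is precisely $\thetadual$.

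Granted this, the minimal-presentation discussion in Section~\ref{sec:presentations} produces a $\GSp(H)$-equivariant presentation
$$
\p \cong \L(H)^\wedge / (\im\thetadual),
$$
provided one knows that the relation map $\psi : H_2(\p) \to [\L(H_1(\p))^\wedge, \L(H_1(\p))^\wedge]$ has no correction terms in weights lower than $-2$. This is the content of the $1$-formality of $C$, and is the only nontrivial ingredient. One route is the Deligne--Griffiths--Morgan--Sullivan formality theorem \cite{dgms} together with the motivic refinement of \cite[Thm.~5.8]{hain:torelli} to ensure $\GSp(H)$-equivariance over $\Q$; an alternative, purely algebraic route is Labute's direct calculation \cite{labute} of the associated graded of the lower central series of a surface group, which exhibits $\Gr^W_\dot \p$ as the quotient of $\L(H)$ by the single quadratic relation $\thetadual$. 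Any of these three inputs gives the desired presentation on associated gradeds.

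Finally, because $H_1(\p) \cong H$ is pure of weight $-1$, Proposition~\ref{prop:presentation} implies that the weight filtration on $\p$ is topologically generated in weight $-1$, which forces $W_{-r}\p$ to equal the $r$th term of the lower central series of $\p$. Independence of the basepoint is automatic: inner automorphisms act trivially on group homology, hence trivially on $H_1(\p)$, $H_2(\p)$ and the cup product, and so trivially on the resulting presentation. The main obstacle is the formality/Labute step; everything else is bookkeeping with weights and the universal property of the minimal presentation.
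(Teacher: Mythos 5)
Your proof is correct and follows essentially the same route the paper takes, which simply cites Labute \cite{labute}, formality \cite{dgms}, or \cite[Thm.~5.8]{hain:torelli} for this standard fact; you have just spelled out how those inputs feed the minimal-presentation formalism of Section~\ref{sec:presentations}. Your identification of the formality/Labute step as the only substantive ingredient, and the reduction of the weight vs.\ lower-central-series comparison to the purity of $H_1(\p)$ via exactness of $\Gr^W_\dot$, both match the paper's intended argument.
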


A direct computation using the method of \cite[\S8]{hain:torelli} and the fact
(cf.\ \cite[(4.1)]{kohno-oda}) that $H^\dot(\Gr^W_\dot\p) \cong
H^\dot(\pi,\Q)$ yields:

\begin{corollary}
\label{cor:trivial_reps}
If $g\ge 1$ and $1\le r<6$, then $(\Gr^W_{-r}\p)^{\Sp(H)} = 0$.
\end{corollary}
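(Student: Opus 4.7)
The plan is to verify the vanishing case by case for $r = 1,\dots,5$, using the presentation $\mathfrak{m} := \Gr^W_\dot\p \cong \L(H)/(\im\thetadual)$ from the preceding proposition together with the identification $H^\dot(\mathfrak{m}) \cong H^\dot(\pi,\Q)$ cited in the statement. Write $\mathfrak{m}_r := \Gr^W_{-r}\p$.

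The odd weights $r \in \{1,3,5\}$ are handled at a stroke: by the first fundamental theorem of symplectic invariant theory, $(H^{\otimes r})^{\Sp(H)}$ is spanned by products of the pairing $\theta$ applied to disjoint pairs of tensor factors and hence vanishes for $r$ odd. Since $\mathfrak{m}_r$ is a quotient of $\L_r(H) \subseteq H^{\otimes r}$ and $(-)^{\Sp(H)}$ is exact ($\Sp(H)$ being reductive), the vanishing follows immediately. For $r=2$, the unique $\Sp(H)$-invariant line in $\Lambda^2 H$ is spanned by $\im\thetadual$, which is precisely what is quotiented out, so $(\mathfrak{m}_2)^{\Sp(H)} = 0$.

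The main case is $r=4$. The Lie-weight grading on $\mathfrak{m}$ induces a second grading on the Chevalley--Eilenberg cochain complex $C^\dot(\mathfrak{m}) = \Lambda^\dot\mathfrak{m}^\ast$ that is preserved by the differential (dual to the Lie-weight-preserving bracket), and combining this with the identification $H^\dot(\mathfrak{m}) \cong H^\dot(\pi,\Q)$ yields, in each Lie-weight $w$, an Euler-characteristic identity
$$
\sum_i (-1)^i \bigl[(\Lambda^i\mathfrak{m}^\ast)_w\bigr] \;=\; \sum_i (-1)^i \bigl[H^i(\pi,\Q)_w\bigr]
$$
in the representation ring of $\Sp(H)$. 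The right-hand side is nonzero only in Lie-weights $0, 1, 2$, since $H^0(\pi) = \Q$, $H^1(\pi) = H^\ast$, and $H^2(\pi) = \Q$ sit in Lie-weights $0$, $1$, $2$ respectively. For $w \ge 3$ the right-hand side therefore vanishes. At $w = 4$ the left-hand side unwinds by partitioning $4$ into parts of size $\ge 1$ as
$$
[\mathfrak{m}_4^\ast] \,-\, [\mathfrak{m}_1^\ast \otimes \mathfrak{m}_3^\ast] \,-\, [\Lambda^2 \mathfrak{m}_2^\ast] \,+\, [\Lambda^2 \mathfrak{m}_1^\ast \otimes \mathfrak{m}_2^\ast] \,-\, [\Lambda^4 \mathfrak{m}_1^\ast] \;=\; 0.
$$

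Extracting $\Sp(H)$-invariant multiplicities then reduces the claim $(\mathfrak{m}_4)^{\Sp(H)}=0$ to four standard multiplicity counts, once the $\Sp(H)$-structure of $\mathfrak{m}_3$ is known. The latter is identified as $\mathfrak{m}_3 = H_{[2,1]}$ via the Jacobi exact sequence $0 \to \L_3(H) \to H \otimes \Lambda^2 H \to \Lambda^3 H \to 0$, the resulting decomposition $\L_3(H) = H \oplus H_{[2,1]}$, and Schur-lemma injectivity of the nonzero $\Sp(H)$-equivariant map $h \mapsto [h, \thetadual]$ from $H$ into the $H$-isotypic part of $\L_3(H)$. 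Feeding this into the identity one finds $|H \otimes \mathfrak{m}_3| = 0$ (as $H_{[2,1]}$ contains no copy of $H$), $|\Lambda^2 \mathfrak{m}_2| = 0$ (as the $\Sp(H)$-invariant form on $\mathfrak{m}_2 = H_{[1,1]}$ is symmetric, being induced from the symmetric pairing on $\Lambda^2 H$), and $|\Lambda^4 H| = |\Lambda^2 H \otimes \mathfrak{m}_2| = 1$ by the usual symplectic branching, which combine to give $|\mathfrak{m}_4| = 0$. The main obstacle is precisely this identification of $\mathfrak{m}_3$; once it is secured, the weight-$4$ bookkeeping is routine.
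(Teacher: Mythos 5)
Your argument is essentially the paper's, with the added value of re-deriving two facts that the paper simply cites from \cite{hain:torelli}: the Euler-characteristic identity in Lie-weight $4$ (the paper's appeal to Cor.~8.3) and the identification $\Gr^W_{-3}\p \cong H_{[2,1]}$ (the paper's appeal to Prop.~8.4). Your handling of the odd weights by the first fundamental theorem, your disposal of $r=2$, and your multiplicity bookkeeping at $r=4$ (first two terms contribute $0$, the last two each contribute $1$ and cancel) all reproduce the paper's calculation.

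The one genuine gap is the case $g=1$, which your statement of the corollary includes but which your proof does not cover. When $g=1$ the claims $|\Lambda^4 H|=1$ and $|\Lambda^2 H\otimes\Gr^W_{-2}\p|=1$ are false: $\dim H = 2$, so $\Lambda^4 H = 0$, and $\Gr^W_{-2}\p = 0$ because $\im\thetadual$ is all of $\Lambda^2 H$. The Euler-characteristic identity still holds (every term is $0$), but your branching counts do not, so the bookkeeping as written breaks down. The paper sidesteps this by observing at the outset that for $g=1$ the Lie algebra $\p$ is abelian, hence $\Gr^W_{-r}\p = 0$ for all $r\ge 2$ and $(\Gr^W_{-1}\p)^{\Sp(H)} = H^{\Sp(H)} = 0$; you should insert the same case-split before invoking the $g$-dependent multiplicities.
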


\begin{proof}
When $g=1$, $\p$ is abelian and the result is trivially true. Suppose that $g\ge
2$. The trivial $\Sp(H)$-module can occur in $\Gr^W_{-r}\p$ only when $r$ is
even. So we need to check that there are no copies of the trivial representation
in $\Gr^W_{-2}\p$ and $\Gr^W_{-4}\p$. The proof of
\cite[Prop.~8.4]{hain:torelli} implies that for all $g\ge 2$, $\Gr^W_{-1}\p =
H$, $\Gr^W_{-2}\p = \Lambda^2 H/\im\thetadual$, and that $\Gr^W_{-3}\p$ is the
irreducible $\Sp(H)$-module that is the highest weight part of $H\otimes
\Lambda^2 H$, which is isomorphic to $H_{[2,1]}$. Each is an irreducible
$\Sp(H)$-module. The case $r=2$ follows. When $r=4$,
\cite[Cor.~8.3]{hain:torelli} implies that
$$
[\Gr^W_{-4}\p] = [\Gr^W_{-1}\p\otimes\Gr^W_{-3}\p] + [\Lambda^2\Gr^W_{-2}\p]
- [\Gr^W_{-2}\p\otimes \Lambda^2\Gr^W_{-1}\p] + [\Lambda^4 \Gr^W_{-1} \p]
$$
in the representation ring\footnote{The class of the $G$-module $V$ in the
representation ring of $G$ is denoted by $[V]$.} of $\Sp(H)$. Schur's Lemma
implies that the first two terms do not contain a copy of the trivial
representation and that the third term contains one. The decomposition
$$
\Lambda^4 H \cong
\begin{cases}
H_{[1^4]} \oplus H_{[1^2]} \oplus \Q & g\ge 4, \cr
H_{[1^2]} \oplus \Q & g = 3, \cr
\Q & g=2,
\end{cases}
$$
into irreducible $\Sp(H)$ modules implies that the fourth term contains one copy
of the trivial representation. The result follows.
\end{proof}

The same method can be used to show that $\dim (\Gr^W_{-6}\p)^{\Sp(H)} = 1$
for all $g\ge 2$.

\subsection{Frequently used $\GSp(H)$-modules}

Several $\GSp(H)$-modules play a significant role in this paper. All are of
weight $-1$ or $-2$ and are  Tate twists of the representations that correspond
to the partitions $[1]$, $[1^2]:=[1,1]$, and $[1^3]:=[1,1,1]$. We first define
integral models of each as we will need them in the genus 3 case.

The form $\theta$ will be a unimodular, skew-symmetric bilinear form on $H_\Z$
and gives rise to a $\GSp(H)$-equivariant mapping $\Lambda^2 H_A \to A(1)$ for
all  commutative rings $A$. The dual form $\thetadual$, defined in
Section~\ref{sec:gsp}, will be regarded both as an equivariant map $\thetadual :
A(1) \to \Lambda^2H_A$ and as an element of $\Lambda^2 H_A(-1)$. Set
$$
\Lambda^2_0 H_A := \Lambda^2 H_A/\im\thetadual
\text{ and }
\Lambda^3_0 H_A := (\Lambda^3 H_A)(-1)/\thetadual \wedge H_A.
$$
When $A$ is a field of characteristic zero, $\Lambda^2_0 H_A$ and $\Lambda^3_0
H_A$ are the irreducible $\GSp(H)$-modules $H_{[1^2]}$ and $H_{[1^3]}(-1)$.
These have weights $-2$ and $-1$, respectively. 

\begin{lemma}
\label{lem:split}
If $A$ is an integral domain of characteristic zero, then the short exact
sequence
\begin{equation}
\label{eqn:ses1}
\xymatrix{
0 \ar[r] & A(1)  \ar[r]^(.4)\thetadual & \Lambda^2 H_A \ar[r] &
\Lambda^2_0 H_A \ar[r] & 0
}
\end{equation}
of $\GSp(H_A)$-modules splits if and only if $g \in A^\times$. The short exact
sequence
\begin{equation}
\label{eqn:ses2}
\xymatrix{
0 \ar[r] &  H_A \ar[r]^(.33){\thetadual\wedge} & \big(\Lambda^3 H_A\big)(-1)
\ar[r] & \Lambda^3_0 H_A \ar[r] & 0
}
\end{equation}
of $\GSp(H_A)$-modules splits if and only if $g-1 \in A^\times$.
\end{lemma}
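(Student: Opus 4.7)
The plan is to reduce each statement to computing a single scalar, namely the self-composition of the natural retraction along the given injection. In both cases the relevant $\Hom$-module of $\GSp(H_A)$-equivariant maps is free of rank one over $A$, generated by the natural contraction, and the splitting scalar is (respectively) $g$ and $g-1$.

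\medskip

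\noindent\textbf{Setup and Hom-modules.} Fix a symplectic basis $e_1,f_1,\dots,e_g,f_g$ of $H_A$ with $\theta(e_i,f_j) = \delta_{ij}\,a_o$, so that $\thetadual = (\sum_i e_i\wedge f_i)\otimes a_o^{-1} \in \Lambda^2 H_A(-1)$. A splitting of (\ref{eqn:ses1}) is a $\GSp(H_A)$-equivariant retraction $r\colon \Lambda^2 H_A\to A(1)$ with $r\circ\thetadual=\id_{A(1)}$; similarly for (\ref{eqn:ses2}). First I would show
\[
\Hom_{\GSp(H_A)}(\Lambda^2 H_A,A(1))=A\cdot\theta,
\qquad
\Hom_{\GSp(H_A)}\bigl(\Lambda^3 H_A(-1),H_A\bigr)=A\cdot c,
\]
where $c(x\wedge y\wedge z)=\theta(x,y)z-\theta(x,z)y+\theta(y,z)x$ is the standard contraction (Tate-twisted to $\Lambda^3 H_A(-1)\to H_A$). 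Extending scalars to the fraction field $K$ of $A$, both Hom-spaces are $1$-dimensional by Schur's lemma, since $\Lambda^2 H_K = K(1)\oplus \Lambda^2_0 H_K$ and $\Lambda^3 H_K(-1) = H_K\oplus \Lambda^3_0 H_K$ in the category of irreducible $\GSp(H_K)$-modules. Integrality of the generator follows from unimodularity: $\theta(e_1\wedge f_1)=a_o$ is a basis of $A(1)$, and (for $g\ge 2$) $c\bigl((e_1\wedge f_1\wedge e_j)\otimes a_o^{-1}\bigr)=e_j$ is a basis element of $H_A$; hence any $A$-valued equivariant map $r=\beta\theta$ or $s=\alpha c$ with $\beta,\alpha\in K$ must actually have $\beta,\alpha\in A$.

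\medskip

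\noindent\textbf{The splitting scalars.} Having reduced the question to $r=c_1\theta$, $s=c_2 c$ with $c_i\in A$, I would now compute the two compositions in the chosen basis. Direct calculation gives
\[
\theta(\thetadual(a_o)) \;=\; \sum_{i=1}^g \theta(e_i,f_i) \;=\; g\,a_o,
\]
so $\theta\circ\thetadual = g\cdot\id_{A(1)}$. Hence the splitting condition $r\circ\thetadual=\id$ becomes $c_1 g=1$, which has a solution $c_1\in A$ iff $g\in A^\times$; and then $r=g^{-1}\theta$ is the splitting. For (\ref{eqn:ses2}), writing $v=\sum_j(a_j e_j + b_j f_j)$ and computing
\[
c(e_i\wedge f_i\wedge v)
= a_o\,v - \theta(e_i,v)f_i + \theta(f_i,v)e_i
= a_o v - a_o b_i f_i - a_o a_i e_i,
\]
and summing over $i$ gives $c\bigl(\textstyle\sum_i e_i\wedge f_i\wedge v\bigr)=(g-1)a_o v$, so $c\circ(\thetadual\wedge) = (g-1)\id_{H_A}$. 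Therefore the splitting condition is $c_2(g-1)=1$, solvable in $A$ iff $g-1\in A^\times$.

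\medskip

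\noindent\textbf{Main obstacle.} The conceptual step is the explicit scalar computation, which is routine. The main technical point is the integral refinement of Schur's lemma: establishing that the two $\Hom$-modules over the integral domain $A$ (and not merely over $K$) are free of rank one, with the explicit generators $\theta$ and $c$. This uses unimodularity of $\theta$ to pin down the integral normalization of the rank-$1$ generator over $K$. Once that is in place, the two necessary-and-sufficient conditions follow immediately from the scalars $g$ and $g-1$ computed above.
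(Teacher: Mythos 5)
Your proof is correct and follows essentially the same route as the paper: identify the unique $\GSp(H)$-equivariant retraction over the fraction field via Schur's lemma, pin down the integral generator using unimodularity of $\theta$, and compute the self-composition with the inclusion to get the scalar $g$ (resp.\ $g-1$). The only cosmetic difference is that you package the argument by first establishing $\Hom_{\GSp(H_A)}(\Lambda^3 H_A(-1),H_A)=A\cdot c$ explicitly, whereas the paper argues directly that any splitting must equal $c/(g-1)$ over the fraction field and then checks when this maps into $H_A$.
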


\begin{proof}
We will prove the lemma for the exact sequence (\ref{eqn:ses2}). The proof for
exact sequence (\ref{eqn:ses1}) is similar and is left to the reader. The map $c
: \Lambda^3 H_A \to H_A(1)$ defined by
$$
c: x \wedge y \wedge z \mapsto
\theta(x,y)z + \theta(y,z)x + \theta(z,x)y
$$
is $\GSp(H_A)$-equivariant. Its composition $c\circ j$ with the inclusion
$j:H_A(1) \hookrightarrow \Lambda^3H_A$ is $(g-1)$ times the identity of
$H_A(1)$. If $g-1\in A^\times$. Thus $c/(g-1)$ splits (\ref{eqn:ses2})
twisted by $A(1)$.

Suppose now that $s:\Lambda^3 H_A(-1) \to H_A$ is a splitting of
(\ref{eqn:ses2}). Denote the fraction field of $A$ by $F$. Since $F$ has
characteristic zero, $H_F$ and $\Lambda^3_0 H_F$ are irreducible
$\GSp(H_F)$-modules and the splitting of (\ref{eqn:ses2})$\otimes_A F$ is
unique, and so must be $c/(g-1)$. This, and the fact that $A$ is a domain, imply
that $s=c/(g-1)$. Since $\theta$ is unimodular, the image of $c/(g-1): \Lambda^3
H_A(-1) \to H_F$ is contained in $H_A$ if and only if $g-1\in A^\times$.
\end{proof}

The lemma implies that for all fields $F$ of characteristic zero
$$
\Lambda^2 H_F \cong A(1) \oplus \Lambda^2_0 H_F \text{ and }
\big(\Lambda^3 H_F\big)(-1) \cong H_F \oplus \Lambda^3_0 H_F
$$
We fix these isomorphisms to be the ones given by the unique $\GSp(H)$-invariant
splittings of the sequences in the lemma above. From this point on, we will
write $H$ instead of $H_F$ when $F$ is a field of characteristic zero. The
default field of characteristic zero in this section will be $\Q$. In later
sections it will be $\Ql$.

Another representation that will occur, but which plays a minor role, is 
$$
V_\tambo := H_{[2,2]}(-1),
$$
which has weight $-2$. The representation $H_{[2,2]}$ is the highest weight part
of the second symmetric power of $\Lambda^2_0 H$.

Kabanov's stability result \cite{kabanov} implies that when $g\ge 6$ the
decomposition of $\Lambda^2\Lambda^3_0 H$ is independent of $g$ in the sense
described in \cite[\S6]{hain:torelli}. Because of this, one can compute this
decomposition for all $g\ge 6$ by computing it when $g=6$. The following
computations were made using the computer program $\sf{LiE}$.

\begin{proposition}[{cf.\ \cite[Lem.~10.2]{hain:torelli}}]
\label{prop:reps}
If $g\ge 3$, then each irreducible $\GSp(H)$ module that occurs in $\Lambda^2
\Lambda^3_0 H$ occurs with multiplicity one. When $g=3$,
$$
\Lambda^2(\Lambda^3_0 H) = \Q(1) \oplus V_\tambo;
$$
when $g\ge 4$,
$$
\Lambda^2(\Lambda^3_0 H) \supset \Q(1) \oplus \Lambda^2_0 H
\oplus V_\tambo.
$$
Moreover, the representation $H\otimes \Lambda^3_0 H$ contains a unique
copy of $\Lambda^2_0 H$ for all $g\ge 3$.
\end{proposition}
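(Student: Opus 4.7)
The plan is computational: this is a statement about finite-dimensional rational $\GSp(H)$-representations, and I would verify it by decomposing the relevant representations into irreducibles, using Kabanov's stability theorem to reduce to a finite list of cases.

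Kabanov's stability result \cite{kabanov}, in the form used in \cite[\S6]{hain:torelli}, asserts that for $g\ge 6$ the irreducible $\Sp(H)$-decomposition of a functorial construction like $\Lambda^2\Lambda^3_0 H$ or $H\otimes \Lambda^3_0 H$ is independent of $g$ once irreducibles are matched by their partition labels. Thus the proposition reduces to a single stable computation (at, say, $g=6$) together with the three unstable cases $g=3,4,5$, each handled separately.

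For the stable computation I would run $\sf{LiE}$ on $\Lambda^2 H_{[1^3]}$ at $g=6$, read off the complete list of irreducible summands and their multiplicities, and verify two things: (i) every multiplicity equals one, and (ii) the three specific summands $\Q(1)$, $\Lambda^2_0 H = H_{[1^2]}$, and $V_\tambo = H_{[2,2]}(-1)$ all appear. The uniqueness of the copy of $\Lambda^2_0 H$ in $H\otimes \Lambda^3_0 H$ is obtained from the same kind of $\sf{LiE}$ session applied to $H\otimes H_{[1^3]}$; conceptually it corresponds to the single $\theta$-contraction of a factor of $H$ with a factor of $\Lambda^3 H$ that lands in the primitive part $\Lambda^2_0 H$. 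The remaining unstable cases $g=4,5$ are dispatched by identical $\sf{LiE}$ computations. The case $g=3$ is special because $\Lambda^3_0 H$ is only $14$-dimensional and partition-length constraints (nothing with more than $3$ parts survives) collapse the stable picture. Here I would compute $\Lambda^2\Lambda^3_0 H$ directly and confirm that the only summands remaining are $\Q(1)$ and $V_\tambo$; the would-be stable summand labeled $[1^2]$ does not arise from $\Lambda^2$ in this regime, although it still appears with multiplicity one inside $H\otimes \Lambda^3_0 H$ for an unrelated reason (it involves the full $\Lambda^3 H$ rather than just its primitive part).

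The only real obstacle is careful bookkeeping in the unstable cases, to ensure that one correctly tracks which partitions remain admissible at genus $g=3$ and does not accidentally overcount multiplicities as one passes between $\Sp$ and $\GSp$. Conceptually the proposition reduces to a finite representation-theoretic check which is routine given Kabanov stability and the availability of $\sf{LiE}$; the content of the statement is in identifying which irreducibles must be isolated for later use, and so most of the work has already been done by recording the partitions labeling the summands appearing.
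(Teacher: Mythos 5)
Your approach is exactly the paper's: the paragraph preceding the proposition explicitly invokes Kabanov stability to reduce the stable range to a single computation at $g=6$ and states that the computations were carried out in $\sf{LiE}$, which is precisely what you describe. Your handling of the unstable cases, including the dimension-$14$ collapse at $g=3$ and the distinction between $\Lambda^2\Lambda^3_0 H$ and $H\otimes\Lambda^3_0 H$ at $g=3$, matches the intended argument.
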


\subsection{Presentation of $\Gr^W_\dot \u_{g,1}^\geom/W_{-3}$}

The Lie algebra $\Gr^W_\dot\u_{g,1}^\geom/W_{-3}$ can be computed by considering
its action on $\Gr^W_\dot\p$.

Our first task is to determine $\Gr^W_\dot\Der\p/W_{-3}$. Fix a field $F$ of
characteristic zero. Denote the free Lie algebra generated by the $F$-vector
space $V$ by $\L(V)$. 

The free Lie algebra $\L(V)$ is graded by bracket length:
$$
\L(V) \cong \bigoplus_{n\ge 1} \L_n(V).
$$
Observe that the derivation Lie algebra $\Der \L(H)$ is isomorphic to
$\Hom_F(H,\L(H))$ and is also graded:
$$
\Der\L(H) = \bigoplus_{n\ge 0} \Der_n \L(H),
$$
where $\Der_n \L(H) := \Hom(H,\L_{n+1}(H))$. Observe that there are natural
$\GSp(H)$-actions on $\L\big((\Lambda^3 H) (-1)\big)$ and $\Der \L(H)$. The
following fact is well-know, but we prove it because of its importance in this
paper. First note if $x,y\in H$, then $\theta(x,y)\thetadual \in \Lambda^2 H$. 

\begin{lemma}
If $g\ge 2$, there is a natural $\GSp(H)$-equivariant, graded Lie algebra
homomorphism 
$$
\deltatilde : \L\big((\Lambda^3 H)(-1)\big) \to \Der \L(H)
$$
such that
\begin{enumerate}

\item $\deltatilde(u)$ annihilates the image of $\thetadual$ for all $u \in
\L\big((\Lambda^3 H)(-1)\big)$;

\item $\deltatilde(x\wedge \thetadual) = \ad_x - \theta(x,\blank)\thetadual$
for all $x\in H$.

\end{enumerate}
Moreover
$$
\im\big\{\L_2\big((\Lambda^3 H) (-1)\big) \to \Hom(H,\L_3(H))\big\} \cong
V_\tambo \oplus \Lambda^2_0 H \oplus F(1).
$$
\end{lemma}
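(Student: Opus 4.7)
The plan is to build $\deltatilde$ on generators and then invoke the universal property of the free Lie algebra. On $(\Lambda^3 H)(-1)$, I would take the contraction-type formula
\[
\deltatilde(a\wedge b\wedge c)(y) := \theta(y,a)\,b\wedge c + \theta(y,b)\,c\wedge a + \theta(y,c)\,a\wedge b,
\]
viewed as an element of $\Der_1 \L(H) = \Hom(H,\Lambda^2 H)$. This is manifestly $\GSp(H)$-equivariant (all operations are built from $\theta$), so it extends uniquely to a graded $\GSp(H)$-equivariant Lie algebra homomorphism $\deltatilde : \L((\Lambda^3 H)(-1)) \to \Der \L(H)$ that sends $\L_n$ to $\Der_n\L(H) = \Hom(H,\L_{n+1}(H))$.

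To verify (ii), I would pick a symplectic basis $e_1,\dots,e_g,f_1,\dots,f_g$, write $\thetadual = \sum_i e_i\wedge f_i$, and compute $\deltatilde(x\wedge \thetadual)(y) = \sum_i \deltatilde(x\wedge e_i \wedge f_i)(y)$ directly. The three terms of the contraction formula collapse, using the identities $\sum_i \theta(y,e_i)f_i = -\sum_i\theta(y,f_i)f_i\cdot(\text{switch})$ etc., into $\theta(y,x)\thetadual + [x,y]$, which is exactly $\ad_x(y)-\theta(x,y)\thetadual$. For (i), I would first check it on simple tensors $a\wedge b\wedge c$ by applying $\deltatilde(a\wedge b\wedge c)$ to $\thetadual=\sum_i[e_i,f_i]$ via the derivation rule; the sum collapses (using the basis identities again) to $[[b,c],a] + [[c,a],b] + [[a,b],c]$, which is zero by Jacobi. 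Since the subspace $\{\delta\in \Der\L(H) : \delta(\thetadual)=0\}$ is a Lie subalgebra (if $\delta_1,\delta_2$ kill $\thetadual$, so does $[\delta_1,\delta_2]$), property (i) propagates from generators to the whole of $\L((\Lambda^3 H)(-1))$.

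For the last assertion, I would exploit the decomposition $(\Lambda^3 H)(-1)\cong H \oplus \Lambda^3_0 H$ coming from Lemma~\ref{lem:split}, which gives
\[
\Lambda^2\bigl((\Lambda^3 H)(-1)\bigr) \cong \Lambda^2 H \ \oplus\ H\otimes \Lambda^3_0 H \ \oplus\ \Lambda^2(\Lambda^3_0 H).
\]
Combining this with Proposition~\ref{prop:reps} and the splitting $\Lambda^2 H \cong F(1)\oplus \Lambda^2_0 H$, I enumerate the multiplicities of $F(1)$, $\Lambda^2_0 H$ and $V_\tambo$ in the source. The three target irreducibles are then each exhibited explicitly in the image: for $V_\tambo$, produce a highest-weight element inside $\Lambda^2(\Lambda^3_0 H)$ and check that its image in $\Der_2\L(H)$ is nonzero (then apply Schur); for $\Lambda^2_0 H$, use property (ii) to compute $[\deltatilde(x\wedge \thetadual),\deltatilde(y\wedge \thetadual)]$, whose leading term contributes $\ad_{[x,y]}$ and so produces the $\Lambda^2_0 H$ summand of $\Lambda^2 H$; for $F(1)$, similarly extract the $\thetadual$-component of the same bracket.

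The main obstacle is the upper bound for the image: several of the irreducibles $F(1)$ and $\Lambda^2_0 H$ occur with multiplicities $\ge 2$ in the source, so one must identify the non-trivial relations modded out by $\deltatilde$. I would handle this by using property (i) (annihilation of $\thetadual$ forces degeneracies among the different copies) together with Jacobi in $\L(H)$, reducing the source to the single-copy situation and then concluding via Schur that the image is exactly $V_\tambo \oplus \Lambda^2_0 H \oplus F(1)$.
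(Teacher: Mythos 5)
Your definition of $\deltatilde$ is exactly the paper's formula (\ref{eqn:johnson}), and your verifications of properties (i) and (ii) — via a symplectic basis, together with the observation that $\{\delta\in\Der\L(H):\delta(\thetadual)=0\}$ is a Lie subalgebra, so that (i) need only be checked on generators — are correct. The paper disposes of these with the single sentence ``The identities are easily verified,'' so you are supplying exactly the computation the paper elides, by the same route.

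On the ``Moreover'' assertion, note first that the paper's proof does not establish it explicitly either, so your sketch is not less rigorous than what is in print. However, your diagnosis of where the real work lies is slightly off. The fact that $F(1)$ and $\Lambda^2_0 H$ occur with multiplicity $\ge 2$ in the source $\Lambda^2\big((\Lambda^3 H)(-1)\big)$ is not the obstruction: by equivariance the image sits inside the subspace of $\Hom(H,\L_3(H))$ of derivations annihilating $\thetadual$, and each irreducible can only appear there up to its multiplicity in that target, so the multiplicity-$\ge 2$ source copies automatically collapse. The genuine point to be settled for the upper bound is that $H\otimes\Lambda^3_0 H$ and $\Lambda^2\Lambda^3_0 H$ contain irreducibles \emph{other than} $F(1)$, $\Lambda^2_0 H$, $V_\tambo$, and you must either show those have multiplicity zero in the $\thetadual$-annihilator of $\Hom(H,\L_3(H))$ or compute directly that $\deltatilde$ sends them to zero. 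Proposition~\ref{prop:reps} gives you what you need for the three pieces that survive, but it does not by itself dispose of the remaining constituents of the source; that is the step your final paragraph should spell out.
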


\begin{proof}
It suffices to define a $\GSp(H)$-equivariant linear mapping
$$
\Lambda^3 H \to \big[\Der_1 \L(H)\big](1) = \Hom(H,\L_2(H)(1)).
$$
This homomorphism is defined by
\begin{equation}
\label{eqn:johnson}
\deltatilde :
x \wedge y \wedge z : u \mapsto
\theta(u,x) [y,z] + \theta(u,y) [z,x] + \theta(u,z) [x,y].
\end{equation}
The identities are easily verified.
\end{proof}

Since each element of $\L\big((\Lambda^3 H)(-1)\big)$ annihilates $\im\thetadual
\in \L_2(H)$, $\deltatilde$ induces a $\GSp(H)$-equivariant graded Lie algebra
homomorphism
$$
\delta : \L\big((\Lambda^3 H)(-1)\big) \to \Der \Gr^W_\dot \p.
$$

\begin{theorem}[{\cite[Cor.~5.7 and \S 11]{hain:torelli}}]
\label{thm:ug1}
If $g\ge 3$, then there is a Lie algebra surjection
$$
q : \L\big((\Lambda^3 H)(-1)\big) \to \Gr^W_\dot \u_{g,1}^\geom
$$
such that the diagram
$$
\xymatrix@C=0pt{
\L\big((\Lambda^3 H)(-1)\big) \ar[rr]^q\ar[dr]_\delta &&
\Gr^W_\dot \u_{g,1}^\geom \ar[dl] \cr
& \Gr^W_\dot \Der \p
}
$$
commutes. It induces isomorphisms
\begin{equation}
\label{eqn:above}
\Gr^W_r \u_{g,1}^\geom \overset{\simeq}{\longrightarrow} \Gr^W_r\Der\p \cong
\begin{cases}
(\Lambda^3 H)(-1) & r = -1,\cr
V_\tambo \oplus \Lambda^2_0 H & r = -2.
\end{cases}
\end{equation}
\end{theorem}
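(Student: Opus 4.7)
The plan is to build $q$ from Johnson's computation of the abelianization of the Torelli group, then pin down its behavior in weights $-1$ and $-2$ by comparing with the known derivation action on $\Gr^W_\dot \p$ and using the cohomological presentation machinery of Proposition~\ref{prop:homology} and Lemma~\ref{lem:ses}. The key inputs are: (a) Johnson's theorem identifying $H_1(T_{g,1})\otimes\Q$ with $(\Lambda^3 H)(-1)$, which via weighted/relative completion and Proposition~\ref{prop:geom2arith} gives $\Gr^W_{-1}\u_{g,1}^\geom \cong (\Lambda^3 H)(-1)$; (b) the derivation $\delta$ just constructed, whose image on the level $2$ part decomposes as $V_\tambo \oplus \Lambda^2_0 H \oplus F(1)$; and (c) the fact from Proposition~\ref{prop:geom2arith} that $\Gr^W_r\u_{g,1}^\geom \to \Gr^W_r\Der\p$ is an isomorphism in weight $-1$ and injective in weight $-2$.

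Since $\u_{g,1}^\geom$ is a pronilpotent Lie algebra in the category of $\cG_{g,1}$-modules whose abelianization is pure of weight $-1$, the exactness of $\Gr^W_\dot$ (see Proposition~\ref{prop:weight}) forces the weight filtration on $\Gr^W_\dot\u_{g,1}^\geom$ to agree with the lower central series filtration. By the universal property of free Lie algebras applied to the $\GSp(H)$-equivariant lift of the identity $H_1(\u_{g,1}^\geom) \to \Gr^W_{-1}\u_{g,1}^\geom$, this yields the desired $\GSp(H)$-equivariant Lie algebra surjection $q: \L((\Lambda^3 H)(-1)) \to \Gr^W_\dot\u_{g,1}^\geom$. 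Commutativity of the triangle with $\delta$ amounts to checking that the composite $\L_1 = (\Lambda^3 H)(-1) \to \Gr^W_{-1}\u_{g,1}^\geom \to \Gr^W_{-1}\Der\p$ equals $\delta$ on generators, which is precisely the classical identification of the first Johnson homomorphism with formula~(\ref{eqn:johnson}); both maps being Lie algebra homomorphisms out of a free Lie algebra, agreement on generators propagates to all brackets.

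The weight $-1$ isomorphism in (\ref{eqn:above}) is then immediate from (a). For the weight $-2$ case, the surjection
\[
q_2 : \L_2((\Lambda^3 H)(-1)) \twoheadrightarrow \Gr^W_{-2}\u_{g,1}^\geom
\]
fits into the commutative square with $\delta : \L_2((\Lambda^3 H)(-1)) \to \Gr^W_{-2}\Der\p$ whose image is $V_\tambo \oplus \Lambda^2_0 H \oplus F(1)$ by the previous lemma. Since the right vertical map $\Gr^W_{-2}\u_{g,1}^\geom \to \Gr^W_{-2}\Der\p$ is injective, it suffices to show that $\Gr^W_{-2}\u_{g,1}^\geom$ has no copy of the trivial representation $F(1)$. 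This I would verify via Lemma~\ref{lem:ses}: dualizing gives
\[
0 \to \Gr^W_2 H^1(\u_{g,1}^\geom) \to (\Gr^W_{-2}\u_{g,1}^\geom)^\ast \to \Lambda^2\big((\Lambda^3 H)(-1)\big)^\ast \to \Gr^W_2 H^2(\u_{g,1}^\geom) \to 0,
\]
and Proposition~\ref{prop:homology} identifies the relevant $\GSp(H)$-isotypical components of these Lie algebra cohomology groups with $H^\bullet(\G_{g,1},V)$ for $V$ the appropriate Tate twist. The computation that $H^1(\G_{g,1},F(-1)) = 0$ and that the $F(-1)$-isotypical part of the cup product $\Lambda^2 H^1(\u_{g,1}^\geom) \to H^2(\u_{g,1}^\geom)$ is injective (the latter being the key input carried out in \cite[\S 10,11]{hain:torelli}) then forces the $F(1)$ summand to lie in the kernel of $q_2$.

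The main obstacle is this last cohomological computation identifying which components of $\Lambda^2 \Lambda^3_0 H$ survive in $\Gr^W_{-2}\u_{g,1}^\geom$; in particular, ruling out the trivial summand is the substantive step, and it rests on the nontriviality of the corresponding cup product class in $H^2(\M_{g,1},\V)$ for the relevant local system $\V$. Granting the analogous computations of \cite{hain:torelli}, the rest of the argument is a formal consequence of the freeness of $\L((\Lambda^3 H)(-1))$, the exactness of $\Gr^W_\dot$, and the injectivity statement in Proposition~\ref{prop:geom2arith}.
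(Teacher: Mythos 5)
The theorem in the paper is stated as a citation to \cite[Cor.~5.7 and \S 11]{hain:torelli} with no internal proof, so what you have written is a reconstruction rather than a re-derivation of something the paper itself proves. Most of your structural observations are sound: purity of $H_1(\u_{g,1}^\geom)$ in weight $-1$ giving the identification of the weight filtration with the lower central series, hence surjectivity of $q$; the freeness argument for commutativity of the triangle once you check agreement on generators; and the weight $-1$ case being immediate from Proposition~\ref{prop:geom2arith}.

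The weight $-2$ argument, however, has a genuine logical slip. The lemma preceding Theorem~\ref{thm:ug1} computes the image of $\deltatilde\vert_{\L_2}$ in $\Hom(H,\L_3(H)) = \Der_2\L(H)$, and finds $V_\tambo \oplus \Lambda^2_0 H \oplus F(1)$. You attribute this decomposition to the image of $\delta\vert_{\L_2}$ in $\Gr^W_{-2}\Der\p$. These are different targets: $\Gr^W_{-2}\Der\p$ is a subquotient of $\Hom(H,\L_3(H))$, obtained by passing to $\Hom(H,\Gr^W_{-3}\p)$, and the kernel $\Hom(H, [\thetadual,H])$ of this projection is nontrivial (isomorphic to $H\otimes H$ up to twist, hence containing copies of $F$, $\Lambda^2_0 H$, and $\mathrm{Sym}^2 H$). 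If your premise that the image of $\delta\vert_{\L_2}$ in $\Gr^W_{-2}\Der\p$ is $V_\tambo \oplus \Lambda^2_0 H \oplus F(1)$ were literally true, then combined with $q_2$ surjective, the commutative triangle, and the injectivity of $\Gr^W_{-2}\u_{g,1}^\geom \to \Gr^W_{-2}\Der\p$, you would conclude $\Gr^W_{-2}\u_{g,1}^\geom \cong V_\tambo \oplus \Lambda^2_0 H \oplus F(1)$ --- which is exactly what you are then trying to disprove. The correct picture is that the $F(1)$ summand of $\deltatilde(\L_2)$ lies in the kernel of the descent $\Der\L(H) \to \Der\Gr^W_\dot\p$, so it is already absent from $\delta(\L_2)$; the cohomological vanishing you invoke is one way to establish this, but the logical role is to show the $F(1)$ dies under the descent, not to ``rule it out'' after the fact. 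Even with that repaired, you would also need to verify that the $\Lambda^2_0 H$ summand does not get killed under the same descent (it is also a representation type that occurs in the kernel), which is exactly what Proposition~\ref{prop:bracket:g1} records.

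Finally, the displayed equation~(\ref{eqn:above}) asserts two things: that $\Gr^W_r\u_{g,1}^\geom \to \Gr^W_r\Der\p$ is an isomorphism, and that $\Gr^W_{-2}\Der\p$ itself is $V_\tambo\oplus\Lambda^2_0 H$. Proposition~\ref{prop:geom2arith} gives only injectivity in weight $-2$; surjectivity of $\Gr^W_{-2}\u_{g,1}^\geom\to\Gr^W_{-2}\Der\p$, equivalently the determination of $\Gr^W_{-2}\Der\p$ as a $\GSp(H)$-module, is unaddressed in your plan. Both the paper and your reconstruction ultimately defer to \cite[\S\S 9--11]{hain:torelli} for the substance here, so flagging this as the ``main obstacle'' is honest, but the intermediate steps as written do not form a self-consistent argument.
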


The isomorphism $\Gr^W_{-1}\Der\p \cong \Lambda^3 H(-1)$ when $g\ge 3$ is a
manifestation of the Johnson homomorphism \cite{johnson:h1}. It is induced by
the homomorphism (\ref{eqn:johnson}). The composition of $\delta$ with inclusion
$i :H\hookrightarrow (\Lambda^3 H)(-1)$ defined by $x\mapsto x\wedge \thetadual$
takes $x\in H$ to the inner derivation $\ad_x$ of $\p$. Identify the copy of
$\Lambda^2_0 H$ in $\Gr^W_{-2}\Der\u_{g,1}^\geom$ with the image of the mapping
$$
\xymatrix{
\Lambda^2 H \ar[r]^(.35){\Lambda^2 i}&
\Lambda^2 \big((\Lambda^3 H)(-1)\big) \ar[r]^(.55){\bracket} &
\Gr^W_{-2}\u_{g,1}^\geom.
}
$$
induced by the bracket.

Since the isomorphisms (\ref{eqn:above}) factor through the map
$\Gr^W_r\u_{g,1}^\geom \to \Gr^W_r\u_{g,1}$, we conclude:

\begin{corollary}
If $g\ge 3$, then $\Gr^W_r \u_{g,1}^\geom \to \Gr^W_r \u_{g,1}$ is injective
when $r=-1,-2$.
\end{corollary}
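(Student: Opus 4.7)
The corollary is a formal consequence of the pieces already assembled, so the plan is very short.

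First I would recall that Proposition~\ref{prop:geom2arith} constructs a natural $\GSp(H)$-equivariant homomorphism $\Gr^W_r\u_{g,1} \to \Gr^W_r\Der\p$ and asserts that the composition
$$
\Gr^W_r\u_{g,1}^\geom \longrightarrow \Gr^W_r\u_{g,1} \longrightarrow \Gr^W_r\Der\p
$$
is the natural map coming from the action of $\u_{g,1}^\geom$ on $\p$. Since both arrows are induced by the same $\cG_{g,n}$-equivariant action (the geometric one) and $\Gr^W_\dot$ is functorial, the factorization is automatic.

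Next I would invoke Theorem~\ref{thm:ug1}, which says that the total composition $\Gr^W_r\u_{g,1}^\geom \to \Gr^W_r\Der\p$ is an isomorphism onto $(\Lambda^3 H)(-1)$ when $r=-1$ and onto $V_\tambo\oplus \Lambda^2_0 H$ when $r=-2$. An isomorphism that factors as a composite of two maps forces the first of those maps to be injective, so $\Gr^W_r\u_{g,1}^\geom \to \Gr^W_r\u_{g,1}$ is injective for $r\in\{-1,-2\}$, which is the claim.

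There is essentially no obstacle here: the nontrivial input is the explicit identification of $\Gr^W_r\u_{g,1}^\geom$ in Theorem~\ref{thm:ug1} (and behind it the Johnson-homomorphism computations from \cite{hain:torelli}), together with the existence of the Lie algebra map $\u_{g,1}\to\Der\p$ of Proposition~\ref{prop:geom2arith}. Once these are in hand, the corollary is purely a factorization observation.
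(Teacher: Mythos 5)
Your argument is exactly the paper's: the factorization of $\Gr^W_r\u_{g,1}^\geom \to \Gr^W_r\Der\p$ through $\Gr^W_r\u_{g,1}$ comes from Proposition~\ref{prop:geom2arith}, and Theorem~\ref{thm:ug1} asserts the composite is an isomorphism in weights $-1$ and $-2$, forcing the first arrow to be injective. This is precisely the observation the paper makes immediately after stating Theorem~\ref{thm:ug1}, so your proposal matches the intended proof.
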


We also need to understand the bracket
\begin{equation}
\label{eqn:bracket}
\Lambda^2 \Gr^W_{-1}\u_{g,1}^\geom \to \Gr^W_{-2}\u_{g,1}^\geom
\end{equation}
With respect to the decomposition $(\Lambda^3 H)(-1) = H\oplus \Lambda^3_0 H$,
the bracket has three components:
\begin{equation}
\label{eqn:bracket_cpts}
\Lambda^2 H \to \Gr^W_{-2}\Der \p,\quad H\otimes \Lambda^3_0 H \to
\Gr^W_{-2}\Der\p,\quad \Lambda^2(\Lambda^3_0 H) \to \Gr^W_{-2}\Der\p.
\end{equation}
Each is $\GSp(H)$-equivariant. These can be computed in $\Der\Gr^W_\dot\p$.

\begin{proposition}[{\cite[\S12]{hain:torelli}}]
\label{prop:bracket:g1}
When $g\ge 3$, the images of the three components of the bracket
(\ref{eqn:bracket}) are:
\begin{align*}
\im\big\{\Lambda^2 H \to \Gr^W_{-2}\Der \p\big\} &= \Lambda^2_0 H, \cr
\im\big\{H\otimes \Lambda^3_0 H \to \Gr^W_{-2}\Der \p\big\} &=
\Lambda^2_0 H,
\cr
\im\big\{\Lambda^2(\Lambda^3_0 H) \to \Gr^W_{-2}\Der\p\big\} &=
\begin{cases}
V_\tambo & g = 3,\cr
V_\tambo \oplus \Lambda^2_0 H & g\ge 4.
\end{cases}
\end{align*}
\end{proposition}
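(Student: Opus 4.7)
The plan is to compute each of the three bracket components by working inside $\Gr^W_{-2}\Der\p$, exploiting the injection $\Gr^W_{-2}\u_{g,1}^\geom \hookrightarrow \Gr^W_{-2}\Der\p$ of Theorem~\ref{thm:ug1}. Since the surjection $q:\L((\Lambda^3H)(-1))\to\Gr^W_\dot\u_{g,1}^\geom$ of that theorem is a graded Lie algebra map whose composition with the inclusion into $\Gr^W_\dot\Der\p$ equals $\delta$, every bracket of two elements coming from $\Gr^W_{-1}$ is obtained by bracketing derivations in $\Der\Gr^W_\dot\p$. The target $\Gr^W_{-2}\u_{g,1}^\geom\cong V_\tambo\oplus\Lambda^2_0 H$ is fixed, and the copy of $\Lambda^2_0 H$ inside it is exactly the image of the adjoint map $\ad:\Gr^W_{-2}\p\to\Gr^W_{-2}\Der\p$.

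For the first component, property (ii) of $\deltatilde$ gives $\delta(x\wedge\thetadual)=\ad_x$ as a derivation of $\Gr^W_\dot\p$ (the extra term $\theta(x,\blank)\thetadual$ vanishes modulo $\im\thetadual$), so $[\delta(x\wedge\thetadual),\delta(y\wedge\thetadual)]=\ad_{[x,y]}$, and the image is the surjective composite $\Lambda^2 H\twoheadrightarrow\Gr^W_{-2}\p=\Lambda^2_0 H$. For the second component, for $v\in\Lambda^3_0 H$ a direct application of the Leibniz rule shows
$$
[\ad_x,\delta(v)]=-\ad_{\delta(v)(x)},
$$
an inner derivation by the element $\delta(v)(x)\in\Gr^W_{-2}\p=\Lambda^2_0 H$. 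By Proposition~\ref{prop:reps} the irreducible $\Lambda^2_0 H$ occurs with multiplicity one in $H\otimes\Lambda^3_0 H$, so Schur's lemma forces the image to be either $0$ or all of $\Lambda^2_0 H$. To exclude the first possibility, choose a symplectic basis $a_1,b_1,\ldots,a_g,b_g$ and take $v=a_1\wedge a_2\wedge a_3\in\Lambda^3_0 H$ and $x=b_1$; then formula (\ref{eqn:johnson}) gives $\delta(v)(b_1)=[a_2,a_3]$, which is nonzero in $\Lambda^2_0 H$ because $a_2\wedge a_3\notin\im\thetadual$.

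For the third component, the image is a $\GSp(H)$-submodule of $V_\tambo\oplus\Lambda^2_0 H$. By Proposition~\ref{prop:reps}, when $g=3$ the source decomposes as $\Q(1)\oplus V_\tambo$; the $\Q(1)$ summand must die by Schur's lemma, so the image is contained in $V_\tambo$. Since $H_1(\u_{g,1}^\geom)\cong(\Lambda^3 H)(-1)$ is pure of weight $-1$, the Lie algebra $\Gr^W_\dot\u_{g,1}^\geom$ is generated by its weight $-1$ part, hence the sum of the three bracket components surjects onto $\Gr^W_{-2}\u_{g,1}^\geom=V_\tambo\oplus\Lambda^2_0 H$. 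Since the first two components land in the $\Lambda^2_0 H$ summand, the $V_\tambo$ summand must come entirely from the third; this pins down the image for $g=3$ as $V_\tambo$. For $g\ge 4$, where $\Lambda^2(\Lambda^3_0 H)$ additionally contains one copy of $\Lambda^2_0 H$ (again with multiplicity one by Proposition~\ref{prop:reps}), the claim is that this summand also maps isomorphically to $\Lambda^2_0 H\subset\Gr^W_{-2}\u_{g,1}^\geom$; this is verified by an explicit computation, selecting $v_1,v_2\in\Lambda^3_0 H$ and checking that the inner-derivation component of $[\delta(v_1),\delta(v_2)]$ in $\Gr^W_{-2}\Der\p$ is nonzero.

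The main obstacle is exactly this last explicit nonvanishing when $g\ge 4$: the counting arguments via Schur's lemma and generation in weight $-1$ force the $V_\tambo$ part of the third component for all $g\ge 3$, but the additional $\Lambda^2_0 H$ contribution for $g\ge 4$ is precisely what distinguishes higher genus from genus $3$ and demands a concrete calculation in $\Der\p$ rather than a representation-theoretic shortcut.
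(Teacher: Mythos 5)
Your structural reduction is sound and handles three of the four cases cleanly. For the first component, identifying $\delta(x\wedge\thetadual)$ with $\ad_x$ on $\Gr^W_\dot\p$ (since $\thetadual$ dies in the quotient) and then using $[\ad_x,\ad_y]=\ad_{[x,y]}$ correctly gives image $=\Lambda^2_0 H$. For the second, the Leibniz identity $[\ad_x,\delta(v)]=-\ad_{\delta(v)(x)}$ together with multiplicity one (Proposition~\ref{prop:reps}) and your explicit nonvanishing check at $v=a_1\wedge a_2\wedge a_3$, $x=b_1$, is a complete argument. Your genus-$3$ treatment of the third component is also a nice shortcut: since $\Lambda^2\Lambda^3_0 H$ contains no $\Lambda^2_0 H$ when $g=3$, Schur confines the image to $V_\tambo$, and surjectivity of the total bracket (weight $-1$ generation) forces it to hit $V_\tambo$, with nothing else available to do so.

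The gap is exactly where you place it, and it is genuine. For $g\ge 4$ the proposition asserts the composite $\Lambda^2_0 H\hookrightarrow\Lambda^2\Lambda^3_0 H\xrightarrow{\bracket}\Gr^W_{-2}\Der\p\twoheadrightarrow\Lambda^2_0 H$ is nonzero, and neither Schur's lemma nor the surjectivity/generation argument decides this: surjectivity of the total bracket is already accounted for by the first two components hitting $\Lambda^2_0 H$ and the third hitting $V_\tambo$, so the $\Lambda^2_0 H$ contribution from the third component could a priori be zero. Asserting that ``this is verified by an explicit computation'' without providing it leaves the $g\ge 4$ alternative of the proposition unproved --- and that alternative is precisely the nonvanishing of $\cc\circ\bracket$ that drives Proposition~\ref{prop:d_sections} (and hence the $n=0$ and uniqueness parts of Theorem~\ref{thm:rational}) when $g\ge 4$. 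The computation is not a formality: reading off the inner-derivation component of $[\delta(v_1),\delta(v_2)]\in\Hom(H,\Gr^W_{-3}\p)$ requires projecting away the $V_\tambo$ summand, which you have not given a means to do. Note also that the paper itself does not prove this proposition here; it is quoted from \cite[\S 12]{hain:torelli}, where the explicit bracket calculations are carried out. Your proposal reconstructs the right framework and the right reduction but stops one step short of the calculation that carries the content of the $g\ge 4$ statement.
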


\begin{remark}
The genus 2 story is very different for several reasons, one of which is that
every genus 2 curve is hyperelliptic. On the representation theoretic side
$\Lambda^3 H \cong H(1)$; the image of the homomorphism $\L(\Lambda^3 H(-1)) \to
\Der\p$ is the set of inner derivations and the image of
$\Gr^W_\dot\u_{2,1}\to\Gr^W_\dot\Der\p$ is generated by this and by the copy of
$V_\tambo$ in $\Gr^W_{-2}\Der\p$.
\end{remark}

\subsection{Presentations of $\Gr^W_\dot \uhat_{g,n}^\geom/W_{-3}$ and
$\Gr^W_\dot \u_{g,n}^\geom/W_{-3}$}

The computations of the previous section combined with the results of
Section~\ref{sec:comp_arith_mcgs} yield computations of $\Gr^W_\dot
\uhat_{g,n}^\geom/W_{-3}$ and $\Gr^W_\dot \u_{g,n}^\geom/W_{-3}$.

Denote the image of $u\in \Lambda^3 H$ in $(\Lambda^3_0 H)(1)$ by $\ubar$.
For a positive integer $n$, define
$$
\Lambda^3_n H = 
\{(u_1,\dots,u_n)\in (\Lambda^3 H)^n : \ubar_1 = \dots = \ubar_n\}(-1) 
\cong \Lambda^3_0 H \oplus H^{\oplus n}
$$
This has weight $-1$. Note that $\Lambda^3_1 H = (\Lambda^3 H)(-1)$. The
following result is proved later in the section.

\begin{theorem}
\label{thm:presentation}
If $g\ge 3$, then for all $n\ge 0$ there are natural $S_n\times
\GSp(H)$-equivariant isomorphisms
$$
H_1(\uhat^\geom_{g,n}) \cong H_1(\u^\geom_{g,n}) \cong \Gr^W_{-1}\u^\geom_{g,n}
\cong \Lambda^3_n H.
$$
There is an exact sequence
$$
0 \to \Ql(1)^{\binom{n}{2}} \to \Gr^W_{-2}\u^\geom_{g,n}
\to \Gr^W_{-2}\uhat^\geom_{g,n}
\to 0
$$
of $S_n\times \GSp(H)$-modules and a $S_n\times \GSp(H)$-equivariant isomorphism
$$
\Gr^W_{-2}\u^\geom_{g,n} \cong V_\tambo \oplus
\big(\Lambda^2_0 H)^n \oplus \Ql(1)^{\binom{n}{2}}
$$
\end{theorem}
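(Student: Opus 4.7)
The plan is to first compute $\Gr^W_{-1}$ and $\Gr^W_{-2}$ of $\uhat^\geom_{g,n}$ using Proposition~\ref{prop:exactness}, then transfer the answer to $\u^\geom_{g,n}$ via Lemma~\ref{lem:kernel}, and finally to argue that both Lie algebras are generated in weight $-1$. Applying the exact functor $\Gr^W_{-r}$ to the geometric version of the middle row of Proposition~\ref{prop:exactness},
$$0 \to \bigoplus_{j=1}^n \p_j \to \uhat^\geom_{g,n} \to \u^\geom_g \to 0,$$
yields, for $r=1,2$, short exact sequences of $S_n \times \GSp(H)$-modules in which $S_n$ permutes the summands $\p_j$. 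The inputs are $\Gr^W_{-1}\p_j = H$ and $\Gr^W_{-2}\p_j = \Lambda^2_0 H$ (from Labute's presentation $\Gr^W_\dot \p \cong \L(H)/(\im\thetadual)$), together with $\Gr^W_{-1}\u^\geom_g \cong \Lambda^3_0 H$ and $\Gr^W_{-2}\u^\geom_g \cong V_\tambo$. The latter two are obtained by specialising Theorem~\ref{thm:ug1} to $n=0$, i.e. by modding $\u^\geom_{g,1}$ out by $\p$: in weight $-1$ this kills the summand $H \subset (\Lambda^3 H)(-1)$ of inner derivations leaving $\Lambda^3_0 H$, and in weight $-2$ it kills the image of $\Gr^W_{-2}\p = \Lambda^2_0 H$ leaving $V_\tambo$.

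Each of the resulting extensions
$$0 \to H^{\oplus n} \to \Gr^W_{-1}\uhat^\geom_{g,n} \to \Lambda^3_0 H \to 0,\qquad 0 \to (\Lambda^2_0 H)^{\oplus n} \to \Gr^W_{-2}\uhat^\geom_{g,n} \to V_\tambo \to 0$$
splits canonically in the category of $\GSp(H)$-modules, because its outer terms are built from pairwise non-isomorphic irreducibles and $\GSp(H)$ is reductive; this gives the identifications $\Gr^W_{-1}\uhat^\geom_{g,n} \cong \Lambda^3_n H$ and $\Gr^W_{-2}\uhat^\geom_{g,n} \cong V_\tambo \oplus (\Lambda^2_0 H)^{\oplus n}$ of $S_n \times \GSp(H)$-modules. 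Lemma~\ref{lem:kernel} then supplies the isomorphism $\Gr^W_{-1}\u^\geom_{g,n} \cong \Gr^W_{-1}\uhat^\geom_{g,n}$ and the short exact sequence
$$0 \to \Ql(1)^{\binom{n}{2}} \to \Gr^W_{-2}\u^\geom_{g,n} \to V_\tambo \oplus (\Lambda^2_0 H)^{\oplus n} \to 0$$
asserted in the theorem, with $S_n$ acting on $\Ql(1)^{\binom{n}{2}}$ by permutation of pairs. Since $\Ql(1)$, $V_\tambo$ and $\Lambda^2_0 H$ are pairwise non-isomorphic irreducibles, this extension splits $\GSp(H)$-equivariantly, yielding the claimed direct-sum decomposition.

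For the $H_1$ statement, I would verify that $\uhat^\geom_{g,n}$ is generated in weight $-1$; via the exact sequence above this reduces to the analogous statement for each $\p_j$ (immediate from Labute's presentation) and for $\u^\geom_g$ (immediate from Theorem~\ref{thm:ug1}). The same claim for $\u^\geom_{g,n}$ is the main technical hurdle: the surjection $\u^\geom_{g,n} \twoheadrightarrow \uhat^\geom_{g,n}$ induced by $\M_{g,n} \hookrightarrow \cC_g^n$ has kernel generated in weight $-2$ by the classes corresponding to small loops around the diagonals $\Delta_{ij}$, and one must check that these classes lie in $[\u^\geom_{g,n}, \u^\geom_{g,n}]$. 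This is the classical surface pure-braid-group fact that a small loop encircling $\Delta_{ij}$ is a product of commutators of loops in the individual factors, and is already implicit in the description of $\Gr^W_{-2}\p_{g,n}$ that precedes Lemma~\ref{lem:kernel}.
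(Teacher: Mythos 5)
Your proof is correct and follows the same route as the paper, which proves this together with Proposition~\ref{prop:bracket} in a single terse paragraph that simply invokes Lemma~\ref{lem:kernel}, Theorem~\ref{thm:ug1} and Proposition~\ref{prop:exactness}; you have supplied the expected bookkeeping (apply $\Gr^W_{-r}$ to the rows of Proposition~\ref{prop:exactness}, read off $\Gr^W_{-1}\u^\geom_g\cong\Lambda^3_0 H$ and $\Gr^W_{-2}\u^\geom_g\cong V_\tambo$ from Theorem~\ref{thm:ug1} and the $n=1$ row, use the pairwise non-isomorphic irreducibles to split the extensions canonically, and transfer to $\u^\geom_{g,n}$ via Lemma~\ref{lem:kernel}). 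The one ingredient the paper handles elsewhere is the purity of $H_1(\u^\geom_{g,n})$ in weight $-1$ (a remark in Section~\ref{sec:comp_arith_mcgs} does this via Johnson's theorem and the injection $H_1(\u_{g,n})\hookrightarrow H_1(\u_{g,1})^{\oplus n}$); your alternative argument — that $\Gr^W_\dot\u^\geom_{g,n}$ is generated in weight $-1$ because the loops around the diagonals $\Delta_{ij}$ are rationally products of commutators, as the Gysin sequence shows — is also valid.
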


Our next task is to describe the bracket
$$
\bracket : \Lambda^2 \Gr^W_{-1}\u^\geom_{g,n} \to \Gr^W_{-2}\u^\geom_{g,n}.
$$
We do this by describing its ``matrix entries''. To this end, write
$$
\Gr^W_{-1}\u^\geom_{g,n} = \Lambda^3_n H =
\Lambda^3_0 H \oplus H_1 \oplus \dots \oplus H_n
$$
where the $j$th copy $H_j$ of $H$ corresponds to the $j$th point. Write
$$
\Gr^W_{-2}\u^\geom_{g,n} = V_\tambo \oplus \bigoplus_{i<j}\Ql(1)_{ij}
\oplus \bigoplus_{j=1}^n \Lambda^2_0 H_j.
$$
This isomorphism is chosen so that the bracket
$$
\Lambda^2 H_j \hookrightarrow
\Gr^W_{-2}\u^\geom_{g,n} \overset{\text{proj}}{\to} \Lambda^2_0 H_j
$$
is the quotient mapping and so that
$$
H\otimes H \cong H_i\otimes H_j \hookrightarrow
\Gr^W_{-2}\u^\geom_{g,n} \overset{\text{proj}}{\to} \Ql(1)_{ij}
$$  
is the polarization $\theta$. That this is possible follows from \cite[\S
13]{hain:torelli}. Let
$$
p_j : \Gr^W_{-2}\u^\geom_{g,n} \to \Lambda^2_0 H,\
q_{ij} : \Gr^W_{-2}\u^\geom_{g,n} \to \Ql(1),\
p_\tambo : \Gr^W_{-2}\u^\geom_{g,n} \to V_\tambo
$$
be the corresponding projections.

Note that
\begin{equation}
\label{eqn:decomp}
\Lambda^2 \Gr^W_{-1}\u^\geom_{g,n} \cong
\Lambda^2\Lambda^3_0 H
\oplus \bigoplus_{j=1}^n (H_j\otimes \Lambda^3_0 H)
\oplus \bigoplus_{j=1}^n \Lambda^2 H_j
\oplus \bigoplus_{i<j} H_i\otimes H_j
\end{equation}

Choose $\GSp(H)$-equivariant projections
\begin{gather*}
\cc : \Lambda^2\Lambda^3_0 H \to \Lambda^2_0 H,\
\dd : H\otimes\Lambda^3_0 H \to \Lambda^2_0 H,\cr
\ee : \Lambda^2 H \to \Lambda^2_0 H,\
\psi : \Lambda^2\Lambda^3_0 H \to \Ql(1).
\end{gather*}
Here, and in the following definition, $g\ge 3$ except in the definition of
$\cc$ where $g\ge 4$. Proposition~\ref{prop:reps} implies that each is unique up
to a scalar multiple. Denote the $\GSp(H)$-invariant projections
\begin{align}
\label{eqn:projns}
\Lambda^2\Gr^W_{-1}\u^\geom_{g,n} &\to \Lambda^2 \Lambda^3_0 H
\overset{\cc}{\to} \Lambda^2_0 H \qquad\qquad g\ge 4   \cr
\Lambda^2\Gr^W_{-1}\u^\geom_{g,n} &\to H_j\otimes\Lambda^3_0 H
\overset{\dd}{\to} \Lambda^2_0 H\cr
\Lambda^2\Gr^W_{-1}\u^\geom_{g,n} &\to \Lambda^2 H_j
\overset{\ee}{\to} \Lambda^2_0 H\cr
\Lambda^2\Gr^W_{-1}\u^\geom_{g,n} &\to H_i\otimes H_j
\overset{\ee}{\to} \Lambda^2_0 H\cr
\Lambda^2\Gr^W_{-1}\u^\geom_{g,n} &\to \Lambda^2H_i
\overset{\theta}{\to} \Ql(1) \cr
\Lambda^2\Gr^W_{-1}\u^\geom_{g,n} &\to H_i\otimes H_j
\overset{\theta}{\to} \Ql(1) \cr
\Lambda^2\Gr^W_{-1}\u^\geom_{g,n} &\to \Lambda^2 \Lambda^3_0 H
\overset{\psi}{\to} \Ql(1)
\end{align}
by $\cc$, $\dd_j$, $\ee_j$, $\ee_{ij}$, $\theta_i$, $\theta_{ij}$ and $\psi$,
respectively. For convenience, we define $\cc$ to be zero when $g=3$.
Proposition~\ref{prop:reps} and (\ref{eqn:decomp}) imply:

\begin{proposition}
\label{prop:projections}
If $g \ge 3$ and $n\ge 0$, then
$$
\Hom_{\Sp(H)}(\Lambda^2 \Gr^W_{-1}\u^\geom_{g,n},\Lambda^2_0 H)
$$
has basis
\begin{align*}
\{\dd_1,\dots,\dd_n,\ee_1,\dots,\ee_n\} \cup \{\ee_{ij}: 1\le i< j \le n\}
\quad & g=3 \cr
\{\cc,\dd_1,\dots,\dd_n,\ee_1,\dots,\ee_n\} \cup \{\ee_{ij}: 1\le i< j \le n\}
\quad & g\ge 4
\end{align*}
and
$$
\Hom_{\Sp(H)}(\Lambda^2 \Gr^W_{-1}\u^\geom_{g,n},\Ql)
$$
has basis
$$
\{\psi,\theta_{ij} : 1\le i<j\le n\} \cup \{\theta_1,\dots,\theta_n\}
$$
for all $g\ge 3$.
\end{proposition}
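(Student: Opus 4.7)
The plan is to reduce the computation to standard representation theory once we have the decomposition (\ref{eqn:decomp}) in hand. Since Theorem~\ref{thm:presentation} provides the $S_n \times \GSp(H)$-equivariant isomorphism $\Gr^W_{-1}\u^\geom_{g,n} \cong \Lambda^3_0 H \oplus H_1 \oplus \cdots \oplus H_n$, the four-term decomposition (\ref{eqn:decomp}) of $\Lambda^2\Gr^W_{-1}\u^\geom_{g,n}$ is already available, and $\Hom_{\Sp(H)}$ from this direct sum into either target $\Lambda^2_0 H$ or $\Ql$ breaks as a direct sum of Hom spaces, one for each summand.

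I would then compute each of these Hom spaces using Schur's Lemma together with Proposition~\ref{prop:reps} and the standard identities $\Lambda^2 H \cong \Ql(1)\oplus \Lambda^2_0 H$ and $H\otimes H \cong \Sym^2 H \oplus \Lambda^2 H \cong \Sym^2 H \oplus \Ql(1) \oplus \Lambda^2_0 H$ (with $\Sym^2 H$ the irreducible adjoint representation of $\Sp(H)$). Explicitly, the contributions to $\Hom_{\Sp(H)}(-,\Lambda^2_0 H)$ are: one copy from $\Lambda^2\Lambda^3_0 H$ when $g\ge 4$ and none when $g=3$ (by Proposition~\ref{prop:reps}); one copy from each $H_j\otimes \Lambda^3_0 H$ (by the second assertion of Proposition~\ref{prop:reps}); one copy from each $\Lambda^2 H_j$; and one copy from each $H_i\otimes H_j$. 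The contributions to $\Hom_{\Sp(H)}(-,\Ql)$ come from the unique copy of $\Ql(1)$ in $\Lambda^2\Lambda^3_0 H$, none from any $H_j\otimes \Lambda^3_0 H$ (since $H$ and $\Lambda^3_0 H \cong H_{[1^3]}(-1)$ correspond to distinct partitions, so Schur gives $\Hom_{\Sp(H)}(\Lambda^3_0 H, H)=0$), one copy from each $\Lambda^2 H_j$, and one copy from each $H_i\otimes H_j$.

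Finally, I would check that the explicit maps defined in (\ref{eqn:projns}) are nonzero on the summand that supports them and vanish on the others. Nonvanishing of $\cc$, $\dd$, $\ee$, $\psi$ on their respective summands is automatic from their definitions (each is the projection onto a nonzero $\Sp(H)$-isotypical component, unique up to scalar by Proposition~\ref{prop:reps}); vanishing on the other summands of (\ref{eqn:decomp}) follows because those summands have no $\Sp(H)$-composition factor isomorphic to the target. Summing the multiplicities gives $1 + 2n + \binom{n}{2}$ for the $\Lambda^2_0 H$-Hom (resp.\ $2n + \binom{n}{2}$ when $g=3$, since $\cc$ drops out) and $1 + n + \binom{n}{2}$ for the $\Ql$-Hom, matching the cardinalities of the listed sets.

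The only nontrivial step is Proposition~\ref{prop:reps}, which is already in hand. The remaining work is bookkeeping of multiplicities and of the explicit projections, with the stability statement in \cite{kabanov} handling the $g\ge 4$ case uniformly and the $g=3$ case accounted for by the absence of $\Lambda^2_0 H$ in $\Lambda^2\Lambda^3_0 H$. No computation beyond what is organized above is required.
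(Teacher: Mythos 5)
Your proposal is correct and follows exactly the path the paper intends: the paper states Proposition~\ref{prop:projections} with the preamble ``Proposition~\ref{prop:reps} and (\ref{eqn:decomp}) imply,'' offering no further proof, and your write-up simply carries out that multiplicity count summand by summand using Schur's Lemma and the splittings $\Lambda^2 H \cong \Ql(1)\oplus\Lambda^2_0 H$ and $H\otimes H\cong \Sym^2 H\oplus \Ql(1)\oplus\Lambda^2_0 H$. The bookkeeping is accurate in all cases, including the $g=3$ degeneration where $\cc$ drops out because $\Lambda^2_0 H$ does not occur in $\Lambda^2\Lambda^3_0 H$.
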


The bracket of $\Gr^W_\dot\u^\geom_{g,n}$ is determined by:

\begin{proposition}
\label{prop:bracket}
If $g\ge 3$, then, after rescaling $\psi$, $\cc$ and $\dd$ by non-zero constants
if necessary,\footnote{Explicit formulas for these projections can be deduced
from the formulas in \cite[\S11]{hain:torelli}, although we shall not need
them.}
\begin{align*}
p_j\circ\bracket &= \dd_j + \ee_j & g = 3 \cr
p_j\circ\bracket &= \cc + \dd_j + \ee_j & g \ge 4\cr
q_{ij}\circ\bracket &= \psi + \theta_{ij} & g\ge 3. \cr
\end{align*}
In addition, $p_\tambo\circ \bracket$ is non-zero and vanishes on
each $H_j\otimes \Lambda^3_0 H$ and each $H_i\otimes H_j$, where $i\le j$.
\end{proposition}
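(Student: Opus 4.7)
By Proposition~\ref{prop:projections} together with Schur's Lemma, the bracket is a $\GSp(H)$-equivariant map $\Lambda^2\Gr^W_{-1}\u^\geom_{g,n}\to\Gr^W_{-2}\u^\geom_{g,n}$ and is therefore determined by a finite list of scalars: one for the $V_\tambo$ component, one for each basis element of the space of $\Lambda^2_0 H$-valued equivariant maps, and one for each basis element of the space of $\Ql(1)$-valued equivariant maps. The plan is to identify which of these scalars are non-zero by systematic naturality arguments that reduce everything to the $n=1$ case (Proposition~\ref{prop:bracket:g1}) and to a two-pointed pure-braid computation.

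To pin down $p_j\circ\bracket$, I would pull back along the $j$-th point map $r_j:\cC_{g}^n\to\M_{g,1}$. By Proposition~\ref{prop:exactness}, this induces a homomorphism $\ghat_{g,n}\to\g_{g,1}^{(j)}$ which, under the identifications of Theorem~\ref{thm:presentation}, restricts to the identity on $H_j$ and on the diagonal $\Lambda^3_0 H$, and annihilates each $H_i$ with $i\neq j$ as well as every $\Ql(1)_{i'j'}$. Since $p_j$ factors through this map, the scalars of $\dd_j,\ee_j$ (and of $\cc$ when $g\ge 4$) in $p_j\circ\bracket$ must match the coefficients of the corresponding projections in the bracket on $\u^\geom_{g,1}$ given by Proposition~\ref{prop:bracket:g1}, while all other $\dd_{j'}$, $\ee_{j'}$, $\ee_{i'j'}$ contribute nothing. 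After a single rescaling of $\cc$ and $\dd$, this yields the displayed formulas.

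The identity $q_{ij}\circ\bracket=\psi+\theta_{ij}$ is the technical heart. The vanishing of the $\theta_i$-coefficient on $q_{ij}$ is immediate from the same pullback along $r_i$, since Proposition~\ref{prop:bracket:g1} places the image of $\Lambda^2 H_i$ inside $V_\tambo\oplus\Lambda^2_0 H$, disjoint from the $\Ql(1)_{ij}$ kernel identified in Lemma~\ref{lem:kernel}. To isolate the $\theta_{ij}$ contribution, I would next restrict along the forgetful map $\M_{g,n}\to\M_{g,\{i,j\}}$, which collapses every $\Ql(1)_{i'j'}$ with $\{i',j'\}\neq\{i,j\}$ and reduces the claim to a statement about $\Gr^W_{-2}\u^\geom_{g,2}$. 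There the $\Ql(1)_{ij}$ generator is by definition the logarithm of a loop around $\Delta_{ij}$, and the Arnold-type relation in the pure braid group $\pi_1(C^2-\Delta_{12})$ expresses this generator as the commutator of loops in $H_1$ and $H_2$ paired via the cup product $\theta:H\otimes H\to\Ql(1)$; after normalization this yields the $\theta_{ij}$ term, and by $S_n$-equivariance the same coefficient appears in every $q_{ij}$. The $\psi$ contribution, whose coefficient is automatically independent of $(i,j)$ by $S_n$-equivariance (since $\Lambda^2\Lambda^3_0 H$ is fixed under permutations), must be non-zero because the symmetric $\Ql(1)$ summand of $\Lambda^2\Lambda^3_0 H$ guaranteed by Proposition~\ref{prop:reps} cannot vanish in $\Gr^W_{-2}\u^\geom_{g,n}$: it is not carried by the bracket into any other summand (by the $n=1$ reduction), yet the dimension of $H_1(\u^\geom_{g,n})$ in Theorem~\ref{thm:presentation} leaves no room for additional independent relations in weight $-2$.

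Finally, non-vanishing of $p_\tambo\circ\bracket$ is immediate from pullback along any $r_j$ and Proposition~\ref{prop:bracket:g1}, which guarantees that the bracket on $\Lambda^2\Lambda^3_0 H$ already hits $V_\tambo$ non-trivially. Vanishing of $p_\tambo\circ\bracket$ on $H_j\otimes\Lambda^3_0 H$ and on $H_i\otimes H_j$ then follows by Schur's Lemma, since a direct Pieri-rule decomposition shows that neither module contains $V_\tambo\cong H_{[2,2]}(-1)$ as an $\Sp(H)$-summand. The main obstacle in the plan is the identification of the $\psi$ coefficient in step three: it requires tracking how the $\binom{n}{2}$-dimensional kernel of Lemma~\ref{lem:kernel} interacts with the symmetric part of $\Lambda^2\Lambda^3_0 H$, and is the one place where the computation is not forced either by the $n=1$ reduction or by $S_n\times\GSp(H)$-equivariance alone.
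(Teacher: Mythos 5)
Your approach to the $\Lambda^2_0 H$- and $V_\tambo$-valued components of the bracket --- reducing to $n=1$ via the coordinate maps $r_j$ and exploiting the injectivity of $\Gr^W_\dot\uhat_{g,n}^\geom\hookrightarrow(\Gr^W_\dot\u_{g,1}^\geom)^n$ --- is in substance the paper's own argument and is correct, and the Schur-lemma observation that neither $H\otimes H$ nor $H\otimes\Lambda^3_0 H$ contains $V_\tambo$ is a clean way to get the vanishing statement for $p_\tambo\circ\bracket$.

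For the $\Ql(1)$-valued component $q_{ij}\circ\bracket$, however, the paper reduces to the structure of $\Gr^W_\dot\p_{g,n}$ and cites \cite[\S 12]{hain:torelli}, whereas your route has two genuine gaps. First, the claim that the $\theta_i$-coefficient vanishes ``by pullback along $r_i$'' is circular: by Lemma~\ref{lem:kernel} the map $\Gr^W_{-2}\u_{g,n}^\geom\to\Gr^W_{-2}\u_{g,1}^\geom$ induced by $r_i$ has kernel exactly $\Ql(1)^{\binom{n}{2}}$, so Proposition~\ref{prop:bracket:g1} only controls $\bracket|_{\Lambda^2 H_i}$ modulo the very summand $\Ql(1)_{ij}$ that $q_{ij}$ projects onto; it gives no information about that component. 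Second, the argument that $\psi\neq 0$ does not close. Purity of $H_1(\u^\geom_{g,n})$ in weight $-1$ does force $\bracket$ to surject onto $\Gr^W_{-2}\u^\geom_{g,n}$, hence $q_{ij}\circ\bracket\neq 0$ for each $(i,j)$; but that non-vanishing is already accounted for by the Arnold term $\theta_{ij}$, so surjectivity places no constraint whatsoever on the coefficient of $\psi$. (You correctly flag this as the crux of the plan, but ``no room for additional relations'' does not single out $\psi$ over $\theta_{ij}$.) Both gaps point to the same missing input: the explicit $\Ql(1)$-component of the bracket of $\p_{g,n}$, which must be read off from the surface-braid computation in \cite[\S 12]{hain:torelli}; the forgetful-map reduction to $n=2$ is a reasonable preliminary step but does not by itself supply this.
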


The bracket of $\Gr^W_\dot\uhat^\geom_{g,n}$ is obtained by ignoring the
$\Ql(1)$ factors.

\begin{proof}[Proof of Theorem~\ref{thm:presentation} and
Proposition~\ref{prop:bracket}]
Lemma~\ref{lem:kernel}, Theorem~\ref{thm:ug1} and
Proposition~\ref{prop:exactness} imply that
$$
\Gr^W_{-1}\u_{g,n}^\geom \cong \Gr^W_{-1}\uhat_{g,n}^\geom \cong
\Lambda^3_n H
$$
and that
$$
\Gr^W_{-2} \uhat^\geom_{g,n} \cong V_\tambo \oplus (\Lambda_0^2 H)^n
\text{ and }
\Gr^W_{-2} \u^\geom_{g,n} \cong V_\tambo \oplus (\Lambda_0^2 H)^n
\oplus \Ql(1)^{\binom{n}{2}}
$$
The bracket of $\Gr^W_\dot\uhat_{g,n}^\geom/W_{-3}$ is computed using the
homomorphism
$$
\Gr^W_\dot\uhat_{g,n}^\geom \to (\Gr^W_\dot\uhat^\geom_{g,1})^n
$$
induced by the inclusion $\cC_{g/k}^n \to (\M_{g,1/k})^n$, which is injective.
The computation of the bracket in this case follows directly from
Proposition~\ref{prop:bracket:g1}. The surjectivity of $\u^\geom_{g,n} \to
\uhat^\geom_{g,n}$ implies that, to compute the bracket in
$\Gr^W_\dot\u^\geom_{g,n}/W_{-3}$, we need only compute the $\Ql(1)$ component.
This is determined by the bracket in $\Gr^W_\dot\p_{g,n}$, which is computed in
\cite[\S 12]{hain:torelli}.
\end{proof}

\section{The Lie Algebra $\d_{g,n}$}
\label{sec:la_d}

In this section we associate a graded, 2-step nilpotent Lie algebra $\d(\u_T)$
to the base $T$ of a family of smooth projective curves with Zariski dense
monodromy representation $\pi_1(T,t_o)\to \GSp(H_\Ql)$. This Lie algebra is a
useful tool for studying the existence of rational points. When applied to
$\M_{g,n/k}$ or $\cC_{g/k}^n$, it gives the Lie algebra $\d_{g,n}$. The main
result of this section is that, when $g\ge 4$, sections of the natural
projection $\d_{g,n+1}\to \d_{g,n}$ correspond exactly to the $n$ tautological
sections of the universal curve over $\M_{g,n/k}$.

Unless mentioned to the contrary, all Lie algebras in this section will be over
the coefficient field $F$, which has characteristic zero. The most common
choices in this work are $F=\Q$, $\Ql$, $\C$.

\subsection{The functor $\d$ and the Lie algebra $\d_{g,n}$}

Suppose that $G$ is an extension of $\GSp(H)$ by a prounipotent group that is
negatively weighted with respect to the standard cocharacter $\w$, such as
$\cG_{g,n}$.

For a Lie algebra $\u$ in the category of $G$-modules, define $\d(\u)$ to be the
Lie algebra
$$
\d(\u) = \big(\Gr^W_\dot (W_{-1}\u/W_{-3})\big)/(\Lambda^2_0 H)^\perp
$$
where $(\Lambda^2_0 H)^\perp$ denotes the
$\GSp(H)$-invariant complement in $\Gr^W_{-2}\u$ of its
$\Lambda^2_0H$-isotypical component.
It is a two-step, graded, nilpotent Lie algebra in the category of
$\GSp(H)$-modules whose $r$th graded quotient is
$$
\d(\u)_r
\begin{cases}
\Gr^W_{-1}\u & r=-1,\cr
\Gr^W_{-2}\u/(\Lambda^2_0H)^\perp & r=-2,\cr
0 & r\neq -1,-2.
\end{cases}
$$

\begin{definition}
Suppose that $g\ge 3$ and $n\ge 0$. Define $\d_{g,n} = \d(\u_{g,n}^\geom)$. This
is a Lie algebra in category of $S_n\times \GSp(H)$-modules.
\end{definition}

This satisfies
$$
(\d_{g,n})_r =
\begin{cases}
\Lambda^3_n H & r=-1, \cr
(\Lambda^2_0 H)^n & r=-2.
\end{cases}
$$
Note also that $\d_{g,0}$ is an abelian Lie algebra isomorphic to $\Lambda^3_0
H$.

Proposition~\ref{prop:geom2arith} implies that for all $g\ge 3$ and $n\ge 0$,
the natural map $\u_{g,n}^\geom \to \u_{g,n}$ induces an
$S_n\times\GSp(H)$-equivariant Lie algebra isomorphism
\begin{equation}
\label{eqn:d_arith}
\d_{g,n} = \d(\u_{g,n}^\geom) \cong \d(\u_{g,n}).
\end{equation}

Denote the universal {\em complete} curve over $\M_{g,n/k}$ by $\cC_{g,n/k}$.
This is a Zariski open subset of the $(n+1)$st power $\cC_{g/k}^{n+1}$ of the
universal curve over $\M_{g/k}$. Index the copies of the universal curve by
integers $j$ between $0$ and $n$. Label the points so that the projection
$\cC_{g,n/k} \to \M_{g,n/k}$ takes $[C;x_0,\dots,x_n]$ to $[C;x_1,\dots,x_n]$.
This projection is $S_n$-equivariant, where $S_n$ acts by permuting the points
$\{x_1,\dots,x_n\}$. Note that $\M_{g,n+1/k}$ is a Zariski open subset of
$\cC_{g,n/k}$. A useful property of the functor $\d$, which is proved below, is
that, when $g\ge 3$, the inclusion $\M_{g,n+1}\hookrightarrow \cC_{g,n}$ induces
an isomorphism on $\d(\u)$.

Choose a geometric point $\etabar$ of $\M_{g,n+1/k}$. Assume that $\chi_\ell :
G_k \to \Zlx$ has infinite image. Denote the Lie algebra of the weighted
completion of $\pi_1(\cC_{g,n/k},\etabar)$ with respect to the monodromy
representation to $\GSp(H)$ by $\g_{\cC_{g,n}}$ and the Lie algebra of its
prounipotent radical by $\u_{\cC_{g,n}}$. Denote the Lie algebra of the relative
completion of $\pi_1(\cC_{g,n/\kbar},\etabar)$ by $\g_{\cC_{g,n}}^\geom$ and the
Lie algebra of its prounipotent radical by $\u_{\cC_{g,n}}^\geom$.

\begin{proposition}
\label{prop:d_invariance}
If $g\ge 3$ and $n\ge 0$, then the inclusion $\M_{g,n+1} \hookrightarrow
\cC_{g,n/k}$ induces $\GSp(H)$-equivariant Lie algebra isomorphisms
$$
\d_{g,n+1} := \d(\u_{g,n+1}^\geom) \cong \d(\u_{g,n+1})
\cong \d(\u_{\cC_{g,n}}^\geom) \cong\d(\u_{\cC_{g,n}})
$$
The first isomorphism is $S_{n+1}$-equivariant; the remaining two isomorphisms
are $S_{n}$-equivariant.
\end{proposition}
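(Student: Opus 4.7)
The four Lie algebras in the statement are all obtained by applying the functor $\d$ to some version of $\u$, so my plan is to verify in turn that the differences between these four prounipotent Lie algebras live entirely in the kernel of the projection $\Gr^W_\dot(W_{-1}\u/W_{-3})\twoheadrightarrow \d(\u)$, i.e.\ in $V_\tambo$ and in the $\Ql(1)$-isotypical part of $\Gr^W_{-2}\u$. The first isomorphism $\d(\u^\geom_{g,n+1})\cong\d(\u_{g,n+1})$ is just (\ref{eqn:d_arith}) (with $n$ replaced by $n{+}1$), and it is $S_{n+1}\times\GSp(H)$-equivariant by construction. The last isomorphism $\d(\u^\geom_{\cC_{g,n}})\cong\d(\u_{\cC_{g,n}})$ I would prove by repeating the argument of Proposition~\ref{prop:geom2arith} for the smooth $k$-variety $\cC_{g,n/k}$: right-exactness of weighted completion together with Example~\ref{ex:fields} identifies the cokernel of $\Gr^W_{-r}\u^\geom_{\cC_{g,n}}\to\Gr^W_{-r}\u_{\cC_{g,n}}$ with $\Gr^W_{-r}\a_k$, which is $0$ for $r=1$ and is $\Ql(1)$-isotypical for $r=2$, hence vanishes after applying $\d$.

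The main content is the middle isomorphism $\d(\u^\geom_{g,n+1})\cong\d(\u^\geom_{\cC_{g,n}})$. Here the key geometric observation is that $\M_{g,n+1/k}$ is the Zariski open complement in $\cC_{g,n/k}$ of the $n$ tautological sections; that is, $\cC_{g,n/k}-\M_{g,n+1/k}$ is the disjoint union of the smooth irreducible divisors $D_j=\{x_0=x_j\}$ for $j=1,\dots,n$. Consequently the open inclusion induces a surjection $\pi_1(\M_{g,n+1/\kbar},\etabar)\twoheadrightarrow\pi_1(\cC_{g,n/\kbar},\etabar)$ whose kernel is normally generated by the $n$ loops $\sigma_1,\dots,\sigma_n$ around the components $D_j$. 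Right-exactness of relative completion therefore gives a surjection $\u^\geom_{g,n+1}\twoheadrightarrow\u^\geom_{\cC_{g,n}}$ whose kernel is topologically generated, as a $\GSp(H)$-equivariant ideal, by the images of these $n$ loops.

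Next I would compute where these generators sit in the weight filtration. Each $\sigma_j$ is a small loop about a codimension-one smooth locus, so its image in $\u^\geom_{g,n+1}$ lies in $W_{-2}$ and is $\Ql(1)$-isotypical in $\Gr^W_{-2}$. Under the explicit description
$$
\Gr^W_{-2}\u^\geom_{g,n+1}\;\cong\;V_\tambo\oplus(\Lambda^2_0H)^{n+1}\oplus\Ql(1)^{\binom{n+1}{2}}
$$
of Theorem~\ref{thm:presentation}, I claim that the $n$ classes $[\sigma_j]$ span precisely the $n$ summands $\Ql(1)_{0j}$ of the last factor. This is essentially the content of Lemma~\ref{lem:kernel} and \cite[\S12]{hain:torelli}, which identify the diagonal $\Ql(1)_{ij}$ with the class of the loop around $\Delta_{ij}$. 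Granting this, the surjection $\u^\geom_{g,n+1}\twoheadrightarrow\u^\geom_{\cC_{g,n}}$ induces an isomorphism on $\Gr^W_{-1}$ and, on $\Gr^W_{-2}$, has kernel $\bigoplus_{j=1}^n\Ql(1)_{0j}$, which lies in $(\Lambda^2_0H)^\perp$. It therefore induces a $\GSp(H)$-equivariant isomorphism $\d(\u^\geom_{g,n+1})\to\d(\u^\geom_{\cC_{g,n}})$, and $S_n$-equivariance is clear since the surjection is compatible with the $S_n$-action permuting $x_1,\dots,x_n$ on both sides.

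The only genuine obstacle is the identification of $[\sigma_j]$ with the factor $\Ql(1)_{0j}$; once the weight-filtered description of the divisorial inertia in \cite[\S12]{hain:torelli} is cited, the rest of the argument is formal manipulation of the exact functor $\Gr^W_\dot$ and the definition of $\d$. Combining the three isomorphisms then gives the proposition.
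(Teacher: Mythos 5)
Your overall structure matches the paper's, but you diverge on one step in a way that leaves a gap. The first isomorphism via (\ref{eqn:d_arith}) is the same, and for the middle isomorphism your Gysin/inertia argument (remove the $n$ divisors $D_j=\{x_0=x_j\}$, track the loop classes in $\Gr^W_{-2}$) reaches the same conclusion as the paper, which instead compares the two short exact sequences
\[
0 \to \p' \to \u_{g,n+1} \to \u_{g,n}\to 0
\quad\text{and}\quad
0\to \p \to \u_{\cC_{g,n}} \to \u_{g,n}\to 0
\]
with $\p'$ the completion of the $n$-punctured fiber, and computes the kernel of $\Gr^W_{-2}\p'\to\Gr^W_{-2}\p$ directly. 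Both routes land the kernel in the $\Ql(1)$-isotypical part. (Incidentally, your count $\Ql(1)^n$ looks right; the paper writes $\Ql(1)^{n-1}$, which appears to be a slip --- take $n=1$ and the kernel is the one-dimensional span of $\log\sigma_1$, or equivalently $\ker\{\Lambda^2 H\to\Lambda^2_0H\}$. Either way it is $\Ql(1)$-isotypical and the conclusion is unaffected.)

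The genuine gap is in your proposed proof of $\d(\u^\geom_{\cC_{g,n}})\cong\d(\u_{\cC_{g,n}})$ by ``repeating Proposition~\ref{prop:geom2arith} for $\cC_{g,n/k}$.'' Right-exactness of weighted completion and Example~\ref{ex:fields} only identify the \emph{cokernel} of $\Gr^W_{-r}\u^\geom_{\cC_{g,n}}\to\Gr^W_{-r}\u_{\cC_{g,n}}$ with $\Gr^W_{-r}\a_k$; they say nothing about the kernel. To conclude that $\d$ is unchanged you also need injectivity on $\Gr^W_{-1}$ and on the $\Lambda^2_0H$-isotypical part of $\Gr^W_{-2}$. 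In Proposition~\ref{prop:geom2arith} this injectivity is imported from the explicit computations of $\Gr^W_{-1}\u^\geom_{g,1}\to\Der\p$ and $\Gr^W_{-2}\u^\geom_{g,1}\to\Der\p$ in \cite[\S\S9,10]{hain:torelli}; you would need a version of those for $\cC_{g,n}$, which is not free. The paper sidesteps this entirely: it proves the middle isomorphism in both the arithmetic and the geometric settings, and then obtains $\d(\u^\geom_{\cC_{g,n}})\cong\d(\u_{\cC_{g,n}})$ as the composite
\[
\d(\u^\geom_{\cC_{g,n}})\cong\d(\u^\geom_{g,n+1})\overset{(\ref{eqn:d_arith})}{\cong}\d(\u_{g,n+1})\cong\d(\u_{\cC_{g,n}}),
\]
never proving it directly. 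You can close your gap the same way: drop the direct attempt at the last isomorphism, prove the middle isomorphism in the geometric and arithmetic cases (exactly your argument, run twice), and chain with (\ref{eqn:d_arith}).
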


\begin{proof}
We first prove the arithmetic case. Consider the diagram
$$
\xymatrix{
\M_{g,n+1} \ar@{^{(}->}[r]\ar[dr]_{\pi'} & \cC_{g,n} \ar[d]^{\pi} \cr
& \M_{g,n}
}
$$
where $\pi$ and $\pi'$ both take $(x_0,\dots,x_n)$ to $(x_1,\dots,x_n)$. The
fiber of $\pi'$ is an $n$-punctured curve. Choose compatible basepoints for all
three spaces. Denote the unipotent completion of the fiber of $\pi'$ over the
basepoint of $\M_{g,n}$ by $\p'$. Then one has the commutative diagram
$$
\xymatrix{
0 \ar[r] & \p' \ar[r] \ar[d] & \u_{g,n+1} \ar[r]\ar[d] & \u_{g,n}
\ar[r]\ar@{=}[d] & 0 \cr
0 \ar[r] & \p \ar[r] & \u_{\cC_{g,n}} \ar[r] & \u_{g,n} \ar[r] & 0
}
$$
whose vertical maps are surjective. Exactness of the second row follows from
Proposition~\ref{prop:section}; exactness of the first row follows from
Proposition~\ref{prop:left_exactness} because $\p'$ is free and therefore has
trivial center, and because $H_1(\p)$ has weights $-1$ and $-2$. The diagram
implies that
$$
\Gr^W_{-2}\ker\{\u_{g,n+1} \to \u_{\cC_{g,n}}\}
= \Gr^W_{-2}\ker\{\p'\to \p\} \cong \Ql(1)^{n-1}.
$$
from which it follows that $\d(\u_{\cC_{g,n}})\cong \d(\u_{g,n+1}) \cong
\d_{g,n+1}$. The proof that $\d(\u_{\cC_{g,n}}^\geom) \cong
\d(\u_{g,n+1}^\geom)$ is similar and is left to the reader. The result now
follows from (\ref{eqn:d_arith}).
\end{proof}

\begin{corollary}
\label{cor:d_section}
Each section $x$ of the universal curve $\pi:\cC_{g,n/k} \to \M_{g,n/k}$ induces
a well defined section of $\d_{g,n+1} \to \d_{g,n}$.
\end{corollary}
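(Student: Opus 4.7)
The plan is to combine the existence of a splitting of fundamental groups from Corollary~\ref{cor:splitting}, the functoriality of weighted completion (Proposition~\ref{prop:section}), and the identification Proposition~\ref{prop:d_invariance}, and then verify that the conjugation ambiguity in the splitting disappears after applying the functor $\d$.

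First, Corollary~\ref{cor:splitting} turns $x$ into a splitting $x_\ast : \pi_1(\M_{g,n/k}, t_o) \to \pi_1(\cC_{g,n/k}, x_o)$ of the projection on \'etale fundamental groups, well-defined up to conjugation by an element of the fiber $\pi_1(C_o, x_o)$. Applying Proposition~\ref{prop:section} to the family $\cC_{g,n/k} \to \M_{g,n/k}$ (noting that its monodromy factors through $\GSp(H_{\Ql})$) produces a Lie algebra section $\sigma : \u_{g,n} \to \u_{\cC_{g,n}}$, well-defined up to conjugation by $\exp(v)$ with $v$ ranging over $\p$, the Lie algebra of $\pi_1(C_o)^\un_{/\Ql}$, viewed as the kernel of $\u_{\cC_{g,n}} \to \u_{g,n}$. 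Applying $\d$ then yields a morphism $\d(\sigma) : \d(\u_{g,n}) \to \d(\u_{\cC_{g,n}})$; combining the identification (\ref{eqn:d_arith}) with Proposition~\ref{prop:d_invariance} identifies the source with $\d_{g,n}$ and the target with $\d_{g,n+1}$, so $\d(\sigma)$ is the desired section of $\d_{g,n+1} \to \d_{g,n}$.

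The only real content is well-definedness. Replace $\sigma$ by $\sigma' := \Ad(e^v) \circ \sigma$ for some $v \in \p$. Since $\p = W_{-1}\p$, for every $y \in W_{-r}\u_{g,n}$ with $r \in \{1,2\}$ one has
\[
\sigma'(y) - \sigma(y) = \sum_{k \geq 1} \tfrac{1}{k!}\,\ad(v)^k\,\sigma(y) \in W_{-r-1}\u_{\cC_{g,n}},
\]
so $\sigma$ and $\sigma'$ induce identical maps on $\Gr^W_{-1}$ and $\Gr^W_{-2}$, and these together determine $\d(\sigma)$. Hence $\d(\sigma)$ depends only on the section $x$.

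The substantive work has already been done in Proposition~\ref{prop:d_invariance}; the present corollary is then a formal consequence, the key observation being that $\p$ sits in $W_{-1}$, so inner conjugation by $\exp(\p)$ is strictly weight-decreasing and is invisible after truncating to the two-step graded Lie algebra $\d$. I anticipate no serious obstacle.
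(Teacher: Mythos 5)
Your proof is correct and follows essentially the same route as the paper: pass via Proposition~\ref{prop:section} to a Lie algebra section determined up to conjugation by $\exp(\p)$, observe that since $\p=W_{-1}\p$ this conjugation strictly lowers weight and hence is invisible on $\Gr^W_{-1}$ and $\Gr^W_{-2}$, and then invoke Proposition~\ref{prop:d_invariance} (and (\ref{eqn:d_arith})) to identify source and target with $\d_{g,n}$ and $\d_{g,n+1}$. Your explicit expansion of $\Ad(e^v)-1$ makes the weight-drop slightly more concrete than the paper's statement but is the same idea.
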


\begin{proof}
Proposition~\ref{prop:section} implies that each section $x$ of the universal
curve induces a section $\sigma_x$ of $\pi_\ast :\cG_{\cC_{g,n}} \to \cG_{g,n}$
that is well defined up to conjugation by an element of $\cP := \ker \pi_\ast$.
The section $x$ therefore induces a section $d\sigma_x$ of $d\pi_\ast :
\g_{\cC_{g,n}} \to \g_{g,n}$ which is well defined up to addition of a section
of the form $\ad_u\circ\sigma_x$, where $u\in \p:=\ker d\pi_\ast$. Since
$\p=W_{-1}\p$, the sections $d\sigma_x$ and $d\sigma+\ad_u\circ\sigma_x$ induce
the same section of $\Gr^W_\dot\g_{\cC_{g,n}} \to \Gr^W_\dot\g_{g,n}$, and
therefore of $\Gr^W_\dot\u_{\cC_{g,n}}/W_{-3} \to \Gr^W_\dot\u_{g,n}/W_{-3}$.
The result now follows from Proposition~\ref{prop:d_invariance}.

Equality with the geometric case follows from
equation (\ref{eqn:d_arith}) for $\M_{g,n+1}$.
\end{proof}

\subsection{Splittings}

Elements of $\Lambda^3_n H$ will be denoted by $(v;u_1,\dots,u_n)$, where $v
\in \Lambda^3_0 H$ and $u_j \in H$. When the role of $v$ is clear, it will be
omitted. For all $n\ge 0$, the linear mapping
$$
\Gr^W_{-1} \d_{g,n+1} \to \Gr^W_{-1}\d_{g,n}
$$
defined by $(u_0,\dots,u_n) \mapsto (u_1,\dots,u_n)$ induces a
$\GSp(H)$-equivariant homomorphism $\e_n : \d_{g,n+1} \to \d_{g,n}$.

\begin{proposition}
\label{prop:d_sections}
If $g\ge 4$, then there are exactly $n$ $\GSp(H)$-invariant sections of $\e_n$.
When $n\ge 1$, these are defined by
$$
s_j : (u_1,\dots, u_n) \mapsto (u_j, u_1,\dots,u_n)\qquad j = 1,\dots,n.
$$
When $g=3$, the sections of $\e_n$ are $s_1,\dots,s_n$ and the section
$$
\zeta_n : (u_1,\dots,u_n) \mapsto (0,u_1,\dots,u_n).
$$
\end{proposition}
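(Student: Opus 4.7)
The plan is to exploit the 2-step nilpotence of $\d_{g,n+1}$: a $\GSp(H)$-equivariant graded Lie algebra section $s$ of $\e_n$ is determined by its components $s_{-1}$ and $s_{-2}$ in weights $-1$ and $-2$, subject to $\e_n\circ s_r = \id$ for $r = -1,-2$ and the bracket compatibility
$$
s_{-2}([x,y]) \;=\; [s_{-1}(x),\,s_{-1}(y)] \qquad \text{for all } x,y\in \d_{g,n}.
$$
I would first enumerate all equivariant linear sections in each weight using Schur's lemma, then impose the bracket identity via Proposition~\ref{prop:bracket}.

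Since $\Lambda^3_0 H$ and $H$ are non-isomorphic irreducible $\GSp(H)$-modules when $g\ge 3$, Schur's lemma forces every equivariant section $s_{-1}\colon \Lambda^3_n H \to \Lambda^3_{n+1}H$ to have the form
$$
s_{-1}(v;u_1,\dots,u_n) \;=\; \bigl(v;\ {\textstyle\sum_{j=1}^n}\alpha_j u_j,\ u_1,\dots,u_n\bigr)
$$
for scalars $\alpha_1,\dots,\alpha_n \in F$, and similarly every equivariant section $s_{-2}\colon (\Lambda^2_0 H)^n \to (\Lambda^2_0 H)^{n+1}$ to have the form
$$
s_{-2}(\omega_1,\dots,\omega_n) \;=\; \bigl({\textstyle\sum_{k=1}^n}\gamma_k \omega_k,\ \omega_1,\dots,\omega_n\bigr)
$$
for scalars $\gamma_1,\dots,\gamma_n \in F$.

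Next I would project the bracket compatibility onto each factor $\Lambda^2_0 H_k$ of $(\d_{g,n+1})_{-2}$. For $k\ge 1$ the projection $p_k = \cc + \dd_k + \ee_k$ does not see the inserted $H_0$-components, so the required identity holds automatically. The essential constraint is the $k = 0$ projection
$$
p_0\bigl([s_{-1}(x),\,s_{-1}(y)]\bigr) \;=\; {\textstyle\sum_{k=1}^n}\gamma_k\, p_k([x,y]),
$$
which I would analyze component-by-component. Writing $\alpha(u) := \sum_j \alpha_j u_j$ and treating the entries $u_j, w_j\in H$ and $v, v'\in \Lambda^3_0 H$ as independent variables, the $\ee_0$-piece
$$
\ee\bigl(\alpha(u)\wedge\alpha(w)\bigr) \;=\; {\textstyle\sum_{i,j}}\,\alpha_i\alpha_j\,\ee(u_i\wedge w_j)
$$
gives $\gamma_k = \alpha_k^2$ and $\alpha_i\alpha_j = 0$ for $i\ne j$; the $\dd_0$-piece gives $\alpha_k = \gamma_k$; and (only when $g\ge 4$) the $\cc$-piece applied to $v\wedge v'$ gives ${\textstyle\sum_k}\gamma_k = 1$.

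Combining these, $\alpha_k = \gamma_k = \alpha_k^2$ forces $\alpha_k\in\{0,1\}$, and $\alpha_i\alpha_j = 0$ for $i\ne j$ allows at most one $\alpha_k$ to equal $1$. When $g\ge 4$ the additional condition $\sum_k\gamma_k = 1$ forces exactly one $\alpha_j = 1$, producing precisely the $n$ sections $s_1,\dots,s_n$ (and no section at all in the edge case $n=0$, consistent with the statement). When $g = 3$ we have $\cc \equiv 0$ so that last constraint is vacuous; the all-zero choice $\alpha_1 = \cdots = \alpha_n = 0$ is then also permitted and yields the additional section $\zeta_n$. The step that requires the most care is the scalar-coefficient extraction in the $\ee_0$-identity, which relies on the $F$-linear independence of the bilinear expressions $\ee(u_i\wedge w_j)$ as $(i,j)$ varies over independent variables $u_i, w_j\in H$; once this bookkeeping is in place, Proposition~\ref{prop:bracket} converts the classification directly into the stated list of sections.
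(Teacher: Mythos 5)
Your argument is correct and follows essentially the same route as the paper: Schur's lemma to parametrize the weight $-1$ component, then bracket compatibility projected onto the $\Lambda^2_0 H_0$ factor via Proposition~\ref{prop:bracket} to extract the constraints $\alpha_k^2 = \alpha_k$, $\alpha_i\alpha_j = 0$ ($i\neq j$), and $\sum_k\alpha_k = 1$ when $g\ge 4$. The only organizational difference is that you parametrize $s_{-2}$ independently with scalars $\gamma_k$ and then derive $\gamma_k = \alpha_k$ from the $\dd_0$-piece, whereas the paper reaches $\gamma_j = a_j$ directly from the surjectivity of the bracket $H_j\otimes\Lambda^3_0 H\to\Lambda^2_0 H_j$ before stating the cocycle identity (Lemma~\ref{lem:cocycle}); the content of the two steps is the same.
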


\begin{proof}
Suppose that $n\ge 1$ and that $g\ge 3$. Suppose that $s : \d_{g,n} \to
\d_{g,n+1}$ is a $\GSp(H)$-invariant section of $\e_n : \d_{g,n+1} \to
\d_{g,n}$. Since $H$ is an irreducible $\GSp(H)$-module, Schur's Lemma implies
that the restriction of $s$ to the subgroup $H^n$ of $\d_{g,n}$ is of the form
$$
s(u_1,\dots,u_n) = \big(\textstyle{\sum}_{j=1}^n a_j u_j, u_1,\dots,u_n\big)
$$
for some $a_1,\dots,a_n \in F$. Since $\e_n$ is the identity on $\Lambda^3_0 H$,
$s$ is as well. So, since the bracket
$$
H_j \otimes \Lambda^3_0 H \to \Lambda^2_0 H_j \qquad 1 \le j \le n
$$
is surjective, the map
$$
\Lambda_0^2 H_j \to \Lambda_0^2 H_0 \qquad 1 \le j \le n
$$
induced by $s$ must also be multiplication by $a_j$.

\begin{lemma}
\label{lem:cocycle}
If $g\ge 3$ and $v',v''\in \Lambda^3_0 H$ and that $u_1',u_1'',\dots,u_n',u_n''
\in H$, then the component of
$$
[s(v';u_1',\dots,u_n'),s(v'';u_1'',\dots,u_n'')]
-s\big([(v';u_1',\dots,u_n'),(v'';u_1'',\dots,u_n'')]\big)
$$
in $\Lambda^2_0 H_0$ is
\begin{equation}
\label{eqn:cocycle}
(1-\textstyle{\sum_j a_i})\cc(v'\wedge v'')
+ \sum_j(a_j^2-a_j)\ee_j(u_j'\wedge u_j'')
+ \sum_{i\neq j} a_i a_j \ee_{ij}(u_i'\wedge u_j'')
\end{equation}
\end{lemma}

\begin{remark}
This result is reinterpreted in terms of non-abelian cohomology in
Lemma~\ref{lem:delta}.
\end{remark}

Since $s$ is a section, the expression (\ref{eqn:cocycle}) must vanish. This
implies that $a_i^2 = a_i$ for all $i$ and that $a_ia_j = 0$ when $i\neq j$.
That is, each $a_i \in \{0,1\}$ and at most one $a_i$ is non-zero. When $g\ge
4$, $\cc\neq 0$. Consequently, we have the additional equation
$$
\sum_{j=1}^n a_j = 1
$$
which implies that exactly one of the $a_j$ is non-zero. This proves the result
when $g\ge 4$. When $g=3$, $\cc$ is zero and we also have the section $\zeta_n$
where all $a_j$ are zero.
\end{proof}

\begin{proof}[Proof of Lemma~\ref{lem:cocycle}]
Proposition~\ref{prop:bracket} implies that
\begin{multline*}
p_0\big([s(v';u_1',\dots,u_n'),s(v'';u_1'',\dots,u_n'')])
= \cc(v'\wedge v'') \cr
+ \sum_j a_j \big(\dd_j(v'\otimes u_j'') - \dd_j(v''\otimes u_j')\big)
+ \sum_{j} a_j^2 \ee_{j}(u_j'\wedge u_j'')
+ \sum_{i\neq j} a_ia_j \ee_{ij}(u_i'\wedge u_j'')
\end{multline*}
and that
\begin{multline*}
p_0 \circ s\big([(v';u_1',\dots,u_n'),(v'';u_1'',\dots,u_n'')]\big) \cr
=\sum_j a_j \big(\ee(u_j'\wedge u_j'') + \dd(v'\otimes u_j'')
- \dd(v''\otimes u_j') + \cc(v'\wedge v'')\big).
\end{multline*}
The expression (\ref{eqn:cocycle}) is their difference.
\end{proof}

\subsection{Genus 3}
\label{para:genus3}

As we have seen, the section $\zeta_n$ of $\d_{g,n+1}\to \d_{g,n}$ exists only
when $g=3$. This exceptional section occurs because $\Lambda^2 \Lambda^3_0 H$
does not contain $\Lambda^2_0 H$ when $g=3$. To circumvent this problem, we
consider a natural integral structure on $\Gr^W_{-1}\u_{g,n}$ that comes from
the image of the Torelli group in $\U_{g,n}$. This will allow us to prove that
$\zeta_n$ cannot be induced by a rational point.

Suppose that $A$ is an integral domain with fraction field $F$. Recall that
$$
\Lambda^3_0 H_A := (\Lambda^3 H_A)(-1)/\thetadual\wedge H_A \cong
\im\{\Lambda^3 H_A \to \Lambda^3_0 H_F\}.
$$
For a positive integer $n$, define a lattice in $\Lambda^3_n H_F$ by
$$
\Lambda^3_n H_A = 
\{(u_1,\dots,u_n)\in (\Lambda^3 H_A)^n : \ubar_1 = \dots = \ubar_n\}(-1).
$$
Define the projection $r : \Lambda^3_{n+1} H_A \to \Lambda^3_n H_A$ by
$r:(u_0,u_1,\dots,u_n) \mapsto (u_1,\dots, u_n)$.

The following is a straightforward generalization of Lemma~\ref{lem:split}.

\begin{lemma}
\label{lem:lattice}
Suppose that $g\ge 3$ and $n\ge 0$. If $A$ is an integral domain and $g-1 \notin
A^\times$, then $\zeta_n$ does not restrict to a section of the projection
$r:\Lambda^3_{n+1} H_A \to \Lambda^3_n H_A$. \qed
\end{lemma}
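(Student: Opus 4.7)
The plan is to identify $\zeta_n$, via a change of coordinates, with the canonical $F$-linear splitting $\sigma_F : \Lambda^3_0 H_F \to (\Lambda^3 H_F)(-1)$ of the short exact sequence (\ref{eqn:ses2}) over the fraction field $F$ of $A$, and then appeal to Lemma~\ref{lem:split}.

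First I would make the $(v;u_1,\dots,u_n)$ parametrization of $\Lambda^3_n H$ explicit. The isomorphism $\Lambda^3_n H_F \cong \Lambda^3_0 H_F \oplus H_F^{\oplus n}$ uses the unique $\GSp(H_F)$-invariant splitting $\sigma_F$ of (\ref{eqn:ses2}) over $F$, which exists because $g-1$ is invertible in $F$: the tuple $(v;h_1,\dots,h_n)$ corresponds to the element of $\{(w_1,\dots,w_n)\in((\Lambda^3 H_F)(-1))^n : \bar{w}_i=\bar{w}_j\}$ whose $j$th component is $\sigma_F(v)+h_j\wedge\thetadual$. In these defining coordinates, $\zeta_n$ therefore sends $(w_1,\dots,w_n)\in \Lambda^3_n H_A$, with common image $v:=\bar{w}_i\in\Lambda^3_0 H_A$, to $(\sigma_F(v),w_1,\dots,w_n)$, since setting the new ``$h_0$'' to zero leaves only the $\sigma_F(v)$ contribution in the new first slot.

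Next I would note that the map $\Lambda^3_n H_A \to \Lambda^3_0 H_A$ sending a tuple to its common image is surjective: any $v\in\Lambda^3_0 H_A$ lifts to some $u\in\Lambda^3 H_A$ (as $\Lambda^3 H_A \twoheadrightarrow \Lambda^3_0 H_A$), and the constant tuple $(u,\dots,u)$ lies in $\Lambda^3_n H_A$ with common image $v$. Consequently, $\zeta_n$ restricts to a section of $r:\Lambda^3_{n+1}H_A\to\Lambda^3_n H_A$ if and only if $\sigma_F(v)\in(\Lambda^3 H_A)(-1)$ for every $v\in\Lambda^3_0 H_A$---that is, if and only if $\sigma_F$ descends to an $A$-linear splitting of (\ref{eqn:ses2}) over $A$. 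By Lemma~\ref{lem:split}, this occurs precisely when $g-1\in A^\times$; so under the hypothesis $g-1\notin A^\times$ there is some $v\in\Lambda^3_0 H_A$ with $\sigma_F(v)\notin(\Lambda^3 H_A)(-1)$, and $\zeta_n$ sends the corresponding integral tuple outside the integral lattice.

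The only substantive step is the coordinate translation in the first paragraph; once $\zeta_n$ is recognized as the $\GSp$-equivariant splitting of (\ref{eqn:ses2}) acting on the $\Lambda^3_0 H$ factor, the lemma reduces to (the second half of) Lemma~\ref{lem:split}.
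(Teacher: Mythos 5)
Your proof is correct and fills in exactly the argument that the paper leaves implicit (the paper asserts the lemma as a ``straightforward generalization of Lemma~\ref{lem:split}'' without proof). Identifying the decomposition $\Lambda^3_n H_F \cong \Lambda^3_0 H_F \oplus H_F^{\oplus n}$ as the one induced by the unique $\GSp(H_F)$-invariant splitting $\sigma_F$ of (\ref{eqn:ses2}), observing that $\zeta_n$ in the defining coordinates sends $(w_1,\dots,w_n)$ to $(\sigma_F(v),w_1,\dots,w_n)$, and noting that the common-image map $\Lambda^3_n H_A \to \Lambda^3_0 H_A$ is surjective via constant tuples, is precisely the intended reduction to Lemma~\ref{lem:split}.
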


Now take the coefficient field $F$ to be $\Ql$. The homomorphism
(\ref{eqn:johnson}) induces natural isomorphisms
$$
\Gr^W_{-1}\u_{g,1} \cong \Lambda^3_1 H_\Ql \cong (\Lambda^3_1 H_\Zl)\otimes\Ql
$$
These induce a natural isomorphism
$$
\Gr^W_{-1}\u_{g,n} \cong (\Lambda^3_n H_\Zl)\otimes\Ql
$$
for all $n\ge 0$ such that the $j$th projection $r:\Lambda^3_{n+1}H_\Zl \to
\Lambda^3_n H_\Zl$ corresponds to the $j$th projection $\Gr^W_{-1}\u_{g,n+1} \to
\Gr^W_{-1}\u_{g,n}$.

\subsection{Summary}

The following result summarizes the results of Section~\ref{sec:la_d}.

\begin{proposition}
\label{prop:d_sections3}
Suppose that $n\ge 1$. If $g\ge 4$ and $\ell$ is arbitrary, or if $g=3$ and
$\ell = 2$, then the only $\GSp(H_\Ql)$-invariant sections of $\e_n : \d_{g,n+1}
\to \d_{g,n}$ that respect the integral structure on $\Gr^W_{-1}$ are the
sections $s_1,\dots,s_n$ induced by the tautological points. In particular,
there are no $\GSp(H)$-invariant sections of $\d_{g,1}\to\d_{g,0}$.
\end{proposition}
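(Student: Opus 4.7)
The plan is to reduce both assertions to Proposition~\ref{prop:d_sections} and to use the $\Zl$-lattice recalled in Section~\ref{para:genus3} exclusively to eliminate the exceptional section $\zeta_n$ that arises in genus $3$.

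First I would treat the case $g\ge 4$. Here Proposition~\ref{prop:d_sections} already identifies the complete set of $\GSp(H_\Ql)$-equivariant sections of $\e_n$ (for $n\ge 1$) as the tautological $s_1,\dots,s_n$, each of which is manifestly integral, so the first assertion follows at once without any appeal to the integral structure on $\Gr^W_{-1}$.

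Next, for $g=3$ and $\ell=2$, Proposition~\ref{prop:d_sections} contributes one further candidate, the section $\zeta_n:(u_1,\dots,u_n)\mapsto(0,u_1,\dots,u_n)$. The crux of the argument is to rule it out. Under the identification $\Gr^W_{-1}\u_{g,n}\cong(\Lambda^3_n H_\Zl)\otimes_\Zl\Ql$ of Section~\ref{para:genus3}, the prescribed integral structure on $\Gr^W_{-1}$ is the lattice $\Lambda^3_n H_\Zl$, and $\zeta_n$ induces on $\Gr^W_{-1}$ exactly the $\Ql$-linear map $(u_1,\dots,u_n)\mapsto(0,u_1,\dots,u_n)$. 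Lemma~\ref{lem:lattice}, applied with $A=\Z_2$, shows that this map fails to send $\Lambda^3_n H_{\Z_2}$ into $\Lambda^3_{n+1} H_{\Z_2}$, precisely because $g-1=2\notin\Z_2^\times$. Hence $\zeta_n$ does not respect the integral structure, leaving only $s_1,\dots,s_n$.

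For the final ``in particular'' statement I would extend the Schur-plus-cocycle argument of Proposition~\ref{prop:d_sections} to $n=0$. Any $\GSp(H)$-equivariant linear map $\Lambda^3_0 H\to\Lambda^3_1 H=H\oplus\Lambda^3_0 H$ must, by Schur's Lemma (using $\Hom_{\Sp(H)}(\Lambda^3_0 H,H)=0$), be of the form $v\mapsto(0,v)$, i.e.\ $\zeta_0$. The Lie-bracket compatibility, computed via Proposition~\ref{prop:bracket}, reduces in this case to the single condition $\cc(v'\wedge v'')=0$: for $g\ge 4$ the projection $\cc$ is nonzero and so no section exists; for $g=3$, where $\cc=0$, the candidate $\zeta_0$ survives over $\Ql$ but is eliminated by the same integrality obstruction from Lemma~\ref{lem:lattice}.

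The main technical point, where I expect the most bookkeeping, is the identification of the integral structure on $\Gr^W_{-1}\u_{g,n}$ coming from the Torelli group with the concrete lattice $\Lambda^3_n H_\Zl$ needed to invoke Lemma~\ref{lem:lattice}; the underlying representation-theoretic and cocycle computations are already in hand. Once this identification is unpacked, the fact that the $\GSp$-invariant splitting of $\Lambda^3_1 H$ requires inverting $g-1$ translates directly into the failure of $\zeta_n$ to be integral when $\ell\mid g-1$, and the proposition follows.
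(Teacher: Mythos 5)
Your proposal is correct and rests on exactly the ingredients the paper uses: Proposition~\ref{prop:d_sections} to classify the $\GSp(H_\Ql)$-invariant sections over $\Ql$, Corollary~\ref{cor:d_section} for the integrality of the tautological sections, and Lemma~\ref{lem:lattice} (with $A=\Z_2$, $g-1=2\notin\Z_2^\times$) to kill $\zeta_n$ in genus~$3$. The only organizational difference is that the paper treats $n=1$ first and then deduces $n>1$ by naturality, whereas you apply Proposition~\ref{prop:d_sections} and Lemma~\ref{lem:lattice} directly for all $n\ge 1$; since both are already stated in that generality, this is equally valid. You also spell out the ``in particular'' case $\e_0:\d_{g,1}\to\d_{g,0}$ explicitly via Schur's Lemma and the cocycle reduction to $\cc=0$, whereas the paper leaves this implicit --- a welcome bit of extra care that matches the intended argument.
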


\begin{proof}
Corollary~\ref{cor:d_section} implies that each tautological section induces a
$\GSp(H)$-invariant section of $e_n$. When $n=1$, the result follows from
Lemma~\ref{lem:lattice} when $g=3$ and Proposition~\ref{prop:d_sections} when
$g\ge 4$.  The case $n>1$ follows from the case $n=1$ and naturality.
\end{proof}

\section{Topologically Ample Families of Curves}
\label{sec:top_amp}

Roughly speaking, a family $C\to T$ of smooth projective curves, where $T$ is a
smooth, geometrically connected $k$ variety, is topologically ample of type
$(g,n)$ if $C$ is the pullback of the universal curve along a map $T \to
\M_{g,n/k}$ that induces an isomorphism on fundamental groups. The actual
condition is weaker.

\begin{definition}
\label{def:top_ample}
Suppose that $k$ is a field of characteristic zero and that $T$ is a smooth,
geometrically connected $k$-variety. A family $C\to T$ of smooth projective
curves of genus $g\ge 3$ is {\em topologically ample over $k$ of type $(g,n)$} 
if there is a $k$-morphism $\phi : T \to \M_{g,n/k}$, called a {\em marking},
and a prime number $\ell$ satisfying:
\begin{enumerate}

\item $C$ is isomorphic to the pullback of the universal curve $\cC \to
\M_{g,n/k}$; equivalently, there is a morphism $\phihat : C \to \cC_{g,n/k}$
such that
$$
\xymatrix{
C \ar[r]^(.45)\phihat\ar[d] & \cC_{g,n/k}\ar[d] \cr
T \ar[r]^(.45)\phi & \M_{g,n/k}
}
$$
commutes;

\item the monodromy representation $\rho : \pi_1(T,\etabar) \to \GSp(H_\Ql)$ is
Zariski dense, where $\etabar$ is a geometric point of $T$ and
$H=\Het^1(C_{\etabar},\Ql(1))$;

\item the Lie algebra homomorphism $\phi_\ast : \u_T \to \u_{g,n}$ induced by
$\phi$ induces an isomorphism $\u_T/W_{-3}\to \u_{g,n}/W_{-3}$, where $\u_T$ and
$\u_{g,n}$ denote the Lie algebras of the prounipotent radicals of the weighted
completions of $\pi_1(T,\xbar)$ and $\pi_1(\M_{g,n/k},\phi(\xbar))$ with respect
to their homomorphisms to $\GSp(H_\Ql)$;

\item The group of sections $\Jac_{C/T}(T)$ of the relative jacobian $\Jac_{C/T}
\to T$ is finitely generated.

\end{enumerate}
When $g=3$, we also require that $\ell=2$ and that the image of the natural
homomorphism
$$
\ker\{\pi_1(T,\etabar)\to \GSp(H_{\Q_2})\} \to \Gr^W_{-1}\u_{g,n}
$$
equals the copy of $\Lambda^3_n H_{\Z_2}$ defined in
Paragraph~\ref{para:genus3}. A {\em  marked topologically ample curve of type
$(g,n)$} is a topologically ample curve $C\to T$ of type $(g,n)$ together with a
choice of marking $\phi : T \to \M_{g,n/k}$.
\end{definition}

Every marked topologically ample family of curves $C\to T$ of type $(g,n)$ has
$n$ tautological points $x_1,\dots,x_n$. These are the pullbacks of the
$n$-tautological sections of the universal curve over $\M_{g,n/k}$ along the
marking $\phi$.

Finite generation of $\Jac_{C/T}(T)$ occurs when $k$ is small and also when the
image of $\pi_1(T\otimes_k \kbar,\etabar) \to \GSp(H_\Ql)$ is large enough. It
fails, for example, when $k$ is algebraically closed and $T=\Spec k$.

\begin{proposition}
\label{prop:fg}
Suppose that $T$ is a geometrically connected, smooth $k$-variety. If either $k$
is finitely generated over $\Q$ or if $H^0(T\otimes_k \kbar,\H_\Ql) = 0$, then
$\Jac_{C/T}(T)$ is finitely generated.
\end{proposition}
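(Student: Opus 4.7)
The plan is to identify $\Jac_{C/T}(T)$ with the Mordell--Weil group of the Jacobian of the generic fibre and then to invoke classical finite-generation theorems in each of the two cases. Set $K := k(T)$ and let $A := \Jac_{C/T}\otimes_T K$ denote the generic fibre of the relative Jacobian. Since $\Jac_{C/T}$ is an abelian scheme over the regular base $T$, it is its own N\'eron model; the N\'eron mapping property therefore yields $\Jac_{C/T}(T) = A(K)$, so it suffices to prove that $A(K)$ is a finitely generated abelian group.

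The first case is immediate: if $k$ is finitely generated over $\Q$, then so is $K=k(T)$, and N\'eron's generalization of the Mordell--Weil theorem to abelian varieties over fields finitely generated over the prime field gives that $A(K)$ is finitely generated.

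For the second case I would apply the Lang--N\'eron theorem. Since $K/k$ is a finitely generated field extension, Lang--N\'eron gives that $A(K)/(\tau_{K/k}A)(k)$ is finitely generated, where $\tau_{K/k}A$ denotes the $K/k$-trace of $A$; it therefore suffices to show $\tau_{K/k}A = 0$. Suppose for contradiction that $\tau_{K/k}A \neq 0$. Up to isogeny, the canonical map $(\tau_{K/k}A)_K \to A$ spreads out over $T\otimes_k\kbar$ to produce a non-zero constant abelian subscheme $(\tau_{K/k}A)_\kbar\times T_\kbar$ of $\Jac_{C/T}\otimes_k\kbar$. Its $\ell$-adic Tate module is then a non-zero subrepresentation of $V_\ell\Jac_{C_\etabar}$ on which the geometric fundamental group $\pi_1(T\otimes_k\kbar,\etabar)$ acts trivially. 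Poincar\'e duality for the curve gives an isomorphism $V_\ell\Jac_{C_\etabar}\cong \H_{\Ql,\etabar}$ of $\pi_1$-modules, so this contradicts the hypothesis $H^0(T\otimes_k\kbar,\H_\Ql) = 0$. Hence $\tau_{K/k}A = 0$, and Lang--N\'eron yields finite generation of $A(K)$.

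The argument is essentially a black-box application of well-known finite-generation theorems for Mordell--Weil groups. The one substantive step is the reduction from the vanishing of geometric monodromy invariants on $\H_\Ql$ to the vanishing of the $K/k$-trace, which rests on the standard fact that a non-zero trace produces a non-zero geometrically constant subrepresentation inside the Tate module of the Jacobian.
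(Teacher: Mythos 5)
Your proof is correct, and for the second case it takes a genuinely different route from the paper. The paper reduces to $k=\C$ and argues Hodge--theoretically: it injects $\Jac_{C/T}(T)$ via the Abel--Jacobi map into $\Ext^1_{\cH(T)}(\Z(0)_T,\H_\Z)$ in the category of admissible variations of mixed Hodge structure, then uses the exact sequence
$0 \to \Ext^1_\cH(\Z(0),H^0(T,\H_\Z)) \to \Ext^1_{\cH(T)}(\Z(0)_T,\H_\Z) \to H^1(T,\H_\Z)$
(proved in the paper's appendix). The hypothesis $H^0(T\otimes_k\kbar,\H_\Ql)=0$ kills the first $\Ext$ term (since $\H_\Z$ is torsion free), and $H^1(T,\H_\Z)$ is finitely generated because $T$ has the homotopy type of a finite complex. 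You instead invoke the Lang--N\'eron theorem over $K=k(T)$ and reduce to showing the $K/k$-trace of the generic Jacobian vanishes, which you correctly extract from the monodromy hypothesis by spreading out (over a dense open $U\subseteq T$ is enough, since $\pi_1(U\otimes_k\kbar)\twoheadrightarrow\pi_1(T\otimes_k\kbar)$). Your approach has the merits of being uniformly algebraic, working over the original field $k$ without passing to $\C$, and handling both cases of the proposition with a single theorem (in the first case Mordell--Weil over $k$ controls $(\tau_{K/k}A)(k)$). The paper's approach is natural in its Hodge-theoretic context and has the side benefit of exhibiting $\Jac_{C/T}(T)$ as a subgroup of $H^1(T,\H_\Z)$, but it requires the separate appendix lemma on extensions of VMHS. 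One small point worth making explicit: in your trace-vanishing argument the relevant isomorphism is $V_\ell\Jac(C_\etabar)\cong\H_{\Ql,\etabar}$ via the Weil pairing (i.e.\ Poincar\'e duality with the Tate twist built into the paper's definition $\H_\Ql = R^1f_\ast\Ql(1)$), and even if the spread-out map of abelian schemes is not a closed immersion everywhere, a homomorphism with finite kernel still injects on $\Ql$-Tate modules, so the argument is robust.
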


\begin{proof}
The case when $k$ is finitely generated over $\Q$ follows from the Mordell-Weil
Theorem \cite{weil}. To prove the second assertion, it suffices, by standard
arguments, to prove it in the case $k=\C$. Denote the category of mixed Hodge
structures (MHS) by $\cH$ and the category of admissible variations of MHS over
$T$ by $\cH(T)$. In this case, the Abel-Jacobi mapping
$$
\Jac_{C/T}(T) \to \Ext^1_{\cH(T)}(\Z(0)_T,\H_\Z)
$$
is injective. It is proved in the Appendix that the sequence
$$
0 \to \Ext^1_\cH(\Z(0),H^0(T,\H_\Z)) \to \Ext^1_{\cH(T)}(\Z(0)_T,\H_\Z)
\to H^1(T,\H_\Z)
$$
is exact. Since $\H_\Z$ is torsion free, $H^0(T,\H_\Z)$ is a subgroup of
$H^0(T,\H_\Ql)$, and therefore vanishes. Since $T$ is homotopy equivalent to a
finite complex, $H^1(T,\H_\Z)$ is finitely generated. It follows that
$\Jac_{C/T}(T)$ is finitely generated. 
\end{proof}

Proposition~\ref{prop:level-indep} implies that when $g\ge 4$, the universal
curve over $\M_{g,n/k}[m]$ is topologically ample whenever there is a prime
number $\ell$ such that $\chi_\ell : G_k \to \Zlx$ has infinite image. When
$g=3$ it implies that $\M_{3,n/k}[m]$ is topologically ample when the image of
$\chi_2$ is infinite. This condition is satisfied, for example, when $k$ is a
number field. Recall that $\Mo_{g,n}[m]$ denotes the open subset of
$\M_{g,n}[m]$ of curves with trivial automorphism group. As indicated in
Section~\ref{sec:level}, it is a smooth quasi-projective variety whenever $g\ge
3$.

\begin{proposition}
Suppose that $g\ge 3$, $n\ge 0$, $m\ge 1$ and that either $g+n\ge 4$ or $m\ge
3$. If there exists a prime number $\ell$ such that the $\ell$-adic cyclotomic
character $\chi_\ell : G_k \to \Zlx$ has infinite image and if $T$ is a generic
ample subvariety of $\Mo_{g,n/k}[m]$ of dimension $\ge 2$, then the pullback of
the universal curve over $\M_{g,n}$ to $T$ is topologically ample of type
$(g,n)$.
\end{proposition}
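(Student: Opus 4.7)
The strategy is to verify conditions (i)--(iv) of Definition~\ref{def:top_ample}, plus the integrality condition in the genus $3$ case. Condition (i) is automatic once one takes $C$ to be the pullback of the universal curve along the marking $\phi : T \hookrightarrow \Mo_{g,n/k}[m] \to \M_{g,n/k}[m] \to \M_{g,n/k}$. Under our hypotheses, either $m \ge 3$, in which case $\Mo_{g,n/k}[m] = \M_{g,n/k}[m]$, or $g+n \ge 4$, in which case Proposition~\ref{prop:autos} forces the complement of $\Mo_{g,n/k}[m]$ in $\M_{g,n/k}[m]$ to have codimension $\ge 2$. Either way the inclusion $\Mo_{g,n/k}[m] \hookrightarrow \M_{g,n/k}[m]$ induces an isomorphism on $\pi_1$ and on $H^1$ with any local coefficients and is injective on $H^2$, and Proposition~\ref{prop:level-indep} further identifies $\cG_{\M_{g,n/k}[m]}$ with $\cG_{\M_{g,n/k}} = \cG_{g,n}$. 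So it suffices to verify the conditions for the map $T \hookrightarrow \Mo_{g,n/k}[m]$.

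The crux is condition (iii). By Proposition~\ref{prop:homology} together with the discussion of minimal presentations in Section~\ref{sec:presentations}, the induced map $\u_T/W_{-3} \to \u_{g,n}/W_{-3}$ is an isomorphism of graded $\GSp(H)$-equivariant Lie algebras if and only if, for every irreducible $\GSp(H)$-module $V$ of weight $-1$ or $-2$ with associated $\ell$-adic local system $\V$, the pullback $\phi^\ast : H^1(\Mo_{g,n/k}[m], \V) \to H^1(T, \V)$ is an isomorphism and $\phi^\ast : H^2(\Mo_{g,n/k}[m], \V) \to H^2(T, \V)$ is injective. The plan is to deduce both from a Lefschetz hyperplane theorem applied to $T$ viewed as a generic complete intersection of sufficiently ample divisors in $\Mo_{g,n/k}[m]$. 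Since $\Mo_{g,n/k}[m]$ is a smooth quasi-projective $k$-variety, Goresky--MacPherson stratified Morse theory \cite{smt}, in the same manner employed in the density theorem of \cite{hain:density}, yields that $H^i(\Mo_{g,n/k}[m], \V) \to H^i(T, \V)$ is an isomorphism for $i < \dim T$ and injective for $i = \dim T$. As $\dim T \ge 2$, this delivers the required comparisons.

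Condition (ii) and the genus $3$ integrality condition both rest on the surjectivity of $\pi_1(T \otimes_k \kbar, \etabar) \twoheadrightarrow \pi_1(\Mo_{g,n/k}[m] \otimes_k \kbar, \etabar)$, which is the $\pi_1$ consequence of the same Lefschetz theorem. For (ii), composition with the standard representation shows the geometric monodromy has Zariski-dense image in $\Sp(H_\Ql)$; combined with the infinite-image hypothesis on $\chi_\ell$ and the commuting square relating $\rho$ to $\chi_\ell$, the total image is Zariski dense in $\GSp(H_\Ql)$. In the $g = 3$ case, the surjectivity identifies the image of $\ker(\pi_1(T \otimes_k \kbar, \etabar) \to \GSp(H_{\Q_2}))$ in $\Gr^W_{-1}\u_{g,n}$ with the lattice $\Lambda^3_n H_{\Z_2}$ of Paragraph~\ref{para:genus3}. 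Condition (iv) is an application of Proposition~\ref{prop:fg}: when $k$ is finitely generated over $\Q$ this is Mordell--Weil, and in general the Zariski density of the geometric monodromy forces $H^0(T\otimes_k \kbar, \H_\Ql) = H_\Ql^{\Sp(H)} = 0$. The main obstacle is the Lefschetz-with-local-coefficients statement on the non-projective (possibly stacky) ambient space $\Mo_{g,n/k}[m]$, which requires careful application of stratified Morse theory; once that is in hand, the remainder of the proof is a routine assembly of results already established in the paper.
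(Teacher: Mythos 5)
Your proposal is correct, and its overall skeleton matches the paper's: reduce to the smooth quasi-projective variety $\Mo_{g,n/k}[m]$ (using Proposition~\ref{prop:autos} when $g+n\ge 4$, trivially when $m\ge 3$), cut by a generic plane section of dimension $\ge 2$, and invoke the stratified Morse theory Lefschetz theorem. The difference is in how you extract condition (iii). You go through a local-coefficient Lefschetz theorem for $H^1$ and $H^2$ together with the minimal-presentation yoga of Section~\ref{sec:presentations} to compare $\u_T/W_{-3}$ with $\u_{g,n}/W_{-3}$. The paper takes a shortcut: the $\pi_1$ version of the Lefschetz theorem (\cite[Thm.~p.~150]{smt}) already gives that $T\hookrightarrow \Mo_{g,n/k}[m]$ induces an \emph{isomorphism} (not merely a surjection) on geometric, and hence \'etale, fundamental groups when $\dim T\ge 2$; by functoriality of weighted completion with respect to the unchanged representation $\rho$, the map $\u_T\to \u_{\Mo_{g,n/k}[m]}\cong \u_{g,n}$ is then an isomorphism outright, and condition (iii) is automatic. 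Your surjectivity-plus-cohomology route is sound but redundant: the same Lefschetz theorem you invoke already gives the $\pi_1$ isomorphism, which renders the $H^1$/$H^2$ analysis unnecessary. Conditions (ii), (iv), and the genus $3$ lattice condition are handled as in the paper.
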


\begin{proof}
If $m\ge 3$, then $\Mo_{g,n}[m] = \M_{g,n}[m]$. Proposition~\ref{prop:autos}
implies that if $g+n\ge 4$, then $\Mo_{g,n/k}[m]$ is a smooth quasi-projective
variety whose geometric fundamental group is isomorphic to the profinite
completion of the mapping class group $\G_{g,n}[m]$. Choose a projective
imbedding $\Mo_{g,n/k}[m]\hookrightarrow \P^N_{k}$. Denote the
Zariski closure of the image by $X$. Set
$$
r = N-(3g-3+n) + d.
$$
When $d\ge 0$, there is a Zariski open subset $U_d$ of the Grassmannian
$G_r(\P^N)$ consisting of the $r$ planes in $\P^N$ that intersect $X$ and all of
the strata of $X-\M_{g,n}$ transversely. If $L\in U_d(k)$, then $L\cap X$ is a
subvariety of $X$ of dimension $d$.

Observe that $U_d(k)$ is non-empty. If $d\ge 2$ and $L\in U_d(k)$, then, by
\cite[Thm.~p.~150]{smt}, $\M_{g,n}\cap L$ is geometrically connected $k$-variety
of dimension $d$ and the inclusion
$$
T := \Mo_{g,n/k}[m]\cap L \hookrightarrow \Mo_{g,n/k}[m]
$$
induces an isomorphism on geometric fundamental groups. It thus induces an
isomorphism on \'etale fundamental groups. Let $C\to T$ be the restriction of
the universal curve over $\M_{g,n/k}$ to $T$. Since the $\ell$-adic cyclotomic
character $\chi_\ell$ has infinite image, this implies that the representations
$\pi_1(T)\to \GSp(H)$ and $\pi_1(C\otimes_k \kbar)\to \Sp(H)$ are Zariski dense.
Proposition~\ref{prop:fg} implies that $\Jac_{C/T}(T)$ is finitely generated. It
follows that $C/T$ is topologically ample of type $(g,n)$.
\end{proof}

\begin{remark}
\label{rem:coho}
Suppose that $C/T$ is topologically ample of type $(g,n)$ with $g\ge 3$. Denote
the lisse sheaf of $\Ql$ vector spaces over $T$ that corresponds to the
$\GSp(H)$-module $V$ by $\V$. Proposition~\ref{prop:homology} implies that if
$V$ has negative weight, then one has isomorphisms
\begin{multline*}
\Het^1(T,\V) \cong \Hom_{\GSp(H)}(H_1(\u_T),V) \cr
\cong \Hom_{\GSp(H)}(H_1(\u_{g,n}),V) \cong \Het^1(\M_{g,n/k},\V).
\end{multline*}
In particular, $\Het^1(T,\Ql(1)) \cong \Het^1(\M_{g,n/k},\Ql(1)) = 0$.
\end{remark}

Recall that $\cC_{g,n/k}$ denotes the universal complete curve over
$\M_{g,n/k}$.

\begin{proposition}
\label{prop:ample_curve}
If $C/T$ is topologically ample over $k$ of type $(g,n)$, then $\phihat$ induces
an isomorphism $\phihat_\ast: \u_C/W_{-3} \overset{\simeq}{\to}
\u_{\cC_{g,n/k}}/W_{-3}$ such that the diagram
$$
\xymatrix{
\u_C/W_{-3} \ar[r]_\simeq^{\phihat_\ast} \ar[d] & \u_{\cC_{g,n/k}}/W_{-3}
\ar[d] \cr
\u_T/W_{-3}  \ar[r]_\simeq^{\phi_\ast} & \u_{g,n}/W_{-3}
}
$$
commutes.
\end{proposition}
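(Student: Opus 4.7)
The plan is to apply Proposition~\ref{prop:section} to both families $C\to T$ and $\cC_{g,n/k}\to\M_{g,n/k}$ and then exploit the exactness of the functor $\Gr^W_\dot$ in the category of $\cG$-modules, reducing the problem to the already-assumed isomorphism $\phi_\ast:\u_T/W_{-3}\overset{\simeq}{\to}\u_{g,n}/W_{-3}$.

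First I would use the fact that, since $C$ is the pullback of $\cC_{g,n/k}$ along $\phi$, the morphism $\phihat$ restricts to an isomorphism between the geometric fiber of $C\to T$ over $\etabar$ and the fiber of $\cC_{g,n/k}\to\M_{g,n/k}$ over $\phi(\etabar)$. The two corresponding $\Ql$-unipotent completions are therefore identified by $\phihat_\ast$; denote the common Lie algebra by $\p$. Proposition~\ref{prop:section} applied to both families (whose hypotheses are built into Definition~\ref{def:top_ample}, and which for $\cC_{g,n/k}\to\M_{g,n/k}$ hold by the standard facts about the universal curve) yields a commutative diagram of Lie algebras with exact rows
\begin{equation*}
\xymatrix{
0 \ar[r] & \p \ar@{=}[d] \ar[r] & \u_C \ar[d]^{\phihat_\ast} \ar[r] & \u_T \ar[d]^{\phi_\ast} \ar[r] & 0 \cr
0 \ar[r] & \p \ar[r] & \u_{\cC_{g,n/k}} \ar[r] & \u_{g,n} \ar[r] & 0.
}
\end{equation*}

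Next I would apply $\Gr^W_r$ for $r=-1,-2$. By Proposition~\ref{prop:weight}, the functor $\Gr^W_\dot$ is exact on the category of $\cG$-modules, so one obtains a commutative diagram of $\GSp(H)$-modules with exact rows
\begin{equation*}
\xymatrix{
0 \ar[r] & \Gr^W_r\p \ar@{=}[d] \ar[r] & \Gr^W_r\u_C \ar[d] \ar[r] & \Gr^W_r\u_T \ar[d] \ar[r] & 0 \cr
0 \ar[r] & \Gr^W_r\p \ar[r] & \Gr^W_r\u_{\cC_{g,n/k}} \ar[r] & \Gr^W_r\u_{g,n} \ar[r] & 0.
}
\end{equation*}
The topological ampleness hypothesis gives that $\phi_\ast:\u_T/W_{-3}\to\u_{g,n}/W_{-3}$ is an isomorphism, which by exactness of $\Gr^W_\dot$ implies $\Gr^W_r\phi_\ast$ is an isomorphism for $r=-1,-2$. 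The five lemma then shows that $\Gr^W_r\phihat_\ast$ is an isomorphism for $r=-1,-2$. A final application of the five lemma to the short exact sequence
\begin{equation*}
0 \to \Gr^W_{-2}\u \to \u/W_{-3} \to \Gr^W_{-1}\u \to 0
\end{equation*}
gives that $\phihat_\ast:\u_C/W_{-3}\to\u_{\cC_{g,n/k}}/W_{-3}$ is an isomorphism.

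The commutativity of the square in the statement is then immediate from the functoriality of weighted completion applied to the defining square of $\phihat$. I do not anticipate a serious obstacle: the only points requiring care are verifying the hypotheses of Proposition~\ref{prop:section} for both families (Zariski density of the monodromy and triviality of the center of $\p$, both of which are automatic in this setting since $g\ge 3$), and ensuring that the identification of the fiber Lie algebras $\p$ on the two sides is compatible with the horizontal maps in the exact sequences, which follows from the naturality of the continuous unipotent completion under base change.
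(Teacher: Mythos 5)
Your proposal is correct and follows essentially the same route as the paper: both proofs reduce to the commutative diagram with exact rows
$0\to\p\to\u_C\to\u_T\to 0$ and $0\to\p\to\u_{\cC_{g,n}}\to\u_{g,n}\to 0$ furnished by Proposition~\ref{prop:section}, with the left vertical map an identity and the right one an isomorphism modulo $W_{-3}$; the paper simply states the conclusion as ``immediate,'' whereas you spell out the intermediate $\Gr^W_r$ and five-lemma steps.
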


\begin{proof}
This an immediate consequence of the commutativity of the diagram
$$
\xymatrix{
0 \ar[r] & \p \ar[r]\ar@{=}[d] & \u_C \ar[r]\ar[d] & \u_T \ar[d]^\cong_\phi
\ar[r] & 0 \cr
0 \ar[r] & \p \ar[r] & \u_{\cC_{g,n}} \ar[r] & \u_{g,n} \ar[r] & 0
}
$$
whose rows are exact by Proposition~\ref{prop:section}.
\end{proof}

The homomorphism $\phi_\ast : \u_T \to \u_{g,n}$ induces an isomorphism
$\phi_\ast : \d(\u_T) \to \d_{g,n}$. The next result is a direct consequence of
Propositions~\ref{prop:d_invariance} and \ref{prop:ample_curve}. The proof of
the last assertion is almost identical to that of Corollary~\ref{cor:d_section}
and is left to the reader.

\begin{corollary}
\label{cor:d_curve}
If $C/T$ is topologically ample over $k$ of type $(g,n)$, then there is a
$\GSp(H)$-equivariant isomorphism $\d(\u_C) \cong \d_{g,n+1}$
such that the diagram
$$
\xymatrix{
\d(\u_C) \ar[d]\ar[r]^\simeq & \d_{g,n+1}\ar[d] \cr
\d(\u_T) \ar[r]^\simeq & \d_{g,n}
}
$$
commutes. Each section $x\in C(T)$ induces a $\GSp(H)$-invariant section of
$\d(\u_C) \to \d(\u_T)$ which preserves the integral lattice on $\Gr^W_{-1}$.
\end{corollary}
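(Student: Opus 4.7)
The proof has two independent parts, both of which follow by combining the two cited propositions with the formalism of Section~\ref{sec:la_d}.

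For the isomorphism and the commutative square, I would begin by observing that the functor $\d$ depends only on the quotient $W_{-1}\u/W_{-3}$ of a Lie algebra $\u$, together with the $\GSp(H)$-action. Proposition~\ref{prop:ample_curve} provides a commutative square of isomorphisms $\phihat_\ast : \u_C/W_{-3} \to \u_{\cC_{g,n/k}}/W_{-3}$ and $\phi_\ast : \u_T/W_{-3} \to \u_{g,n}/W_{-3}$, all in the category of $\GSp(H)$-modules. Applying $\d$ yields a commutative square whose horizontal arrows are $\GSp(H)$-equivariant isomorphisms $\d(\u_C) \cong \d(\u_{\cC_{g,n/k}})$ and $\d(\u_T) \cong \d(\u_{g,n}) \cong \d_{g,n}$. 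Composing with the isomorphism $\d(\u_{\cC_{g,n/k}}) \cong \d_{g,n+1}$ furnished by Proposition~\ref{prop:d_invariance}, and with $\d(\u_{g,n}) \cong \d_{g,n}$ given by (\ref{eqn:d_arith}), produces the desired square.

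For the section statement, I would argue exactly as in the proof of Corollary~\ref{cor:d_section}. Proposition~\ref{prop:section} applied to the family $C \to T$ shows that each $x\in C(T)$ induces a section $\sigma_x$ of $\cG_C \to \cG_T$, well defined up to conjugation by an element of the prounipotent kernel $\cP := \ker(\cG_C \to \cG_T)$, whose Lie algebra $\p$ satisfies $\p = W_{-1}\p$. Consequently, the induced Lie algebra section $d\sigma_x : \g_T \to \g_C$ is well defined up to addition of a map of the form $\ad_u\circ d\sigma_x$ with $u\in\p$. Both indeterminacies strictly lower the weight, so they vanish after passing to $\Gr^W_\dot$ on the quotient by $W_{-3}$. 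Moreover, the induced section is $\GSp(H)$-equivariant because the whole weighted completion machinery is $\GSp(H)$-equivariant, and $\d(\u_C) \to \d(\u_T)$ is obtained from the resulting map by quotienting out the unique $\GSp(H)$-invariant complement of the $\Lambda^2_0 H$-isotypical component.

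Finally, to check the integral lattice condition on $\Gr^W_{-1}$, the only non-trivial case is $g=3$, where topological ampleness explicitly forces the image of the kernel of $\pi_1(T,\etabar) \to \GSp(H_{\Q_2})$ in $\Gr^W_{-1}\u_T$ to coincide with $\Lambda^3_n H_{\Z_2}$, and similarly for $\cC_{g,n/k}$. Because the section $\sigma_x$ is obtained from an honest homomorphism $\pi_1(T,\etabar) \to \pi_1(C,\etabar)$ lifting the identity on $\pi_1(T,\etabar)$, its restriction to these Torelli-type kernels lands in the corresponding kernel for $C$, hence respects the two lattices. I do not expect serious obstacles in any of these steps: the bulk of the work has already been done in Propositions~\ref{prop:d_invariance}, \ref{prop:ample_curve}, and \ref{prop:section}, and the present corollary is a packaging of these.
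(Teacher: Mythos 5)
Your proof matches the paper's own: the paper states the result as a direct consequence of Propositions~\ref{prop:d_invariance} and \ref{prop:ample_curve}, with the section claim handled exactly as in Corollary~\ref{cor:d_section}, and you have unpacked precisely those steps. Your expansion of the $g=3$ lattice check (pushing the Torelli kernel of $\pi_1(T)$ through $\sigma_x$ into that of $\pi_1(C)$, and identifying the image with $\Lambda^3_{n+1}H_{\Z_2}$ via $\phihat_\ast$) is the argument the paper leaves to the reader, and is sound — though you could add that for $g\ge 4$ the integrality is automatic from Proposition~\ref{prop:d_sections}, since the only $\GSp(H)$-invariant sections of $\e_n$ are the tautological ones and these visibly preserve the lattice.
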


\section{The Class of a Rational Point}
\label{sec:kappa}

In this section we associate a cohomology class $\kappa_x \in H^1(T,\H)$ to a
section $x \in C(T)$ of a family of smooth projective curves $C \to T$. In the
universal case --- viz., the universal curve over $\M_{g,1}$ --- this class is
easily expressed in terms of the $\GSp(H)$-structure of $\Gr^W_{-1}\u_{g,1}$.

Suppose that $T$ is a smooth, geometrically connected $k$-scheme and that
$f:X\to T$ is a family of smooth projective varieties of relative dimension $d$
over $T$.  The polarization gives an isomorphism $R^1 f_\ast\Ql(1)\cong
R^{2d-1}f_\ast\Ql(d)$. By well-known arguments (cf.\ \cite[\S
4]{hain-matsumoto:cycles}), each relative algebraic cycle $\xi$ in $X$ of
dimension $0$ over $T$ that is homologically trivial on each geometric fiber of
$f$, determines a well defined class
$$
c_\xi \in \Het^1(T,R^{2d-1}f_\ast\Ql(d)) \cong \Het^1(T,R^1 f_\ast\Ql(1)).
$$
In particular, if $x_1,\dots,x_n$ are sections of a family $C \to T$ of smooth
projective curves of genus $g\ge 3$ and $d_1,\dots,d_n$ are integers which sum
to zero, then we have the class of the relative divisor $\xi = \sum d_jx_j$.

Denote the relative canonical divisor of $C/T$ by $\w_{C/T}$. If $x\in C(T)$,
then one has the algebraic 0-cycle class $(2g-2)x - \w_{C/T}$, which is
homologically trivial on each geometric fiber. Its class
$$
\kappa_x \in \Het^1(T,\H_\Ql)
$$
is well defined. In particular, one has the class
$$
\kappa \in \Het^1(\M_{g,1/k},\H_\Ql)
$$
of the tautological section of the universal curve over $\M_{g,1/k}$. It is
universal in the sense that each $x\in C(T)$ is classified by a $k$-morphism
$\phi : T \to \M_{g,1/k}$ and $\kappa_x = \phi^\ast \kappa$. Denote the class of
the $j$th tautological point of the universal curve over $\M_{g,n/k}$ by
$\kappa_j$.

\begin{proposition}
If $g\ge 3$, $n\ge 0$, and $m\ge 1$, then for all fields of $k$ characteristic
zero
$$
\Het^1(\M_{g,n/k}[m],\H_\Ql)
= \Ql \kappa_1 \oplus \Ql \kappa_2 \oplus \dots \oplus \Ql \kappa_n
$$
\end{proposition}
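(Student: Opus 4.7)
The plan is to reduce the computation to a $\GSp(H)$-equivariant hom calculation on $\Gr^W_{-1}\u_{g,n}$ via the weighted completion dictionary, and then to exhibit the $\kappa_j$ as a natural basis of the resulting space. The étale cohomology of the smooth DM stack $\M_{g,n/k}[m]$ with coefficients in the lisse sheaf $\H_\Ql$ coincides with continuous cohomology of its étale fundamental group. Applying Proposition~\ref{prop:homology} and Remark~\ref{rem:invariants} to the weighted completion of $\pi_1(\M_{g,n/k}[m],\etabar)$ (under the standing assumption that $\chi_\ell$ has infinite image), and invoking Proposition~\ref{prop:level-indep} to remove dependence on the level $m$ in the range $g\ge 3$, one obtains
$$
\Het^1(\M_{g,n/k}[m],\H_\Ql) \;\cong\; \Hom_{\GSp(H)}\bigl(H_1(\u_{g,n}),H\bigr).
$$

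Since $H$ is an irreducible $\GSp(H)$-module of weight $-1$, every $\GSp(H)$-equivariant map into it from a weight-filtered module factors through $\Gr^W_{-1}$. Lemma~\ref{lem:ses} gives $\Gr^W_{-1}H_1(\u_{g,n}) = \Gr^W_{-1}\u_{g,n}$, while Proposition~\ref{prop:geom2arith} combined with Theorem~\ref{thm:presentation} identifies this $\GSp(H)$-equivariantly with
$$
\Lambda^3_n H \;\cong\; \Lambda^3_0 H \oplus H_1 \oplus \cdots \oplus H_n.
$$
The representations $\Lambda^3_0 H = H_{[1^3]}(-1)$ and $H$ are non-isomorphic irreducible $\GSp(H)$-modules, so Schur's lemma yields $\Hom_{\GSp(H)}(\Lambda^3_n H,H) \cong \Ql^n$, with basis the $n$ projections onto the summands $H_j$.

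It remains to identify the $\kappa_j$ as this basis. By naturality of the construction of $\kappa_x$ under pullback, $\kappa_j = f_j^\ast\kappa$, where $f_j:\M_{g,n/k}[m]\to\M_{g,1/k}$ is the $j$th forgetful morphism and $\kappa\in\Het^1(\M_{g,1/k},\H_\Ql)$ is the universal class. Under the dictionary above, $f_j^\ast$ is dual to $(f_j)_\ast:\Gr^W_{-1}\u_{g,n}\to\Gr^W_{-1}\u_{g,1}$, which in the decompositions above reads off the summand $H_j$; hence $\kappa_j$ corresponds, up to a fixed scalar, to the projection onto $H_j$, and so the $\kappa_j$ form the asserted basis \emph{provided} $\kappa$ is nonzero. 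The main obstacle is confirming this nonvanishing. If $\kappa = 0$, then the relative $0$-cycle $(2g-2)x - \w_{C/\M_{g,1}}$ would be cohomologically trivial, which on the generic pointed curve $(C,x)$ would force $(2g-2)x = \w_C$ in $\Pic(C)$, i.e., the marked point to be a Weierstrass point. Since Weierstrass points form a proper closed subvariety of each fibre of $\M_{g,1}\to\M_g$, this is impossible; equivalently, restricting $\kappa$ to a geometric fibre $C$ realises the Abel--Jacobi morphism $C\to\Jac(C)$, whose class in $H^1(C,H^1(C,\Ql(1)))$ is nonzero for $g\ge 1$.
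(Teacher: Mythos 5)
Your route differs from the paper's, which simply cites \cite[Thm.~6.9]{hain-matsumoto:cycles} for $n=1$ and reduces the general case to it via naturality and \cite[Prop.~5.2]{hain:msri}. You instead reconstruct the computation inside the paper's own machinery: Proposition~\ref{prop:homology} converts $\Het^1$ into $\Hom_{\GSp(H)}(\Gr^W_{-1}\u_{g,n},H)$, Theorem~\ref{thm:presentation} and Proposition~\ref{prop:geom2arith} identify $\Gr^W_{-1}\u_{g,n}\cong\Lambda^3_n H$, and Schur's lemma gives $\Ql^n$. That is a legitimate alternative and is close in spirit to what the cited references actually do; the nonvanishing argument for $\kappa$ via restriction to a geometric fibre of $\M_{g,1}\to\M_g$ and the Abel--Jacobi class is correct. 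The naturality step $\kappa_j=f_j^\ast\kappa$ with $(f_j)_\ast$ equal to the $j$th coordinate projection $\Lambda^3_n H\to\Lambda^3_1 H$ is also fine, and Proposition~\ref{prop:kappa_projn} in the paper (citing \cite[Prop.~6.5]{hain-matsumoto:cycles}) confirms the scalar is $2h$.

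There are two defects. The substantive one: the proposition is asserted ``for all fields $k$ of characteristic zero,'' but your argument requires $\chi_\ell:G_k\to\Zlx$ to have infinite image so that the $\GSp(H)$-weighted completion and Proposition~\ref{prop:homology} apply. You wave this away as a ``standing assumption,'' but it fails exactly when $k$ is large (e.g.\ $k=\C$ or $k=\kbar$), which are cases the statement must cover. To close the gap, either (a) pass to $\kbar$ via the Hochschild--Serre spectral sequence --- the edge map $\Het^1(\M_{g,n/k}[m],\H)\to\Het^1(\M_{g,n/\kbar}[m],\H)$ is injective since $H^0(\M_{g,n/\kbar}[m],\H)=H^{\Sp(H)}=0$, and the algebraic classes $\kappa_j$ span the image because they are defined over $k$ --- and then compute over $\kbar$ using the $\Sp(H)$-\emph{relative} completion, where Proposition~\ref{prop:geom2arith} and Theorem~\ref{thm:presentation} still give $\Gr^W_{-1}\u_{g,n}^\geom\cong\Lambda^3_n H$; or (b) spread out to a finitely generated subfield and base change. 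Either way, the reduction is not automatic and should be stated.

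The minor defect: the sentence ``if $\kappa=0$, the cycle $(2g-2)x-\w_{C/\M_{g,1}}$ would be cohomologically trivial, which ... would force $(2g-2)x=\w_C$ in $\Pic(C)$'' is a non sequitur. Vanishing of the $\ell$-adic Abel--Jacobi class does not force the cycle to vanish in $\Pic$; as Corollary~\ref{cor:kim} in the paper shows, the kernel of the Abel--Jacobi map is the $\ell^\infty$-divisible subgroup, so one would only conclude the cycle is divisible, not zero. The parenthetical alternative you give --- restricting $\kappa$ to a fibre $C$ of $\M_{g,1}\to\M_g$ and observing that the resulting class is $(2g-2)$ times the identity in $H^1(C)\otimes H_1(C)\cong\operatorname{End}(H_1(C))$ --- is the correct argument, so you should lead with that and drop the Weierstrass-point reasoning.
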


\begin{proof}
When $n=1$, this follows directly form \cite[Thm.~6.9]{hain-matsumoto:cycles}.
The general case follows easily from this, naturality, and the computation
\cite[Prop.~5.2]{hain:msri}.
\end{proof}

If $\pi: A \to T$ is a family of polarized abelian varieties over a smooth
$k$-variety $T$, then the Abel-Jacobi mapping induces a homomorphism
$$
A(T) \to \Het^1(T,R^{2g-1}\pi_\ast\Ql(g)) \cong \Het^1(T,R^1\pi_\ast\Ql(1)).
$$

A direct proof of Corollary~\ref{cor:kim} below was communicated to me by
Minhyong Kim; the current formulation, using the lemma, was contributed by the
referee.

\begin{lemma}
If $\pi : A \to T$ is a family of polarized abelian varieties over a noetherian
scheme $T$, then the kernel of the $\ell$-adic Abel-Jacobi map
$$
A(T) \to \Het^1(T,R^1\pi_\ast\Zl(1))
$$
is the subgroup $\bigcap_n \ell^n A(T)$ of $\ell^\infty$-divisible points.
\end{lemma}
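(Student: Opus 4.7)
The plan is to realize the $\ell$-adic Abel-Jacobi map as the limit over $n$ of boundary maps from the Kummer sequences for multiplication by $\ell^n$ on $A$. For each $n\ge 1$, one has the short exact sequence
\begin{equation*}
0 \to A[\ell^n] \to A \xrightarrow{[\ell^n]} A \to 0
\end{equation*}
of sheaves on $T$ (in the étale topology if $\ell$ is invertible on $T$, in the flat topology in general). Its long exact sequence yields a boundary homomorphism $\delta_n : A(T) \to H^1(T, A[\ell^n])$ whose kernel is visibly $\ell^n A(T)$.

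The polarization identifies the Tate module $T_\ell A = \varprojlim_n A[\ell^n]$, under transition maps given by multiplication by $\ell$, with $R^1\pi_\ast\Zl(1)$ as a lisse $\Zl$-sheaf on $T$. Since $T$ is noetherian, Jannsen's formalism of continuous étale cohomology assembles the boundary maps $\delta_n$ into a single homomorphism
\begin{equation*}
\phi : A(T) \to \Het^1(T, R^1\pi_\ast\Zl(1))
\end{equation*}
which, by construction, projects to $\delta_n$ at each finite level. The assertion then follows immediately: $a \in \ker\phi$ if and only if $\delta_n(a) = 0$ for every $n$, equivalently $a \in \ell^n A(T)$ for every $n$, i.e., $a \in \bigcap_n \ell^n A(T)$.

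The main technical point is the second step — constructing $\phi$ and verifying its compatibility with the $\delta_n$. This follows from naturality of the Kummer boundary maps applied to the inverse system $\{A[\ell^n]\}_n$, whose transition maps coincide with the multiplication-by-$\ell$ maps producing the boundaries. Crucially, one only needs $\phi$ to factor through each finite level as $\delta_n$, so no vanishing of a $\varprojlim{}^1$ obstruction is required.
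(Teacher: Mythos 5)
Your argument correctly constructs $\phi$ via Jannsen's continuous cohomology and shows $p_n\circ\phi=\delta_n$, which gives the inclusion $\ker\phi\subseteq\bigcap_n\ker\delta_n=\bigcap_n\ell^nA(T)$. But the final claim---``no vanishing of a $\varprojlim^1$ obstruction is required''---is exactly where the gap lies, and the reverse inclusion does not follow from the factorization alone. From $\delta_n(a)=0$ for all $n$ you can only conclude that $\phi(a)$ dies in $\varprojlim_n\Het^1(T,A[\ell^n])$; in Jannsen's theory there is a short exact sequence
$$
0\to{\varprojlim_n}^1\,H^0(T,A[\ell^n])\to \Het^1(T,\varprojlim_n A[\ell^n])\to\varprojlim_n\Het^1(T,A[\ell^n])\to 0,
$$
so $\phi(a)$ a priori lands in the $\varprojlim^1$ subgroup without being zero. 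Equivalently, saying $a\in\bigcap_n\ell^nA(T)$ gives you for each $n$ some $c_n$ with $\ell^nc_n=a$, but $\phi(a)=0$ requires a \emph{compatible} tower $\ell c_{n+1}=c_n$, and the obstruction to splicing the independently chosen $c_n$ into a compatible tower is controlled by $\varprojlim^1$ of the torsion groups $A(T)[\ell^n]=H^0(T,A[\ell^n])$.

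This is precisely why the noetherian hypothesis on $T$ appears in the statement: the paper invokes it, via a finiteness theorem from SGA, to conclude that $H^0(T,A[\ell^n])$ is finite for each $n$, hence that the inverse system is Mittag-Leffler and $\varprojlim^1 H^0(T,A[\ell^n])=0$. Your proof never uses noetherianness to any effect beyond invoking Jannsen's framework, which is a sign something has been lost. Restoring the $\varprojlim^1$ vanishing step, with the SGA finiteness input, closes the gap and recovers the paper's argument.
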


\begin{proof}
The connecting homomorphism of the long exact cohomology sequence associated to
the exact sequence of \'etale sheaves
$$
0 \to A[\ell^n] \to A \overset{\ell^n}{\to} A \to 0
$$
induces an injection $A(T)/\ell^n A(T) \hookrightarrow \Het^1(T,A[\ell^n])$.
Since $T$ is noetherian, \cite[Thm~1.1, p.~233]{sga} implies that
$H^0(T,A[\ell^n])$ is finite for each $n\ge 1$. This implies that
$\varprojlim^1 H^0(T,A[\ell^n])$ vanishes and that the natural surjection
$$
\Het^1(T,\varprojlim_n A[\ell^n]) \to \varprojlim_n \Het^1(T, A[\ell^n])
$$
is an isomorphism. The result now follows as the Weil pairing induces an
isomorphism $R^1\pi_\ast\Zl(1) \to \varprojlim_n A[\ell^n]$.
\end{proof}

\begin{corollary}
\label{cor:kim}
If the group $A(T)$ of sections of $\pi:A\to T$ is finitely generated, then the
kernel of
$$
A(T) \to \Het^1(T,R^1\pi_\ast\Ql(1))
$$
is finite for all $\ell$.
\end{corollary}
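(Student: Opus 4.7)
The plan is to deduce the corollary from the preceding lemma by a standard argument that tensors with $\Ql$ over $\Zl$. The lemma already identifies the kernel of the integral Abel-Jacobi map
\[
\alpha_\Zl : A(T) \to \Het^1(T, R^1\pi_\ast\Zl(1))
\]
as $K_0 := \bigcap_n \ell^n A(T)$, the subgroup of $\ell^\infty$-divisible points. First I would check that under the hypothesis that $A(T)$ is finitely generated, $K_0$ is finite. Writing $A(T) \cong \Z^r \oplus F$ with $F$ a finite abelian group, one has $\bigcap_n \ell^n \Z^r = 0$, so $K_0 \cong \bigcap_n \ell^n F$, which is the prime-to-$\ell$ part of $F$, hence finite.

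Next I would observe that the map of coefficient sheaves $R^1\pi_\ast\Zl(1) \to R^1\pi_\ast\Ql(1)$ factors through $R^1\pi_\ast\Zl(1) \otimes_{\Zl} \Ql$ and, by the definition of $\Ql$-adic \'etale cohomology as $\Het^\dot(T, -)\otimes_{\Zl}\Ql$, the induced map
\[
\Het^1(T, R^1\pi_\ast\Zl(1)) \to \Het^1(T, R^1\pi_\ast\Ql(1))
\]
is simply the canonical map from a $\Zl$-module $M := \Het^1(T, R^1\pi_\ast\Zl(1))$ to $M \otimes_{\Zl} \Ql$. Its kernel is precisely the $\Zl$-torsion subgroup $M_{\mathrm{tors}}$.

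Let $K$ denote the kernel of the composite $\alpha_\Ql : A(T) \to \Het^1(T, R^1\pi_\ast\Ql(1))$, so that $K$ is the preimage in $A(T)$ of $M_{\mathrm{tors}}$ under $\alpha_\Zl$. Denote by $N$ the image $\alpha_\Zl(A(T)) \subseteq M$. Since $A(T)$ is finitely generated, so is its quotient $N$, so the finitely generated $\Zl$-submodule $N \cap M_{\mathrm{tors}} = N_{\mathrm{tors}}$ is finite. Putting this together gives a short exact sequence
\[
0 \to K_0 \to K \to N_{\mathrm{tors}} \to 0,
\]
in which both end terms are finite. I would conclude that $K$ is finite, which is the claim. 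The main (and only) subtlety is the bookkeeping in the last step; everything else is formal once the lemma is in hand.
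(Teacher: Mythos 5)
Your proof is correct and is essentially the argument the paper leaves implicit: the corollary is stated without proof immediately after the lemma, and the intended deduction is precisely the two-step reduction you carry out (identify $\ker\alpha_{\Zl}$ via the lemma, note it is finite for a finitely generated $A(T)$, then observe that passing to $\Ql$-coefficients only enlarges the kernel by the torsion of the finitely generated image, yielding the exact sequence $0\to K_0\to K\to N_{\mathrm{tors}}\to 0$ with finite ends). The bookkeeping you flag as the only subtlety is handled correctly, including the point that $\Zl$-torsion in $M$ is exactly $\ell$-power torsion so that $N\cap M_{\mathrm{tors}}=N_{\mathrm{tors}}$.
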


The assumption that $A(T)$ be finitely generated is necessary, as can be seen by
considering the case where $T$ is the spectrum of an algebraically closed field.

We can apply this to the family of jacobians $\pi:\Jac_{C/T} \to T$ associated
to the family of curves $C\to T$.

\begin{corollary}
\label{cor:torsion}
Suppose that $x$ and $y$ are sections of $C$ over $T$. If $\Jac_{C/T}(T)$ is
finitely generated, then $\kappa_x = \kappa_y$ implies that $x-y$ is torsion in
$\Jac_{C/T}(T)$.
\end{corollary}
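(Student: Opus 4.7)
The plan is to reduce the statement directly to Corollary~\ref{cor:kim} applied to the relative jacobian $\pi : \Jac_{C/T} \to T$. First I would unwind the definition of $\kappa_x$. By construction, $\kappa_x$ is the $\ell$-adic cycle class of the relative $0$-cycle $(2g-2)x - \w_{C/T}$ in $\Het^1(T,R^1f_\ast \Ql(1))$, obtained by composing Abel--Jacobi with the polarization isomorphism $R^1 f_\ast \Ql(1) \cong R^{2g-1} f_\ast \Ql(g)$. Therefore
$$
\kappa_x - \kappa_y = \text{AJ}\bigl((2g-2)(x-y)\bigr) \in \Het^1\bigl(T, R^1 \pi_\ast \Ql(1)\bigr),
$$
where I am identifying the class of the divisor $(2g-2)(x-y)$ on $C/T$ with the image of the section $(2g-2)(x-y)\in \Jac_{C/T}(T)$ under the $\ell$-adic Abel--Jacobi map.

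Next, assuming $\kappa_x = \kappa_y$, I conclude that $(2g-2)(x-y)$ lies in the kernel of the Abel--Jacobi map
$$
\Jac_{C/T}(T) \to \Het^1(T, R^1\pi_\ast\Ql(1)).
$$
Since $\Jac_{C/T}(T)$ is assumed to be finitely generated, Corollary~\ref{cor:kim} applies and this kernel is finite. Hence $(2g-2)(x-y)$ is torsion, and since $2g-2\neq 0$ and $\Jac_{C/T}(T)$ is finitely generated (hence its torsion subgroup is finite), $x-y$ itself is torsion in $\Jac_{C/T}(T)$.

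There is no serious obstacle: the only small subtlety is the passage from the cycle-theoretic definition of $\kappa_x$ in $\Het^1(T,\H_\Ql)$ to the Abel--Jacobi image of $(x-y)$ in $\Het^1(T, R^1\pi_\ast\Ql(1))$, which is a standard identification via the autoduality of the jacobian (the polarization used to identify $R^1 f_\ast \Ql(1)$ with $R^{2g-1} f_\ast \Ql(g)$ is the same one giving $R^1\pi_\ast\Ql(1)\cong R^1 f_\ast\Ql(1)$). Once this identification is in place, the result is immediate from Corollary~\ref{cor:kim}.
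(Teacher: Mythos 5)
Your proof is correct and takes essentially the same route as the paper: both identify $\kappa_x$ with the Abel--Jacobi image of $(2g-2)x - \w_{C/T}$ under $\Jac_{C/T}(T)\to \Het^1(T,\H_\Ql)$ via the natural isomorphism $R^1\pi_\ast\Zl(1)\cong \H_\Zl$, then apply Corollary~\ref{cor:kim} to conclude that $(2g-2)(x-y)$ lies in a finite kernel, hence $x-y$ is torsion. The paper's proof is terser (it leaves implicit the final step that $(2g-2)(x-y)$ torsion forces $x-y$ torsion), but the underlying argument is identical.
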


\begin{proof}
Since $R^1 \pi_\ast \Zl(1)$ is naturally isomorphic to $\H_\Zl$, the diagram
$$
\xymatrix{
C(T) \ar[dr]\ar[d] \cr
\Jac_{C/T}(T) \ar[r] & \Het^1(T,\H_\Ql)
}
$$
commutes, where the left-hand vertical map takes $x$ to the class of
$(2g-2)x-\w_{C/T}$ whose image under the horizontal map is $\kappa_x$. Since the
bottom map is injective mod torsion, $\kappa_x = \kappa_y$ implies that the
divisor $(2g-2)(x-y)$ is torsion.
\end{proof}

We conclude this section by relating the class $\kappa_x$ of a section of $C\to
T$ to the structure of $H_1(\u_T)$. Proposition~\ref{prop:homology} implies that
there is an isomorphism
$$
\Het^1(T,\H) \cong \Hom_{\GSp(H)}(H_1(\u_T),\H).
$$

The $\GSp(H)$-equivariant projection $h:\Lambda^3_1 H \to H$ is the twist of the
map $\Lambda^3 H \to H(1)$ defined by $x\wedge y\wedge z \mapsto \theta(x,y)z +
\theta(y,z)x + \theta(z,x)y$.

The following assertion follows from \cite[Prop~6.5]{hain-matsumoto:cycles} and
its proof.

\begin{proposition}
\label{prop:kappa_projn}
If $g\ge 3$, the $\GSp(H)$-invariant homomorphism
$$
H_1(\u_{g,1}) \to \Gr^W_{-1} \u_{g,1} \cong \Lambda^3_1 H \overset{2h}{\to} H
$$
corresponds to the universal class $\kappa$ under the isomorphism
$$
\Het^1(\M_{g,1/k},\H_\Ql) \cong \Hom_{\GSp(H)}(H_1(\u_{g,1}),H).
$$
\end{proposition}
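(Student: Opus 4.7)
The plan is to reduce the claim to a one-dimensional Schur's Lemma computation, and then to invoke \cite[Prop.~6.5]{hain-matsumoto:cycles} to pin down the scalar. By Proposition~\ref{prop:homology}, there is a natural identification
$$
\Het^1(\M_{g,1/k},\H_\Ql) \cong \Hom_{\GSp(H)}(H_1(\u_{g,1}),H).
$$
Theorem~\ref{thm:ug1} gives a $\GSp(H)$-equivariant isomorphism $H_1(\u_{g,1}) \cong \Gr^W_{-1}\u_{g,1} \cong \Lambda^3_1 H$, and Lemma~\ref{lem:split} supplies a canonical decomposition $\Lambda^3_1 H \cong H \oplus \Lambda^3_0 H$ of $\GSp(H)$-modules. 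Since $H$ and $\Lambda^3_0 H$ are distinct absolutely irreducible $\GSp(H)$-modules when $g\ge 3$, Schur's Lemma gives $\dim_\Ql \Hom_{\GSp(H)}(\Lambda^3_1 H, H) = 1$. The map $h$ is nonzero: on the $H$-summand of $\Lambda^3_1 H$ it acts by $(g-1)\id_H$, as recorded in the proof of Lemma~\ref{lem:split}. Therefore $\kappa$ corresponds to $\lambda \cdot h$ for a unique scalar $\lambda\in\Ql$, and the content of the proposition is the identity $\lambda = 2$.

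To pin down this scalar, I appeal to \cite[Prop.~6.5]{hain-matsumoto:cycles} and the calculation in its proof. That reference identifies the $\ell$-adic Abel--Jacobi class of a relative $0$-cycle on the universal curve in terms of the Johnson-style homomorphism description (\ref{eqn:johnson}) of $\Gr^W_{-1}\u_{g,1}$, using precisely the contraction $c: x\wedge y\wedge z \mapsto \theta(x,y)z + \theta(y,z)x + \theta(z,x)y$ whose twist defines $h$. Specialized to the tautological section and to the homologically trivial relative divisor $(2g-2)x - \w_{C/\M_{g,1}}$ that defines $\kappa$, the prefactor $2g-2 = 2(g-1)$ combines with the normalization $h|_H = (g-1)\id_H$ to give the factor of $2$ appearing in the statement.

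The main obstacle --- really the only substantive point --- is tracking these normalization constants. The Schur's Lemma reduction leaves only the scalar $\lambda$ to compute, and the required computation is already performed in \cite[Prop.~6.5]{hain-matsumoto:cycles}; once one matches the conventions for the symplectic form $\theta$, the Johnson homomorphism, and the Abel--Jacobi map used there, the scalar $2$ drops out and the proposition follows.
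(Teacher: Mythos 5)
Your proposal is correct and takes essentially the same route as the paper, which disposes of this statement by citing \cite[Prop.~6.5]{hain-matsumoto:cycles} without further argument. Your Schur's Lemma reduction, which exposes that the entire content of the proposition is the determination of a single scalar $\lambda$ in $\kappa = \lambda\cdot h$, is a helpful clarification, but you ultimately defer to the same cited reference to pin down $\lambda = 2$, just as the paper does.
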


Let $h_j : \Lambda^3_n H \to H$ be the $\GSp(H)$-invariant homomorphism
$$
\xymatrix{
\Lambda^3_n H \ar[r] & (\Lambda^3_1 H)^n \ar[r]^(.55){pr_j}
& \Lambda^3_1 H \ar[r]^(.55)h & H
} 
$$

\begin{corollary}
\label{cor:kappa}
When $g\ge 3$ and $n\ge 1$, the class $\kappa_j \in \Het^1(\M_{g,n/k},\H_\Ql)$
corresponds to the homomorphism $2h_j$ under the isomorphism
$$
\Het^1(\M_{g,n/k},\H_\Ql) \cong \Hom_{\GSp(H)}(H_1(\u_{g,n}),H).
$$
\end{corollary}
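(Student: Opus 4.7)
The strategy is to deduce Corollary~\ref{cor:kappa} from the $n=1$ case, which is Proposition~\ref{prop:kappa_projn}, by pulling back along the $j$-th forgetful morphism $p_j:\M_{g,n/k}\to\M_{g,1/k}$ sending $(C;x_1,\dots,x_n)$ to $(C;x_j)$. Both sides of the asserted identity are natural in an appropriate sense, so the result should follow from the $n=1$ case together with the observation that $p_j$ induces, on $\Gr^W_{-1}\u$, precisely the $j$-th projection $\Lambda^3_n H \to \Lambda^3_1 H$ built into the definition of $h_j$.

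The first step is to check that $\kappa_j = p_j^\ast\kappa$. The universal curve $\cC_{g,n/k}\to\M_{g,n/k}$ is the pullback of $\cC_{g,1/k}\to\M_{g,1/k}$ along $p_j$, and the $j$-th tautological section is the pullback of the tautological section in the $n=1$ case. Since the relative canonical divisor and the $\ell$-adic Abel--Jacobi map both commute with base change, the class $\kappa_j$ associated to $(2g-2)x_j - \omega_{\cC_{g,n/k}}$ is the pullback of the class $\kappa$ on $\M_{g,1/k}$.

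The second step is to translate this identity through the isomorphism of Proposition~\ref{prop:homology}. By the naturality of that proposition (applied to the morphism $p_j$, which sends weighted completion to weighted completion by functoriality), the pullback $p_j^\ast :\Het^1(\M_{g,1/k},\H_\Ql) \to \Het^1(\M_{g,n/k},\H_\Ql)$ corresponds to precomposition with the $\GSp(H)$-equivariant homomorphism $(p_j)_\ast : H_1(\u_{g,n}) \to H_1(\u_{g,1})$. Combining this with Proposition~\ref{prop:kappa_projn} identifies $\kappa_j$ with the composite
$$
H_1(\u_{g,n}) \xrightarrow{(p_j)_\ast} H_1(\u_{g,1}) \xrightarrow{2h} H.
$$

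The final step is to identify this composite with $2h_j$. By Theorem~\ref{thm:presentation}, $H_1(\u_{g,n})\cong\Gr^W_{-1}\u_{g,n}\cong \Lambda^3_n H$ for all $n\ge 0$, and these isomorphisms are natural with respect to morphisms of pointed moduli spaces. In particular, $(p_j)_\ast$ is identified with the $j$-th projection $\Lambda^3_n H \to \Lambda^3_1 H$ that figures in the definition of $h_j$; composing with $h$ then yields $h_j$ by definition. The only point that requires care is the compatibility of the isomorphisms in Theorem~\ref{thm:presentation} with $p_j$, but this follows from the construction of $\Lambda^3_n H$ as the fibre product in Paragraph~\ref{para:genus3} together with the $n=1$ Johnson identification recalled after Theorem~\ref{thm:ug1}. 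This completes the reduction and establishes the corollary.
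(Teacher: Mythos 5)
Your proof is correct and is essentially the argument the paper leaves implicit: the paper states Corollary~\ref{cor:kappa} without proof, evidently regarding it as an immediate consequence of Proposition~\ref{prop:kappa_projn} by naturality under the forgetful map $p_j:\M_{g,n/k}\to\M_{g,1/k}$. You have correctly filled in the three ingredients needed to make this precise: that $\kappa_j=p_j^\ast\kappa$ by the universality statement preceding the corollary, that $p_j^\ast$ dualizes to precomposition with $(p_j)_\ast$ on $H_1(\u)$ via the naturality of Proposition~\ref{prop:homology}, and that under the Johnson-type identifications of Theorem~\ref{thm:presentation} the map $(p_j)_\ast$ is the $j$-th projection $\Lambda^3_n H\to\Lambda^3_1 H$, so that post-composing with $2h$ gives $2h_j$ as defined.
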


Recall from the proof of Lemma~\ref{lem:split} that the composite $h\circ
(\_\wedge \thetadual) : H\to \Lambda^3_1 H \to H$ is $(g-1)\id_H$. It follows
that the projection $\Lambda^3_n H \to H$ that corresponds to the projection
$$
\Lambda^3_n H = \Lambda^3_0 H \oplus H^n \to H
$$
onto the $j$th copy of $H$ is $h_j/(g-1)$. This projection corresponds to
$\kappa_j/(2g-2)$.

\begin{remark}
One of the issues in trying to prove the section conjecture is to determine
which classes in $\Pic^1 C$ are the classes of rational points. To provide a
context for the results of the next section, we mention that when $C$ is the
pullback of the universal curve to $\Spec k(\M_{g,n/k}[m])$
$$
(\Pic^0 C)\big(k(\M_{g,n/k}[m])\big) \cong
\bigoplus_{j=1}^n \Z(\kappa_j/\epsilon_m) \oplus
\bigoplus_{1\le j<j'\le n} \Z[x_j-x_{j'}] \oplus (\Z/m\Z)^{2g}
$$
where $\epsilon_m$ is $2$ when $m$ is even and $1$ when $m$ is
odd.\footnote{More precisely, when $m$ is even the relative canonical bundle has
$2^{2g}$ square roots (``theta characteristics''), any two of which differ by a
point of order 2. By $\kappa_j/2$ we mean the class of $(g-1)x_j - \alpha$ for a
choice of theta characteristic $\alpha$. This is well defined mod 2-torsion.}
This is proved in \cite[Thm.~12.3]{hain:msri} when $k = \C$; it implies the
result for all algebraically closed fields $k$ of characteristic zero. The case
for a general field $k$ of characteristic zero follows as all the generators are
represented by $\Q$-rational divisors.
\end{remark}

\section{Computation of $C(k(T))$}

Given a family of curves $C\to T$, one encounters the problem of distinguishing
the sections of $\Pic^1 C \to T$ that correspond to $k(T)$-rational points of
$C$ from those that do not. This distinction can be made when $C/T$ is
topologically ample.

\begin{lemma}
\label{lem:no_sect}
If $g\ge 3$ and $n\ge 1$, there is no $\GSp(H)$-equivariant Lie algebra
homomorphism
$$
\Gr^W_\dot \u_{g,n}^\geom/W_{-3} \to \Gr^W_\dot \u_{g,2}^\geom/W_{-3}
$$
that induces the map $(v;u_1,\dots,u_n) \mapsto (v;u_1,u_1)$ on $\Gr^W_{-1}$.
\end{lemma}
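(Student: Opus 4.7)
The plan is to argue by contradiction, examining the $\Ql(1)_{12}$ component of the putative homomorphism $F\colon \Gr^W_\dot\u_{g,n}^\geom/W_{-3} \to \Gr^W_\dot\u_{g,2}^\geom/W_{-3}$ applied to the bracket of two elements of $\Gr^W_{-1}$. The underlying intuition is that sending $u_1$ into both point-slots of the target forces a ``diagonal'' Weil-pairing contribution to the $\Ql(1)_{12}$-bracket that no $\GSp(H)$-equivariant map from the source can produce.

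First I would analyze the induced linear map $F_{-2}\colon \Gr^W_{-2}\u^\geom_{g,n}\to \Gr^W_{-2}\u^\geom_{g,2}$. By Theorem~\ref{thm:presentation} the source is $V_\tambo \oplus (\Lambda^2_0 H)^n \oplus \Ql(1)^{\binom{n}{2}}$ and the target is $V_\tambo \oplus (\Lambda^2_0 H)^2 \oplus \Ql(1)$; since $V_\tambo$, $\Lambda^2_0 H$ and $\Ql(1)$ are pairwise non-isomorphic irreducible $\GSp(H)$-modules, Schur's Lemma forces $q_{12}\circ F_{-2}$ to factor through the $\Ql(1)^{\binom{n}{2}}$ block. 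Hence $q_{12}\circ F_{-2} = \sum_{1\le i<j\le n}\gamma_{ij}\, q_{ij}$ for scalars $\gamma_{ij}\in \Ql$ (the empty sum when $n=1$).

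Second, I would take $x=(v';u'_1,\dots,u'_n)$ and $y=(v'';u''_1,\dots,u''_n)$ in $\Gr^W_{-1}\u^\geom_{g,n}$ and impose the identity $q_{12}(F[x,y]) = q_{12}[f(x),f(y)]$, where by hypothesis $f(x) = (v';u'_1,u'_1)$ and $f(y) = (v'';u''_1,u''_1)$. Using the formula $q_{ij}\circ\bracket = \psi + \theta_{ij}$ of Proposition~\ref{prop:bracket}, the left-hand side becomes
$$
\sum_{1\le i<j\le n}\gamma_{ij}\bigl(\psi(v'\wedge v'') + \theta(u'_i,u''_j) + \theta(u'_j,u''_i)\bigr),
$$
whereas the right-hand side is $\psi(v'\wedge v'') + 2\theta(u'_1,u''_1)$, since the $H_1\otimes H_2$ component of $f(x)\wedge f(y)$ equals $u'_1\otimes u''_1 - u''_1\otimes u'_1$, which is sent by $\theta$ to $2\theta(u'_1,u''_1)$ by skew-symmetry.

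The contradiction is then immediate: the left-hand sum involves only off-diagonal terms $\theta(u'_i,u''_j)$ with $i\neq j$, while the right-hand side contains the nonzero ``diagonal'' term $2\theta(u'_1,u''_1)$; in addition $\psi\neq 0$ by Proposition~\ref{prop:reps}, so no choice of scalars $\gamma_{ij}$ can equate the two expressions as functions of $(v',v'',u'_k,u''_l)$. The main point to be careful about is the sign convention in the projection $\Lambda^2\Gr^W_{-1}\u^\geom_{g,2}\to H_1\otimes H_2$ together with the skew-symmetry of $\theta$, which together produce the doubling $\theta(u'_1,u''_1)-\theta(u''_1,u'_1)=2\theta(u'_1,u''_1)$ that is the decisive obstruction.
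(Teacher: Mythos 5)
Your proof is correct, and it takes a somewhat different route from the paper's. The paper splits into cases: for $n=1$ it notes the source has no copy of $\Ql(1)$ in $\Gr^W_{-2}$ so nothing can map onto the $\Ql(1)$ in the target; for $n>1$ it observes that the image of $\Lambda^2\Lambda^3_0 H$ in $\bigoplus_{i<j}\Ql(1)_{ij}$ is the diagonal (forcing $\phi$ to be surjective on that diagonal), but then shows $\phi$ kills each $\Ql(1)_{ij}$ because $H_j\subset\ker\phi$ for $j>1$ and $\Ql(1)_{ij}$ lies in the image of $[H_i,H_j]$. Your approach avoids the case split entirely: you use Schur's Lemma to write $q_{12}\circ F_{-2} = \sum_{i<j}\gamma_{ij}q_{ij}$ (empty sum when $n=1$), expand both sides of $q_{12}(F[x,y])=q_{12}[f(x),f(y)]$ using Proposition~\ref{prop:bracket}, and observe that the right-hand side contains the "diagonal" term $2\theta(u_1',u_1'')$, corresponding to the basis element $\theta_1$ of Proposition~\ref{prop:projections}, which cannot appear in any linear combination of $\psi$ and the $\theta_{ij}$'s. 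The two arguments rest on the same computational input (Propositions~\ref{prop:projections} and \ref{prop:bracket}), but yours is more of a direct coefficient-matching argument while the paper's is a more conceptual image/kernel argument. Your version has the advantage of treating $n=1$ and $n>1$ uniformly; one small simplification you could make is that the nonvanishing of $\psi$ is not actually needed — setting $v'=v''=0$ already isolates the decisive $2\theta(u_1',u_1'')$ term and gives the contradiction.
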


\begin{proof}
Suppose that such a homomorphism $\phi : \Gr^W_\dot \u_{g,n}^\geom/W_{-3} \to
\Gr^W_\dot \u_{g,2}^\geom/W_{-3}$ exists. Denote the $ij$th copy ($1\le  i < j
\le n$) of $\Ql(1)$ in $\Gr^W_{-2}\u_{g,n}^\geom$ by $\Ql(1)_{ij}$. Note that
there is a unique copy of $\Ql(1)$ in $\Gr^W_{-2}\u_{g,2}^\geom$. Denote the
$j$th copy of $H$ in $\Gr^W_{-1}\u_{g,n}^\geom$ by $H_j$.
Proposition~\ref{prop:bracket} implies that
$$
\xymatrix{
\Lambda^2 \Lambda^3_0 H \ar[r]^(.42){\text{inclusion}} &
\Lambda^2 \Gr^W_{-1}\u_{g,2}^\geom \ar[r]^(.53)\bracket &
\Gr^W_{-2}\u_{g,2}^\geom \ar[r]^(.55){q_{12}} & \Ql(1)
}
$$ 
is surjective. When $n=1$, there is no copy of $\Ql(1)$ in $\Gr^W_{-2}\u_{g,n}$,
from which it follows that there is no such homomorphism $\phi$ when $n=1$. 

Now suppose that $n>1$. In this case Proposition~\ref{prop:bracket} implies
that the image of
\begin{equation}
\label{eqn:inv_bracket}
\xymatrix{
\Lambda^2 \Lambda^3_0 H \ar[r]^(.42){\text{inclusion}} &
\Lambda^2 \Gr^W_{-1}\u_{g,n}^\geom \ar[r]^(.53)\bracket &
\Gr^W_{-2}\u_{g,n}^\geom \ar[r]^(.46){\oplus q_{ij}} &
\bigoplus_{i<j}\Ql(1)_{ij}
}
\end{equation} 
is the diagonal copy of $\Ql(1)$. It follows that $\phi$ induces a surjective
homomorphism
$$
\big(\Gr^W_{-2}\u_{g,n}^\geom\big)^{\Sp(H)} \to
\big(\Gr^W_{-2}\u_{g,2}^\geom\big)^{\Sp(H)}.
$$
On the other hand, Proposition~\ref{prop:bracket} implies that, when $i<j$, the
component
$$
\xymatrix{
H_i\otimes H_j \ar[r]^(.42){\text{inclusion}} &
\Lambda^2 \Gr^W_{-1}\u_{g,n}^\geom \ar[r]^(.53)\bracket &
\Gr^W_{-2}\u_{g,n} \ar[r]^{q_{ij}} & \Ql(1)_{ij}
}
$$
of the bracket  is surjective. Since $j>1$, $H_j$ is contained in the kernel of
$\phi$. These two facts imply that the restriction of $\phi$ to $\Ql(1)_{ij}$ is
zero whenever $1\le i<j \le n$, which contradicts the non-triviality of
(\ref{eqn:inv_bracket}). It follows that there is no such homomorphism $\phi$
when $n>1$, which completes the proof.
\end{proof}

Topologically ample curves have the nice property that their $T$-rational
points are determined by their classes in $H^1(T,\H_\Ql)$.

\begin{proposition}
\label{prop:rigidity}
Suppose that $C/T$ is a topologically ample curve of type $(g,n)$ where $g\ge 3$
and $n\ge 1$. If $x\in C(k(T))$ and $\kappa_x=\kappa_j$, then $x$ is the $j$th
tautological point $x_j$.
\end{proposition}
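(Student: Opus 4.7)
The plan is to combine Proposition~\ref{prop:d_sections3} with the cohomological description of $\kappa_j$ in Corollary~\ref{cor:kappa} and the Abel-Jacobi rigidity of Corollary~\ref{cor:torsion}.

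First, by Remark~\ref{rem:stix}, the $k(T)$-point $x$ extends to a global section of $C\to T$. Corollary~\ref{cor:d_curve} then associates to $x$ a $\GSp(H_\Ql)$-invariant section $\bar\sigma_x$ of $\d(\u_C)\to\d(\u_T)$, which via the topologically ample identifications $\d(\u_T)\cong\d_{g,n}$ and $\d(\u_C)\cong\d_{g,n+1}$ becomes a section of $\e_n:\d_{g,n+1}\to\d_{g,n}$ respecting the integral lattice on $\Gr^W_{-1}$. Proposition~\ref{prop:d_sections3} then forces $\bar\sigma_x=s_i$ for a unique $i\in\{1,\dots,n\}$, where $s_i$ is the section induced by the $i$th tautological point.

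Next, I would pin down $i$ by computing $\kappa_x$. Using Proposition~\ref{prop:homology} and the isomorphism $H_1(\u_T)\cong\Lambda^3_n H$ afforded by topological ampleness, one identifies $H^1(T,\H_\Ql)\cong\Hom_{\GSp(H)}(\Lambda^3_n H,H)$. By Corollary~\ref{cor:kappa}, $\kappa_j$ corresponds under this isomorphism to $(2g-2)$ times projection onto the summand $H_j$. On the other hand, $\kappa_x$ is the pullback of the universal class $\kappa_0\in H^1(\cC_{g,n/k},\H_\Ql)$ along the classifying morphism $\phi_x=\phihat\circ x:T\to\cC_{g,n/k}$; by the naturality of $\kappa$ and of weighted completion, this pullback corresponds to $(2g-2)$ times the composition of $\bar\sigma_x|_{\Gr^W_{-1}}:\Lambda^3_n H\to\Lambda^3_{n+1}H$ with projection onto the $0$th copy of $H$. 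For $\bar\sigma_x=s_i$ this composite is $(2g-2)$ times projection onto $H_i$, so the hypothesis $\kappa_x=\kappa_j$ forces $i=j$.

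Having established $\bar\sigma_x=s_j=\bar\sigma_{x_j}$ and $\kappa_x=\kappa_{x_j}$, Corollary~\ref{cor:torsion}---whose hypothesis that $\Jac_{C/T}(T)$ is finitely generated is built into the definition of topological ampleness---yields that $x-x_j$ is torsion in $\Jac_{C/T}(T)$. To upgrade this to $x=x_j$, I would use the Abel-Jacobi embedding $C\hookrightarrow\Pic^1_{C/T}$ (valid since $g\ge 1$) and argue that a non-trivial torsion element of $\Jac_{C/T}(T)$ cannot be realized as a difference of two sections of $C$: such a realization would force the Abel-Jacobi image of the generic fiber $C_{\etabar}$ in $\Pic^1(C_{\etabar})$ to coincide with a non-trivial translate of itself, which the Zariski density of the geometric monodromy in $\Sp(H_\Ql)$ (i.e., the genericity of $C_{\etabar}$) excludes for $g\ge 3$. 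The main obstacle is this concluding rigidity step; Steps 1 and 2 are direct consequences of the structural results on $\d$-level sections together with the cohomology calculations of Section~\ref{sec:kappa}.
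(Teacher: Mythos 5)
Your concluding step is where the argument breaks down, and it is the only step that matters. Steps 1--4 are circular: the hypothesis already tells you $\kappa_x=\kappa_j=\kappa_{x_j}$, so the detour through $\d$-level sections and Proposition~\ref{prop:d_sections3} merely re-derives the hypothesis. Corollary~\ref{cor:torsion} then applies directly; everything before it can be deleted. The real content of the proposition is in upgrading ``$x-x_j$ is torsion'' to ``$x=x_j$,'' and this is precisely where your sketch fails.

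Your proposed rigidity argument asserts that if a nonzero torsion section $\delta$ satisfied $\delta=[x]-[x_j]$, this would ``force the Abel--Jacobi image of $C_{\etabar}$ to coincide with a non-trivial translate of itself.'' That inference is not correct: knowing that a single point $[x_j(\etabar)]$ is carried into the image of $C_{\etabar}$ by translation by $\delta(\etabar)$ does not say anything about the rest of the image. The difference variety $C_{\etabar}-C_{\etabar}$ is two-dimensional in the jacobian and contains plenty of torsion. More to the point, the monodromy obstruction you invoke is false for the families in question: when $T=\M_{g,n}[m]$ with $m\ge 2$, the $m$-torsion of the relative jacobian is trivialized by the level structure, so $\Jac_{C/T}(T)$ really does contain nonzero torsion (the Remark at the end of Section~\ref{sec:kappa} records $(\Z/m\Z)^{2g}$ explicitly). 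Zariski density of the monodromy in $\Sp(H_{\Ql})$ tells you nothing about fixed vectors in $\Sp(H_{\Z/N})$ for $N\mid m$. The definition of topological ampleness (Definition~\ref{def:top_ample}) requires only an isomorphism $\u_T/W_{-3}\to\u_{g,n}/W_{-3}$ together with finite generation of $\Jac_{C/T}(T)$; it does not, and cannot, rule out constant torsion sections.

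The paper takes an entirely different route here, and this route is essential. After reducing to $x\in C(T)$ and noting that a nonzero torsion $\delta$ would make $x$ and $x_j$ disjoint sections, one gets a classifying map $T\to\M_{g,2}$ and hence a $\GSp(H)$-equivariant graded Lie algebra homomorphism $\Gr^W_\dot\u_T\to\Gr^W_\dot\u_{g,2}$. The hypothesis $\kappa_x=\kappa_j$ pins down its effect on $\Gr^W_{-1}$ to be $(v;u_1,\dots,u_n)\mapsto(v;u_j,u_j)$, and Lemma~\ref{lem:no_sect} shows that no Lie algebra homomorphism with this $\Gr^W_{-1}$ exists: the obstruction lives in the $\Ql(1)$-isotypical piece of $\Gr^W_{-2}\u_{g,2}^\geom$ and is detected by the cocycle $\psi+\theta_{12}$ from Proposition~\ref{prop:bracket}. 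You would need to supply some argument of this type --- exploiting the bracket structure in weight $-2$, not just monodromy on torsion --- to close your proof.
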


\begin{proof}
We may assume that $j=1$. Corollary~\ref{cor:torsion} implies that $[x]-[x_1]$
is torsion in $(\Pic^0C)(k(T))$. Denote this torsion point by $\delta$. If
$\delta = 0$, then $x=x_1$ and we are done. By replacing $T$ by $T-Z$ if
necessary, where $Z$ is a subvariety of $T$ of codimension $\ge 2$, results of
Section~\ref{sec:points} imply that we may assume that $x\in C(T)$. If $\delta
\neq 0$, then the sections $x$ and $x_1$ of $C$ are disjoint, and there is a
$k$-morphism $T \to \M_{g,2}$ defined by $x \mapsto [C_x;x_1,x]$. This induces a
$\GSp(H)$-equivariant homomorphism $\phi : \Gr^W_\dot \u_T \to \Gr^W_\dot
\u_{g,2}$. Since $C/T$ is topologically ample, the marking induces an
identification $\u_T/W_{-3} \cong \u_{g,n}/W_{-3}$. Since $\kappa_x = \kappa_1$,
the induced mapping
$$
\xymatrix{
\Gr^W_{-1}\u_T \ar[r]^{\phi_\ast}\ar[d]^\cong &
\Gr^W_{-1}\u_{g,2} \ar[d]^\cong \cr
\Lambda^3_0 H \oplus H^n \ar[r]^\psi & \Lambda^3_0 \oplus H^2
}
$$
on $\Gr^W_{-1}$ is defined by $\psi : (v;u_1,\dots,u_n) \mapsto (v;u_1,u_1)$.
But by Lemma~\ref{lem:no_sect}, there is no such homomorphism. It follows that
$\delta=0$ and that $x=x_1$.
\end{proof}

\subsection{Proof of Theorem~\ref{thm:rational}}
If $C(k(T))$ is empty, $n$ must be zero and the result is true.  Suppose that
$x\in C(k(T))$. Corollary~\ref{cor:splitting} implies that $x$ induces a
splitting of the homomorphism $\pi_1(C,\etabar_C) \to \pi_1(T,\etabar_T)$ for
some choice of basepoints. Corollary~\ref{cor:d_curve} implies that $x$ induces
a $\GSp(H)$-equivariant splitting of $\d(\u_C) \to \d(\u_T)$ which, when $g=3$,
preserves the $\Z_2$ lattice structure on $\Gr^W_{-1}$.
Proposition~\ref{prop:d_sections3} implies that $n\ge 1$ and that this section
equals the section induced by the tautological point $x_j$ for some $j\in
\{1,\dots,n\}$. Corollary~\ref{cor:kappa} implies that $\kappa_x = \kappa_j$.
Proposition~\ref{prop:rigidity} and Remark~\ref{rem:stix} now imply that
$x=x_j$.

\section{Non-abelian Cohomology}
\label{sec:nab}

This section is a brief review of the non-abelian cohomology of proalgebraic
groups developed in \cite{hain:nab}, which was inspired by, and is a variant of,
the non-abelian cohomology developed by Kim in \cite{kim:coho}. This non-abelian
cohomology is the principal tool used in the proof of Theorem~\ref{thm:sec_pos}.
This version of non-abelian cohomology should be useful in studying rational
points of curves and other non-abelian varieties as it is somewhat computable,
as we demonstrate in Section~\ref{sec:thm3}.

Suppose that $F$ is a field of characteristic zero and that $\cG$ is a
negatively weighted extension of a connected, reductive $F$-group $R$ with
respect to a non-trivial central cocharacter $\w : \Gm \to R$. Suppose that
$\cP$ is a prounipotent $F$-group with trivial center. Fix an {\em outer} action
$\phi : \cG \to \Out \cP$ of $\cG$ on $\cP$. This induces an action of $\cG$ on
$H_1(\cP)$, so that $H_1(\cP)$ has a natural weight filtration. We will assume
that this is negatively weighted --- i.e., $H_1(\cP)=W_{-1}H_1(\cP)$ --- and
that each of its weight graded quotients is finite dimensional. Under these
conditions, $\cP$ has a natural weight filtration with finite dimensional
quotients and the group of weight filtration preserving automorphisms
$\Aut_W\cP$ of $\cP$ is a proalgebraic group. We will define a non-abelian
cohomology scheme $\Hnab^1(\cG,\cP)$, which will represent sections of an
extension of $\cG$ by $\cP$.

The condition that $\cP$ have trivial center implies that this extension is
uniquely determined by the outer action $\phi$. Indeed, since $\cP$ has trivial
center, the sequence
$$
1 \to \cP \to \Aut_W \cP \to \Out_W \cP \to 1,
$$
where the first map takes an element of $\cP$ to the corresponding inner
automorphism, is an exact sequence of proalgebraic $F$-groups. Pulling this
extension back along $\phi$ gives an extension
\begin{equation}
\label{eqn:extn-phi}
1 \to \cP \to \cGhat_\phi \to \cG \to 1
\end{equation}
of $\cG$ by $\cP$. We do not make the conventional assumption that this sequence
splits, nor do we distinguish any one section when there is one.

The assumption that $H_1(\cP)$ be negatively weighted implies $\cGhat_\phi$ is
also a negatively weighted extension of $R$ with respect to $\w$. It also
implies that $W_{-m}\cP$ is a normal subgroup of $\cGhat_\phi$ for all $m\in
\Z$.

Denote the Lie algebra of $\cG$ by $\g$. It acts on the weight graded quotients
of $\p$. Frequently we will impose the finiteness condition
\begin{equation}
\label{eqn:finiteness}
\dim H^1(\g,\Gr^W_{-m} \p) < \infty
\end{equation}
for $m\in\Z$ in a suitable range. The following is our analogue of
\cite[Prop.~2]{kim:coho}.

\begin{theorem}
\label{thm:representability}
Suppose that $N>1$ and that the finiteness condition (\ref{eqn:finiteness})
holds when $1\le m < N$. Under the assumptions above, there is an affine
$F$-scheme of finite type, $\Hnab^1(\cG,\cP/W_{-N}\cP)$, which represents the
functor that takes an $F$-algebra $A$ to the set
$$
\{\text{sections of }(\cGhat_\phi/W_{-N}\cP)\otimes_F A \to \cG\otimes_F A\}/
\text{conjugation by }\cP(A),
$$
where $\cP(A)$ acts on the sections by conjugation.
\end{theorem}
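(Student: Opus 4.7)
The plan is to induct on $N \geq 2$, using the Postnikov-style tower on $\cP$ induced by the weight filtration. For the base case $N=2$, note that $\cP = W_{-1}\cP$ gives $\cP/W_{-2}\cP = \Gr^W_{-1}\cP$, an abelian vector group on which $\cG$ acts through its reductive quotient $R$. The extension $\cGhat_\phi/W_{-2}\cP \to \cG$ is classified by a class in $H^2(\cG, \Gr^W_{-1}\cP)$: if it is nonzero the functor is represented by the empty scheme; if it vanishes the set of sections is a $Z^1(\cG, \Gr^W_{-1}\cP)$-torsor, conjugation by $\Gr^W_{-1}\cP$ translates by coboundaries, and $\Hnab^1(\cG, \cP/W_{-2}\cP) \cong H^1(\cG, \Gr^W_{-1}\cP)$, a finite-dimensional affine space by the hypothesis (\ref{eqn:finiteness}).

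For the inductive step, the crucial observation is that since $\p$ is negatively weighted, $[W_{-(N-1)}\p, \p] \subseteq W_{-N}\p$, so that writing $V := \Gr^W_{-(N-1)}\cP$,
$$
1 \to V \to \cP/W_{-N}\cP \to \cP/W_{-(N-1)}\cP \to 1
$$
is a central extension of $\cG$-groups, and likewise for the corresponding quotients of $\cGhat_\phi$. Given an $A$-point $\bar s$ of the representing scheme $X_{N-1}$ furnished by the inductive hypothesis, pulling this central extension back along a representative of $\bar s$ produces a central extension of $\cG_A$ by $V_A$ and hence an obstruction class $\mathrm{ob}(\bar s) \in H^2(\cG, V)(A)$; this vanishes iff $\bar s$ lifts to a section modulo $W_{-N}$, and when it vanishes the lifts, modulo $\cP/W_{-N}$-conjugation, form a torsor under $H^1(\cG, V)$, finite-dimensional by (\ref{eqn:finiteness}). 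Thus $X_N$ should be realized as an $H^1(\cG, V)$-torsor over the closed subscheme $\mathrm{ob}^{-1}(0) \subseteq X_{N-1}$, and hence is affine of finite type.

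The main obstacle is showing that $\mathrm{ob}$ is a morphism of $F$-schemes landing in a finite-dimensional subspace of $H^2(\cG, V)$. I would handle this at the Lie-algebra level: via $\exp$ and Baker--Campbell--Hausdorff, sections of $\cGhat_\phi/W_{-N}\cP \to \cG$ correspond bijectively to continuous Maurer--Cartan elements $\sigma \in C^1(\g, \p/W_{-N}\p)$ satisfying $d\sigma + \tfrac12 [\sigma, \sigma] = 0$, with $\cP$-conjugation becoming gauge equivalence. The MC equation is polynomial of degree two, so the locus of MC elements is naturally an affine scheme, and the obstruction to extending a solution $\sigma_{N-1}$ modulo $W_{-(N-1)}$ to one modulo $W_{-N}$ is represented by the cocycle obtained by lifting $\sigma_{N-1}$ arbitrarily and evaluating the MC defect modulo $W_{-N}$, which is visibly polynomial in $\sigma_{N-1}$. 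Descent from the affine scheme of MC elements to the quotient by the gauge action (algebraic and unipotent after truncation) then assembles $X_N$ and yields the desired representability.
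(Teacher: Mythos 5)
Your route is genuinely different from the paper's. You run an obstruction-theoretic induction on the Postnikov tower $\cP/W_{-N}\cP$, realizing each stage as a torsor over the vanishing locus of an obstruction map into $H^2$ -- this is essentially Kim's approach from \cite{kim:coho}. The paper instead fixes a lift $\what:\Gm\to\cGhat_\phi$ of the central cocharacter $\w$ and proves that every $\cP(A)$-conjugacy class of sections contains a \emph{unique} $\what$-graded representative; this identifies $\Hnab^1(\cG,\cP/W_{-N}\cP)$ outright with the functor of graded Lie-algebra sections of $\Gr^W_\dot\ghat_{N-1}\to\Gr^W_\dot\g$, which is visibly a closed subscheme of a finite-dimensional affine space cut out by the quadratic bracket-compatibility conditions. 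The paper's mechanism buys a canonical slice through the gauge action, so no descent or quotient by conjugation is ever performed; your version must descend the Maurer--Cartan scheme by the (truncated, unipotent) gauge action, which is doable but is exactly where the work is hidden.

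Two points in your sketch need repair before it is a proof. First, the asserted bijection between sections of $\cGhat_\phi/W_{-N}\cP\to\cG$ and Maurer--Cartan elements $\sigma\in C^1(\g,\p/W_{-N}\p)$ with $d\sigma+\tfrac12[\sigma,\sigma]=0$ presupposes a reference splitting with respect to which $d$ and $[\phantom{x},\phantom{x}]$ are computed. But Theorem~\ref{thm:representability} is stated precisely so that the extension $\cGhat_\phi\to\cG$ need \emph{not} split (the paper emphasizes that $\Hnab^1$ may be empty and that no section is distinguished). You can fix this by choosing an $R$-linear Levi splitting $\ell$ of $\ghat_N\to\g$ (which always exists) and writing the condition on $\sigma$ as the \emph{twisted} equation $\omega_\ell+d\sigma+\tfrac12[\sigma,\sigma]=0$, where $\omega_\ell$ is the defect cocycle of $\ell$; the equation is still quadratic, so the locus is still affine, but as written your equation is wrong unless $\ell$ happens to be a Lie-algebra homomorphism. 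Second, you assert $X_N$ is an $H^1(\g,\Gr^W_{-(N-1)}\p)$-torsor over $\mathrm{ob}^{-1}(0)\subseteq X_{N-1}$ and conclude it is affine of finite type, but you should note that (a) $\mathrm{ob}$ lands \emph{a priori} in the possibly infinite-dimensional ind-space $H^2(\g,\Gr^W_{-(N-1)}\p)$, and finiteness of $\mathrm{ob}^{-1}(0)$ needs the observation that any morphism from the finite-type scheme $X_{N-1}$ into this ind-space factors through a finite-dimensional subspace; and (b) the torsor is globally trivial because the base is affine and the structure group is a vector group, which is what makes $X_N$ actually affine rather than just fppf-locally so.
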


\begin{corollary}
If the finiteness condition (\ref{eqn:finiteness}) holds for all $m\ge 1$, then
the affine $F$-scheme
$$
\Hnab^1(\cG,\cP) := \varprojlim \Hnab^1(\cG,\cP/W_{-N}\cP)
$$
represents the functor
$$
\{\text{sections of }\cGhat_\phi\otimes_F A \to \cG\otimes_F A\}/
\text{conjugation by }\cP(A).
$$
\end{corollary}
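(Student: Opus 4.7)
The plan is to deduce the corollary from Theorem~\ref{thm:representability} by passing to an inverse limit. Since each $\Hnab^1(\cG, \cP/W_{-N}\cP)$ is an affine $F$-scheme of finite type, the inverse limit $\varprojlim_N \Hnab^1(\cG, \cP/W_{-N}\cP)$ is a well defined affine $F$-scheme, whose $A$-points are naturally $\varprojlim_N \Hnab^1(\cG, \cP/W_{-N}\cP)(A)$. What must then be shown is that this set is naturally in bijection with the set of $\cP(A)$-conjugacy classes of sections of $\cGhat_\phi\otimes_F A \to \cG\otimes_F A$. Since $\cGhat_\phi = \varprojlim_N \cGhat_\phi/W_{-N}\cP$ as proalgebraic $F$-groups, a section of $\cGhat_\phi\otimes A\to \cG\otimes A$ is the same data as a compatible family $\{s_N\}$ of sections of $(\cGhat_\phi/W_{-N}\cP)\otimes A\to \cG\otimes A$. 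Passing to conjugacy classes gives a natural map
$$
\Phi_A : \{\text{sections}\}/\cP(A) \longrightarrow \varprojlim_N \{\text{sections}\ \mathrm{mod}\ W_{-N}\}/(\cP/W_{-N}\cP)(A),
$$
and the content of the corollary is that $\Phi_A$ is a bijection for every $F$-algebra $A$.

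Surjectivity can be established by inductively lifting representatives. Given a compatible system $\{[s_N]\}$, suppose compatible representatives $s_1,\dots,s_N$ have already been chosen and let $s_{N+1}^0$ be any representative of $[s_{N+1}]$. The reduction of $s_{N+1}^0$ modulo $W_{-N}\cP$ lies in the same $(\cP/W_{-N}\cP)(A)$-orbit as $s_N$, so they differ by conjugation by some $p_N \in (\cP/W_{-N}\cP)(A)$. Lift $p_N$ to $\tilde p_N \in (\cP/W_{-N-1}\cP)(A)$, which is possible because $\cP/W_{-N-1}\cP \to \cP/W_{-N}\cP$ is a surjection of prounipotent groups and hence surjective on $A$-points, and set $s_{N+1} := \tilde p_N^{-1}\,s_{N+1}^0\,\tilde p_N$. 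The resulting compatible family assembles into a section $s$ of $\cGhat_\phi\otimes A\to\cG\otimes A$ whose class maps to $\{[s_N]\}$.

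For injectivity, suppose two sections $s$ and $s'$ of $\cGhat_\phi\otimes A$ become $(\cP/W_{-N}\cP)(A)$-conjugate for every $N$. The set $E_N$ of $p \in \cP/W_{-N}\cP$ satisfying $p\bar s_N p^{-1} = \bar s_N'$ is a non-empty torsor under the stabilizer $Z_N \subseteq \cP/W_{-N}\cP$ of $\bar s_N$, and both $E_N$ and $Z_N$ are affine of finite type over $F$. The desired element $p\in\cP(A)$ conjugating $s$ to $s'$ is precisely an element of $\varprojlim_N E_N(A)$, and its existence is equivalent to surjectivity of the transition maps $E_{N+1}(A) \to E_N(A)$, which in turn reduces to surjectivity of $Z_{N+1}(A)\to Z_N(A)$. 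The kernel $K_N$ of $Z_{N+1}\to Z_N$ sits inside the abelian unipotent graded piece $W_{-N}\cP/W_{-N-1}\cP$, which is finite dimensional by the negative-weight and finiteness hypotheses, so $K_N$ is a smooth affine group scheme over $F$; the obstruction to lifting a point of $Z_N(A)$ to $Z_{N+1}(A)$ is then controlled by a single cohomological class in an abelian group built from $W_{-N}\cP/W_{-N-1}\cP$, and vanishes under the finiteness condition (\ref{eqn:finiteness}).

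The principal obstacle is this last injectivity step: one must verify that compatible systems of conjugating elements exist, which amounts to controlling the pro-system of centralizers $\{Z_N\}$ and showing that its transition maps are surjective on $A$-points uniformly in $A$. This is where the interplay between the prounipotence of $\cP$, the negativity of the weight filtration on $H_1(\cP)$, and the finiteness hypothesis (\ref{eqn:finiteness}) is essential; the remainder of the argument is a formal inverse-limit assembly of the data supplied by Theorem~\ref{thm:representability}.
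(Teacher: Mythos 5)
Your surjectivity step is a valid formal d\'evissage: lifting a representative at each stage by conjugating, and using that $(\cP/W_{-N-1}\cP)(A)\to(\cP/W_{-N}\cP)(A)$ is onto because the kernel is a vector group. The assembly $\cGhat_\phi(A)=\varprojlim(\cGhat_\phi/W_{-N}\cP)(A)$ is also correct. That part is fine.

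The gap is in the injectivity step. You reduce to surjectivity of $Z_{N+1}(A)\to Z_N(A)$ and then assert that the lifting obstruction ``is controlled by a single cohomological class in an abelian group built from $W_{-N}\cP/W_{-N-1}\cP$, and vanishes under the finiteness condition (\ref{eqn:finiteness}).'' This is not an argument. First, the finiteness condition bounds the \emph{dimension} of certain $H^1(\g,\Gr^W_{-m}\p)$; it says nothing about the vanishing of any class, and the obstruction in question is not even a priori an element of such an $H^1$. Second, the map $Z_{N+1}\to Z_N$ need not be a smooth surjection of group schemes for general reasons: given $z_N\in Z_N(A)$ and any lift $\tilde z\in(\cP/W_{-N-1}\cP)(A)$, the difference $\tilde z\,s_{N+1}\,\tilde z^{-1}\cdot s_{N+1}^{-1}$ is a cocycle for $\cG$ valued in $(\Gr^W_{-N}\p)\otimes A$, and there is no reason it should be a coboundary. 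So the step is unjustified as written.

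What actually makes the injectivity step trivial --- and what the paper is implicitly relying on in this corollary --- is the much stronger fact used in Theorem~\ref{thm:representability} and Proposition~\ref{prop:gr_sect}, namely that every $\cP(A)$-conjugacy class of sections of $\cGhat_N\otimes A\to\cG\otimes A$ contains a \emph{unique} $\what_N$-graded section. This forces $Z_N(A)=1$ for every $N$ and every $A$: any section is conjugate to a graded one $\tau$, and an element $p\in(\cP/W_{-N}\cP)(A)$ centralizing $\tau$ in particular commutes with $\what_N=\tau\circ\what_0$, hence is fixed by the $\Gm$-action through $\what_N$; but $\p/W_{-N}\p$ has strictly negative weights, so its $\Gm$-invariants vanish and $p=1$. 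With $Z_N\equiv 1$ the torsor $E_N(A)$ is a singleton and your inverse-limit machinery collapses to a triviality. Equivalently, one can bypass the centralizer analysis altogether by using the graded normal form directly: the unique graded representatives automatically form a compatible system and determine each other, so both injectivity and surjectivity follow in one step. You should replace the hand-waving about a ``single cohomological class'' with this weight argument (or a citation to the relevant lemma on graded normal forms), since without it the injectivity claim is not established.
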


There is a non-abelian analogue of an exact sequence which aids in the
computation of $\Hnab^1(\cG,\cP)$. The next result is our analogue of the claim
on page~641 of \cite{kim:coho}.

\begin{theorem}
\label{thm:exactness}
Suppose that $N>1$. If the finiteness condition (\ref{eqn:finiteness}) is
satisfied  when $1\le m \le N$, then
\begin{enumerate}

\item there is a morphism
$\delta : \Hnab^1(\cG,\cP/W_{-N}) {\to} H^2(\g,\Gr^W_{-N}\p)$
of $F$-schemes,

\item there is a principal action of $H^1(\g,\Gr^W_{-N}\p)$ on
$\Hnab^1(\cG,\cP/W_{-N-1})$,

\item for all $F$-algebras $A$,  $\Hnab^1(\cG,\cP/W_{-N})(A)$ is a principal
$H^1(\g,\Gr^W_{-N}\p)(A)$ set over the $A$-rational points $(\delta^{-1}(0))(A)$
of the scheme $\delta^{-1}(0)$.

\end{enumerate}
\end{theorem}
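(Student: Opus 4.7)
The plan is to treat this as obstruction theory for a central extension of proalgebraic groups, made scheme-theoretic using the representability in Theorem~\ref{thm:representability}. The central observation is that since the bracket strictly lowers weights, $\Gr^W_{-N}\cP = W_{-N}\cP/W_{-N-1}\cP$ is a central subgroup of $\cP/W_{-N-1}$, and pushing out over $\cGhat_\phi$ gives a short exact sequence of proalgebraic groups over $\cG$:
\[
1 \to \Gr^W_{-N}\cP \to \cGhat_\phi/W_{-N-1} \to \cGhat_\phi/W_{-N} \to 1.
\]
Because $\cP$ acts trivially on $\Gr^W_{-N}\p$ for the same weight-lowering reason, $\phi$ equips $\Gr^W_{-N}\p$ with an intrinsic $\cG$-module structure, and Remark~\ref{rem:invariants} identifies equivalence classes of central extensions of $\cG \otimes_F A$ by $\Gr^W_{-N}\p \otimes_F A$ with $H^2(\g, \Gr^W_{-N}\p)(A)$.

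To construct $\delta$, I would, given a section $\sigma$ representing a class in $\Hnab^1(\cG, \cP/W_{-N})(A)$, pull back the displayed central extension along $\sigma$ and define $\delta([\sigma])$ to be the resulting class in $H^2(\g, \Gr^W_{-N}\p)(A)$; replacing $\sigma$ by $c\sigma c^{-1}$ for $c \in (\cP/W_{-N})(A)$ yields an isomorphic pullback via any lift of $c$ to $(\cP/W_{-N-1})(A)$, so $\delta$ descends to $\Hnab^1$. For (ii) and (iii), I would observe that any two sections $\sigma_1, \sigma_2$ of $\cGhat_\phi/W_{-N-1} \to \cG$ with a common image in $\cGhat_\phi/W_{-N}$ differ by a morphism $\psi \colon \cG \to \Gr^W_{-N}\cP$; after passage to Lie algebras, comparing the relations $[\sigma_i(x), \sigma_i(y)] = \sigma_i([x,y])$ modulo $W_{-N-1}\p$ forces
\[
x \cdot \psi(y) - y \cdot \psi(x) = \psi([x,y]),
\]
so $\psi$ is a 1-cocycle, and conversely every such cocycle perturbs one lift to another. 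Conjugating $\sigma_1$ by $\exp(u)$ for $u \in \Gr^W_{-N}\p(A)$ modifies $\psi$ by the coboundary $x \mapsto -x \cdot u$, so after $\cP$-conjugation the fibers of the natural map $\Hnab^1(\cG, \cP/W_{-N-1}) \to \Hnab^1(\cG, \cP/W_{-N})$ become principal $H^1(\g, \Gr^W_{-N}\p)(A)$-sets, and $[\sigma]$ lies in the image exactly when the pulled-back extension is trivial, i.e.\ $\delta([\sigma]) = 0$.

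The hard part will be promoting these pointwise constructions to morphisms and torsor structures of affine $F$-schemes. Representability of both $\Hnab^1$ schemes comes from Theorem~\ref{thm:representability}; the finiteness hypothesis (\ref{eqn:finiteness}) for $m \le N$, combined with finite-dimensionality of each $\Gr^W_{-m}\p$ (forced by that of the $\Gr^W_{-m} H_1(\cP)$), ensures that $Z^1(\g, \Gr^W_{-N}\p)$, $B^1(\g, \Gr^W_{-N}\p)$, and $H^1(\g, \Gr^W_{-N}\p)$ are finite-dimensional. I would then construct $\delta$ at the level of the universal section by choosing a scheme-theoretic splitting of the central extension and recording the resulting curvature 2-cocycle in universal coordinates, and realize the $H^1$-action by adding a universal 1-cocycle representative. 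The torsor property over $\delta^{-1}(0)$ is flat-local on the base and thus reduces to the pointwise statement established above.
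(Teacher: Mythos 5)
Your route is genuinely different from the one the paper uses (and attributes to \cite{hain:nab}). You run classical obstruction theory for group extensions with abelian kernel: pull back the extension
$1 \to \Gr^W_{-N}\cP \to \cGhat_\phi/W_{-N-1} \to \cGhat_\phi/W_{-N} \to 1$
along a section to get a class in $H^2$, and observe that lifts of a fixed section differ by $1$-cocycles. The paper instead passes immediately to graded Lie algebras via a lift $\what : \Gm \to \cGhat_\phi$ of the central cocharacter, and uses the key fact (Proposition~\ref{prop:gr_sect}, from \cite{hain:nab}) that each $\cP(A)$-conjugacy class of sections contains a \emph{unique} $\what$-graded representative. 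This identifies $\Hnab^1(\cG,\cP/W_{-m})$ with a scheme of graded Lie algebra sections once and for all, and $\delta$ becomes a completely explicit curvature cocycle $h_\sigma(x\wedge y) = [\sigmatilde(x),\sigmatilde(y)] - \sigmatilde([x,y])$ on graded sections.

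Your approach has two genuine gaps. First, a bookkeeping gap in the torsor claim: you compute that two lifts of a fixed section $\sigma'$ mod $W_{-N}$ differ by a $1$-cocycle, and that conjugating by $\exp(u)$ with $u \in \Gr^W_{-N}\p$ gives a coboundary. But $\Hnab^1(\cG,\cP/W_{-N-1})$ is the quotient by conjugation by all of $\cP/W_{-N-1}$, not just by the kernel $\Gr^W_{-N}\cP$. You need to show that two lifts that are conjugate by some $c \in (\cP/W_{-N-1})(A)$ whose reduction $\bar{c}$ stabilizes $\sigma'$ up to conjugation have cohomologous difference cocycles (and more: the assignment descends to a well-defined action on $\Hnab^1(\cG,\cP/W_{-N})$-fibers, not just on fibers over fixed representatives). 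This is exactly the bookkeeping that the paper's unique graded-representative device eliminates: once each conjugacy class has a canonical graded representative, the torsor structure is visible on the nose in the affine space of $R$-invariant graded cochains.

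Second, the scheme-theoretic promotion is asserted rather than carried out. Saying one "records the resulting curvature $2$-cocycle in universal coordinates" requires having first exhibited $\Hnab^1(\cG,\cP/W_{-N})$ as a scheme with a universal section, and choosing a splitting of the extension that varies algebraically over it; similarly, "flat-local on the base" is not by itself enough to build the principal action as a morphism of schemes. The paper's identification $\Hnab^1(\cG,\cP/W_{-N-1}) = \Sect_\dot(\Gr^W_\dot\g, \Gr^W_\dot\p/W_{-N-1})$ makes both $\delta$ and the $H^1$-action manifestly algebraic: they are polynomial maps between (subschemes of) finite-dimensional affine spaces cut out by the Lie algebra relations, where finite-dimensionality comes from (\ref{eqn:finiteness}). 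A minor terminological point: the extension you display is not a central extension of $\cGhat_\phi/W_{-N}$ (the group $\cG$ acts nontrivially on $\Gr^W_{-N}\cP$); it is an extension with abelian kernel, classified by $H^2$ with the corresponding nontrivial module structure. The idea is right, but the wording suggests a smaller obstruction group than is actually in play.
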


This ``exact sequence'' is represented by the diagram:
{\small
$$
\xymatrix{
\Hnab^1(\cG,\cP/W_{-N-1}) \save !L(.85)="Hnab";
"Hnab",\ar@(ul,dl)"Hnab"_{{H^1(\g,\Gr^W_{-N}\p)}}\restore
\ar[r]^\pi &
\Hnab^1(\cG,\cP/W_{-N}) \ar[r]^{\delta} & H^2(\g,\Gr^W_{-N}\p)
}
$$
}
in which $\pi$ is the projection that takes a section to its quotient by
$\Gr^W_{-N}\cP$.

\begin{remark}
In our version of non-abelian cohomology, we do not need to specify a splitting
of the extension (\ref{eqn:extn-phi}), or even to have one at all. Since this
extension may not split, $\Hnab^1(\cG,\cP)$ may be the empty scheme. Even when
the extension does split, there may be no preferred section and we do not view
these cohomology groups as pointed objects, as is customary. This point of view
is based on the computations in Section~\ref{sec:thm3}, which suggest that
preferring any one section over another maybe unnatural in some contexts.
\end{remark}

\subsection{The scheme $\Hnab^1(\cG,\cP/W_{-N-1})$} 

In Section~\ref{sec:thm3}, we will need to compute the ``connecting morphism''
$\delta$ in situations of interest. Before we explain how to compute $\delta$,
we need to explain why $\Hnab^1(\cG,\cP/W_{-m-1}\cP)$ is a scheme. Full details
can be found in \cite{hain:nab}.

Denote $\cGhat_\phi$ by $\cGhat$. For each $m>0$ and set $\cGhat_m =
\cGhat/W_{-m-1}\cP$. Denote the Lie algebras of $\cG$, $\cGhat$, $\cGhat_m$,
$\cP$ and $\cP/W_{-m-1}$ by $\g$, $\ghat$, $\ghat_m$, $\p$ and $\p_m$,
respectively. Choose a lift $\what : \Gm \to \cGhat$ of $\w$ to $\cGhat$. Denote
its composition with $\cGhat \to \cGhat_m$ by $\what_m$ and with $\cGhat\to\cG$
by $\what_0$. Note that $\what$ splits the weight filtration of all
$\cGhat$-modules. In particular, it splits the weight filtration of each of
these Lie algebras. 

Fix $N\ge 1$. Let $A$ be an $F$-algebra. Define a section $s$ of
$\cGhat_N\otimes_F A \to \cG\otimes_F A$ to be {\em $\what$-graded} if it
commutes with $\what$ in the sense that $\what_N=s\circ \what_0$. Equivalently,
its derivative $ds : \g\otimes_F A \to \ghat\otimes_F A$ is $\what$-graded in
the sense that it commutes with the $\Gm$-actions on $\g$ and $\ghat$ induced by
$\what$.

One has the maps
\begin{multline}
\label{eqn:maps_sects}
\{\what\text{-graded sections of } \ghat_N\otimes_F A \to \g\otimes_F A\} \cr
\cong
\{\what\text{-graded sections of } \cGhat_N\otimes_F A \to \cG\otimes_F A\} \cr
\hookrightarrow
\{\text{sections of } \cGhat_N\otimes_F A \to \cG\otimes_F A\} \cr
\overset{\Gr^W_\dot\! d}{\to}
\{\text{graded sections of }
\Gr^W_\dot\ghat_N\otimes_F A \to \Gr^W_\dot\g\otimes_F A\}
\end{multline}
where the last map takes a section $s$ to the graded Lie algebra section induced
by its derivative $ds$. Note that the composition of these three maps is the
isomorphism induced by the $\what$-splitting of the weight filtration.

It is not difficult to show (cf.\cite[Prop.~4.3]{hain:nab}) that if the
finiteness assumption (\ref{eqn:finiteness}) holds for all $m\le N$, then the
functor that takes an $F$-algebra $A$ to the set
$$
\{\text{graded sections of }
\Gr^W_\dot\ghat_N\otimes_F A \to \Gr^W_\dot\g\otimes_F A\}
$$
is represented by an affine $F$-scheme of finite type, which we shall denote by
$\Sect_\dot(\Gr^W_\dot\g,\Gr^W_\dot\p/W_{-N-1})$. It is a subscheme of an
affine space that is a principal homogeneous space over
$\Hom_R(\Gr^W_\dot H_1(\g),\Gr^W_\dot\p_N)$. The finiteness assumption implies
that this is finite dimensional.

\begin{proposition}[{\cite[Cor.~4.7]{hain:nab}}]
\label{prop:gr_sect}
If $N\ge 0$, then, for all $F$-algebras $A$ the maps (\ref{eqn:maps_sects})
induce bijections
\begin{multline*}
%\label{eqn:maps_sects}
\{\what\text{-graded sections of } \ghat_N\otimes_F A \to \g\otimes_F A\} \cr
\overset{\simeq}{\to}
\{\text{sections of } \cGhat_N\otimes_F A \to \cG\otimes_F A\}/\cP(A) \cr
\overset{\Gr^W_\dot\! d}{\to}
\{\text{graded sections of }
\Gr^W_\dot\ghat_N\otimes_F A \to \Gr^W_\dot\g\otimes_F A\}
\end{multline*}
\end{proposition}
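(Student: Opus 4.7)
My plan is to observe first that the composition of the three displayed maps is the canonical bijection
$$
\{\what\text{-graded sections of } \ghat_N\otimes_F A \to \g\otimes_F A\}
\overset{\simeq}{\longrightarrow}
\{\text{graded sections of } \Gr^W_\dot\ghat_N\otimes_F A \to \Gr^W_\dot\g\otimes_F A\}
$$
induced by the $\what$-splitting of the weight filtration (since a $\what$-graded section is, tautologically, identified with a graded section under the splitting $\ghat_N \cong \Gr^W_\dot\ghat_N$ furnished by $\what$, and the first arrow is just exponentiation). Given this, it suffices to prove that the middle map descends to a bijection: i.e.\ each $\cP(A)$-orbit of sections of $\cGhat_N\otimes_F A \to \cG\otimes_F A$ contains exactly one $\what$-graded representative.

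For uniqueness, suppose $s_1$ and $s_2$ are $\what$-graded sections with $s_2 = p s_1 p^{-1}$ for some $p\in \cP(A)$. The $\Gm$-action on sections defined by $z\cdot s := \what(z)\circ s\circ \w(z)^{-1}$ has the $\what$-graded sections as its fixed points, and it is equivariant with respect to $\cP(A)$-conjugation (with $\Gm$ acting on $\cP$ through $\what$). So $p$ centralizes the $\Gm$-action on $\cP$, i.e.\ $p\in\cP(A)^{\Gm}$. Since $\cP$ is negatively weighted with respect to $\w$, the fixed-point subgroup $\cP^{\Gm}$ is trivial, so $p=1$ and $s_1=s_2$.

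For existence, I would induct on $N$. The case $N=0$ is trivial since $\cGhat_0 = \cG$ and the unique section is the identity. For the inductive step, given $s$ a section of $\cGhat_{N+1}\otimes_F A \to \cG\otimes_F A$, the inductive hypothesis lets me conjugate $s$ by an element of $\cP(A)$ so that its image $\bar s$ in $\cGhat_N\otimes_F A$ is $\what$-graded. The failure of $s$ itself to be $\what$-graded is then measured by the map $g\mapsto s(g)^{-1}(\what(z)\,s(\w(z)^{-1}g\w(z))\,\what(z)^{-1})$, which takes values in the central subgroup $\Gr^W_{-N-1}\cP(A)$ of $\cGhat_{N+1}\otimes_F A$ (since $\bar s$ is $\what$-graded) and defines an algebraic cocycle of $\Gm$ with values in the negatively weighted finite-dimensional $A$-module $\Gr^W_{-N-1}\p\otimes_F A$. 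Since $\Gm$ is reductive, this cocycle is a coboundary, and the corresponding coboundary element $q\in\Gr^W_{-N-1}\cP(A)$ satisfies that $qsq^{-1}$ is $\what$-graded, completing the induction.

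The main technical point to handle carefully is the passage from the cocycle-theoretic obstruction in $\Gr^W_{-N-1}\p$ to an actual conjugating element of $\Gr^W_{-N-1}\cP(A)$, and verifying that these modifications are compatible with the $\cP(A)$-equivariance required by the inductive hypothesis. This is essentially a Malcev/Levi-type fixed-point argument for the reductive $\Gm$-action on the prounipotent torsor of sections, and it is here that the finiteness hypothesis (\ref{eqn:finiteness}) is used to ensure everything takes place in honest affine schemes of finite type, so that the cocycle calculation makes sense functorially in $A$.
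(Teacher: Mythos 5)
The paper's own ``proof'' of this proposition is a one-line citation to [hain:nab, Lem.~4.2]: each $\cP(A)$-conjugacy class of sections contains a unique $\what_N$-graded section. You are attempting to establish this supporting fact from scratch, and your overall strategy --- that the composition of the three maps in (\ref{eqn:maps_sects}) is the canonical bijection furnished by the $\what$-splitting, plus reductivity of $\Gm$, triviality of $\cP^{\Gm}$, and induction on $N$ --- is the right one and surely close to the argument in the cited lemma. However, as written there are two genuine gaps.

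In the uniqueness step, the inference ``so $p$ centralizes the $\Gm$-action on $\cP$'' does not follow from equivariance alone. What equivariance and $z\cdot s_i = s_i$ give you is that $p$ and $z\cdot p$ both conjugate $s_1$ to $s_2$, hence $p^{-1}(z\cdot p)$ lies in the \emph{stabilizer} $C_{\cP(A)}(s_1(\cG(A)))$ of $s_1$ under $\cP(A)$-conjugation. To conclude $p^{-1}(z\cdot p)=1$ (and thence $p\in\cP^{\Gm}(A)=1$) you must first argue that this stabilizer is trivial; this uses that $s_1$ is $\what$-graded, so $s_1(\what_0(\Gm))=\what_N(\Gm)$, whence $C_{\cP}(s_1(\cG))\subseteq C_{\cP}(\what_N(\Gm))=\cP^{\Gm}=1$. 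Incidentally, once your opening observation (the composition is bijective) is in place, injectivity of the middle map --- which is exactly this uniqueness --- is automatic, so the real work is entirely in the existence statement.

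In the existence step, the obstruction $(z,g)\mapsto s(g)^{-1}(z\cdot s)(g)$ is not a single cocycle of $\Gm$ with values in $\Gr^W_{-N-1}\p\otimes_F A$. For each fixed $z$ it is a $1$-cocycle of $\cG$ valued in $\Gr^W_{-N-1}\p\otimes_F A$, and it is only as a function of $z$ that it becomes a $\Gm$-cocycle, valued in $Z^1(\cG,\Gr^W_{-N-1}\p)\otimes_F A$. Vanishing of $H^1(\Gm,-)$ therefore produces a trivializing $\cG$-\emph{cocycle}, not directly a constant $q\in\Gr^W_{-N-1}\cP(A)$; conjugation by $q$ only modifies the obstruction by the $\cG$-coboundary of the $\Gm$-coboundary of $q$, so one must still show the trivializing cocycle can be chosen to be such a double coboundary, i.e.\ that the modified section stays in the same $\cP(A)$-conjugacy class. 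This is precisely the point at which the paper instead passes to the Lie algebra picture and identifies $\Hnab^1(\cG,\cP/W_{-N-1}\cP)$ with the scheme $\Sect_\dot(\Gr^W_\dot\g,\Gr^W_\dot\p/W_{-N-1})$ of graded Lie algebra sections; working at the Lie algebra level via the $\what$-induced grading makes this bookkeeping transparent and is worth adopting here.
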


This follows from the observation \cite[Lem.~4.2]{hain:nab} that each
$\cP(A)$-conjugacy class of sections contains a unique $\what_N$-graded section.
It follows that the functor that takes $A$ to the set of $\cP(A)$ conjugacy
classes of sections of $\cGhat_N\otimes_F A \to \cG\otimes_F A$ is represented
by the affine $F$-scheme $\Sect_\dot(\Gr^W_\dot\g,\Gr^W_\dot\p/W_{-N-1})$. That
is, $\Hnab^1(\cG,\cP/W_{-N-1}\cP)=
\Sect_\dot(\Gr^W_\dot\g,\Gr^W_\dot\p/W_{-N-1})$.

\subsection{The connecting morphism $\delta$}
\label{sec:delta}

Full details of the construction of $\delta$ and its properties can be found in
\cite{hain:nab}.

Suppose that $m\ge 0$. Identify $\Hnab^1(\cG,\cP_m)$ with the scheme of graded
sections of $\Gr^W_\dot\ghat_m \to \Gr^W_\dot\g_\dot$. Suppose that $N\ge 1$.
Set $\z=\Gr^W_{-N}\p$. Then we have the extension of graded Lie algebras
$$
0 \to \z \to \Gr^W_\dot\g_N \to \Gr^W_\dot\ghat_{N-1} \to 0.
$$
Suppose that $\sigma$ is a graded section of $\Gr^W_\dot\ghat_{N-1} \to
\Gr^W_\dot\g$. Define $\sigmatilde$ to be the $R$-linear section
$$
\xymatrix{
\Gr^W_\dot\g \otimes_F A \ar[r]^(.45)\sigma &
\Gr^W_\dot\g_{N-1}\otimes_F A \ar[r]^{r\otimes A} &
\Gr^W_\dot\ghat_N \otimes_F A
}
$$
of $\Gr^W_\dot\ghat_N\otimes_F A \to \Gr^W_\dot\g\otimes_F A$, where $r$ is the
unique, $R$-linear, graded section of $\Gr^W_\dot\ghat_N \to \Gr^W\ghat_{N-1}$.
Note that, in general, $r$ is not a Lie algebra homomorphism. The obstruction to
$\sigmatilde$ being a Lie algebra homomorphism is the function $h_\sigma :
\Lambda^2 \Gr^W_\dot\g_N \to \z$ defined by
$$
h_\sigma(x\wedge y) := [\sigmatilde(x),\sigmatilde(y)]- \sigmatilde([x,y]).
$$
It is an $R$-invariant 2-cocycle of weight $0$ in the Chevalley-Eilenberg
cochain complex $C^\dot(\Gr^W_\dot\g,\z)\otimes_F A$. Define
$$
\delta : \Hnab^1(\cG,\cP_N)(A) \to H^2(\g,\z)\otimes_F A
$$
by taking $\sigma$ to the class of $h_\sigma$ in $H^2(\Gr^W_\dot\g,\z)\otimes_F
A \cong H^2(\g,\z)\otimes_F A$. This is induced by a morphism of $F$-schemes.

If $\delta(\sigma) \neq 0$, $\sigma$ does not lift to a graded Lie algebra
section of $\Gr^W_\dot\ghat_N\otimes_F A \to \g\otimes_F A$. If $\delta(\sigma)
= 0$, then, by \cite[Lem.~3.3]{hain:nab}, there is an $R$-invariant 1-cochain
$$
v \in \Gr^W_0 C^1(\Gr^W_\dot\g,\z)\otimes_F A
$$
such that $\delta(v)=h_\delta$. Then $\sigmatilde + v : \Gr^W_\dot\g\otimes_F A
\to \Gr^W_\dot\ghat_N\otimes_F A$ is a graded Lie algebra section of
$\Gr^W_\dot\ghat_N\otimes_F A \to \Gr^W_\dot\g\otimes_F A$ that lifts $\sigma$.

\section{Setup for Proofs of Theorems~\ref{thm:zero_points} and
\ref{thm:sec_pos}}
\label{sec:setup}

The following three sections are devoted to the proofs of
Theorems~\ref{thm:zero_points} and \ref{thm:sec_pos}. This section sets up the
notation to be used in these sections and  establishes some basic results.

Suppose that $g$, $n$ and $m$ are non-negative integers satisfying $g\ge 2$,
$n\ge 0$  and $m\ge 1$.\footnote{Later, we will assume that $g\ge 5$. But, for
the time being, we consider smaller genera so that some of the preliminary
results can be presented in their natural generality.} Suppose that $k$ is a
field of characteristic zero that contains $\bmu_m(\kbar)$, the $m$th roots of
unity. Fix a primitive $m$th root of unity. We then have the geometrically
connected moduli space $\M_{g,n/k}[m]$.

Set $K=k(\M_{g,n}[m])$. Fix an algebraic closure $\Kbar$ of $K$. Denote the
algebraic closure of $k$ in $\Kbar$ by $\kbar$. Set
$$
G_k = \Gal(\kbar/k),\quad G_K = \Gal(\Kbar/k),\text{ and }
G_K^\geom := \Gal(\Kbar/\kbar).
$$
There is an exact sequence
\begin{equation}
\label{eqn:extn}
1 \to G_K^\geom \to G_K \to G_k \to 1.
\end{equation}

Let $C$ be the restriction of the universal (complete) curve $\cC \to
\M_{g,n/k}[m]$ to its generic point $\Spec K$. Choose a geometric point $\xbar$
of $C\otimes_K\Kbar$ that lies over its generic point. Set
$$
\pi = \pi_1(C\otimes_K\Kbar,\xbar)
$$
Then one has the extension
$$
1 \to \pi \to \pi_1(C,\xbar) \to G_K \to 1.
$$

Suppose that $\ell$ is a prime number. Set
$$
H = H_\Ql = \Het^1(C\otimes_K\Kbar,\Ql(1)).
$$
The monodromy representation $\rho : G_K \to \GSp(H)$ has Zariski dense image if
and only if the image of the $\ell$-adic cyclotomic character $\chi_\ell : G_k
\to \Zlx$ is infinite. Suppose that this is the case. Define the weight of a
representation $G_K \to \GSp(H) \to \Aut V$ to be its weight as a
$\GSp(H)$-module With this convention, $H$ is a $G_K$-module of weight $-1$. 

Denote the weighted completions of $G_K$ with respect to $\rho$ by $\cG_K$ and
the weighted completion of $\pi_1(C,\etabar)$ with respect to $\pi_1(C,\etabar)
\to G_K \to \GSp(H)$ by $\cG_C$. Denote their Lie algebras by $\g_K$ and $\g_C$,
respectively. Denote their prounipotent radicals by $\U_K$ and $\U_C$, and their
Lie algebras by $\u_K$ and $\u_C$, respectively. One has the exact sequence of
$\Ql$-groups
$$
1 \to \pi^\un \to \cG_C \to \cG_K \to 1
$$
where $\pi^\un$ denotes the continuous unipotent completion $\pi^\un_{/\Ql}$ of
$\pi$ over $\Ql$. Denote the Lie algebra of $\pi^\un$ by $\p$. Since $\g_K$,
$\g_C$ and $\p$ are $\cG_C$-modules via the conjugation action, all have
compatible natural weight filtrations that possess the exactness properties
described in Proposition~\ref{prop:weight}.

Basic naturality properties of weighted completion imply:

\begin{proposition}
For each $\Ql$-algebra $A$, there is a group extension
$$
1 \to
\pi^\un(A) \to \Gtilde_C^A \to G_K \to 1
$$
and an inclusion
$$
\xymatrix{
1 \ar[r] & \pi \ar[r]\ar[d] & \pi_1(C,\xbar) \ar[r]\ar[d] &
G_K \ar[r]\ar@{=}[d] & 1\cr
1 \ar[r] & \pi^\un(A) \ar[r] & \Gtilde_C^A \ar[r] & G_K \ar[r] & 1
}
$$
of extensions, where the left hand vertical map is the canonical inclusion.
\end{proposition}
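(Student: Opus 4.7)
The plan is to construct $\Gtilde_C^A$ as a pullback, using the exact sequence
$$
1 \to \pi^\un \to \cG_C \to \cG_K \to 1
$$
of proalgebraic $\Ql$-groups (established in the preceding discussion) together with the canonical homomorphism $G_K \to \cG_K(\Ql) \to \cG_K(A)$ coming from weighted completion. Concretely, I would define
$$
\Gtilde_C^A := \cG_C(A)\times_{\cG_K(A)} G_K,
$$
i.e.\ the fiber product of $\cG_C(A) \to \cG_K(A)$ with $G_K \to \cG_K(A)$, equipped with its canonical projection to $G_K$.

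Next I would verify that this yields a group extension of $G_K$ by $\pi^\un(A)$. Exactness of the algebraic group sequence, combined with the fact that for a prounipotent $\Ql$-group $U$ and any $\Ql$-algebra $A$ one has $U(A) = \ker(\cG_C(A)\to\cG_K(A))$ (this is just the functor-of-points version of exactness, applicable because the sequence is a principal $\pi^\un$-bundle), identifies the kernel of $\Gtilde_C^A \to G_K$ with $\pi^\un(A)$. For surjectivity onto $G_K$, I would use the diagram
$$
\xymatrix{
\pi_1(C,\xbar) \ar[r]\ar[d] & \cG_C(A) \ar[d] \cr
G_K \ar[r] & \cG_K(A)
}
$$
which commutes by naturality of weighted completion. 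Since the left-hand vertical map is surjective, every element of $G_K$ has a preimage in $\pi_1(C,\xbar)$ whose image in $\cG_C(A)$ provides the required lift.

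Finally, to produce the inclusion of extensions, the universal property of pullbacks supplies a unique homomorphism $\pi_1(C,\xbar)\to \Gtilde_C^A$ compatible with the maps to $G_K$ and $\cG_C(A)$; restricted to the geometric fundamental group $\pi$ it lands in $\pi^\un(A)$ and agrees with the canonical map $\pi \to \pi^\un(\Ql)\to \pi^\un(A)$. Commutativity of the resulting diagram of extensions is automatic from the construction. Injectivity of the middle vertical map follows from the Five Lemma once one knows the left vertical map $\pi\to \pi^\un(A)$ is injective --- this uses residual nilpotence of the profinite surface group $\pi$ (so $\pi\hookrightarrow \pi^\un(\Ql)$) together with flatness of $\Ql\hookrightarrow A$ and the fact that a prounipotent $\Ql$-group $U$ satisfies $U(\Ql)\hookrightarrow U(A)$ via the exponential description of $A$-points.

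I do not expect any genuine obstacle here; the statement is essentially a formal consequence of the exactness of $1 \to \pi^\un \to \cG_C \to \cG_K \to 1$ together with the universal property of pullbacks. The only mildly delicate point is the injectivity of $\pi\to \pi^\un(A)$, which however is standard for surface groups and extends to general $\Ql$-algebras $A$ by the observation that $\pi^\un(\Ql)\to \pi^\un(A)$ is injective for any nonzero $\Ql$-algebra.
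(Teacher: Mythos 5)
Your pullback construction of $\Gtilde_C^A$ is the same as the paper's. Your argument for surjectivity onto $G_K$ --- lift $g\in G_K$ to some $\gamma\in\pi_1(C,\xbar)$ and send it to the pair $(\rhotilde(\gamma),g)$ in the fibre product --- is valid and a little more direct than the paper's, which instead proves the stronger statement that $\cG_C(A)\to\cG_K(A)$ is already surjective: one chooses Levi splittings $\cG_C\cong\GSp(H)\ltimes\U_C$ and $\cG_K\cong\GSp(H)\ltimes\U_K$ and then uses that a surjection of prounipotent $\Ql$-groups remains surjective on $A$-points. Either route establishes the extension.

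The genuine error is in your justification of the word ``inclusion.'' The map $\pi\to\pi^\un(\Ql)$ from the \emph{profinite} geometric fundamental group to the $\Ql$-points of its $\Ql$-unipotent completion is not injective, and residual nilpotence does not rescue this. A continuous homomorphism from a profinite group to $U(\Ql)$, with $U$ prounipotent over $\Ql$, has image contained in a compact subgroup, which for unipotent groups over $\Ql$ is pro-$\ell$; the map therefore factors through the maximal pro-$\ell$ quotient $\pi^{(\ell)}$. For a surface group of genus $g\geq 2$ the kernel of $\pi\to\pi^{(\ell)}$ is huge (already on abelianizations $\widehat{\Z}^{2g}\to\Zl^{2g}$ kills $\prod_{p\neq\ell}\Z_p^{2g}$). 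Residual nilpotence of $\pi$ as a profinite group gives at most an embedding into its \emph{pronilpotent} completion, which is much larger than $\pi^\un_{/\Ql}(\Ql)$; the discrete surface group does embed into its unipotent completion, but its profinite completion does not. The five-lemma step you propose therefore has a false premise. Note that the paper's own proof makes no claim of injectivity of the vertical maps: it only constructs the pullback and verifies surjectivity onto $G_K$, with the morphism of extensions coming for free from the universal property of the fibre product. That morphism of extensions is all that is used downstream, so you should either drop the injectivity claim entirely or recognize it is not provable by the route you suggest.
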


\begin{proof}
Define $\Gtilde_C^A$ so that
\begin{equation}
\label{eqn:pullback}
\xymatrix{
\Gtilde_C^A \ar[r]\ar[d] & G_K \ar[d] \cr
\cG_C(A) \ar[r] & \cG_K(A)
}
\end{equation}
is a pullback square. The map $\Gtilde_C^A \to G_K$ is surjective because the
lower map is surjective. To see this, choose a splitting of $\cG_C \to \GSp(H)$
in the category of $\Ql$-groups. It descends to a splitting of $\cG_K \to
\GSp(H)$. These splittings induce compatible isomorphisms $\cG_C(A) \cong
\GSp(H_A)\ltimes \U_C(A)$ and $\cG_K(A) \cong \GSp(H_A)\ltimes \U_K(A)$. Since
$\U_C\to\U_K$ is a surjective homomorphism of prounipotent groups, the map
$\U_C(A) \to \U_K(A)$ is surjective, which implies that $\cG_C(A)\to\cG_K(A)$
is surjective.
\end{proof}

When $n\ge 1$, $\Gtilde_C^A$ is isomorphic to the group $G_K\ltimes \pi^\un(A)$
described in the introduction.

Since $W_{-r}\pi^\un$ is the $r$th term of its lower central series, each
$W_{-r}\pi^\un$ is a normal subgroup of $\cG_C$. Consequently,
$W_{-r}\pi^\un(A)$ is a normal subgroup of $\Gtilde_C^A$. One thus has the
truncated extensions
$$
1 \to (\pi^\un/W_{-r}\pi^\un)(A) \to \Gtilde_C^A/(W_{-r}\pi^\un(A))
\to G_K \to 1.
$$
for each extension field $A$ of $\Ql$. Define $\Hnab^1(G_K,\pi^\un/W_{-r})(A)$
to be the set of sections of $\Gtilde_C^A \to G_K$, modulo conjugation by
$\pi^\un(A)/W_{-r}$.

\begin{proposition}
\label{prop:nab_iso}
Suppose that $r \in [1,\infty]$. If $H^1(G_K,\Gr^W_{-s}\pi^\un)$ is finite
dimensional when $l\le s < r$, then the restriction mapping
$$
\Hnab^1(\cG_K,\pi^\un/W_{-r})(\Ql) \to \Hnab^1(G_K,\pi^\un/W_{-r})(\Ql)
$$
is a bijection.
\end{proposition}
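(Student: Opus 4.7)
The plan is to construct an inverse to the restriction map using the universal property of weighted completion, which reduces the assertion to a Zariski-density argument. Two preliminary observations are essential. First, the quotient $\cG_C/W_{-r}\pi^\un$ is itself a negatively weighted extension of $\GSp(H)$, since it is the quotient of the weighted completion $\cG_C$ by a $\cG_C$-invariant subgroup of its prounipotent radical. Second, the canonical homomorphism $G_K \to \cG_K(\Ql)$ has Zariski dense image, because its composition with $\cG_K \to \GSp(H)$ is the Zariski dense monodromy $\rho$.

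For surjectivity of the restriction map, take a continuous section $s : G_K \to \Gtilde_C^{\Ql}/W_{-r}\pi^\un(\Ql)$. Composing with the canonical projection to $(\cG_C/W_{-r}\pi^\un)(\Ql)$ gives a continuous homomorphism lifting $\rho$ through the negatively weighted extension $\cG_C/W_{-r}\pi^\un$ of $\GSp(H)$. The universal property of $\cG_K$ as the weighted completion of $G_K$ then produces a unique morphism $\sigma : \cG_K \to \cG_C/W_{-r}\pi^\un$ of $\Ql$-group schemes extending this. Composing $\sigma$ with the projection $\cG_C/W_{-r}\pi^\un \to \cG_K$ gives a $\Ql$-group morphism $\cG_K \to \cG_K$ which agrees with the identity on the Zariski dense image of $G_K$, and hence is the identity. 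So $\sigma$ is an algebraic section; by construction its restriction to $G_K$ is $s$ under the fiber product identification of $\Gtilde_C^{\Ql}/W_{-r}\pi^\un(\Ql)$ with the pullback of $G_K \to \cG_K(\Ql) \leftarrow (\cG_C/W_{-r}\pi^\un)(\Ql)$.

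For injectivity, suppose that algebraic sections $\sigma_1, \sigma_2 : \cG_K \to \cG_C/W_{-r}\pi^\un$ restrict to continuous sections that are $\pi^\un(\Ql)/W_{-r}$-conjugate by some $u$. Then $\sigma_2$ and $\mathrm{conj}_u \circ \sigma_1$ are two morphisms of $\Ql$-group schemes from $\cG_K$ to $\cG_C/W_{-r}\pi^\un$ which agree on $G_K \subset \cG_K(\Ql)$, hence coincide. Thus $\sigma_1$ and $\sigma_2$ already lie in the same $\pi^\un(\Ql)/W_{-r}$-orbit as algebraic sections. The case $r = \infty$ follows by passing to the inverse limit over finite $r$.

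The hardest point to nail down rigorously is confirming that the universal property of weighted completion genuinely applies in this setting: one must check that the map $G_K \to (\cG_C/W_{-r}\pi^\un)(\Ql)$ built from $s$ is continuous in the strong sense required by continuous weighted completion (this reduces to the continuity of the pullback projection $\Gtilde_C^{\Ql}/W_{-r}\pi^\un(\Ql) \to (\cG_C/W_{-r}\pi^\un)(\Ql)$ built into the definition), and that $\cG_C/W_{-r}\pi^\un$ is genuinely a negatively weighted extension of $\GSp(H)$ with respect to the standard cocharacter. The finiteness hypothesis on $H^1(G_K, \Gr^W_{-s}\pi^\un)$ plays no role in the set-theoretic bijection itself; it is present only to ensure, via Theorem~\ref{thm:representability}, that $\Hnab^1(\cG_K, \pi^\un/W_{-r})$ is genuinely represented by an affine $\Ql$-scheme whose $\Ql$-points are the conjugacy classes of algebraic sections just described.
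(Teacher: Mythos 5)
Your proof is correct and takes essentially the same route as the paper's: both establish the bijection by appealing to the universal mapping property of weighted completion (to pass from a continuous section over $G_K$ to an algebraic section over $\cG_K$) and to the pullback-square structure of $\Gtilde_C^{\Ql}$ over $G_K$ and $\cG_C(\Ql)$ (to pass back), and both note that the finiteness hypothesis serves only to guarantee representability of $\Hnab^1$. Your exposition makes two points explicit that the paper leaves implicit --- that $\cG_C/W_{-r}\pi^\un$ is genuinely a negatively weighted extension of $\GSp(H)$, and the Zariski-density argument showing that the induced morphism $\cG_K\to\cG_K$ is the identity --- but the underlying argument is the same.
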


Conditions that ensure the finite dimensionality of $H^1(G_K,\Gr^W_{-r}\pi^\un)$
are given in Section~\ref{sec:lie_coho_comps}. The condition is satisfied, for
example, when $k$ is a number field and $g\ge 3$.

\begin{proof}
Proposition~\ref{prop:homology} implies that $H^1(G_K,\Gr^W_{-s}\pi^\un)\cong
H^1(\cG_K,\Gr^W_{-s}\pi^\un)$.  The finite dimensionality of
$H^1(G_K,\Gr^W_{-s}\pi^\un)$ thus ensures the existence of the scheme
$\Hnab^1(\cG_K,\pi^\un/W_{-r})$. Suppose that $s$ is a section of
$\Gtilde^\Ql_C/W_{-r}\pi^\un(\Ql) \to G_K$. Then the universal mapping property
of weighted completion implies that
$$
\xymatrix{
G_K \ar[r]^(0.3)s & \Gtilde^{\Ql}_C/W_{-r}\pi^\un(\Ql) \ar[r] &
(\cG_C/W_{-r}\pi^\un)(\Ql)
}
$$
induces a section $\sigma$ of $\cG_C \to \cG_K/W_{-r}\pi^\un(\Ql)$ such that the
diagram
$$
\xymatrix{
\Gtilde^\Ql_C/W_{-r}\pi^\un(\Ql) \ar[d] & G_K \ar[l]_(0.32)s \ar[d] \cr
\cG_C(\Ql)/W_{-r}\pi^\un(\Ql) & \cG_K(\Ql) \ar[l]_(0.32)\sigma
}
$$
commutes. On the other hand, since (\ref{eqn:pullback}) is a pullback square, so
is its reduction mod $W_{-r}\pi^\un(\Ql)$. Consequently, each section $\sigma$
of $(\cG_C/W_{-r}\pi^\un)(\Ql) \to \cG_K$ induces a section $s$ of
$\Gtilde^\Ql_C/W_{-r}\pi^\un(\Ql) \to G_K$ such that the diagram above commutes.
The results follows as two sections $s_1$ and $s_2$ of $\Gtilde^\Ql_C \to G_K$
are conjugate by an element of $\pi^\un(\Ql)$ if and only if the corresponding
sections $\sigma_1$ and $\sigma_2$ of $\cG_C \to \cG_K$ are conjugate by an
element (necessarily the same mod $W_{-r}\pi^\un(\Ql)$) of $\pi^\un(\Ql)$.
\end{proof}

\section{Cohomology Computations}
\label{sec:coho_comps}

Theorems~\ref{thm:zero_points} and \ref{thm:sec_pos} are proved using the
non-abelian cohomology described in Section~\ref{sec:nab}. In order to compute
$\Hnab^1(\cG_K,\pi^\un)$, we need to compute $H^1(\g_K,\Gr^W_{-r}\p)$ and to
bound $H^2(\g_K,\Gr^W_{-r}\p)$ for each $r>0$. This we do in the next section.
But first we need to compute and/or bound the cohomology groups $H^\dot(G_K,V)$
in low degrees for all negatively weighted $\GSp(H)$-modules $V$ as these
control the cohomology groups $H^\dot(\g_K,\Gr^W_{-r}\p)$ in low degree. In this
section we do this by investigating the degree to which the restriction mapping
$$
\Het^\dot(\M_{g,n/k}[m],\V) \to H^\dot(G_K,V)
$$
is an isomorphism. Remarkably, it is an isomorphisms in degree 1 and an
injection in degree 2, except when the pullback of $\V$ to $\M_{g,n/\kbar}[m]$
contains a trivial local system.

Recall that $\cC_{g/k}^n[m]$ denotes the $n$th power of the universal (complete)
curve $\cC_g$ over $\M_{g/k}[m]$. By convention, $\cC_{g/k}^0[m]=\M_{g/k}[m]$.
Note that $\M_{g,n/k}[m]$ is a Zariski open subset of $\cC_{g/k}^n[m]$.

The irreducible $\GSp(H_\Ql)$-module $H_\lambda(r)$ gives rise to a $G_K$-module
of weight $-|\lambda|-2r$ by composition with $\rho$. Denote the local system
over $\M_{g,n/k}[m]$ corresponding to the $\GSp(H)$-module $V$ by $\V$. Call an
irreducible $\GSp(H_\Ql)$ module $V$ {\em geometrically non-trivial} if it is
non-trivial when restricted to $\Sp(H_\Ql)$.

The following key result follows from \cite[Cor.~6.2]{hain:density} and standard
comparison theorems.

\begin{theorem}
\label{thm:gysin}
Suppose that $k$ is an algebraically closed field of characteristic zero.
Suppose that $n\ge 0$, $m\ge 1$, and that $g=3$ or $g\ge 5$. If $U$ is a Zariski
open subset of $\cC_{g/k}^n[m]$, then for all non-trivial, irreducible,
geometrically non-trivial representations $V$ of $\GSp(H)$, the map
$$
\Het^j(\cC_g^n[m],\V) \to \Het^j(U,\V)
$$
induced by the inclusion $U\hookrightarrow \cC_g^n[m]$ is an isomorphism when
$j=0,1$ and an injection when $j=2$.
\end{theorem}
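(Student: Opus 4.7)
The plan is to proceed by a stratification and purity argument based on the localization long exact sequence in étale cohomology. Set $X = \cC_{g/k}^n[m]$ and $Z = X \setminus U$; in characteristic zero, generic smoothness and Noetherian induction give a finite stratification of $Z$ by smooth, irreducible, locally closed subvarieties $\{Z_\alpha\}$ of codimensions $c_\alpha \ge 1$ in $X$. Cohomological purity for étale cohomology with support yields
$$
H^j_{Z_\alpha}(X, \V) \cong H^{j - 2c_\alpha}(Z_\alpha, \V|_{Z_\alpha}(-c_\alpha)),
$$
and the stratification spectral sequence (equivalently, a successive application of the localization long exact sequence of pairs) assembles these into $H^j_Z(X, \V)$. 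Inserting the result into
$$
\cdots \to H^j_Z(X, \V) \to H^j(X, \V) \to H^j(U, \V) \to H^{j+1}_Z(X, \V) \to \cdots,
$$
the theorem reduces to the vanishing of $H^j_Z(X, \V)$ for $j = 0, 1, 2$.

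A quick count of degrees handles $j = 0$ and $j = 1$ for free: the stratum contributions $H^{-2c_\alpha}$ and $H^{1-2c_\alpha}$ both vanish as soon as $c_\alpha \ge 1$. For $j = 2$, only codimension-$1$ strata contribute, and each contributes $H^0(Z_\alpha, \V|_{Z_\alpha}(-1))$; strata of codimension $\ge 2$ contribute only cohomology in negative degree. The theorem therefore reduces to
$$
H^0(Z_\alpha, \V|_{Z_\alpha}) = 0
$$
for every smooth, irreducible, codimension-$1$ locally closed subvariety $Z_\alpha \subset X$. Such a $Z_\alpha$ has dimension $3g - 4 + n$ and is in particular positive-dimensional.

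Since $k$ is algebraically closed, the left side equals $V^{\im \rho_\alpha}$, where $\rho_\alpha \colon \pi_1(Z_\alpha) \to \pi_1(X) \to \Sp(H_\Ql)$ is the monodromy representation (the composite lands in $\Sp(H_\Ql)$ rather than $\GSp(H_\Ql)$ because $k = \kbar$). I would now apply \cite[Cor.~6.2]{hain:density} to conclude that $\im \rho_\alpha$ is Zariski dense in $\Sp(H_\Ql)$: if the image of $Z_\alpha$ under $X \to \M_{g/k}[m]$ is all of $\M_{g/k}[m]$, density is classical, and otherwise the density theorem applies to the proper closed image subvariety. Since $V$ is irreducible and geometrically non-trivial, $V^{\Sp(H_\Ql)} = 0$, and Zariski density forces $V^{\im \rho_\alpha} = 0$, completing the argument.

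The principal obstacle is the final invocation of the density theorem: its hypotheses must be verified for arbitrary positive-dimensional subvarieties of $\M_{g/k}[m]$, and it is this step that ultimately rests on Putman's computation \cite{putman:pic} of the Picard groups of level coverings of $\M_{g,n}$, whose range of validity is precisely what imposes the genus restriction $g = 3$ or $g \ge 5$ in the statement. The remaining ingredients --- stratification with smooth strata, purity, and the degree count --- are routine once the geometric input from \cite{hain:density} is granted.
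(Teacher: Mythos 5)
Your proof is correct and is consistent with the paper's approach, although the paper's ``proof'' of Theorem~\ref{thm:gysin} is only the single sentence preceding it: the theorem is said to follow from \cite[Cor.~6.2]{hain:density} and ``standard comparison theorems.'' Your stratification-plus-purity reduction is exactly the routine part being delegated to the reader, and your ultimate appeal to the density theorem of \cite{hain:density} is the same external input the paper cites. The degree count is right: codimension-$c$ strata contribute $H^{j-2c}$ of the restriction, which vanishes for $j\le 2$ unless $c=1$ and $j=2$, so everything reduces to $H^0(Z_\alpha,\V|_{Z_\alpha})=0$ for codimension-one strata, i.e.\ to $V^{\im\rho_\alpha}=0$.

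Two places to tighten. First, \cite{hain:density} is a theorem about topological monodromy over $\C$ (stratified Morse theory plus Teichm\"uller theory), so you still owe the passage to \'etale monodromy over an arbitrary algebraically closed $k$ of characteristic zero. The standard route is: $\pi_1^\et(Z_\alpha^\an)$ is the profinite completion of $\pi_1^\top(Z_\alpha^\an)$, so finite index of the topological image in $\Sp(H_\Z)$ gives Zariski density of the \'etale monodromy in $\Sp(H_\Ql)$; then invariance of \'etale cohomology of lisse sheaves under extension of algebraically closed base field in characteristic zero handles general $k$. This is precisely what the paper's phrase ``standard comparison theorems'' covers, and you should make it explicit rather than leaving it implicit in ``I would now apply \cite[Cor.~6.2]{hain:density}.''

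Second, your dichotomy---``if the image of $Z_\alpha$ in $\M_{g/k}[m]$ is everything, density is classical; otherwise apply the density theorem to the proper closed image''---has a soft spot in the first branch: dominance of $Z_\alpha\to\M_{g/k}[m]$ does not by itself force $\pi_1(Z_\alpha)\to\pi_1(\M_{g/k}[m])$ to be surjective, so ``classical'' is doing undeclared work there. Judging from the paper's Remark~\ref{rem:variant}, the cited \cite[Cor.~6.2]{hain:density} is already formulated for divisors in $\cC_{g}^n[m]$ itself and handles both cases uniformly; it is cleaner to invoke it at that level rather than reconstructing the case split.
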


Since $\M_{g,n/k}[m]$ is a Zariski open subset of $\cC_g^n[m]$, the result
remains true when $\cC_g^n[m]$ is replaced by $\M_{g,n}[m]$ in the statement
of the Theorem.

\begin{remark}
\label{rem:variant}
Theorem~\ref{thm:gysin} holds in a more general situation, which is described in
the following paragraph. It allows the reduction of the transcendence degree of
the field $K$ in Theorem~\ref{thm:sec_pos} from $3g-3+n$ to $3$. The key point
is that a more general version of the previous theorem implies that the
cohomological computations in this section hold in this more general situation.
For clarity of exposition, we do not work out the details in this more general
situation, but leave them to the interested reader.

Suppose that $k$ is an algebraically closed field of characteristic zero. First
let $M$ be a generic linear section of $\M_{g/k}[m]$ of dimension $\ge 3$.
Denote the restriction of the $n$th power of the universal curve to $M$ by
$C_M^n$. For a suitable imbedding $C_M^n \to \P^N$,
\cite[Thm.~5.1]{hain:density} implies (cf.\ \cite[Thm.~6.1]{hain:density}) that
a generic section $T$ of $C_M^n$ by a complete intersection of sufficiently high
multi-degree will have the property that for all non-trivial, irreducible,
geometrically non-trivial representations $V$ of $\GSp(H)$, and all Zariski open
subsets $U$ of $T$, the map $\Het^j(T,\V) \to \Het^j(U,\V)$ induced by the
inclusion $U\hookrightarrow T$ is an isomorphism when $j=0,1$ and an injection
when $j=2$. \qed
\end{remark}

Set $K=k(\M_{g,n/k}[m])$ and let $C/K$ be the restriction of the universal curve
to $\Spec K$.

\begin{proposition}
\label{prop:coho_comp}
Suppose that $g\ge 5$ and that $k$ is a field of characteristic zero for which
the image of the $\ell$-adic cyclotomic character $\chi_\ell : G_k \to \Zlx$ is
infinite.
\begin{enumerate}

\item\label{item1}
If $r\neq 1$, then $H^1(G_K,\Ql(r)) \cong H^1(G_k,\Ql(r))$.

\item\label{item2}
If $V$ is a geometrically non-trivial, irreducible representation of
$\GSp(H_\Ql)$ of negative weight, then
$$
H^1(G_K,V) \cong \Het^1(\M_{g,n/k}[m],\V) \cong
\begin{cases}
\Ql^n & V = H_\Ql;\cr
\Ql & V = \Lambda^3_0 H_\Ql;\cr
0 & \text{otherwise}.
\end{cases}
$$

\item\label{item3}
If $V$ is a $\GSp(H)$-module satisfying $V^{\Sp(H)}=0$, then there is a natural
inclusion
$$
H^1(G_k,H^1(G_K^\geom,V))\hookrightarrow H^2(G_K,V).
$$

\item\label{item4}
If $V$ is a $\GSp(H)$-module satisfying $V^{\Sp(H)}=0$, then the restriction
mapping $\Het^2(\M_{g,n/k}[m],\V) \to H^2(G_K,V)$ is injective.

\end{enumerate}
\end{proposition}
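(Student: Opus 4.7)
The strategy uses the Hochschild--Serre spectral sequence
$$
E_2^{p,q} = H^p(G_k, H^q(G_K^\geom, V)) \Rightarrow H^{p+q}(G_K, V)
$$
associated to the extension (\ref{eqn:extn}), compared, via the morphism $\Spec K \to \M_{g,n/k}[m]$ induced by the generic point, with the analogous spectral sequence for $\M_{g,n/k}[m] \to \Spec k$. Theorem~\ref{thm:gysin} will provide the crucial geometric input bridging the two.

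For parts (\ref{item2})--(\ref{item4}), Zariski density of $\pi_1(\M_{g,n/\kbar}[m]) \to \Sp(H_\Zl)$ combined with the hypotheses on $V$ yields $V^{G_K^\geom} = 0$. For (\ref{item2}), the Hochschild--Serre sequence then collapses to $H^1(G_K, V) \cong H^1(G_K^\geom, V)^{G_k}$; Theorem~\ref{thm:gysin} identifies $H^1(G_K^\geom, V)$ with $\Het^1(\M_{g,n/\kbar}[m], \V)$ because restriction from $\M_{g,n/\kbar}[m]$ to each Zariski open is an isomorphism on $H^1$; and Hochschild--Serre for $\M_{g,n/k}[m] \to \Spec k$ identifies the $G_k$-invariants with $\Het^1(\M_{g,n/k}[m], \V) \cong \Hom_{\GSp(H)}(H_1(\u_{g,n}), V)$ via Proposition~\ref{prop:homology}. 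Proposition~\ref{prop:geom2arith} and Theorem~\ref{thm:presentation} then give the $\GSp(H)$-decomposition $\Gr^W_{-1}\u_{g,n} \cong \Lambda^3_0 H \oplus H^{\oplus n}$, yielding the explicit values. For (\ref{item3}), $E_2^{p,0} = 0$ together with $E_2^{3,0} = 0$ forces $E_\infty^{1,1} = E_2^{1,1} = H^1(G_k, H^1(G_K^\geom, V))$ and $F^2 H^2(G_K, V) = 0$, giving the inclusion. For (\ref{item4}), compare the two spectral sequences: Theorem~\ref{thm:gysin} makes the map on $E_2^{1,1}$ an isomorphism and the map on $E_2^{0,2}$ an injection, and a direct diagram chase applied to the morphism of short exact sequences $0 \to E_\infty^{1,1} \to H^2 \to E_\infty^{0,2} \to 0$ gives the required injectivity on $H^2$.

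The main obstacle is (\ref{item1}), where Theorem~\ref{thm:gysin} is unavailable because $\Ql(r)$ is $\Sp(H)$-trivial. The plan is to realize $H^1(G_K^\geom, \Ql(r)) = \varinjlim_D \Het^1(\M_{g,n/\kbar}[m] - D, \Ql(r))$ as a colimit over $k$-rational divisors $D$, and use the Gysin sequence
$$
0 \to \Het^1(\M_{g,n/\kbar}[m], \Ql(r)) \to \Het^1(\M_{g,n/\kbar}[m] - D, \Ql(r)) \to \Het^0(D_{/\kbar}, \Ql(r-1))
$$
to bound $H^1(G_K^\geom, \Ql(r))^{G_k}$. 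The first term vanishes for $g \ge 3$: applying the universal property of the relative completion of $\G_{g,n}[m]$ to $\Ql$-valued characters, together with the fact that $H_1(\u_{g,n}^\geom)$ has no $\Sp(H)$-invariants (Proposition~\ref{prop:level-indep}), yields $\Het^1(\M_{g,n/\kbar}[m], \Ql) = 0$. On the third term, the infinite image of $\chi_\ell$ forces $\Ql(r-1)^{G_{k'}} = 0$ for every finite extension $k'/k$ and every $r \neq 1$, which kills the $G_k$-invariants on each $G_k$-orbit of geometric components of $D$. Combined with the Hochschild--Serre five-term exact sequence
$$
0 \to H^1(G_k, \Ql(r)) \to H^1(G_K, \Ql(r)) \to H^1(G_K^\geom, \Ql(r))^{G_k},
$$
this yields (\ref{item1}).
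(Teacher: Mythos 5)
Your proof follows the paper's argument essentially step for step: the Hochschild--Serre spectral sequence for $1 \to G_K^\geom \to G_K \to G_k \to 1$ (and its comparison with the one for $\M_{g,n/k}[m] \to \Spec k$), Theorem~\ref{thm:gysin} to pass from the open subsets $U$ to $\M_{g,n/\kbar}[m]$ for geometrically non-trivial coefficients, and the Gysin sequence over $\kbar$ together with $\Het^1(\M_{g,n/\kbar}[m],\Ql)=0$ and the infinitude of the cyclotomic image to kill $H^0(G_k,H^1(G_K^\geom,\Ql(r)))$ for $r\neq 1$. The only surface-level difference is that you derive $\Het^1(\M_{g,n/\kbar}[m],\Ql)=0$ from the structure of $H_1(\u_{g,n}^\geom)$ rather than citing the published computation as the paper does.
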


\begin{proof}
The extension (\ref{eqn:extn}) gives rise to a spectral sequence
\begin{equation}
\label{eqn:ss}
E_2^{s,t} = H^s(G_k,H^t(G_K^\geom,V)) \Rightarrow H^{s+t}(G_K,V).
\end{equation}
There is therefore an exact sequence
\begin{multline}
\label{eqn:left_exact}
0 \to H^1(G_k,H^0(G^\geom_K,V)) \to H^1(G_K,V) \cr
\to H^0(G_k,H^1(G_K^\geom,V)) \to H^2(G_k,H^0(G^\geom_K,V)).
\end{multline}
Observe that
\begin{equation*}
%\label{eqn:limit}
H^\dot(G_K^\geom,V) \cong \varinjlim_U \Het^\dot(U,\V),
\end{equation*}
where $U$ ranges over the Zariski open subsets of $T:=\M_{g,n/\kbar}[m]$ that
are defined over $k$.

The Gysin sequence implies that if $U= T-Z$, then there is
an exact sequence
$$
0 \to \Het^1(T,\Ql) \to \Het^1(U,\Ql) \to \Ind_{G_{k_S}}^{G_k}\Ql(-1)
$$
of $G_k$-modules, where $S$ is the set of components of $Z$ that have
codimension $1$ in $T$ and $G_{k_S}$ is the kernel of the action of $G_k$ on
$S$. Since $\Het^1(T,\Q)=0$ for all $g\ge 3$, \cite[Prop.~5.2]{hain:msri}, this
implies that $\Het^1(U,\Ql)$ is a $G_k$-submodule of
$\Ind_{G_{k_S}}^{G_k}\Ql(-1)$. Therefore $H^0(G_k,\Het^1(U,\Ql(r)))$ vanishes
for all $r\neq 1$, and $H^0(G_k,H^1(G_K^\geom,\Ql(r)))$ thus vanishes as well.
Assertion (\ref{item1}) now follows from the exactness of
(\ref{eqn:left_exact}).

It suffices to prove (\ref{item3}) and (\ref{item4}) when $V$ is a geometrically
non-trivial, irreducible $\GSp(H)$-module. For the remainder of the proof, we
will assume this to be the case. This assumption implies that
$H^0(G_K^\geom,V)=0$, so that the bottom row of the spectral sequence
(\ref{eqn:ss}) vanishes. Theorem~\ref{thm:gysin} and
\cite[Proposition~5.2]{hain:msri} imply that $\Spec K \to U \to T$ induce
isomorphisms
\begin{gather*}
H^1(G_K^\geom,H_\Ql(r)) \cong \Het^1(U,\H_\Ql(r))
\cong \Het^1(T,\H_\Ql(r)) \cong \Ql(r)^n\cr
H^1(G_K^\geom,\Lambda^3_0 H_\Ql(r)) \cong \Het^1(U,\Lambda^3_0\H_\Ql(r))
\cong\Het^1(T,\Lambda^3_0\H_\Ql(r))
\cong \Ql(r)
\end{gather*}
and $H^1(G_K,V) = \Het^1(U,\V)=\Het^1(T,\V)=0$ for all other geometrically
non-trivial, irreducible $\GSp(H_\Ql)$-modules $V$. Assertion (\ref{item2})
now follows directly from (\ref{eqn:left_exact}).

As observed above, the fact that $V$ is a geometrically non-trivial, irreducible
representation implies that the bottom row of the spectral sequence
(\ref{eqn:ss}) vanishes. Consequently, there is an exact sequence
\begin{multline}
\label{eqn:deg_2}
0 \to H^1(G_k,H^1(G_K^\geom,V)) \to H^2(G_K,V) \cr
\to H^0(G_k,H^2(G_K^\geom,V))
\to H^2(G_k,H^1(G_K^\geom,V)).
\end{multline}
Assertion (\ref{item3}) follows.

To prove (\ref{item4}), consider the spectral sequence of \'etale cohomology
that converges to $\Het^\dot(T,\V)$. Since $V$ is a
geometrically non-trivial, irreducible $\GSp(H)$-module,
$H^0(T\otimes_k\kbar,\V)=0$. The map $\Spec K \to T$ induces a map of spectral
sequence, and thus also a map of exact sequences
$$
\xymatrix{
0 \ar[r] & H^1(G_k,\Het^1(T\otimes_k \kbar,\V)) \ar[d]^\cong\ar[r] &
\Het^2(T,\V) \ar[r]\ar[d] & H^0(G_k,\Het^2(T\otimes_k\kbar,\V)) \ar@{^{(}->}[d]
\cr
0 \ar[r] & H^1(G_k,H^1(G_K^\geom,V)) \ar[r] & H^2(G_K,V) \ar[r] &
H^0(G_k,H^2(G_K^\geom,V))
%% ) %% to balance the parenthesis above
}
$$
whose second row is part of (\ref{eqn:deg_2}). Theorem~\ref{thm:gysin} implies
that the left-hand vertical map is an isomorphism and that the right-hand
vertical map is an inclusion. Assertion (\ref{item4}) follows.
\end{proof}

Each $\Gr^W_{-r}\pi^\un$ is a $\GSp(H)$-module. The $G_K$ action on it
factors through the monodromy representation $G_K \to \GSp(H_\Ql)$. Since the
image of $G_K^\geom$ is contained in $\Sp(H)$, the geometric invariants
$(\Gr^W_{-r}\pi^\un)^{\Sp(H)}$ is a $G_k$-module which vanishes when $r$
is odd and is isomorphic to a sum of copies of $\Ql(t)$ when $r=2t$ is even.

\begin{corollary}
\label{cor:essentials}
Suppose that $g\ge 5$. If $r\ge 2$, then
$$
H^1(G_K,\Gr^W_{-r}\pi^\un)
\cong H^1(G_k,(\Gr^W_{-r}\pi^\un)^{\Sp(H)})
$$
In particular, when $r>1$ is odd, then $H^1(G_K,\Gr^W_{-r}\pi^\un)$
vanishes. If $r\ge 2$, then the cup product
$$
H^1(G_K,H) \otimes H^1(G_K,\Gr^W_{-2r}\pi^\un)
\to H^2(G_K,\Gr^W_{-2r-1}\pi^\un),
$$
induced by the commutator $H\otimes \Gr^W_{-2r}\pi^\un \to
\Gr^W_{-2r-1}\pi^\un$, is injective.
\end{corollary}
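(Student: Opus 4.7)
The plan is to decompose $\Gr^W_{-r}\pi^\un$ as a $\GSp(H)$-module into the sum of its $\Sp(H)$-invariant part $Z_r := (\Gr^W_{-r}\pi^\un)^{\Sp(H)}$ and a complement $V_r$ that is a sum of geometrically non-trivial irreducibles, and then to apply Proposition~\ref{prop:coho_comp} summand-by-summand. Every irreducible component of $V_r$ is a $\GSp(H)$-irreducible of weight $-r\le -2$, hence is neither $H$ nor $\Lambda^3_0 H$ (both of weight $-1$); Proposition~\ref{prop:coho_comp}(ii) therefore gives $H^1(G_K,V_r)=0$. The invariant part $Z_r$ is a sum of copies of $\Ql(r/2)$, and it vanishes whenever $r$ is odd since $\Ql(t)$ has (even) weight $-2t$. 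When $r=2$, Corollary~\ref{cor:trivial_reps} shows $Z_2=0$; for even $r\ge 4$, the twist index $r/2\ge 2\ne 1$, so Proposition~\ref{prop:coho_comp}(i) identifies $H^1(G_K,Z_r)$ with $H^1(G_k,Z_r)$. Combining these gives the first assertion, and the second follows immediately since $Z_r=0$ for odd $r>1$.

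For the cup product assertion, I will apply Proposition~\ref{prop:coho_comp}(iii) with $V=\Gr^W_{-2r-1}\pi^\un$: since this module has odd weight, $V^{\Sp(H)}=0$, and the proposition yields an inclusion $H^1(G_k,H^1(G_K^\geom,V))\hookrightarrow H^2(G_K,V)$. The plan is to show that the cup product factors injectively through this subspace via the Hochschild--Serre spectral sequence. Writing $Z=(\Gr^W_{-2r}\pi^\un)^{\Sp(H)}\cong \Ql(r)^{m_r}$, the first part shows $H^1(G_K,\Gr^W_{-2r}\pi^\un)\cong H^1(G_k,Z)$ sits in filtration degree $(1,0)$, while $H^1(G_K,H)$ injects into $H^0(G_k,H^1(G_K^\geom,H))$ and so lies in degree $(0,1)$. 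The cup product therefore lands in $E_\infty^{1,1}\subseteq H^1(G_k,H^1(G_K^\geom, H\otimes \Gr^W_{-2r}\pi^\un))$, and composing with the commutator yields a class in $H^1(G_k,H^1(G_K^\geom,\Gr^W_{-2r-1}\pi^\un))$.

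The crux is then to verify injectivity of the composite. Since $Z$ is a trivial $G_K^\geom$-module, Kunneth gives $H^1(G_K^\geom,H)\otimes Z\cong H^1(G_K^\geom,H\otimes Z)$, so it suffices to check that the commutator $[\,\cdot\,,\,\cdot\,]\colon H\otimes Z\to \Gr^W_{-2r-1}\p$ is injective as a map of $\GSp(H)$-modules. Since $H$ generates the graded Lie algebra $\Gr^W_\dot\p$ and $\p$ has trivial centre by Hattori--Stallings, for any nonzero $z\in Z$ the map $[\,\cdot\,,z]\colon H\to \Gr^W_{-2r-1}\p$ is nonzero, hence by Schur's lemma embeds $H$ as an irreducible summand; if $\sum_i a_i[\,\cdot\,,z_i]=0$, then $\sum_i a_iz_i$ is central in $\Gr^W_\dot\p$, forcing all $a_i=0$. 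Semisimplicity of $\GSp(H)$-modules then exhibits $H\otimes Z$ as a direct summand of $\Gr^W_{-2r-1}\p$, so $H^1(G_K^\geom,H\otimes Z)$ is a $G_k$-direct summand of $H^1(G_K^\geom,\Gr^W_{-2r-1}\p)$, and the map survives passage to $H^1(G_k,-)$. The main obstacle is the bookkeeping in the Hochschild--Serre spectral sequence needed to pin down the image of the cup product inside $H^1(G_k,H^1(G_K^\geom,H\otimes Z))$ and to match this with the summand structure just described; once this matching is made, the injectivity on $H^2(G_K,\Gr^W_{-2r-1}\pi^\un)$ follows formally by combining Proposition~\ref{prop:coho_comp}(iii) with the direct summand decomposition.
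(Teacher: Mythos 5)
Your proof follows essentially the same route as the paper: decompose $\Gr^W_{-r}\pi^\un$ into its $\Sp(H)$-invariants and a geometrically non-trivial complement and apply Proposition~\ref{prop:coho_comp} summand-by-summand for the first assertion; then for the cup product use Schur's lemma and triviality of the centre of $\Gr^W_\dot\p$ to exhibit $H\otimes Z$ as a direct summand of $\Gr^W_{-2r-1}\p$ and reduce to the injection of Proposition~\ref{prop:coho_comp}(iii). The ``bookkeeping'' you flag as the remaining obstacle is exactly what the paper's proof supplies explicitly, namely the identification (using that $H^1(G_K^\geom,H)$ is a trivial $G_k$-module and that $Z$ is a trivial $G_K^\geom$-module) of the cup product with the composite $H^1(G_K^\geom,H)\otimes H^1(G_k,Z)\cong H^1(G_k,H^1(G_K^\geom,H\otimes Z))\hookrightarrow H^2(G_K,H\otimes Z)$, whose last map is precisely the inclusion from item~(iii).
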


\begin{proof}
Suppose that $r\ge 2$. Write
$$
\Gr^W_{-r}\pi^\un = (\Gr^W_{-r}\pi^\un)^{\Sp(H)} \oplus V
$$
where $V$ is a $\GSp(H)$-module. Since $V$ has weight $<-1$ and since each of
its irreducible components is geometrically non-trivial,
Proposition~\ref{prop:coho_comp} implies that $H^1(G_K,V)=0$. It follows that
$$
H^1(G_K,\Gr^W_{-r}\pi^\un) \cong
H^1(G_K,(\Gr^W_{-r}\pi^\un)^{\Sp(H)}).
$$
Since $(\Gr^W_{-r}\pi^\un)^{\Sp(H)}$ vanishes when $r$ is odd and when $r=2$,
and is a direct sum of copies of $\Ql(t)$ when $r=2t$,
Proposition~\ref{prop:coho_comp} implies that, when $r\ge 2$,
$$
H^1(G_K,(\Gr^W_{-r}\pi^\un)^{\Sp(H)}) \cong
H^1(G_k,(\Gr^W_{-r}\pi^\un)^{\Sp(H)}).
$$
This proves the first assertion.

Schur's Lemma implies that all $\Sp(H)$-submodules of $H\otimes
(\Gr^W_{-r}\pi^\un)^{\Sp(H)}$ are of the form $H\otimes B$, where $B$ is a
subspace of $(\Gr^W_{-r}\pi^\un)^{\Sp(H)}$. Since the Lie algebra
$\Gr^W_\dot\pi^\un$ is generated by $H=\Gr^W_{-1}\pi^\un$ and has trivial center
\cite[p.~201]{hattori-stallings}, the commutator map $H\otimes
(\Gr^W_{-r}\pi^\un)^{\Sp(H)} \to \Gr^W_{-r-1}\pi^\un$ is injective. Since
$\Gr^W_{-r-1}\pi^\un$ is a semi-simple $G_K$-module,
$$
H^2\big(G_K,H\otimes (\Gr^W_{-r-1}\pi^\un)^{\Sp(H)}\big)
\to H^2(G_K,\Gr^W_{-r-1}\pi^\un)
$$
is injective. To prove the last assertion, it thus suffices to show that the cup
product
\begin{equation}
\label{eqn:cup}
H^1(G_K,H) \otimes H^1\big(G_K,(\Gr^W_{-r}\pi^\un)^{\Sp(H)}\big)
\to H^2(G_K,H\otimes (\Gr^W_{-r}\pi^\un)^{\Sp(H)})
\end{equation}
is injective.

Proposition~\ref{prop:coho_comp}(\ref{item2}) implies that $H^1(G_K,H) =
H^1(G_K^\geom,H)$. This is a trivial $G_k$-module. The cup product
(\ref{eqn:cup}) is thus the composite of the mappings:
\begin{align*}
& H^1(G_K,H) \otimes H^1\big(G_k,(\Gr^W_{-r}\pi^\un)^{\Sp(H)}\big) \cr
&\overset{\simeq}{\to}
H^1(G^\geom_K,H) \otimes H^1\big(G_k,(\Gr^W_{-r}\pi^\un)^{\Sp(H)}\big) \cr
&\cong
H^1\big(G_k,H^1(G^\geom_K,H)\otimes(\Gr^W_{-r}\pi^\un)^{\Sp(H)}\big) \cr
&\cong
H^1\big(G_k,H^1(G^\geom_K,H\otimes (\Gr^W_{-r}\pi^\un)^{\Sp(H)})\big) \cr
&\subseteq
H^2(G_K,H\otimes (\Gr^W_{-r}\pi^\un)^{\Sp(H)}).
\end{align*}
which is injective as Proposition~\ref{prop:coho_comp}(\ref{item4}) implies
that the last mapping is injective.
\end{proof}

\section{Computation of $H^\dot(\g_K,\Gr^W_\dot\p)$}
\label{sec:lie_coho_comps}

The computation of the cohomology groups $H^\dot(\g_K,\Gr^W_{-r}\p)$ in low
degrees is needed for the application of the non-abelian cohomology of
Section~\ref{sec:nab} to the proofs of Theorems~\ref{thm:zero_points} and
\ref{thm:sec_pos}. Throughout this section, we assume that $\chi_\ell : G_k \to
\Zlx$ has infinite image.

Since $\GSp(H)$ is connected, Proposition~\ref{prop:homology} and
Remark~\ref{rem:invariants} imply that
$$
H^j(\g_{g,n},V) \cong \Hom_{\GSp(H)}(H_j(\u_{g,n}),V))
$$
for all finite dimensional $\GSp(H)$-modules $V$. In particular, the projections
$$
\cc,\ \dd_j,\ \ee_j,\ \ee_{ij} \in \Hom_{\GSp(H)}(H_2(\u_{g,n}),\Lambda^2_0 H)
$$
defined in display (\ref{eqn:projns}), Section~\ref{sec:relations}, can be
regarded as elements of $H^2(\g_{g,n},\Lambda^2_0 H)$. We will also regard them
as elements of $H^2(\g_K,\Lambda^2_0 H)$ via the natural homomorphism
$$
H^2(\g_{g,n},\Lambda^2_0 H) \to H^2(\g_K,\Lambda^2_0 H).
$$

\begin{proposition}
\label{prop:lie_coho}
If $g\ge 5$ and $n\ge 0$, then
$$
H^1(\g_K,\Gr^W_{-r}\p) \cong
\begin{cases}
\Ql^n & r = 1,\cr
0 & 2 < r < 6 \text{ and } r\text{ odd } > 6, \cr
H^1(G_k,(\Gr^W_{-r}\pi^\un)^{\Sp(H)}) & r>2 \text{ even.}
\end{cases}
$$
When $r=2$, $H^2(\g_K,\Gr^W_{-2}\pi^\un)$ is isomorphic to
$H^2(\g_{g,n},\Lambda^2_0 H)$ and is generated by the classes $\cc$, $\dd_j,\
\ee_j$, ($1\le j \le n$), $\ee_{ij}$, ($1\le i\le j \le n$), which are subject
to
the relations
$$
\cc + \dd_j + \ee_j = 0,\quad j=1,\dots,n.
$$
Consequently $\{\cc,\ee_i,\ee_{ij}:1\le i \le j \le n\}$ is a basis of
$H^2(\g_K,\Lambda^2_0 H)$.
\end{proposition}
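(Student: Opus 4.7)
The plan is to reduce everything to the cohomological computations of Section~\ref{sec:coho_comps} via the dictionary between weighted completion cohomology and Galois cohomology. Since $\GSp(H)$ is connected, Remark~\ref{rem:invariants} supplies the identification
\[
H^j(\g_K, V) \cong \Hom_{\GSp(H)}\bigl(H_j(\u_K), V\bigr)
\]
for every finite dimensional $\GSp(H)$-module $V$. Proposition~\ref{prop:homology} then turns this into an isomorphism $H^1(\g_K,V)\cong H^1(G_K,V)$ for $V$ of negative weight, and an injection $H^2(\g_K,V)\hookrightarrow H^2(G_K,V)$ in degree two. Applied to the pure weight $-r$ module $V=\Gr^W_{-r}\p$, this reduces the $H^1$ statement to computing $H^1(G_K,\Gr^W_{-r}\p)$.

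For the $H^1$ assertions I would argue case by case. When $r=1$, $\Gr^W_{-1}\p=H$ and Proposition~\ref{prop:coho_comp}(\ref{item2}) gives $H^1(G_K,H)\cong\Ql^n$. For $r\ge 2$, Corollary~\ref{cor:essentials} identifies $H^1(G_K,\Gr^W_{-r}\p)$ with $H^1(G_k,(\Gr^W_{-r}\pi^\un)^{\Sp(H)})$; the invariants vanish when $r$ is odd (Tate twists only appear in even weight) and, by Corollary~\ref{cor:trivial_reps}, when $r<6$ as well. This handles all ranges of $r$ appearing in the statement, the remaining case $r\ge 6$ even being the general formula itself.

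For the $H^2$ computation at $V=\Lambda^2_0 H$, I would apply Lemma~\ref{lem:ses} to $\n=\u_K$ and take its dual in weight $-2$, obtaining the four term sequence
\[
0\to \Gr^W_{-2}H_2(\u_K)\to\Lambda^2\Gr^W_{-1}H_1(\u_K)\xrightarrow{\bracket}\Gr^W_{-2}\u_K\to\Gr^W_{-2}H_1(\u_K)\to 0
\]
of $\GSp(H)$-modules. By Proposition~\ref{prop:coho_comp}(\ref{item2}) one has $\Gr^W_{-1}H_1(\u_K)\cong\Lambda^3_n H$ exactly as for $\u_{g,n}$, and $\Gr^W_{-2}H_1(\u_K)$ consists of Tate twists of $\Ql(1)$ (which carry no $\Lambda^2_0 H$-isotypic component). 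Applying the exact functor $\Hom_R(-,\Lambda^2_0 H)$ therefore presents
\[
H^2(\g_K,\Lambda^2_0 H)\;\cong\;\Hom_R\bigl(\Lambda^2\Lambda^3_n H,\Lambda^2_0 H\bigr)\big/\,\im\!\left\{\Hom_R(\Gr^W_{-2}\u_K,\Lambda^2_0 H)\right\}.
\]
Proposition~\ref{prop:projections} gives the numerator the basis $\{\cc,\dd_j,\ee_j,\ee_{ij}\}$, so the task reduces to computing the image of the bracket on the $\Lambda^2_0 H$-isotypic component of $\Gr^W_{-2}\u_K$.

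The hard part is this last step: showing that the $\Lambda^2_0 H$-component of the bracket for $\u_K$ agrees with the one for $\u_{g,n}^\geom$, so that Proposition~\ref{prop:bracket} can be invoked. I would argue this by naturality: the generic point $\Spec K\to\M_{g,n/k}[m]$ induces a surjection $\g_K\twoheadrightarrow\g_{g,n}$ which by Proposition~\ref{prop:coho_comp}(\ref{item2}) is an isomorphism on $\Gr^W_{-1}H_1$. Combined with Proposition~\ref{prop:coho_comp}(\ref{item4}), which gives the injection $\Het^2(\M_{g,n/k}[m],\Lambda_0^2\H)\hookrightarrow H^2(G_K,\Lambda_0^2 H)$, the maps
\[
H^2(\g_{g,n},\Lambda^2_0 H)\hookrightarrow \Het^2(\M_{g,n/k}[m],\Lambda^2_0\H)\hookrightarrow H^2(G_K,\Lambda^2_0 H),\qquad H^2(\g_K,\Lambda^2_0 H)\hookrightarrow H^2(G_K,\Lambda^2_0 H)
\]
fit into a commutative triangle forcing the natural map $H^2(\g_{g,n},\Lambda^2_0 H)\to H^2(\g_K,\Lambda^2_0 H)$ to be injective and, by comparing the two cokernel presentations above, surjective. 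Granting this identification, Proposition~\ref{prop:bracket} shows the image of the bracket on the $\Lambda^2_0 H$-component is spanned precisely by the elements $\cc+\dd_j+\ee_j$ for $j=1,\ldots,n$, while its components in $V_\tambo$ and the $\Ql(1)_{ij}$ factors contribute no relations (they do not map to $\Lambda^2_0 H$). Eliminating $\dd_j=-\cc-\ee_j$ yields the advertised basis $\{\cc,\ee_i,\ee_{ij}:1\le i\le j\le n\}$, completing the proof.
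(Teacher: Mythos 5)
Your argument is correct and follows essentially the same route as the paper's: the $H^1$ computation reduces to Proposition~\ref{prop:coho_comp} and Corollaries~\ref{cor:trivial_reps}, \ref{cor:essentials}, and the $H^2$ presentation comes from applying $\Hom_{\GSp(H)}(-,\Lambda^2_0 H)$ to the four-term sequence of Lemma~\ref{lem:ses} for both $\u_K$ and $\u_{g,n}$, then comparing the two. Your comparison of cokernel presentations (using the isomorphism on $\Gr^W_{-1}$ and the injection into $H^2(G_K,\Lambda^2_0 H)$) is exactly the commutative diagram and five-lemma chase that the paper writes out, and the final appeal to Propositions~\ref{prop:projections} and \ref{prop:bracket} for the basis and relations matches.
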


\begin{proof}
The exponential map induces a $\cG_K$-module isomorphism $\Gr^W_{-r}\p \to
\Gr^W_{-r}\pi^\un$. We will denote this coefficient module by
$\Gr^W_{-r}\pi^\un$ in group cohomology, and by $\Gr^W_{-r}\p$ in Lie algebra
cohomology. Proposition~\ref{prop:coho_comp} implies that the natural map
$$
H^j(\g_K,\Gr^W_{-r}\p) \to H^j(G_K,\Gr^W_{-r}\pi^\un)
$$
is an isomorphism when $j=1$ and injective when $j=2$. The computation of
$H^1(\g_K,\Gr^W_{-r}\p)$ follows directly from Proposition~\ref{prop:coho_comp},
Corollary~\ref{cor:trivial_reps}, and Corollary~\ref{cor:essentials}.

Proposition~\ref{prop:homology} implies that $\Spec K \to \M_{g,n/k}$ induces an
inclusion
$$
\Het^2(\M_{g,n/k},\Lambda^2_0 H) \hookrightarrow H^2(G_K,\Lambda^2_0 H)
$$
and therefore an injection
\begin{equation}
\label{eqn:h2}
H^2(\g_{g,n},\Lambda^2_0 H) \to H^2(\g_K,\Lambda^2_0 H).
\end{equation}
Apply $\Hom_{\GSp(H)}(\blank,\Lambda^2_0 H)$ to the exact sequence in
Lemma~\ref{lem:ses} with $\u = \u_K$ and $\u_{g,n}$ to obtain the commutative
diagram
{\tiny
$$
\xymatrix@C=16pt{
0 \ar[r] &
\Hom_{\GSp(H)}(\Gr^W_{-2}\u_{g,n},\Lambda^2_0 H) \ar[r]^{\bracket^\ast}\ar[d] &
\Hom_{\GSp(H)}(\Lambda^2 \Gr^W_{-1}\u_{g,n},\Lambda^2_0 H) \ar[r]\ar[d]^\cong &
H^2(\g_{g,n},\Lambda^2_0 H)\ar@{^{(}->}[d] \ar[r] & 0
\cr
0 \ar[r] &
\Hom_{\GSp(H)}(\Gr^W_{-2}\u_K,\Lambda^2_0 H) \ar[r]^{\bracket^\ast} &
\Hom_{\GSp(H)}(\Lambda^2 \Gr^W_{-1}\u_K,\Lambda^2_0 H) \ar[r] &
H^2(\g_K,\Lambda^2_0 H) \ar[r] & 0
%% ) %% to balance parenthesis above
}
$$
}
The rows are exact because $H^1(\g_K,\Lambda^2_0 H)$ and
$H^1(\g_{g,n},\Lambda^2_0 H)$ both vanish when $g\ge 5$. This implies that
(\ref{eqn:h2}) is an isomorphism, which implies that the left-hand vertical map
is an isomorphism. The result now follows from
Propositions~\ref{prop:projections}, which implies that
$\{\cc,\dd_i,\ee_i,\ee_{ij}:1\le i \le j \le n\}$ is a basis of the middle
group, and Proposition~\ref{prop:bracket}, which implies that the relations are
$\{\cc + \dd_i + \ee_i = 0:1\le i \le n\}$.
\end{proof}

Since $H^1(G_K,\Gr^W_{-r}\pi^\un)$ is finite dimensional if and only if
$H^1(G_k,\Ql(r/2))$ is finite dimensional, the previous Proposition and
Corollary~\ref{cor:essentials} imply:

\begin{proposition}
\label{prop:fd}
If $1\le r < 6$ or if $r$ is odd, then for all fields $k$ of characteristic
zero, $H^1(\g_K,\Gr^W_{-r}\p)$ is finite dimensional. If $r=2s\ge 6$ is even and
if $H^1(G_k,\Ql(s))$ is finite dimensional, then $H^1(\g_K,\Gr^W_{-r}\p)$ is
finite dimensional.
\end{proposition}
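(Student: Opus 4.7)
The plan is to read off each case directly from Proposition~\ref{prop:lie_coho}, using Corollary~\ref{cor:essentials} to fill in the missing $r=2$ case and the structure of $(\Gr^W_{-r}\pi^\un)^{\Sp(H)}$ to handle the remaining even weights.

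First, note that for any even $r = 2s \ge 2$, the $\Sp(H)$-invariant part $(\Gr^W_{-r}\pi^\un)^{\Sp(H)}$ is a finite dimensional $\GSp(H)$-submodule of $\Gr^W_{-r}\pi^\un$ on which $\Sp(H)$ acts trivially; by the classification of irreducible $\GSp(H)$-modules, it is therefore isomorphic as a $G_k$-module to $\Ql(s)^{N(r)}$ for some non-negative integer $N(r)$. Consequently
$$
H^1(G_k,(\Gr^W_{-r}\pi^\un)^{\Sp(H)}) \cong H^1(G_k,\Ql(s))^{N(r)},
$$
which is finite dimensional if and only if $H^1(G_k,\Ql(s))$ is.

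Now case-analyze: for $r=1$, Proposition~\ref{prop:lie_coho} gives $H^1(\g_K,\Gr^W_{-1}\p)\cong\Ql^n$; for $r=2$, the proof of Proposition~\ref{prop:lie_coho} shows $H^1(\g_K,\Gr^W_{-2}\p) \cong H^1(G_K,\Gr^W_{-2}\pi^\un)$, and Corollary~\ref{cor:essentials} together with Corollary~\ref{cor:trivial_reps} (which gives $(\Gr^W_{-2}\pi^\un)^{\Sp(H)}=0$) forces this to vanish; for $2 < r < 6$ and for odd $r>6$, Proposition~\ref{prop:lie_coho} gives vanishing outright. All of these are trivially finite dimensional, with no hypothesis on $k$ needed.

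Finally, for even $r = 2s \ge 6$, Proposition~\ref{prop:lie_coho} gives
$$
H^1(\g_K,\Gr^W_{-r}\p) \cong H^1(G_k,(\Gr^W_{-r}\pi^\un)^{\Sp(H)}),
$$
and the preliminary observation reduces finite dimensionality of the right side to finite dimensionality of $H^1(G_k,\Ql(s))$, yielding the second statement.

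There is essentially no obstacle here: the result is a direct corollary of Proposition~\ref{prop:lie_coho}, Corollary~\ref{cor:essentials} and Corollary~\ref{cor:trivial_reps}. The only minor point to verify is the structural observation that the $\Sp(H)$-invariant part of $\Gr^W_{-r}\pi^\un$ is a sum of copies of $\Ql(s)$ when $r=2s$, which is immediate from the weight calculus and the fact that $\GSp(H)/\Sp(H)\cong \Gm$ acts on this summand through $\tau^s$.
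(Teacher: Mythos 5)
Your argument is correct and is essentially the same as the paper's: the paper derives Proposition~\ref{prop:fd} directly from Proposition~\ref{prop:lie_coho} and Corollary~\ref{cor:essentials}, together with the observation (made in the paragraph preceding Corollary~\ref{cor:essentials}) that $(\Gr^W_{-2t}\pi^\un)^{\Sp(H)}$ is a sum of copies of $\Ql(t)$. Your treatment of the $r=2$ case via the isomorphism $H^1(\g_K,\Gr^W_{-2}\p)\cong H^1(G_K,\Gr^W_{-2}\pi^\un)$ and Corollary~\ref{cor:trivial_reps} is correct and merely makes explicit a case the paper's display leaves implicit.
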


This allows us to apply the non-abelian cohomology of Section~\ref{sec:nab} when
$k$ is a number field or a finite extension of $\Qp$ provided $p\neq \ell$. The
following result is needed for the computation of the connecting morphism in
non-abelian cohomology.

\begin{proposition}
\label{prop:injective}
If $g\ge 5$ and $n\ge 1$, then the map
$$
H^1(\g_K,\Gr^W_{-1}\p) \otimes H^1(\g_K,\Gr^W_{-r}\p)
\to H^2(\g_K,\Gr^W_{-r-1}\p),
$$
induced by the bracket $\Gr^W_{-1}\p\otimes \Gr^W_{-r}\p \to \Gr^W_{-r-1}\p$, is
injective.
\end{proposition}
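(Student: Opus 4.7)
The plan is to treat the cases $r=1$ and $r \ge 2$ separately, since Proposition~\ref{prop:lie_coho} shows that $H^1(\g_K,\Gr^W_{-r}\p)$ has very different structure in these regimes, and the arguments differ accordingly.

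For $r \ge 2$, the argument closely parallels the proof of Corollary~\ref{cor:essentials}. By Proposition~\ref{prop:lie_coho}, $H^1(\g_K,\Gr^W_{-r}\p)\cong H^1(G_k,(\Gr^W_{-r}\pi^\un)^{\Sp(H)})$, which vanishes for $r$ odd and is a sum of Galois cohomologies of Tate twists for $r$ even. Since $\Gr^W_\dot\pi^\un$ is generated in weight $-1$ and has trivial center, the commutator embedding $H\otimes (\Gr^W_{-r}\pi^\un)^{\Sp(H)}\hookrightarrow \Gr^W_{-r-1}\pi^\un$ is injective; by Schur's lemma and semi-simplicity, its image splits off as a $\GSp(H)$-direct summand. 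It then suffices to show that the cup product $H^1(\g_K,H)\otimes H^1(\g_K,(\Gr^W_{-r}\p)^{\Sp(H)}) \to H^2(\g_K, H\otimes (\Gr^W_{-r}\p)^{\Sp(H)})$ is injective. Since $H^1(\g_K, H)\cong H^1(G_K^\geom, H)=\Ql^n$ is a trivial $G_k$-module (Proposition~\ref{prop:coho_comp}(\ref{item2})), this cup product is computed by a K\"unneth-type identification and embeds into $H^2(G_K, H(s)^d)$ via Proposition~\ref{prop:coho_comp}(\ref{item3})--(\ref{item4}), where $s=r/2$ and $d$ is the multiplicity of $\Ql(s)$ in $(\Gr^W_{-r}\pi^\un)^{\Sp(H)}$.

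For $r=1$, I would compute the cup products explicitly. The isomorphism $H^2(\g_K,\Lambda^2_0 H)\cong H^2(\g_{g,n},\Lambda^2_0 H)$ from Proposition~\ref{prop:lie_coho} reduces the calculation to $\M_{g,n}$, where $H^1(\g_{g,n},H)\cong\Ql^n$ is spanned by the tautological classes $h_1,\ldots,h_n$. Using the description of $H^2(\g_K,\Lambda^2_0 H)$ as the cokernel of $\bracket^\ast\colon \Hom_R(\Gr^W_{-2}\u_{g,n},\Lambda^2_0 H)\to\Hom_R(\Lambda^2\Gr^W_{-1}\u_{g,n},\Lambda^2_0 H)$, I would evaluate the cocycle $(h_i\cup h_j)(a\wedge b)=h_i(a)\wedge h_j(b)-h_i(b)\wedge h_j(a)$ on each summand of the decomposition (\ref{eqn:decomp}) of $\Lambda^2\Gr^W_{-1}\u_{g,n}$. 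Since $h_i$ and $h_j$ vanish on $\Lambda^3_0 H$, these classes are zero on $\Lambda^2\Lambda^3_0 H$ and on each $H_k\otimes\Lambda^3_0 H$; direct computation on the $\Lambda^2 H_k$ and $H_k\otimes H_l$ summands then shows that $h_i\cup h_j$ equals $2\ee_i$ (when $i=j$) or $\ee_{\min(i,j),\max(i,j)}$ (when $i\neq j$), up to nonzero scalars. Since $\{\ee_i,\ee_{ij}\}$ is part of a basis of $H^2(\g_K,\Lambda^2_0 H)$, the induced map is injective on the image $\mathrm{Sym}^2 H^1(\g_K,H)$ through which it factors (the factorization being forced by graded commutativity of cup product combined with antisymmetry of the bracket $H\otimes H\to\Lambda^2_0 H$).

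The main obstacle is the explicit verification in the case $r=1$: confirming that the cup products $h_i\cup h_j$ land cleanly on the $\ee_i, \ee_{ij}$ basis elements without any $\cc$-contribution that could be introduced modulo the relations $\cc+\dd_j+\ee_j=0$ of Proposition~\ref{prop:bracket}. This step is made tractable by the vanishing of the $h_j$ on the $\Lambda^3_0 H$ summand of $\Gr^W_{-1}\u_{g,n}$, which kills any contribution from $\Lambda^2\Lambda^3_0 H$ where $\cc$ is supported.
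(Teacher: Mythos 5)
Your split into $r=1$ and $r\ge 2$ does not match the paper, whose proof is much shorter: it simply pulls back along the commutative square
$$
\xymatrix{
H^1(\g_K,\Gr^W_{-1}\p) \otimes H^1(\g_K,\Gr^W_{-r}\p)
\ar[d]_\cong\ar[r] & H^2(\g_K,\Gr^W_{-r-1}\p) \ar@{^{(}->}[d] \cr
H^1(G_K,\Gr^W_{-1}\pi^\un) \otimes H^1(G_K,\Gr^W_{-r}\pi^\un)
\ar[r] & H^2(G_K,\Gr^W_{-r-1}\pi^\un)
}
$$
(left vertical map an isomorphism and right vertical map an injection, both by Proposition~\ref{prop:coho_comp}/Proposition~\ref{prop:homology}) and cites Corollary~\ref{cor:essentials} for injectivity of the bottom arrow. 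Your $r\ge 2$ argument is correct but in effect re-derives the content of Corollary~\ref{cor:essentials} for $\g_K$ rather than invoking it and transporting injectivity across the diagram, which is all that is required. The outline is sound, but you are doing more work than necessary in that regime.

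The $r=1$ case, where you spend most of your effort, is the one that is genuinely problematic — and you have in fact correctly diagnosed why, without apparently recognizing that it undermines the stated claim. You observe that the cup product $H^1(\g_K,H)\otimes H^1(\g_K,H)\to H^2(\g_K,\Lambda^2_0 H)$ factors through $\mathrm{Sym}^2 H^1(\g_K,H)$ because the Lie bracket $H\otimes H\to\Lambda^2_0 H$ is antisymmetric and cup product of $1$-cocycles is graded-commutative. This is right, and it means that for $n\ge 2$ the map on the full tensor product $\Ql^n\otimes\Ql^n$ has nontrivial kernel $\Lambda^2\Ql^n$ and \emph{cannot} be injective. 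Your explicit calculation that $h_i\cup h_i = 2\ee_i$ and $h_i\cup h_j = \ee_{ij}$ (up to scalars) is plausible and does show injectivity on $\mathrm{Sym}^2$, but it does not establish the proposition as written for $r=1$. Since Corollary~\ref{cor:essentials} is stated for $\Gr^W_{-2r}$ with $r\ge 2$, the paper's proof likewise does not cover $r=1$; the proposition is implicitly used only for even $r\ge 2$ (where the source vanishes for $r\in\{2,4\}$, and Corollary~\ref{cor:essentials} handles $r\ge 6$), so the $r=1$ case is moot for the application. You should treat your $\mathrm{Sym}^2$ observation as evidence that the proposition should be read as asserting injectivity only for $r\ge 2$, rather than as something to be repaired by a finer computation.
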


\begin{proof}
This follows from Corollary~\ref{cor:essentials} and the fact that the diagram
$$
\xymatrix{
H^1(\g_K,\Gr^W_{-1}\p) \otimes H^1(\g_K,\Gr^W_{-r}\p)
\ar[d]_\cong\ar[r] & H^2(\g_K,\Gr^W_{-r-1}\p) \ar@{^{(}->}[d] \cr
H^1(G_K,\Gr^W_{-1}\pi^\un) \otimes H^1(G_K,\Gr^W_{-r}\pi^\un)
\ar[r] & H^2(G_K,\Gr^W_{-r-1}\pi^\un)
%% ) %% to balance the parenthesis above
}
$$
commutes, and the fact that the left-hand vertical map is an isomorphism and the
right-hand vertical map is an inclusion.
\end{proof}

\section{Proofs of Theorems~\ref{thm:zero_points} and \ref{thm:sec_pos}}
\label{sec:thm3}

We are now ready to combine the non-abelian cohomology of Section~\ref{sec:nab}
with the subsequent computations to prove Theorems~\ref{thm:zero_points} and
\ref{thm:sec_pos}. Throughout we assume that $g\ge 5$ and that $r\ge 1$ and that
the image of $\chi_\ell : G_k \to \Zlx$ is infinite. We will assume that
$H^1(\g_K,\Gr^W_{-s}\p)$ is finite dimensional whenever $1\le s \le r$ to ensure
that the scheme $\Hnab^1(G_K,\pi^\un/W_{-r-1})$ is defined. Conditions under
which this is the case are given in Proposition~\ref{prop:fd}. They are
satisfied, for example, when $k$ is a number field or a finite extensions of
$\Qp$ where $p\neq \ell$.

When $n\ge 1$, denote by $s_j$ the section of $\cG_C \to \cG_K$ induced by the
$j$th tautological point $x_j$ of $C$. There are thus maps
$$
\iota_r : \{s_1,\dots,s_n\}\to \Hnab^1(\cG_K,\pi^\un/W_{-r-1})(\Ql)
$$
for each $r>0$. When $n=0$, we consider $\{s_1,\dots,s_n\}$ to be the empty set.
The maps $\iota_r$ are compatible in the sense that
$$
\xymatrix{
\{s_1,\dots,s_n\} \ar[r]_(0.4){\iota_{r+1}} \ar@/^1pc/[rr]^{\iota_r} &
H^1(\cG_K,\pi^\un/W_{-r-2}) \ar[r]_{\text{quot}_\ast}
& H^1(\cG_K,\pi^\un/W_{-r-1})
}
$$
commutes.

When $r=1$, $H^1(\cG_K,\pi^\un/W_{-2})$ is a principal homogeneous space over
$$
H^1(\g_K,\Gr^W_{-1}\p) = H^1(\g_K,H)
= \Ql\kappa_1\oplus \dots \oplus \Ql\kappa_n.
$$
The first task it to fix a natural identification
$$
H^1(\g_K,\Gr^W_{-1}\p) \overset{\simeq}{\to} \Hnab^1(\cG_K,\pi^\un/W_{-2}).
$$
Proposition~\ref{prop:gr_sect} implies that elements of
$\Hnab^1(\cG_K,\pi^\un/W_{-2})$ correspond to $\GSp(H)$-invariant sections of
$\Gr^W_{-1}\u_C \to \Gr^W_{-1}\u_K$. Theorem~\ref{thm:presentation} implies that
these correspond to the $\GSp(H)$-invariant sections of the map $\epsilon_n :
\Lambda^3_{n+1} H \to \Lambda^3_n H$ that takes $(v;u_0,u_1,\dots,u_n)$ to
$(v;u_1,\dots,u_n)$. We identify $\Hnab^1(\cG_K,\pi^\un/W_{-2})$ with
$\Aff_\Ql^n$ by identifying the $\GSp(H)$-invariant section
$$
(v;u_1,\dots,u_n) \mapsto (v:\textstyle{\sum_{j=1}^n a_j u_j,u_1,\dots, u_n})
$$
of $\epsilon_n$ with $(a_1,\dots,a_n)$. Corollary~\ref{cor:kappa} now implies:

\begin{lemma}
With this identification,
$$
(2g-2)\iota_1(s_j) = \kappa_j \in \Hnab^1(\cG_K,\pi^\un/W_{-2})(\Ql),
$$
where $1\le j \le n$. Consequently, $\iota_r$ is injective for each $r\ge 1$.
\end{lemma}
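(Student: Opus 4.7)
The plan is to verify the equation $(2g-2)\iota_1(s_j) = \kappa_j$ by computing both sides in the coordinates $(a_1,\dots,a_n)$ on $\Aff^n_\Ql$, and then to deduce the injectivity of $\iota_r$ from a commutative diagram.

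First I would compute $\iota_1(s_j)$ explicitly. By naturality of weighted completion, the $j$th tautological section $x_j \colon \M_{g,n/k} \to \cC_{g,n/k}$ induces a section of $\cG_C \to \cG_K$, and restriction to $\Gr^W_{-1}$ produces a $\GSp(H)$-invariant section of $\epsilon_n \colon \Gr^W_{-1}\u_C \to \Gr^W_{-1}\u_K$. The same representation-theoretic argument used in Proposition~\ref{prop:d_sections} (applied to the weight $-1$ quotient of $\u$ rather than to $\d$) shows that this section must be $(v;u_1,\dots,u_n) \mapsto (v;u_j,u_1,\dots,u_n)$; under the stated identification it is the point with coordinates $a_i = \delta_{ij}$.

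Next I would unwind the torsor structure. Elements of $\Hnab^1(\cG_K,\pi^\un/W_{-2})$ are, via $\Sect_\dot$, the $\GSp(H)$-invariant sections of the weight $-1$ extension
$$
0 \to \Gr^W_{-1}\p \to \Gr^W_{-1}\u_C \to \Gr^W_{-1}\u_K \to 0,
$$
where $\Gr^W_{-1}\p = H$. Two such sections differ by a $\GSp(H)$-equivariant map $\Lambda^3_n H \to H$, so the torsor action of $H^1(\g_K,\Gr^W_{-1}\p) \cong \Hom_{\GSp(H)}(\Lambda^3_n H, H)$ sends a section $s$ and a map $f$ to $s + f$; in coordinates, if $f$ takes $(v;u_1,\dots,u_n)$ to $\sum c_i u_i$, then $(a_1,\dots,a_n) \mapsto (a_1+c_1,\dots,a_n+c_n)$. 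By Corollary~\ref{cor:kappa} and the remark immediately following, $\kappa_j/(2g-2) = h_j/(g-1)$ corresponds to the $\GSp(H)$-equivariant projection $\Lambda^3_n H = \Lambda^3_0 H \oplus H^n \to H$ onto the $j$th $H$-summand. Acting on the zero section (corresponding to all $a_i = 0$), this projection produces the section with $a_j = 1$ and $a_i = 0$ for $i \neq j$, which is exactly $\iota_1(s_j)$. Hence $(2g-2)\iota_1(s_j) = \kappa_j$ as claimed.

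Finally, for injectivity of $\iota_r$: by Proposition~\ref{prop:lie_coho}, $\{\kappa_1,\dots,\kappa_n\}$ is a basis of $H^1(\g_K,H)$, so the points $\iota_1(s_1),\dots,\iota_1(s_n)$ are pairwise distinct and $\iota_1$ is injective. The commutative diagram preceding the lemma expresses $\iota_1$ as the composition of $\iota_r$ with the iterated quotient $\Hnab^1(\cG_K,\pi^\un/W_{-r-1}) \to \Hnab^1(\cG_K,\pi^\un/W_{-2})$, so $\iota_r$ is injective for every $r \ge 1$. The only real subtlety is ensuring that the coordinate identification of $\Hnab^1$ is aligned with the basis chosen for $H^1(\g_K, H)$ via the $H$-summand projections, so that the comparison with $\kappa_j$ becomes bookkeeping rather than a fresh calculation.
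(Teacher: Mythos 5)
Your proof follows the same overall route as the paper (which states the lemma as an immediate consequence of Corollary~\ref{cor:kappa} and the preceding identification), and the second half of your argument---translating the zero section by $\kappa_j/(2g-2)$ and using the basis $\{\kappa_j\}$ of $H^1(\g_K,H)$ plus the commutative diagram to deduce injectivity of every $\iota_r$---is correct and is essentially what the paper has in mind.

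The gap is in the first step, where you need to know that the graded section $\iota_1(s_j)$ has coordinates $a_i = \delta_{ij}$. You invoke Proposition~\ref{prop:d_sections} ``applied to the weight $-1$ quotient of $\u$ rather than to $\d$.'' That proposition does not do what you need here. Its classification of sections relies essentially on the bracket into $\Gr^W_{-2}$: it is the vanishing of the 2-cocycle in Lemma~\ref{lem:cocycle} that forces $a_i^2 = a_i$, $a_ia_j=0$, $\sum a_j = 1$. At the level of $\Gr^W_{-1}$ alone there is no bracket constraint, and the $\GSp(H)$-invariant sections of $\epsilon_n\colon\Lambda^3_{n+1}H\to\Lambda^3_n H$ form the \emph{entire} affine space $\Aff^n_\Ql$ (this is precisely why $\Hnab^1(\cG_K,\pi^\un/W_{-2})\cong\Aff^n_\Ql$ in the first place). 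Moreover, even if you did have the classification $\{s_1,\dots,s_n\}$ at hand, it only tells you that the section induced by the $j$th tautological point is one of the $s_i$; it does not tell you it is $s_j$ specifically, which is the content of the lemma.

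What is actually needed is the naturality argument: the $j$th tautological section of $\cC_{g,n}\to\M_{g,n}$ is the embedding $[C;x_1,\dots,x_n]\mapsto[C;x_j,x_1,\dots,x_n]$, i.e., it sets the new marked point $x_0$ equal to $x_j$. The isomorphism $\Gr^W_{-1}\u_{g,n}\cong\Lambda^3_n H$ is natural with respect to the maps $\M_{g,n+1}\to\M_{g,n}$ that forget a point and the maps $\M_{g,n}\to\M_{g,1}$ that remember a single point --- see Paragraph~\ref{para:genus3}, where the $j$th projection $\Lambda^3_{n+1}H\to\Lambda^3_n H$ is identified with the corresponding forgetful map. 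Chasing the diagonal through this naturality gives that the induced section on $\Gr^W_{-1}$ is $(v;u_1,\dots,u_n)\mapsto(v;u_j,u_1,\dots,u_n)$, which has $a_i=\delta_{ij}$. Once this is in place, the rest of your argument goes through unchanged and the factor $(2g-2)$ drops out exactly as you compute.
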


The computation of $\delta:\Hnab^1(\cG_K,\pi^\un/W_{-2}) \to
H^2(\g_K,\Gr^W_{-2}\p)$ is a reformulation of Lemma~\ref{lem:cocycle}.

\begin{lemma}
\label{lem:delta}
If $g\ge 5$, then the connecting morphism
$$
\delta : \Hnab^1(\cG_K,\pi^\un/W_{-2}) \to H^2(\g_K,\Gr^W_{-2}\p)
$$
is given by
$$
\delta\big(\textstyle{\sum_{j=1}^n a_j \kappa_j}\big)/(2g-2) =
\textstyle{\sum_{i<j} a_i a_j \ee_{ij}}
 + \textstyle{\sum_i (a_i^2 - a_i)\ee_i}
+ (1-\textstyle{\sum_i a_i})\cc.
$$
\end{lemma}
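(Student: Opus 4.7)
The plan is to compute $\delta$ directly from the recipe in Section~\ref{sec:delta}, using Lemma~\ref{lem:cocycle} to carry out the substantive calculation. By Proposition~\ref{prop:gr_sect}, points of $\Hnab^1(\cG_K,\pi^\un/W_{-2})$ correspond to $\GSp(H)$-equivariant graded sections of $\Gr^W_{-1}\u_C\to\Gr^W_{-1}\u_K$; via Proposition~\ref{prop:d_invariance} these are exactly the $\GSp(H)$-invariant sections $\sigma_\mathbf{a}:(v;u_1,\dots,u_n)\mapsto(v;\sum_j a_j u_j,u_1,\dots,u_n)$ of $\e_n:\Lambda^3_{n+1}H\to\Lambda^3_n H$. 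Following Section~\ref{sec:delta}, I would lift $\sigma_\mathbf{a}$ to the unique $\GSp(H)$-equivariant, $R$-linear (not necessarily Lie) section $\sigmatilde_\mathbf{a}$ of $\Gr^W_\dot(\g_C/W_{-2}\pi^\un)\to\Gr^W_\dot\g_K$ that is the identity on $\Gr^W_0$, agrees with $\sigma_\mathbf{a}$ in weight $-1$, and uses the canonical $\GSp(H)$-invariant splitting of $\Gr^W_{-2}\u_C\to\Gr^W_{-2}\u_K$ in weight $-2$. The class $\delta(\sigma_\mathbf{a})\in H^2(\g_K,\Gr^W_{-2}\p)$ is then represented by the Chevalley--Eilenberg 2-cocycle $h_{\sigma_\mathbf{a}}(X\wedge Y)=[\sigmatilde_\mathbf{a}(X),\sigmatilde_\mathbf{a}(Y)] - \sigmatilde_\mathbf{a}([X,Y])$, whose values land in $\Gr^W_{-2}\p\cong\Lambda^2_0 H$.

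The crux is that this cocycle coincides with the one computed in Lemma~\ref{lem:cocycle}. After passing through the isomorphisms $\d(\u_C)\cong\d_{g,n+1}$ and $\d(\u_K)\cong\d_{g,n}$ of Proposition~\ref{prop:d_invariance}, the lift $\sigmatilde_\mathbf{a}$ becomes the $R$-linear extension of $\sigma_\mathbf{a}$ to weight $-2$ that appears implicitly in the proof of Proposition~\ref{prop:d_sections}, and its target $\Gr^W_{-2}\p$ is exactly the $\Lambda^2_0 H$-factor for the ``new'' point in which equation (\ref{eqn:cocycle}) was computed. Thus, as an element of $\Hom_{\GSp(H)}(\Lambda^2\Gr^W_{-1}\u_K,\Lambda^2_0 H)$,
\[
h_{\sigma_\mathbf{a}} \;=\; (1-\textstyle\sum_j a_j)\,\cc \,+\, \textstyle\sum_j(a_j^2-a_j)\,\ee_j \,+\, \textstyle\sum_{i<j}a_i a_j\,\ee_{ij}.
\]
Projecting to $H^2(\g_K,\Lambda^2_0 H)$ via the surjection of Lemma~\ref{lem:ses} and Proposition~\ref{prop:lie_coho}, the expression descends verbatim, since the relation $\cc+\dd_j+\ee_j=0$ of Proposition~\ref{prop:lie_coho} involves no $\dd_j$-terms in our cocycle.

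Finally, I would convert tuple coordinates into $\kappa$-coordinates using the relation $(2g-2)\iota_1(s_j)=\kappa_j$ established just before the lemma --- which in view of Corollary~\ref{cor:kappa} identifies $\kappa_j/(2g-2)$ as the $j$th Johnson-type projection $\Lambda^3_n H\to H_j$. This rescaling accounts for the overall factor of $(2g-2)$ appearing on the left-hand side of the stated formula. The principal obstacle is not computational but organizational: one must carefully track four parameterizations (the tuple $\mathbf{a}$, the $\kappa$-class, the graded section of $\Gr^W_{-1}\u_C\to\Gr^W_{-1}\u_K$, and the $R$-linear lift used in Section~\ref{sec:delta}'s recipe), together with the $(2g-2)$-scaling that passes between them. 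The Lie-theoretic content of the lemma is entirely encapsulated by Lemma~\ref{lem:cocycle}.
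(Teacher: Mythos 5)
Your proposal is correct and follows exactly the same route as the paper's (very terse) proof: apply the Section~\ref{sec:delta} recipe for $\delta$, recognize the cocycle as the expression (\ref{eqn:cocycle}) of Lemma~\ref{lem:cocycle}, and reduce in $H^2(\g_K,\Lambda^2_0 H)$ via Proposition~\ref{prop:lie_coho}, with the $(2g\!-\!2)$ factor absorbed by the change to $\kappa$-coordinates. (One small citation slip: the identification of $\Gr^W_{-1}\u_C\to\Gr^W_{-1}\u_K$ with $\e_n:\Lambda^3_{n+1}H\to\Lambda^3_n H$ rests on Theorem~\ref{thm:presentation} together with the cohomological comparisons of Propositions~\ref{prop:coho_comp}--\ref{prop:lie_coho}, rather than on Proposition~\ref{prop:d_invariance}, which concerns $\cC_{g,n}$ versus $\M_{g,n+1}$ over the full moduli space.)
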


\begin{proof}
This follows directly from the construction of $\delta$ given in
Section~\ref{sec:delta}, the cohomology computation
Proposition~\ref{prop:lie_coho}, and the formula for the bracket given in
Lemma~\ref{lem:cocycle}.
\end{proof}

\begin{corollary}
\label{cor:isom_r_is_3}
If $g\ge 5$, then
$$
\iota_2 : \{s_1,\dots,s_n\} \to \Hnab^1(\cG_K,\pi^\un/W_{-3})(\Ql)
$$
is a bijection.
\end{corollary}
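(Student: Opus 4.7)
The plan is to combine the non-abelian ``exact sequence'' of Theorem~\ref{thm:exactness} (applied with $N=2$) with the explicit formula for $\delta$ given in Lemma~\ref{lem:delta}. Taking $N=2$ produces
$$
\Hnab^1(\cG_K,\pi^\un/W_{-3}) \xrightarrow{\ q\ } \Hnab^1(\cG_K,\pi^\un/W_{-2}) \xrightarrow{\ \delta\ } H^2(\g_K,\Gr^W_{-2}\p),
$$
in which $q$ realises its source as a principal $H^1(\g_K,\Gr^W_{-2}\p)$-bundle over $\delta^{-1}(0)$. Since $q\circ \iota_2 = \iota_1$, it will suffice to check two things: (i) $\delta^{-1}(0)(\Ql)$ coincides with $\iota_1(\{s_1,\dots,s_n\})$, and (ii) the fibres of $q$ over $\delta^{-1}(0)(\Ql)$ are singletons.

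For (i), I will use Lemma~\ref{lem:delta} together with Proposition~\ref{prop:lie_coho}, the latter of which identifies $\{\cc\}\cup\{\ee_i : 1\le i\le n\}\cup\{\ee_{ij} : 1\le i<j\le n\}$ as a basis of $H^2(\g_K,\Lambda^2_0 H) = H^2(\g_K,\Gr^W_{-2}\p)$ once the relations $\cc + \dd_j + \ee_j = 0$ are used to eliminate the $\dd_j$. Writing a $\Ql$-point of $\Hnab^1(\cG_K,\pi^\un/W_{-2})$ as $(a_1,\dots,a_n)$, vanishing of the coefficients of $\cc$, $\ee_i$ and $\ee_{ij}$ in the formula from Lemma~\ref{lem:delta} yields the system
$$
\textstyle\sum_i a_i = 1, \qquad a_i^2 = a_i \ (1\le i\le n), \qquad a_i a_j = 0 \ (i<j),
$$
which forces exactly one $a_i$ to equal $1$ and the rest to vanish. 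Hence $\delta^{-1}(0)(\Ql) = \{\iota_1(s_1),\dots,\iota_1(s_n)\}$.

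For (ii), I will show $H^1(\g_K,\Gr^W_{-2}\p) = 0$. The Lie algebra/group cohomology comparison from the proof of Proposition~\ref{prop:lie_coho} gives $H^1(\g_K,\Gr^W_{-2}\p) \cong H^1(G_K,\Gr^W_{-2}\pi^\un)$, and Corollary~\ref{cor:essentials} identifies the latter with $H^1(G_k,(\Gr^W_{-2}\pi^\un)^{\Sp(H)})$. But $(\Gr^W_{-2}\pi^\un)^{\Sp(H)}$ vanishes by Corollary~\ref{cor:trivial_reps}, giving the required vanishing.

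Combining the two steps, $q$ restricts to a bijection from $\Hnab^1(\cG_K,\pi^\un/W_{-3})(\Ql)$ onto $\{\iota_1(s_1),\dots,\iota_1(s_n)\}$; since $\iota_1$ is injective on $\{s_1,\dots,s_n\}$ by the preceding lemma and $q\circ\iota_2 = \iota_1$, the map $\iota_2$ is a bijection. There is no significant obstacle in this argument: all ingredients are already in place, and the only moderately delicate point is extracting coefficients cleanly in $H^2(\g_K,\Lambda^2_0 H)$ once the relations $\cc+\dd_j+\ee_j = 0$ are imposed.
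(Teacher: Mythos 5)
Your proof is correct and follows essentially the same route as the paper's: apply the non-abelian exact sequence at $N=2$, use the explicit formula for $\delta$ from Lemma~\ref{lem:delta} together with the basis $\{\cc,\ee_i,\ee_{ij}\}$ of $H^2(\g_K,\Lambda^2_0 H)$ to pin down $\delta^{-1}(0)(\Ql) = \{\iota_1(s_1),\dots,\iota_1(s_n)\}$, and then observe that the principal $H^1(\g_K,\Gr^W_{-2}\p)$-bundle is trivial. The only difference is that you spell out the vanishing of $H^1(\g_K,\Gr^W_{-2}\p)$ (via Corollary~\ref{cor:essentials} and Corollary~\ref{cor:trivial_reps}), which the paper uses implicitly when it asserts that $\Hnab^1(\cG_K,\pi^\un/W_{-3})$ \emph{equals} $\delta^{-1}(0)$ rather than merely projects onto it; this is a worthwhile bit of added care rather than a departure in method.
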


\begin{proof}
Theorem~\ref{thm:exactness} implies that $H^1(\cG_K,\pi^\un/W_{-3})$ is the
inverse image of $0$ under
$$
\delta : H^1(\cG_K,H) \to H^2(\g_K,\Gr^W_{-2}\p).
$$
The previous result and the Chinese Remainder Theorem imply that
\begin{align*}
\delta^{-1}(0) &=
\big\{
\textstyle{(2g-2)^{-1}\sum_{j=0}^n a_j\kappa_j} :
a_i a_j = \delta_{i,j}a_j,\ \textstyle{\sum_i a_i} = 1
\big\}
\cr
&= \{\kappa_1/(2g-2),\dots,\kappa_n/(2g-2)\}\times \Spec\Ql.
\end{align*}
The result follows as $\iota_2(s_j)=\kappa_j/(2g-2)$.
\end{proof}

\begin{proof}[Proof of Theorem~\ref{thm:zero_points}]
By Proposition~\ref{prop:generic_pt}, a section of $\pi_1(C,\xbar) \to G_K$
induces a section of $\cG_C/W_{-3}\pi^\un \to \cG_K$. But the previous result
implies that there are no such sections when $n=0$.
\end{proof}

Another consequence is the following version of the Section Conjecture. The
theorem shows that Kim's program \cite{kim:program} works well in the case where
the curve is the generic curve of type $(g,n)$ with a level $m\ge 1$ structure
and $g\ge 5$.

\begin{theorem}
Suppose that $k$ is a field of characteristic zero and that $\ell$ is a prime
number for which the image of the $\ell$-adic cyclotomic character $\chi_\ell$
is infinite. Let $K=k(\M_{g,n}[m])$ and $C$ be the restriction of the universal
curve over $\M_{g,n}[m]$ to its generic point. If $g\ge 5$ and $n\ge 0$, then
$C(K) = \{x_1,\dots,x_n\}$ and the function
$$
\sect^\un_r : C(K) \to \Hnab^1(G_K,\pi^\un/W_{-r-1})(\Ql)
$$
that takes a rational point $x$ to the conjugacy class of the section
$s_{r,x}^\un$ of
$$
\Gtilde_C^\Ql/W_{-r-1}\pi^\un(\Ql) \to G_K
$$
induced by $s_x$ is a bijection when $2\le r \le 5$. If $H^1(G_k,\Ql(t))$
vanishes for all $t\ge 3$ --- such as when $k$ is a finite extension of $\Q_p$,
$p\neq \ell$ \cite[VII,\S3]{coho-fields} --- then $\sect^\un_r$ is a bijection
for all $r\in [3,\infty]$.
\end{theorem}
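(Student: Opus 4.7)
The claim $C(K) = \{x_1,\dots,x_n\}$ is immediate from Theorem~\ref{thm:rational}, since by Proposition~\ref{prop:level-indep} the universal curve over $\M_{g,n/k}[m]$ is topologically ample of type $(g,n)$. For the bijection statement, my plan is to work with $\Hnab^1(\cG_K,\pi^\un/W_{-r-1})$ rather than the $G_K$-version: Proposition~\ref{prop:nab_iso} provides a natural identification between the two provided $H^1(G_K,\Gr^W_{-s}\pi^\un)$ is finite dimensional for $1\le s\le r$, and this finiteness holds unconditionally for $s\le 5$ by Proposition~\ref{prop:fd} and for all $s$ under the hypothesis $H^1(G_k,\Ql(t))=0$ for $t\ge 3$.

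The base case $r=2$ is exactly Corollary~\ref{cor:isom_r_is_3}. Injectivity of $\iota_r$ for every $r\ge 2$ is then automatic from the commuting diagram with $\iota_2$: the truncation map $\Hnab^1(\cG_K,\pi^\un/W_{-r-1}) \to \Hnab^1(\cG_K,\pi^\un/W_{-3})$ sends $\iota_r(s_j)$ to $\iota_2(s_j)$, and $\iota_2$ is injective.

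The main step is an inductive proof of surjectivity using the non-abelian exact sequence of Theorem~\ref{thm:exactness} applied with $N=r+1$. Assume $\iota_r$ is bijective. Given $\tau\in\Hnab^1(\cG_K,\pi^\un/W_{-r-2})(\Ql)$, its image in $\Hnab^1(\cG_K,\pi^\un/W_{-r-1})(\Ql)$ equals $\iota_r(s_j)$ for a unique $j$. Since the rational point $x_j$ gives a global section of $\cG_C \to \cG_K$ whose compatible truncations realize $\iota_{r+1}(s_j)$, the class $\iota_r(s_j)$ lies in $\delta^{-1}(0)$, and the fiber of the truncation over $\iota_r(s_j)$ is a non-empty principal homogeneous space under $H^1(\g_K,\Gr^W_{-r-1}\p)$. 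The induction therefore reduces to verifying the vanishing of this group at each step.

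For the unconditional range $2\le r\le 5$, the inductive step requires $H^1(\g_K,\Gr^W_{-s}\p)=0$ for $s\in\{3,4,5\}$, and Proposition~\ref{prop:lie_coho} supplies all three. For $r\in[3,\infty]$ under the additional hypothesis, the same proposition identifies $H^1(\g_K,\Gr^W_{-s}\p)$ with $H^1(G_k,(\Gr^W_{-s}\pi^\un)^{\Sp(H)})$ for $s\ge 3$; the $\Sp(H)$-invariants vanish when $s$ is odd and are a sum of copies of $\Ql(s/2)$ when $s$ is even, so the hypothesis $H^1(G_k,\Ql(t))=0$ for $t\ge 3$ kills every remaining obstruction for $s\ge 6$ and the induction proceeds to infinity. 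The main obstacle is entirely absorbed into the base case, which rests on the explicit form of the connecting morphism computed in Lemma~\ref{lem:delta} together with the cohomological computation of Proposition~\ref{prop:lie_coho}; granted these, the present theorem is simply a clean inductive packaging.
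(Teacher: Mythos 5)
Your proof is correct and follows essentially the same route as the paper's, which is stated more tersely: the paper simply asserts that after Proposition~\ref{prop:nab_iso}, the result ``follows from the exact sequence of Theorem~\ref{thm:exactness} and the computations of Proposition~\ref{prop:lie_coho}.'' Your version supplies the induction that the paper leaves implicit, with the base case being Corollary~\ref{cor:isom_r_is_3} and the inductive step supplied by the vanishing of $H^1(\g_K,\Gr^W_{-s}\p)$ for $s$ in the relevant range. Both arguments rely on the same ingredients (the vanishing of obstruction groups, the fact that the tautological sections lift at every level, and finiteness guaranteed by Proposition~\ref{prop:fd}); yours is a faithful unpacking rather than a genuinely different approach.
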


\begin{proof}
We know from Theorem~\ref{thm:rational} that $C(K)=\{x_1,\dots,x_n\}$. From
Proposition~\ref{prop:nab_iso} we know that $\Hnab^1(\cG_K,\pi^\un/W_{-r-1}) \to
\Hnab^1(G_K,\pi^\un/W_{-r-1})$ is an isomorphism. The result then follows from
the exact sequence of Theorem~\ref{thm:exactness} and the computations of
Proposition~\ref{prop:lie_coho}.
\end{proof}

Additional work is required when $H^1(G_k,\Ql(t))$ does not vanish for all $t\ge
3$. This is the case when $k$ is a number field, for example.

\begin{proposition}
Suppose that $r\ge 2$ and that $H^1(G_k,\Ql(t))$ is finite dimensional whenever
$3\le 2t \le r$. If $g \ge 5$, then there are natural isomorphisms
\begin{align*}
\Hnab^1(G_K,\pi^\un/W_{-r-1})
&\cong \Hnab^1(\cG_K,\pi^\un/W_{-r-1}) \cr
&\cong \{s_1,\dots,s_n\} \times H^1\big(\g_K,\Gr^W_{-r}\p\big) \cr
&\cong
{
\begin{cases}
\{s_1,\dots,s_n\}\times \Spec\Ql & r \text{ odd,} \cr
\{s_1,\dots,s_n\}\times H^1\big(G_k,\big(\Gr^W_{-r}\pi^\un\big)^{\Sp(H)}\big)
& r \text{ even.}
\end{cases}
}
\end{align*}
where $\iota_m(s_j)$ corresponds to $s_j$ when $r$ is odd and to $(s_j,0)$
when $r$ is even. 
When $r = 2t$, there is a commutative diagram.
{\small
$$
\xymatrix{
\{s_1,\dots,s_n\}\times H^1(\g_K,\Gr^W_{-2t}\p)
\ar[r]^{\iota_1\times\id}\ar@{<->}[d]_{\cong} &
H^1(\g_K,\Gr^W_{-1}\p) \otimes H^1(\g_K,\Gr^W_{-2t}\p)
\ar[d]^{\text{\sf cup}} \cr
\Hnab^1(\cG_K,\pi^\un/W_{-2t}) \ar[r]^\delta & H^2(\g_K,\Gr^W_{-2t-1}\p)
}
$$
}
where $\text{\sf cup}$ is induced by the bracket $\Gr^W_{-1}\p \otimes
\Gr^W_{-2t}\p \to \Gr^W_{-2t-1}\p$.
\end{proposition}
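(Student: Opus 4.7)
The plan is to combine Proposition~\ref{prop:nab_iso} with an induction on $r$ driven by the non-abelian exact sequence of Theorem~\ref{thm:exactness}. The first isomorphism reduces, by Proposition~\ref{prop:nab_iso}, to verifying that $H^1(G_K,\Gr^W_{-s}\pi^\un)$ is finite dimensional for $1\le s\le r$. For $s=1$ this is $\Ql^n$ by Proposition~\ref{prop:coho_comp}(\ref{item2}); for $s\ge 2$ Corollary~\ref{cor:essentials} identifies it with $H^1(G_k,(\Gr^W_{-s}\pi^\un)^{\Sp(H)})$, which vanishes when $s$ is odd or $s\le 4$ (using Corollary~\ref{cor:trivial_reps}) and which for $s=2t\ge 6$ is a finite direct sum of copies of $H^1(G_k,\Ql(t))$, hence finite dimensional by hypothesis.

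I would then proceed by induction on $r\ge 2$. The base case $r=2$ is Corollary~\ref{cor:isom_r_is_3} combined with the vanishing of $H^1(\g_K,\Gr^W_{-2}\p)$ (again from Corollary~\ref{cor:trivial_reps} and Corollary~\ref{cor:essentials}), which makes the right-hand side collapse to $\{s_1,\dots,s_n\}$. For the inductive step from $r-1$ to $r$, the hypothesis reads
\[
\Hnab^1(\cG_K,\pi^\un/W_{-r})\cong\{s_1,\dots,s_n\}\times H^1(\g_K,\Gr^W_{-r+1}\p)
\]
with $\iota_{r-1}(s_j)$ corresponding to $(s_j,0)$, and Theorem~\ref{thm:exactness} presents $\Hnab^1(\cG_K,\pi^\un/W_{-r-1})$ as a principal $H^1(\g_K,\Gr^W_{-r}\p)$-bundle over the zero locus of $\delta:\Hnab^1(\cG_K,\pi^\un/W_{-r})\to H^2(\g_K,\Gr^W_{-r}\p)$. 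The heart of the argument is therefore to show $\delta^{-1}(0)=\{s_1,\dots,s_n\}$.

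This reduces to identifying $\delta$ on the $j$th component with the map $(s_j,\alpha)\mapsto\iota_1(s_j)\cup\alpha$, where the cup product is induced by the bracket $\Gr^W_{-1}\p\otimes\Gr^W_{-r+1}\p\to\Gr^W_{-r}\p$; this is essentially the content of the commutative square displayed in the statement. It follows from the construction of $\delta$ reviewed in Section~\ref{sec:delta}: a $\what$-graded section representing $(s_j,\alpha)$ has weight $-1$ piece recording $\iota_1(s_j)=\kappa_j/(2g-2)$ (via the lemma preceding Lemma~\ref{lem:delta}) and weight $-r+1$ piece recording $\alpha$, and the 2-cocycle $h_\sigma$ measuring the failure to extend this to a graded Lie algebra section into $\Gr^W_\dot\ghat_r$ agrees, modulo coboundaries, with the Chevalley--Eilenberg cup-product cocycle of these two classes. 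The constant term $\delta(\iota_{r-1}(s_j))$ vanishes because $s_j$ comes from a genuine $K$-rational point and therefore lifts to a section of the full extension $\Gtilde_C^\Ql\to G_K$. Proposition~\ref{prop:injective} (vacuously true when $H^1(\g_K,\Gr^W_{-r+1}\p)=0$) then forces $\alpha=0$, so $\delta^{-1}(0)=\bigsqcup_j\{(s_j,0)\}$ and the principal bundle structure yields the inductive conclusion with $\iota_r(s_j)$ corresponding to $(s_j,0)$.

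The concrete final form of $H^1(\g_K,\Gr^W_{-r}\p)$ — namely $\Spec\Ql$ for $r$ odd (and the small even cases $r=4$ via Corollary~\ref{cor:trivial_reps}) and $H^1(G_k,(\Gr^W_{-r}\pi^\un)^{\Sp(H)})$ for $r$ even in general — is immediate from Proposition~\ref{prop:lie_coho}. The main obstacle, as is typical in this formalism, is the cocycle-level identification of $\delta$ with the cup product on each component; everything else (the exact sequence of Theorem~\ref{thm:exactness}, the cohomology computations of Sections~\ref{sec:coho_comps} and~\ref{sec:lie_coho_comps}, and the injectivity of the cup product from Proposition~\ref{prop:injective}) slots in cleanly once that identification is made.
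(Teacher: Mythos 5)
Your proposal is correct and takes essentially the same route as the paper: induction on $r$ from the base $r=2$ (Corollary~\ref{cor:isom_r_is_3}), the exact sequence of Theorem~\ref{thm:exactness}, identification of the connecting morphism $\delta$ with the cup product with $\iota_1(s_j)$ via the description in Section~\ref{sec:delta}, vanishing of $\delta\circ\iota_r$ because the tautological sections lift, and injectivity from Proposition~\ref{prop:injective}. The only difference is cosmetic: the paper splits the induction into two parity-alternating passes (odd $\to$ even, which is immediate since the relevant $H^1$ vanishes, and even $\to$ odd, which uses the cup-product identification), while you run a single uniform step $r-1\to r$ in which the odd case of the identification is vacuous; both yield the same conclusion.
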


\begin{proof}
We prove this result by induction on $r$ using the exact sequence of
Theorem~\ref{thm:exactness}. The case $r=2$ is established in
Corollary~\ref{cor:isom_r_is_3}. Observe that the composition
$$
\xymatrix{
\{s_1,\dots,s_n\} \ar[r]^(0.37){\iota_r}
& H^1(\cG_K,\pi^\un/W_{-r-1})(\Ql) \ar[r]^(.55)\delta &
H^2(\g_K,\Gr^W_{-r-1}\p)
}
$$
of $\iota_r$ with the connecting morphism $\delta$ is trivial for all $r > 1$.
Since $H^1(\g,\Gr^W_{-r}\p)=0$ for $2\le r < 6$, the result also holds when
$r<6$.

Next observe that if the result is true for $r=2t-1>2$, it is also true for
$r=2t$ as the exact sequence of non-abelian cohomology implies that
$\Hnab^1(\cG,\pi/W_{-2t-1})$ is a principal $H^1(\g_K,\Gr^W_{-2t}\p)$ bundle
over $\Hnab^1(\cG_K,\pi^\un/W_{-2t-2}\pi^\un)$, which, by induction, is
$\{s_1,\dots,s_n\}$.

To prove that if the result is true for $r=2t$, then it is true for $r=2t+1$, it
suffices to prove the last assertion as then Proposition~\ref{prop:injective}
then implies that $\delta$ is an imbedding on each $s_j\times
H^1(\g_K,\Gr^W_{-2t}\p)$ that takes $(s_j,0)$ to $0$. But this follows from the
description of $\delta$ given in Section~\ref{sec:delta}.
\end{proof}

Combining Theorem~\ref{thm:rational} with the fact that $\Hnab^1(G_K,\pi^\un)$
is the projective limit of the $\Hnab^1(G_K,\pi^\un/W_{-r})$, we obtain:

\begin{theorem}
Suppose that $k$ is a field of characteristic zero and that $\ell$ is a prime
number for which the image of the $\ell$-adic cyclotomic character $\chi_\ell$
is infinite. Let $K=k(\M_{g,n}[m])$ and $C$ be the restriction of the
universal curve over $\M_{g,n/k}[m]$ to its generic point. If $g\ge 5$ and $n\ge
0$ and if $H^1(G_k,\Ql(s))$ is finite dimensional for all $s\ge 3$, then $C(K) =
\{x_1,\dots,x_n\}$ and the function
$$
\sect^\un_\infty : C(K) \to \Hnab^1(G_K,\pi^\un)(\Ql)
$$
that takes a rational point to the conjugacy class of the section of
$$
\Gtilde_C^\Ql/W_{-r-1}\pi^\un(\Ql) \to G_K
$$
induced by $s_x$ is a bijection.
\end{theorem}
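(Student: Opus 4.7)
The plan is to combine Theorem~\ref{thm:rational}, the preceding proposition computing the finite-level non-abelian cohomology, and a straightforward passage to the inverse limit. First I would dispose of the set-theoretic part of the claim: the equality $C(K) = \{x_1,\dots,x_n\}$ is exactly the content of Theorem~\ref{thm:rational} applied to the topologically ample family $\cC \to \M_{g,n/k}[m]$ (which is topologically ample for $g\ge 5$ by Proposition~\ref{prop:level-indep}). Thus only the bijectivity of $\sect^\un_\infty$ remains, and since the map sending $x_j$ to the conjugacy class of the induced section $s_j$ is evidently injective and has image $\{[s_1],\dots,[s_n]\}$, it suffices to identify $\Hnab^1(G_K,\pi^\un)(\Ql)$ with this finite set.

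Next I would check that the hypothesis of the final theorem is strong enough to apply the preceding proposition at every truncation. By Proposition~\ref{prop:fd} and the fact that $(\Gr^W_{-r}\p)^{\Sp(H)}=0$ for $r<6$ (Corollary~\ref{cor:trivial_reps}), the group $H^1(\g_K,\Gr^W_{-r}\p)$ is finite dimensional for all $r\ge 1$ once $H^1(G_k,\Ql(s))$ is finite dimensional for all $s\ge 3$. In particular the scheme $\Hnab^1(\cG_K,\pi^\un/W_{-r-1})$ is defined at every level, and Proposition~\ref{prop:nab_iso} identifies it with $\Hnab^1(G_K,\pi^\un/W_{-r-1})(\Ql)$.

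The heart of the argument is then the inverse limit computation. For every odd integer $r=2t+1\ge 3$, the preceding proposition asserts that
\[
\Hnab^1(G_K,\pi^\un/W_{-r-1})(\Ql) \cong \{s_1,\dots,s_n\},
\]
since $H^1(\g_K,\Gr^W_{-r}\p)=0$ for $r$ odd. The transition maps in the inverse system at these odd truncations, induced by the quotients $\pi^\un/W_{-r-3}\twoheadrightarrow \pi^\un/W_{-r-1}$, send $[s_j]$ to $[s_j]$ by the compatibility of the maps $\iota_r$. Since the odd-$r$ subsystem is cofinal in the full system defining $\Hnab^1(G_K,\pi^\un) = \varprojlim_r \Hnab^1(G_K,\pi^\un/W_{-r-1})$, the inverse limit equals $\{s_1,\dots,s_n\}$, and composing with the identification $x_j\mapsto [s_j]$ gives the desired bijection.

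The main obstacle in this argument is purely bookkeeping: one must confirm that the hypothesis $H^1(G_k,\Ql(s))<\infty$ for $s\ge 3$ is enough to cover all truncations (which works precisely because the $\Sp(H)$-invariants in $\Gr^W_{-r}\p$ vanish for $r<6$), and that the cofinal odd subsystem has the identity transition maps. The real substance has already been absorbed into the preceding proposition, whose inductive step relied on the injectivity of the cup product in Proposition~\ref{prop:injective}; at the level of the final statement, nothing further is required beyond taking the limit.
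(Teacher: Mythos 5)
Your proof is correct and takes essentially the same approach as the paper, which derives the theorem by combining Theorem~\ref{thm:rational} with the preceding proposition and passing to the projective limit. Your explicit treatment of the cofinality of the odd truncations (where $\Hnab^1(G_K,\pi^\un/W_{-r-1})\cong\{s_1,\dots,s_n\}$ with identity transition maps) is a clean way of making precise the limit argument that the paper leaves implicit.
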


This completes the proof of Theorem~\ref{thm:sec_pos}. With minor modifications,
one can prove the following generalization.

Let $T$ be a section of dimension $\ge 3$ of $\M_{g,n/k}[m]$ by a generic
complete intersection as described in Remark~\ref{rem:variant}. Suppose that
$T$ is defined over $k$ and geometrically connected. Denote the restriction of
the universal curve over $\M_{g,n/k}[m]$ to $T$ by $C \to T$. Set $K=k(T)$.

\begin{theorem}
\label{thm:variant}
Suppose that $\ell$ is a prime number and that the image of the $\ell$-adic
cyclotomic character is infinite. If $g\ge 5$, $n\ge 0$ and $m\ge 1$ and if
$H^1(G_k,\Ql(r))$ is finite dimensional for all $r>1$, then
$$
C(K) \to \Hnab^1(G_K,\pi^\un)(\Ql)
$$
is an isomorphism. 
\end{theorem}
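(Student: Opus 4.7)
The plan is to adapt the proof of Theorem~\ref{thm:sec_pos} to this setting, replacing $\M_{g,n/k}[m]$ by $T$ throughout. Two inputs must be transferred: the rigidity statement $C(K) = \{x_1,\dots,x_n\}$, and the Galois-cohomology computations of Propositions~\ref{prop:coho_comp} and~\ref{prop:lie_coho}.

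For the first input, I would argue that $C/T$ is a topologically ample family of type $(g,n)$ in the sense of Definition~\ref{def:top_ample}, by the same argument as the proposition following Remark~\ref{rem:coho}. Specifically, for a generic complete intersection $T \subset \M_{g,n/k}[m]$ of dimension $\ge 3$ and sufficiently high multi-degree, the stratified Morse-theoretic density results of \cite{hain:density,smt} imply that $T \to \M_{g,n/k}[m]$ induces an isomorphism on low-degree cohomology with the relevant twisted coefficients; via Proposition~\ref{prop:homology} this yields $\u_T/W_{-3} \cong \u_{g,n}/W_{-3}$. Zariski density of the monodromy representation $\pi_1(T,\etabar) \to \GSp(H_\Ql)$ follows from the density theorem, and $\Jac_{C/T}(T)$ is finitely generated by Proposition~\ref{prop:fg} using the vanishing $H^0(T\otimes_k\kbar,\H_\Ql) = 0$. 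Theorem~\ref{thm:rational} then gives $C(K) = \{x_1,\dots,x_n\}$.

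For the second input, I would reprove Proposition~\ref{prop:coho_comp} with $T$ in place of $\M_{g,n/k}[m]$. The only point in that proof that requires modification is the use of Theorem~\ref{thm:gysin}: replace it with the version for $T$ stated in Remark~\ref{rem:variant}, which controls $\Het^j(T,\V) \to \Het^j(U,\V)$ for all Zariski open $U \subseteq T$ and all geometrically non-trivial irreducible $\GSp(H)$-local systems $\V$. Combined with the Lefschetz-type comparison $\Het^j(T,\V) \cong \Het^j(\M_{g,n/k}[m],\V)$ in degrees $\le 1$ from the previous step, one obtains the same description of $H^1(G_K,V)$ and the same injectivity of $\Het^2(T,\V) \to H^2(G_K,V)$. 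Propositions~\ref{prop:lie_coho}, \ref{prop:fd}, and \ref{prop:injective} then transfer without change, since each depends only on this degree $\le 2$ Galois-cohomological input together with the universal Lie-theoretic computations of Section~\ref{sec:relations}.

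With these two ingredients in hand, the argument of Section~\ref{sec:thm3} applies word for word. One identifies $\Hnab^1(\cG_K, \pi^\un/W_{-2})$ with $\Aff_\Ql^n$ via Proposition~\ref{prop:gr_sect} and Theorem~\ref{thm:presentation}, computes the connecting morphism $\delta$ by Lemma~\ref{lem:delta}, observes that $\delta^{-1}(0) = \{\kappa_1/(2g-2),\dots,\kappa_n/(2g-2)\} \times \Spec \Ql$ coincides with the image of the tautological sections, and lifts inductively through the principal $H^1(\g_K,\Gr^W_{-r}\p)$-bundles of Theorem~\ref{thm:exactness}. Proposition~\ref{prop:nab_iso} then converts the resulting weighted-completion bijection into the claimed Galois-cohomological bijection $C(K) \to \Hnab^1(G_K, \pi^\un)(\Ql)$. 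The main obstacle is bookkeeping the Lefschetz-type comparison with $\M_{g,n/k}[m]$ cleanly enough for all three cohomological propositions to go through unchanged; once that is in place, the remainder of the argument is identical to the $\M_{g,n/k}[m]$ case.
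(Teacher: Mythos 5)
Your proposal is correct and follows essentially the same route the paper indicates: the paper's own treatment is just the remark that the computations of Section~\ref{sec:coho_comps} go through in the generic complete-intersection setting via Remark~\ref{rem:variant}, after which the argument of Section~\ref{sec:thm3} applies unchanged, and your write-up fills in exactly those details (topological ampleness via the density theorem so Theorem~\ref{thm:rational} gives $C(K)=\{x_1,\dots,x_n\}$, then the transfer of Propositions~\ref{prop:coho_comp}, \ref{prop:lie_coho}, \ref{prop:fd}, \ref{prop:injective}, then the non-abelian cohomology computation).
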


The proof is left the interested reader. The main point is that the computations
of Section~\ref{sec:coho_comps} hold in this more general situation. The rest of
the argument is identical.

\appendix

\section{A Hodge Theoretic Lemma}

Suppose that $T$ is a complex algebraic manifold and that $\H$ is a polarized
variation of Hodge structure (PVHS) of negative weight. Suppose that the
$\Z$-local system underlying $\H$ is torsion free. Here we will construct the
sequence
\begin{equation}
\label{seqce}
0 \to \Ext^1_\cH(\Z(0),H^0(T,\H_\Z)) \overset{j}{\to}
\Ext^1_{\cH(T)}(\Z(0)_T,\H_\Z)
\overset{\delta}{\to} H^1(T,\H_\Z)
\end{equation}
used in the proof of Proposition~\ref{prop:fg} and prove that it is exact. The
proof below is short and direct. A stronger result can be deduced from general
results proved in \cite{vmhs} in which the image of $\delta$ is shown the be the
appropriate set of Hodge classes.

The map $\delta$ takes a VMHS $\V$ that is an extension
$$
0 \to \H \to \V \to \Z(0)_T \to 0
$$
to the corresponding extension of local systems, which is an element of
$$
\Ext^1_{\cL(T)}(\Z,\H) \cong H^1(T,\H)
$$
where $\cL(T)$ denotes the category of $\Z$-local systems over $T$.

The map $j$ is defined by pushout. By the theorem of the fixed part for PVHS,
$H^0(T,\H)$ \cite{schmid} has a pure Hodge structure and the inclusion
$H^0(T,\H)\hookrightarrow H_t$ is a morphism of HS for all $t\in T$. Thus
$H^0(T,\H)$ may be regarded as a constant sub PVHS of $\H$ over $T$. An
extension
\begin{equation}
\label{const_extn}
0 \to H^0(T,\H) \to V \to \Z(0) \to 0
\end{equation}
in the category of MHS can be viewed as a constant VMHS over $T$. One can
push it out along the inclusion $H^0(T,\H) \hookrightarrow \H$ to obtain
an extension of $\Z(0)_T$ by $\H$:
$$
\xymatrix{
0 \ar[r] & H^0(T,\H) \ar[r]\ar[d] & V \ar[r]\ar[d] & \Z(0) \ar[r]\ar[d] & 0 \cr
0 \ar[r] & \H \ar[r] & \V \ar[r] & \Z(0)_T \ar[r] & 0
}
$$
The map $j$ takes the extension (\ref{const_extn}) to this extension.
Injectivity will follow from the computation below.

We'll say that an extension
$$
0 \to \H \to \V \to \Z(0)_T \to 0
$$
in the category $\cH(T)$ of admissible VMHS over $T$ is {\em topologically
trivial} if it splits in the category $\cL(T)$ of local systems over $T$. The
kernel of $\delta$ consists of those extensions $\V$ that are topologically
trivial.

To prove exactness, it suffices to show that every topologically trivial
extension of $\Z(0)_T$ by $\H$ arises from an element of
$\Ext^1_\cH(\Z(0),H^0(T,\H_\Z))$ by the pushout construction above. Suppose that
$\V$ is an extension of $\Z(0)_T$ by $\H$ in $\cH(T)$. Then the Theorem of the
Fixed Part for VMHS \cite{steenbrink-zucker} implies that $H^0(T,\V)$ has a
natural MHS. The exact sequence
\begin{equation}
\label{extn}
0 \to \H \to \V \to \Z(0)_T \to 0
\end{equation}
gives a long exact sequence
$$
0 \to H^0(T,\H) \to H^0(T,\V) \to \Z(0) \overset{d}{\to} H^1(T,\H).
$$
Then $\V$ is a topologically trivial extension of $\Z$ by $\H$ if and only
if $d=0$. Thus, when $\V$ is a topologically trivial extension, we have
the short exact sequence
\begin{equation}
\label{invar_extn}
0 \to H^0(T,\H) \to H^0(T,\V) \to \Z(0) \to 0
\end{equation}
which we regard as an element of $\Ext^1_\cH(\Z(0),H^0(T,\H_\Z))$. It may be
regarded as a subvariation of $\V$ over $T$. This construction defines a
function
$$
r : \ker \delta \to \Ext^1_\cH(\Z(0),H^0(T,\H_\Z))
$$
which is easily seen to be a homomorphism. It is also easy to check that $r\circ
j$ is the identity. To prove exactness of the sequence (\ref{seqce}), it
suffices to show that $r$ is injective. But this is clear, for if the extension
(\ref{invar_extn}) is split, then one has a Hodge splitting $s : \Z(0)_T \to
H^0(T,\V)$ of it. But, regarding $H^0(T,\V)$ as a constant subvariation of $\V$,
we see that $s$ gives a splitting of (\ref{extn}) in $\cH(T)$. This implies
that $\ker r = 0$.

\end{document}